\numberwithin{equation}{section}
\theoremstyle{plain}
\newtheorem{theorem}[subsection]{Theorem}
\newtheorem{proposition}[subsection]{Proposition}
\newtheorem{lemma}[subsection]{Lemma}
\newtheorem{corollary}[subsection]{Corollary}
\newtheorem{claim}[subsection]{Claim}
\theoremstyle{definition}
\newtheorem{definition}[subsection]{Definition}
\renewcommand{\leq}{\leqslant}
\renewcommand{\geq}{\geqslant}
\newsavebox{\proofbox}
\savebox{\proofbox}{\begin{picture}(7,7)%
  \put(0,0){\framebox(7,7){}}\end{picture}}
\newcommand{\md}[1]{\ensuremath{(\operatorname{mod}\, #1)}}
\newcommand{\mdlem}[1]{\ensuremath{(\mbox{\textup{mod}}\, #1)}}
\newcommand\E{\mathbb{E}}
\newcommand\Z{\mathbb{Z}}
\newcommand\R{\mathbb{R}}
\newcommand\C{\mathbb{C}}
\newcommand\N{\mathbb{N}}
\newcommand\Q{\mathbb{Q}}
\newcommand\g{\mathfrak{g}}
\newcommand\h{\mathfrak{h}}
\newcommand\X{\mathcal{X}}
\newcommand\id{\operatorname{id}}
\newcommand\mes{\operatorname{mes}}
\newcommand\vol{\operatorname{vol}}
\newcommand\Lip{{\operatorname{Lip}}}
\newcommand\eps{\varepsilon}
\newcommand\Ad{\operatorname{Ad}}
\newcommand\codim{\operatorname{codim}}
\newcommand\dist{\operatorname{dist}}
\newcommand\ab{\operatorname{ab}}
\newcommand\poly{\operatorname{poly}}
\newcommand\Span{\operatorname{Span}}
\newcommand\lin{\operatorname{lin}}
\newcommand\HK{\operatorname{HK}}
\newcommand\nonlin{2}
\def\proof{\textit{Proof. }}
\def\example{\textit{Example. }}
\def\examples{\textit{Examples. }}
\def\remark{\textit{Remark. }}
\def\remarks{\textit{Remarks. }}
\def\endproof{\hfill{\usebox{\proofbox}}}
\def\endexample{\hfill{$\diamond$}}
\begin{document}

\title{The quantitative behaviour of polynomial orbits on nilmanifolds}

\author{Ben Green}
\address{Centre for Mathematical Sciences\\
Wilberforce Road\\
     Cambridge CB3 0WA\\
     England
}
\email{b.j.green@dpmms.cam.ac.uk}

\author{Terence Tao}
\address{UCLA Department of Mathematics, Los Angeles, CA 90095-1596.
}
\email{tao@math.ucla.edu}

\thanks{The first author is a Clay Research Fellow and gratefully acknowledges the support of the Clay Institute. The second author is supported by a grant from the MacArthur Foundation.}

\subjclass{}

\begin{abstract}
A theorem of Leibman \cite{leibman-single-poly} asserts that a polynomial orbit $(g(n)\Gamma)_{n \in \Z}$ on a nilmanifold $G/\Gamma$ is always equidistributed in a union of closed sub-nilmanifolds of $G/\Gamma$. In this paper we give a quantitative version of Leibman's result, describing the uniform distribution properties of a finite polynomial orbit $(g(n)\Gamma)_{n \in [N]}$ in a nilmanifold. More specifically we show that there is a factorization $g = \varepsilon g' \gamma$, where $\varepsilon(n)$ is ``smooth'', $(\gamma(n)\Gamma)_{n \in \Z}$ is periodic and ``rational'', and $(g'(n)\Gamma)_{n \in P}$ is uniformly distributed (up to a specified error $\delta$) inside some subnilmanifold $G'/\Gamma'$ of $G/\Gamma$ for all sufficiently dense arithmetic progressions $P \subseteq [N]$.

Our bounds are uniform in $N$ and are polynomial in the error tolerance $\delta$. In a companion paper \cite{ukmobius} we shall use this theorem to establish the M\"obius and Nilsequences conjecture from our earlier paper \cite{green-tao-linearprimes}.
\end{abstract}

\maketitle

\section{Introduction}

\textsc{Nilmanifolds.}
In the last few years it has come to be appreciated that \emph{nilmanifolds}, together with orbits on them, play a fundamental r\^ole in combinatorial number theory. Their relevance was certainly apparent in \cite{furstenberg-vonneumann}, and it has been displayed quite dramatically in recent ergodic-theoretic work of Host-Kra \cite{host-kra} and Ziegler \cite{ziegler}. More recently the authors have explored how nilmanifolds arise in additive combinatorics \cite{green-tao-u3inverse} and in the study of linear equations in the primes \cite{green-tao-linearprimes}. The present paper is a part of that programme (and in particular will be used to prove the \emph{M\"obius and Nilsequences conjecture} from \cite{green-tao-linearprimes} in the companion \cite{ukmobius} to this paper) but, since it concerns only the intrinsic properties of nilmanifolds, may be read independently of any of the other work. The reader interested in the background may consult the surveys \cite{green-icm,kra-survey,tao-survey-1} or the paper \cite{green-tao-linearprimes}.

We begin by setting out our notation for nilmanifolds. 

\begin{definition}[Filtrations and Nilmanifolds]\label{nil-def}
Let $G$ be a connected, simply connected Lie group with identity element $\id_G$. For the purposes of this paper we define a \emph{filtration} $G_{\bullet}$ on $G$ to be a sequence of closed connected subgroups 
\[ G = G_0 = G_1 \supseteq G_2 \supseteq \dots \supseteq G_d \supseteq G_{d+1} = \{\id_G\}\] 
which has the property that $[G_i,G_j] \subseteq G_{i+j}$ for all integers $i,j \geq 0$. The least integer $d$ for which $G_{d+1} = \{\id_G\}$ is called the \emph{degree} of the filtration $G_{\bullet}$ and here, as usual, the commutator group $[H,K]$ is the group generated by $\{ [h,k] : h \in H, k \in K \}$, where $[h,k] := h k h^{-1}k^{-1}$ is the commutator of $h$ and $k$. If $G$ possesses a filtration then we say that $G$ is \emph{nilpotent}.
Let $\Gamma \subseteq G$ be a uniform subgroup (i.e. a discrete, cocompact subgroup). Then the quotient $G/\Gamma = \{ g\Gamma: g \in G \}$ is called a \emph{nilmanifold}.  We also write $g \md{\Gamma}$ for $g\Gamma$.
\end{definition}

Throughout the paper we will write $m = \dim G$ and $m_i = \dim G_i$, $i = 1,\dots, d$.

\remark  The assumptions of connectedness and simple-connectedness for $G$ are not completely standard, but are very convenient for us. In any situation in which we apply our theorems, we expect to be able to reduce to this case. If a filtration $G_{\bullet}$ of degree $d$ exists then it is easy to see that the \emph{lower central series filtration\footnote{It is not hard to see that the lower central series filtration \emph{is} a filtration, in that we have $[G_i,G_j] \subseteq G_{i+j}$ for all $i,j$.}} defined by $G = G_0 = G_1$, $G_{i+1} = [G, G_i]$ terminates with $G_{s+1} = \{\id_G\}$ for some integer $s \leq d$. We call the minimal such integer $s$ the \emph{step} of the nilpotent Lie group $G$. In this paper the degree $d$ will play a vastly more important r\^ole than the step $s$, since it will be important to work with filtrations more general than the lower central series. 

\examples   The simplest examples of nilmanifolds arise when $s = 1$ in which case we may, after a linear transformation, take $G = \R^m$ and $\Gamma = \Z^m$. The lower central series filtration is given by $G = G_0 = G_1$ and $G_2 = \{\id_G\}$. The nilmanifold $G/\Gamma$ is then referred to as a \emph{torus}. Note that in this example the group operation is written \emph{additively}, as is conventional for abelian groups. When we are working with non-abelian groups we shall write the group operation multiplicatively.
The simplest non-abelian example is given by the 3-dimensional \emph{Heisenberg nilmanifold}, in which $s = 2$. We will study this object in some detail later on. Here we take 
\begin{equation}\label{heisen}
G = \left(\begin{smallmatrix} 1 & \R & \R \\ 0 & 1 & \R \\  0 & 0 & 1 \end{smallmatrix}\right) \quad \mbox{and} \quad \Gamma = \left(\begin{smallmatrix} 1 & \Z & \Z \\ 0 & 1 & \Z \\  0 & 0 & 1 \end{smallmatrix}\right).
\end{equation}
The lower central series filtration is given by $G = G_0 = G_1$, 
\[ G_2 = \left(\begin{smallmatrix} 1 & 0 & \R \\ 0 & 1 & 0 \\  0 & 0 & 1 \end{smallmatrix}\right)\] and $G_3 = \{\id_G\}$.
Observe that a fundamental domain for the action of $\Gamma$ on $G$ is 
\begin{equation} \left\{ \left(\begin{smallmatrix} 1 & x_1 & x_2 \\ 0 & 1 & x_3 \\  0 & 0 & 1 \end{smallmatrix}\right) : 0 \leq x_1, x_2, x_3 < 1\right\}.\end{equation}
Thus one can view $G/\Gamma$ as a unit cube, with the sides glued together in a twisted fashion.\endexample

This paper will be concerned with the qualitative and quantitative equidistribution of various algebraic sequences on nilmanifolds.  We first set out our notation for equidistribution.  

\begin{definition}[Equidistribution]\label{almost-equidistribution}  Let $G/\Gamma$ be a nilmanifold. Here and in the sequel we endow $G/\Gamma$ with the unique normalised Haar measure, we let $[N] := \{n \in \Z: 1 \leq n \leq N\}$, and we write $\E_{a \in A} f(a) := \frac{1}{|A|} \sum_{a \in A} f(A)$ for the average of $f$ on the set $A$.  
\begin{enumerate}
\item An infinite sequence $(g(n)\Gamma)_{n \in \N}$ in $G/\Gamma$ is said to be \emph{equidistributed} if we have
$$
\lim_{N \to \infty} \E_{n \in [N]} F(g(n) \Gamma) = \int_{G/\Gamma} F
$$
for all continuous functions $F: G/\Gamma \to \C$.  
\item An infinite sequence $(g(n)\Gamma)_{n \in \Z}$ in $G/\Gamma$ is said to be \emph{totally equidistributed} if the sequences $(g(an+r)\Gamma)_{n \in \N}$ are equidistributed for all $a \in \Z \backslash \{0\}$ and $r \in \Z$.
\item 
Given a length $N > 0$ and an error tolerance $\delta > 0$, a finite sequence $(g(n)\Gamma)_{n \in [N]}$ is said to be \emph{$\delta$-equidistributed} if we have
$$ \left|\E_{n \in [N]} F(g(n) \Gamma) - \int_{G/\Gamma} F\right| \leq \delta \|F\|_{\Lip}$$
for all Lipschitz functions $F: G/\Gamma \to \C$, where
\[ \Vert F \Vert_{\Lip} := \Vert F \Vert_{\infty} + \sup_{x,y \in G/\Gamma, x \neq y} \frac{|F(x) - F(y)|}{d_{G/\Gamma}(x,y)}\]
and the metric $d_{G/\Gamma}$ on $G/\Gamma$ will be defined in Definition \ref{metric-def} in the next section (it will involve choosing a \emph{Mal'cev basis} $\X$ for $G/\Gamma$).  
\item A finite sequence $(g(n)\Gamma)_{n \in [N]}$ is said to be \emph{totally $\delta$-equidistributed} if we have
$$ \left|\E_{n \in P} F(g(n) \Gamma) - \int_{G/\Gamma} F\right| \leq \delta \|F\|_{\Lip}$$
for all Lipschitz functions $F: G/\Gamma \to \C$ and all arithmetic progressions $P \subset [N]$ of length at least $\delta N$.
\end{enumerate}
\end{definition}

We will be interested in the \emph{qualitative} question of when a sequence $(g(n)\Gamma)_{n \in \N}$ is equidistributed (or totally equidistributed), as well as the more \emph{quantitative} question of when a finite sequence $(g(n)\Gamma)_{n \in [N]}$ is $\delta$-equidistributed (or totally $\delta$-equidistributed).  Such questions, and corresponding questions in more general settings (for example when $G/\Gamma$ is a homogeneous space of a general, not necessarily nilpotent, Lie group) play a fundamental r\^ole in number theory; see \cite{venkatesh} for a discussion. 
These questions are also closely related to the celebrated theorem of Ratner \cite{ratner} on unipotent flows, although as we are restricting attention to nilmanifolds, we will not need the full force of Ratner's theorem (or quantitative versions thereof) here.

\textsc{Qualitative equidistribution theory of linear sequences.}
To begin the discussion let us first restrict attention to \emph{linear} sequences.

\begin{definition}[Linear sequences]  A \emph{linear sequence} in a group $G$ is any sequence $g: \Z \to G$ of the form $g(n) := a^n x$ for some $a, x \in G$.  A \emph{linear sequence} in a nilmanifold $G/\Gamma$ is a sequence of the form $(g(n)\Gamma)_{n \in \Z}$, where $g: \Z \to G$ is a linear sequence in $G$.
\end{definition}

In the additive case $G = \R^m$, $\Gamma = \Z^m$, a linear sequence takes the form $(an+x\md{\Z^m})_{n \in \Z}$.  In this case one can understand equidistribution satisfactorily using Kronecker's theorem and its variants.  For instance, to answer qualitative questions about equidistribution in this case, we have the following classical result.

\begin{theorem}[Qualitative Kronecker theorem]
Let $m \geq 1$, and let $(g(n)\md{\Z^m})_{n \in \N}$ be a linear sequence in the torus $\R^m/\Z^m$.  Then exactly one of the following statements is true.
\begin{itemize}
\item[(i)] $(g(n)\md{\Z^m})_{n \in \N}$ is equidistributed in $\R^m/\Z^m$. 
\item[(ii)] There exists a non-trivial \emph{character} $\eta: \R^m \to \R/\Z$, i.e. a continuous additive homomorphism which annihilates $\Z^m$ but does not vanish entirely, such that $\eta \circ g$ is constant.  \textup{(}Equivalently, if $g(n) = an+x$, there exists a non-zero $k \in \Z^m$ such that $k \cdot a \in \Z$.\textup{)} 
\end{itemize}
In particular, $(g(n)\md{\Z^m})_{n \in \Z}$ is equidistributed if and only if it is totally equidistributed.
\end{theorem}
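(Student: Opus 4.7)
The plan is to use Weyl's equidistribution criterion, which reduces the problem to estimating exponential sums against all non-trivial characters of the torus. Recall that $(x_n)_{n \in \N}$ is equidistributed in $\R^m/\Z^m$ if and only if $\E_{n \in [N]} e^{2\pi i k \cdot x_n} \to 0$ as $N \to \infty$ for every non-zero $k \in \Z^m$; this follows from the Stone--Weierstrass theorem, since trigonometric polynomials are uniformly dense in $C(\R^m/\Z^m)$.

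First I would verify that (i) and (ii) are mutually exclusive. If $\eta: \R^m \to \R/\Z$ is a non-trivial character annihilating $\Z^m$ and $\eta \circ g$ is constant, then the continuous function $F(x) := e^{2\pi i \tilde \eta(x)}$ (where $\tilde\eta$ is any lift of $\eta$ to $\R/\Z$) is non-constant and descends to $G/\Gamma$, but $F(g(n)\Gamma)$ is constant while $\int F = 0$, contradicting equidistribution. Next I would show that the failure of (ii) forces (i). Writing $g(n) = an + x$, the equivalent formulation of (ii) is that there is a non-zero $k \in \Z^m$ with $k \cdot a \in \Z$. If no such $k$ exists, then for every non-zero $k \in \Z^m$,
\[ \E_{n \in [N]} e^{2\pi i k \cdot g(n)} = e^{2\pi i k \cdot x} \cdot \E_{n \in [N]} e^{2\pi i n (k \cdot a)} = O\!\left( \frac{1}{N \|k \cdot a\|_{\R/\Z}} \right), \]
by summing the geometric progression, and this tends to $0$ as $N \to \infty$ since $k \cdot a \notin \Z$. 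By Weyl's criterion, (i) holds.

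For the final assertion, total equidistribution trivially implies equidistribution (take $a = 1$, $r = 0$). Conversely, if $(an+x)_{n \in \N}$ is equidistributed, then by the above dichotomy $k \cdot a \notin \Z$ for every non-zero $k \in \Z^m$. For any arithmetic subprogression of common difference $b \neq 0$ and initial term $r$, we have $g(bn+r) = (ba)n + (ar+x)$, and for any non-zero $k \in \Z^m$ the vector $bk$ is also a non-zero integer vector, so $k \cdot (ba) = (bk) \cdot a \notin \Z$. Hence the subprogression also satisfies (i), proving total equidistribution.

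There is no real obstacle here: the result is essentially immediate once one has Weyl's criterion and the explicit geometric-series computation. The only point requiring minor care is verifying the equivalence between the two formulations of (ii), which amounts to observing that every character $\eta: \R^m \to \R/\Z$ annihilating $\Z^m$ has the form $\eta(y) = k \cdot y \md{\Z}$ for a unique $k \in \Z^m$, and that $\eta \circ g$ constant is equivalent to $k \cdot a \in \Z$.
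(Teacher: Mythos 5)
Your argument is correct; note that the paper states this as a classical result without giving a proof, so there is nothing to compare against, but your route via Weyl's criterion and summing the geometric progression is the standard one and is complete. One small slip: where you define $F(x) := e^{2\pi i \tilde\eta(x)}$, you describe $\tilde\eta$ as ``a lift of $\eta$ to $\R/\Z$'' --- but $\eta$ already takes values in $\R/\Z$; either take a lift to $\R$, or simply observe that $e^{2\pi i \eta(x)}$ is already well defined since $e^{2\pi i\,\cdot}$ descends to $\R/\Z$. Also, re-using the letter $a$ both for the coefficient of $g$ and (implicitly) for the dilation parameter in Definition 1.2(ii) before renaming it $b$ is harmless but slightly confusing; otherwise the derivation that $k\cdot(ba) = (bk)\cdot a \notin \Z$ for $b \ne 0$ is exactly right.
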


\remarks  An equivalent formulation of this theorem is that if the linear sequence \[ (g(n)\md{\Z^m})_{n \in \N}\]is not equidistributed, then this sequence instead takes values in a finite union of proper subtori of $G/\Gamma$.  This can be viewed as an extremely simple special case of the theorems of Ratner \cite{ratner} and Shah \cite{shah}.
More quantitative results can be obtained via Fourier analysis\footnote{In this simple setting one could also use more classical tools such as Minkowski's geometry of numbers, and in the $m=1$ case one could even use continued fractions.  However, these methods do not seem to extend easily to higher steps.}; see Proposition \ref{quant-kron} below.

A remarkable theorem of Leon Green allows one to reduce qualitative questions about the distribution of orbits on nilmanifolds of step $s > 1$ to the abelian case just described. 

\begin{definition}[Horizontal torus]\label{horiz-def} Given a nilmanifold $G/\Gamma$, the \emph{horizontal torus} is defined to be $(G/\Gamma)_{\ab} := G/[G,G]\Gamma$.  We let $\pi: G \to (G/\Gamma)_{\ab}$ be the canonical projection map.  A \emph{horizontal character} is a continuous additive homomorphism $\eta: G \to \R/\Z$ which annihilates $\Gamma$; observe that such characters in fact annihilate $[G,G]\Gamma$ and so can be viewed as characters on the horizontal torus.  We say that a horizontal character is \emph{non-trivial} if it is not identically zero.
\end{definition}

It follows from results of Mal'cev \cite{malcev}, and in particular the existence of so-called \emph{Mal'cev bases}, that $(G/\Gamma)_{\ab}$ really is a torus and in fact is isomorphic to $\R^{m_{\ab}}/\Z^{m_{\ab}}$ where $m_{\ab} := \dim_{\R}(G) - \dim_{\R}([G,G])$. We will not actually need this characterisation, as the properties of horizontal characters $\eta : G \rightarrow \R/\Z$ will be our main focus. Readers may find it useful to keep this in mind, however.

\begin{theorem}[Leon Green's theorem]\label{leon-green} 
Let $(g(n)\Gamma)_{n \in \Z}$ be a linear sequence in a nilmanifold $G/\Gamma$.  Then the orbit $(g(n)\Gamma)_{n \in \N}$ is equidistributed in $G/\Gamma$ if and only if the projected orbit $(\pi(g(n)\Gamma))_{n \in \N}$ is equidistributed in the horizontal torus $(G/\Gamma)_{\ab}$.  \textup{(}In particular, $(g(n)\Gamma)_{n \in \Z}$ is equidistributed if and only if it is totally equidistributed.\textup{)}
\end{theorem}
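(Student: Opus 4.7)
The forward implication is easy: if $(g(n)\Gamma)_{n \in \N}$ is equidistributed in $G/\Gamma$, then composing any continuous $F \colon (G/\Gamma)_{\ab} \to \C$ with $\pi$ produces a continuous function on $G/\Gamma$, and $\pi$ pushes Haar measure to Haar measure, so the projected orbit is equidistributed on the horizontal torus.

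For the converse I would induct on the step $s$ of $G$. When $s = 1$, $G/\Gamma$ equals its horizontal torus and the claim is precisely the Qualitative Kronecker theorem. For the inductive step with $s \geq 2$, let $Z := G_s$ be the last non-trivial term of the lower central series; it is central, and $Z \cap \Gamma$ is a cocompact lattice in $Z$, so $Z/(Z \cap \Gamma)$ is a torus. Any continuous $F \colon G/\Gamma \to \C$ decomposes in the vertical direction as $F = \sum_\xi F_\xi$, where $\xi$ ranges over characters of $Z/(Z\cap\Gamma)$ and $F_\xi(zy) = e(\xi(z))F_\xi(y)$ for $z \in Z$; by Stone--Weierstrass it suffices to handle each $F_\xi$ separately.

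For $\xi = 0$ the function $F_\xi$ descends to the nilmanifold $G/(Z\Gamma)$, which has step $s - 1$ and whose horizontal torus coincides with $(G/\Gamma)_{\ab}$ since $Z \subseteq [G,G]$; the inductive hypothesis applies to the linear sequence $(g(n)Z\Gamma)$ and yields the limit. For $\xi \neq 0$ one has $\int F_\xi = 0$, and the goal is $\E_{n \in [N]} F_\xi(g(n)\Gamma) \to 0$. Writing $g(n) = a^n x$, van der Corput's inequality gives
$$\left|\E_{n \in [N]} F_\xi(g(n)\Gamma)\right|^2 \ll \frac{1}{H}\sum_{h=1}^{H}\left|\E_{n \in [N]} \Phi_h(g(n)\Gamma)\right| + O\!\left(\tfrac{1}{H} + \tfrac{H}{N}\right),$$
where $\Phi_h(y) := F_\xi(a^h y)\overline{F_\xi(y)}$. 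The central character $\xi$ cancels under left $Z$-translation (using $z \in Z$ central), so $\Phi_h$ is $Z$-invariant and descends to $G/(Z\Gamma)$; the $\xi = 0$ case then gives $\E_n \Phi_h(g(n)\Gamma) \to \langle R_{a^h}F_\xi, F_\xi\rangle_{L^2(G/\Gamma)}$ as $N \to \infty$.

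The remaining step, and the main obstacle, is to prove that
$$\frac{1}{H}\sum_{h=1}^{H}\left|\langle R_{a^h} F_\xi, F_\xi\rangle\right| \to 0 \quad (H \to \infty)$$
for every non-trivial $\xi$. By Wiener's lemma applied to the unitary operator $R_a$ on $L^2(G/\Gamma)$, this Ces\`aro average vanishes provided the spectral measure of $F_\xi$ is continuous --- equivalently, provided $F_\xi$ is orthogonal to every $R_a$-eigenfunction. The key structural input is that every $R_a$-eigenfunction on $L^2(G/\Gamma)$ is $Z$-invariant and in fact pulls back from the horizontal torus; and a non-trivial horizontal character $\eta$ contributes a genuinely non-trivial eigenvalue $e(\eta(\pi(a)))$ precisely when $\eta(\pi(a)) \notin \Z$, which is exactly the Kronecker condition forced by equidistribution of $\pi(g(n))$ on $(G/\Gamma)_{\ab}$. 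Justifying the classification of $R_a$-eigenfunctions is a small theorem in its own right, best handled by either a secondary induction on $s$ or a direct Fourier expansion in Mal'cev coordinates, and this is where the most care is needed. The parenthetical ``in particular'' claim then follows immediately, since equidistribution on the horizontal torus is equivalent to total equidistribution there by Qualitative Kronecker, and the main equivalence lifts this back to $G/\Gamma$.
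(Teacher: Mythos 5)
The paper does not actually prove Theorem~\ref{leon-green}; it cites Auslander--Green--Hahn, Green's representation-theoretic argument, and Parry's elementary argument. So I can only compare your sketch against the general structure of the known proofs and the quantitative argument developed in \S\ref{heisenberg-sec}--\S\ref{sec6} of this paper, which is a finitary version of Parry's approach.

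Your skeleton (induction on step, Fourier decomposition along the centre, van der Corput, descent to $G/(Z\Gamma)$) is the right one and matches the standard proofs. The forward implication and the $\xi = 0$ case are fine, as are the bookkeeping facts about the horizontal torus of $G/(Z\Gamma)$ and the vanishing of $\int F_\xi$. The problem is that the load-bearing step is the one you wave off as ``a small theorem in its own right.'' What your Wiener's-lemma route requires is that $F_\xi$ (with $\xi \neq 0$) be orthogonal to every $R_a$-eigenfunction, and you justify this by asserting that every $R_a$-eigenfunction pulls back from the horizontal torus. That assertion \emph{is} Green's theorem (or at least the usual way it is stated), so the argument as written is essentially circular. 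The weaker statement you actually need --- that eigenfunctions are $Z$-invariant --- is enough, but it is not a ``small theorem'': proving it uses ergodicity (itself not yet available without Green's theorem), a secondary induction, and a commutator computation (for $g \in G_{s-1}$ the function $y \mapsto f(gy)\overline{f(y)}$ is a $Z$-invariant eigenfunction of eigenvalue $e(\xi([g^{-1},a^{-1}]))$, and one must rule out a connected group having a countable nontrivial image). None of this is sketched, and it accounts for most of the work in the Parry/Green argument.

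A cleaner way to close the loop, and the one the quantitative argument of \S\ref{heisenberg-sec} and \S\ref{sec6} takes, avoids spectral theory entirely: after van der Corput one does not merely descend $\Phi_h$ to $G/(Z\Gamma)$ as an abstract function and then ask for Ces\`aro decay of the correlations $c_h$. Instead one regards $n \mapsto (g(n+h), g(n))$ as a linear sequence on $G^{\Box} = G \times_{G_2} G$, notes that $\Phi_h$ is invariant under the diagonal centre $G_d^\Delta$, descends to the step-$(s-1)$ nilmanifold $G^\Box / G_d^\Delta \Gamma^\Box$, and applies the inductive hypothesis there. The horizontal character of $G^\Box$ decomposes into an $\eta_1$ on $G$ and an $\eta_2$ on $G_2$ (Lemma~\ref{g-box-struct}), and it is the explicit form $\eta(\tilde a_h) = \eta_1(a) + \eta_2([a,\{a^h\}])$ --- a bracket polynomial in $h$ --- that produces the diophantine contradiction, rather than continuity of a spectral measure. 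That route never needs to know what the eigenfunctions of $R_a$ look like, which is precisely the point of Parry's simplification over Green's original proof.
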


\begin{proof}
See \cite{auslander-green-hahn,lgreen}.  Leon Green used representation theory to establish his result, but a more elementary proof was subsequently found by Parry \cite{parry}.
\end{proof}

\example Suppose that $G/\Gamma$ is the Heisenberg example \eqref{heisen}. Then 
\[ [G,G] = \left(\begin{smallmatrix} 1 & 0 & \R \\ 0 & 1 & 0 \\  0 & 0 & 1 \end{smallmatrix}\right)\] and $(G/\Gamma)_{\ab}$ may be identified with $\R^2/\Z^2$, the projection $\pi$ being given by
\[ \pi \left[ \left(\begin{smallmatrix} 1 & x_1 & x_2\\ 0 & 1 & x_3 \\  0 & 0 & 1 \end{smallmatrix}\right)\right] := (x_1,x_3).\]
Leon Green's theorem implies that the orbit $(a^n\Gamma)_{n \in \N}$, where 
\[ a =  \left(\begin{smallmatrix} 1 & \alpha_1 & \alpha_2\\ 0 & 1 & \alpha_3 \\  0 & 0 & 1 \end{smallmatrix}\right),\]
is equidistributed in $G/\Gamma$ if and only if $1,\alpha_1$ and $\alpha_3$ are independent over $\Q$. It is already somewhat nontrivial to establish this result directly. \endexample

By Kronecker's theorem, we can then recast Theorem \ref{leon-green} in the following equivalent formulation:

\begin{theorem}[Leon Green's theorem, again]\label{leon-green-again} 
Let $(g(n)\Gamma)_{n \in \Z}$ be a linear sequence in a nilmanifold $G/\Gamma$.   
Then exactly one of the following statements is true:
\begin{itemize}
\item[(i)] $(g(n)\Gamma)_{n \in \N}$ is equidistributed in $G/\Gamma$. 
\item[(ii)] There exists a non-trivial horizontal character $\eta: G \to \R/\Z$ such that $\eta \circ g$ is constant.
\end{itemize}
\end{theorem}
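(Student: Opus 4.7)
The plan is to derive this from the two results we already have in hand: Leon Green's theorem as stated in Theorem \ref{leon-green}, and the qualitative Kronecker theorem. Since the horizontal torus $(G/\Gamma)_{\ab}$ is isomorphic to some $\R^{m_{\ab}}/\Z^{m_{\ab}}$, Kronecker's theorem gives a clean dichotomy there, and we just need to pull it back to $G/\Gamma$.

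First I would verify that the projected sequence $(\pi(g(n)))_{n \in \Z}$ is again a linear sequence, now on the abelian group $(G/\Gamma)_{\ab}$. If $g(n) = a^n x$ then, since $\pi$ is a continuous group homomorphism and the target is abelian, $\pi(g(n)) = \pi(a)^n \pi(x)$, which in the additive notation for the torus is $n\pi(a) + \pi(x)$. Thus the qualitative Kronecker theorem applies directly to $\pi \circ g$.

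Next I would apply Theorem \ref{leon-green} and Kronecker in sequence. If $(g(n)\Gamma)_{n\in\N}$ is not equidistributed in $G/\Gamma$, Theorem \ref{leon-green} says $(\pi(g(n)))_{n\in\N}$ is not equidistributed in the horizontal torus; Kronecker's theorem then produces a non-trivial character $\bar{\eta}: (G/\Gamma)_{\ab} \to \R/\Z$ with $\bar{\eta} \circ \pi \circ g$ constant. Setting $\eta := \bar{\eta} \circ \pi: G \to \R/\Z$ gives a continuous additive homomorphism that annihilates $\Gamma$ (since $\pi(\Gamma)$ is the zero of the torus and $\bar{\eta}$ is a character on the torus); it is non-trivial because $\bar{\eta}$ is, and $\eta \circ g$ is constant by construction. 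This gives (ii).

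Finally I would check that (i) and (ii) are mutually exclusive, which is short: given a non-trivial horizontal character $\eta$, the function $F := e^{2\pi i \eta}$ descends to a continuous function on $G/\Gamma$ with $\int_{G/\Gamma} F = 0$ (its pushforward to the horizontal torus is a non-trivial Fourier mode, which integrates to zero against Haar measure, and Haar measure on $G/\Gamma$ pushes forward to Haar measure on the horizontal torus). If $\eta \circ g$ were constant then $\E_{n \in [N]} F(g(n)\Gamma)$ would be a fixed complex number of modulus $1$, which cannot tend to $0$; hence equidistribution fails. The only step with any subtlety is ensuring the projected sequence really is linear in the torus sense, but this is immediate from $\pi$ being a homomorphism into an abelian group, so I do not anticipate any real obstacle: the theorem is essentially a repackaging of Theorem \ref{leon-green} through the Kronecker dictionary between equidistribution on tori and characters.
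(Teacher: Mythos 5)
Your argument is correct and is exactly the route the paper intends: the paper introduces Theorem \ref{leon-green-again} with the single remark that it follows from Theorem \ref{leon-green} together with Kronecker's theorem, and your proof is simply a careful spelling-out of that derivation (pushing the linear sequence to the horizontal torus, applying the Kronecker dichotomy there, pulling the character back along $\pi$, and checking mutual exclusivity via the function $e^{2\pi i \eta}$).
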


\textsc{Qualitative equidistribution theory of polynomial sequences.}
While our primary applications are concerned with linear sequences, it turns out for various technical reasons that it is important to work in the more general class of \emph{polynomial sequences}.  

\begin{definition}[Polynomial sequences in nilpotent groups]\label{poly-def}
Suppose that $G$ is a nilpotent group with a filtration $G_{\bullet}$. Let $g : \Z \rightarrow G$ be a sequence. If $h \in \Z$ we write $\partial_h g := g(n+h) g(n)^{-1}$. We say that $g$ is a polynomial sequence with coefficients in $G_{\bullet}$, and write $g \in \poly(\Z,G_{\bullet})$, if $\partial_{h_i} \dots \partial_{h_1} g$ takes values in $G_i$ for all positive integers $i$ and for all choices of $h_1,\dots,h_i \in \Z$. In this case we say that $g$ has degree $d$. If $g$ lies in $\poly(G_{\bullet})$ for some filtration $G_{\bullet}$ then we simply say that $g$ is a polynomial sequence.
\end{definition}

This definition is a little abstract. However we will show in \S \ref{poly-sequences-sec} that $g : \Z \rightarrow G$ is a polynomial sequence if and only if $g$ has the form $g(n) = a_1^{p_1(n)} \dots a_k^{p_k(n)}$, where $a_1,\dots,a_k \in G$ and the $p_i : \N \rightarrow \N$ are polynomials. In particular a linear sequence $g(n) = a^n x$ is a polynomial sequence, and in fact since $\partial_{h_1} g(n) = a^{h_1}$ and $\partial_{h_2}\partial_{h_1} g(n) = \id_G$ it is clear that such a sequence has coefficients in the lower central series filtration $G_{\bullet}$. Note carefully that the degree of a linear sequence is equal to the step $s$ of the underlying Lie group $G$, and is not equal to one as the name ``linear'' might suggest.

A remarkable result of Lazard and Leibman \cite{lazard,leibman-group-1,leibman-group-2} asserts that $\poly(\Z,G_{\bullet})$ is a group. We will prove this in \S \ref{poly-sequences-sec}, and it will play a key r\^ole in several of our arguments.

Theorem \ref{leon-green} was extended by Liebman \cite{leibman-single-poly} to the case when $g(n)$ is a polynomial sequence rather than a linear one.  In particular, he showed the following generalisation of Theorem \ref{leon-green-again}.

\begin{theorem}[Leibman's theorem]\label{leibman-thm}\cite{leibman-single-poly}
Suppose that $G/\Gamma$ is a nilmanifold. and that $g : \Z \rightarrow G$ is a polynomial sequence. Then exactly one of the following statements is true:
\begin{itemize}
\item[(i)] $(g(n)\Gamma)_{n \in \N}$ is equidistributed in $G/\Gamma$. 
\item[(ii)] There exists a non-trivial horizontal character $\eta: G \to \R/\Z$ such that $\eta \circ g$ is constant.  
\end{itemize}
\end{theorem}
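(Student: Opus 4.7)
The direction $\mathrm{(ii)} \Rightarrow \neg\mathrm{(i)}$ is immediate: if $\eta \circ g \equiv c$ for some non-trivial horizontal character $\eta$, then $F(x\Gamma) := e^{2\pi i \eta(x)}$ is well-defined and Lipschitz on $G/\Gamma$ (since $\eta$ annihilates $\Gamma$), with $\int_{G/\Gamma} F = 0$ (since $\eta$ descends to a non-trivial character of the compact abelian group $(G/\Gamma)_{\ab}$), yet $|F(g(n)\Gamma)| = 1$ for every $n$, so equidistribution fails. The remaining direction $\neg\mathrm{(i)} \Rightarrow \mathrm{(ii)}$ I would prove by induction on the step $s$ of $G$, bundled if necessary with the degree $d$ of a filtration $G_{\bullet}$ for which $g \in \poly(\Z, G_{\bullet})$, making essential use of the Lazard--Leibman fact that $\poly(\Z, G_{\bullet})$ is a group well-behaved under discrete derivatives.

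The base case is $G$ abelian ($s = 1$): here $G = \R^m$, $\Gamma = \Z^m$, and $g(n) = \sum_{j=0}^d a_j n^j$ is a vector polynomial. Weyl's classical equidistribution theorem says $(g(n) \md{\Z^m})_{n \in \N}$ is equidistributed iff for every nonzero $k \in \Z^m$ at least one of $k \cdot a_1, \dots, k \cdot a_d$ is irrational. If this fails for some $k$, take $q$ to be a common denominator of the rational values $k \cdot a_1, \dots, k \cdot a_d$; then the non-trivial character $\eta(x) := qk \cdot x \md 1$ satisfies $\eta(g(n)) \equiv qk \cdot a_0 \md 1$ for every $n$, giving $\mathrm{(ii)}$. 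For the inductive step, project via $\pi : G \to (G/\Gamma)_{\ab}$. If $\pi \circ g$ is not equidistributed on the horizontal torus, the base case applied to $\pi \circ g$ produces a non-trivial character $\chi$ with $\chi \circ \pi \circ g$ constant, and $\eta := \chi \circ \pi$ witnesses $\mathrm{(ii)}$. Otherwise $\pi \circ g$ equidistributes while $g(n)\Gamma$ does not, and I need to derive a contradiction.

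This final reduction is the main technical step. I would Fourier-decompose an arbitrary mean-zero Lipschitz test function $F$ in the "vertical" direction along the deepest nontrivial term $G_s$ of the lower central series, which lies in the centre of $G$. Components invariant under $G_s$ descend to the nilmanifold $G/G_s\Gamma$ of strictly smaller step, to which the inductive hypothesis applies (after checking that the lack of a horizontal character obstruction on $G/\Gamma$ persists downstairs, using that horizontal characters lift from quotients). Components transforming by a nontrivial central character are handled by van der Corput: one bounds $|\E_n F(g(n)\Gamma)|^2$ by an average in $h$ of correlations $\E_n F(g(n+h)\Gamma)\overline{F(g(n)\Gamma)}$, which factor through $(g(n+h), g(n))$ and the derivative $\partial_h g(n) = g(n+h)g(n)^{-1}$; Lazard--Leibman places this diagonal orbit on a sub-nilmanifold of $(G \times G)/(\Gamma \times \Gamma)$ of effectively smaller complexity, to which the inductive hypothesis applies. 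The main obstacle is to verify that, for generic $h$, the hypothesis $\neg\mathrm{(ii)}$ for $g$ forbids a nontrivial horizontal character annihilating this diagonal orbit, so that the induction forces the correlations to tend to zero. The bookkeeping of filtrations on the product nilmanifold and the "almost every $h$" subtleties in van der Corput are the delicate ingredients; one benefits here from working qualitatively and being free to pass to subsequences, as opposed to the quantitative regime treated in the rest of the paper.
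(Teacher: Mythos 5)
The paper does not actually prove Theorem \ref{leibman-thm}: it cites Leibman's paper for the result, and remarks after Theorem \ref{leib-quant} that the qualitative statement can be deduced from the quantitative one as an exercise. It is also worth flagging that Leibman's own proof (see the remark before Corollary \ref{ratnil}) goes a quite different route: he first handles linear sequences by iterating a non-connected variant of Leon Green's theorem, and then deduces the polynomial case via Furstenberg's lifting trick, which converts a polynomial orbit into a linear orbit on a bigger nilmanifold. Your proposal, by contrast, attacks polynomial sequences directly via vertical Fourier decomposition plus van der Corput, which is precisely the architecture of the paper's proof of the \emph{quantitative} Theorem \ref{main-theorem} in \S\ref{sec6}, transported to the qualitative setting. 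That is a perfectly legitimate plan, and the base case and the direction $\mathrm{(ii)} \Rightarrow \neg\mathrm{(i)}$ are exactly right.

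There is, however, a genuine gap in the inductive step, and you have half-noticed it yourself: you write that ``the main obstacle is to verify that, for generic $h$, the hypothesis $\neg\mathrm{(ii)}$ for $g$ forbids a nontrivial horizontal character annihilating this diagonal orbit,'' and then file this under bookkeeping. It is not bookkeeping; this assertion is simply false as stated. After van der Corput one obtains, for a positive-density set of $h$, a nontrivial horizontal character $\eta^{\Box}$ on $\overline{G^{\Box}} = (G\times_{G_2}G)/G_d^{\Delta}$ with $\eta^{\Box}\circ \overline{g_h^{\Box}}$ constant. Such a character decomposes (as in Lemma \ref{eta-decomp}) into a piece $\eta_1$ on $G$ (the diagonal) and a piece $\eta_2$ on $G_2$ (the off-diagonal), and the existence of $\eta^{\Box}$ does \emph{not} by itself produce a horizontal character for $g$ on $G$. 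The paper's Example \ref{example5} exhibits exactly this phenomenon: $g$ has no horizontal obstruction, yet $g_h^{\Box}$ is annihilated by a nontrivial $\eta^{\Box}$. The resolution in \S\ref{sec6} is a case analysis (Claim \ref{cl} and the $\tau_i$ discussion): either $\eta_1$ or some $\tau_i$ gives a nontrivial horizontal character for $g$, or else $\eta_2$ annihilates $[G,G]$, in which case one only learns that the nonlinear part $g_2 = g\cdot g(1)^{-n}$ is essentially constant along $\eta_2$. In that last, degenerate case one cannot close the induction on degree or step alone; one must factor $g=\varepsilon g'\gamma$ (Lemma \ref{factorization}) so that $g'$ lives in a strictly finer filtration $G'_{\bullet}$ with smaller nonlinearity degree $m_* = m_{\ab} - m_{\lin}$, and run an \emph{inner} induction on $m_*$. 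This second loop is unavoidable whenever the filtration is coarser than the lower central series, which happens for any $g$ whose degree exceeds the step of $G$ (one is then forced into a filtration like $G'_i = G_{\lceil i/d \rceil}$, for which $m_* = m_{\ab} > 0$). So either you need to add the nonlinearity-degree induction and the factorization lemma to your plan, or you should switch to Leibman's original strategy of lifting to a linear sequence; as written, the van der Corput step does not yield the contradiction you claim.
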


\remark  This theorem significantly generalizes the classical theorem of Weyl that a polynomial sequence in $\R/\Z$ is equidistributed unless all of its non-constant coefficients are rational.
We will in fact use a quantitative version of Weyl's theorem in our arguments; see Proposition \ref{weyl-dichotomy} below.

We can iterate this theorem to establish a factorization result.  We first need some notation.

\begin{definition}[Rational subgroup]\label{rat-sub-def}  Let $G/\Gamma$ be a nilmanifold.  A \emph{rational subgroup} of $G$ is a closed connected subgroup $G'$ of $G$ such that $G'\Gamma/\Gamma \cong G'/\Gamma' =  G'/(G' \cap \Gamma)$ is a closed submanifold of $G/\Gamma$ (or equivalently, that $\Gamma'$ is a cocompact subgroup of $G'$).  We say that $G'$ is \emph{proper} if $G' \neq G$.
\end{definition}

\example  If $G/\Gamma$ is a nilmanifold (that is to say if there \emph{exists} a uniform subgroup $\Gamma \leq G$) one can show that each member $G_i$ of the lower central series is a rational subgroup; see e.g. \cite{corwin-greenleaf} or \cite{malcev}.  
\endexample

\begin{definition}[Rational sequence] \label{rat-def} Let $G/\Gamma$ be a nilmanifold.  A \emph{rational group element} is any $g \in G$ such that $g^r \in \Gamma$ for some integer $r > 0$.  A \emph{rational point} is any point in $G/\Gamma$ of the form $g\Gamma$ for some rational group element $g$.  A sequence $(g(n)\Gamma)_{n \in \Z}$ is \emph{rational} if every element $g(n)\Gamma$ in the sequence is a rational point.
\end{definition}

\remark 
It is not difficult to show that the rational group elements form a dense subgroup of $G$ that contains $\Gamma$; see Lemma \ref{ratpoint}.  We will show in Lemma \ref{rat-poly-lem} that any polynomial sequence in $G/\Gamma$ which is rational is automatically periodic.

\begin{corollary}[Factorization theorem for polynomial sequences]\label{ratnil-factor}  Let $(g(n)\Gamma)_{n \in \Z}$ be a polynomial sequence in a nilmanifold $G/\Gamma$.   Then there exists a rational subgroup $G'$ of $G$ and a factorization $g = \eps g' \gamma$, where $\eps \in G$ is a constant, $g': \Z \to G'$ is a polynomial sequence such that $(g'(n)\Gamma')_{n \in \N}$ is totally equidistributed in $G'/\Gamma'$ \textup{(}where $\Gamma' := G \cap \Gamma$\textup{)}, and $\gamma: \Z \to G$ is a polynomial sequence such that the sequence $(\gamma(n)\Gamma)_{n \in \N}$ is rational \textup{(}and hence, by Lemma \ref{rat-poly-lem} \textup{(i)}, is periodic\textup{)}.
\end{corollary}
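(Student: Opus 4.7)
The plan is to induct on $\dim G$, with Leibman's theorem (Theorem~\ref{leibman-thm}) as the engine: at each stage we either terminate because the orbit is already totally equidistributed, or peel off a rational polynomial factor and a constant so as to confine $g$ to a codimension-$1$ rational subgroup, closing the induction on dimension. The base case $\dim G = 0$ is trivial.

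First I apply Leibman's theorem to $g$. If $(g(n)\Gamma)_{n\in\N}$ is equidistributed then it is in fact totally equidistributed: for any $a\in\Z\setminus\{0\}$ and $r\in\Z$ the shift $n\mapsto g(an+r)$ still lies in $\poly(\Z,G_\bullet)$ (differencing commutes with the affine substitution $n\mapsto an+r$), so another application of Leibman's dichotomy would, in the absence of equidistribution, produce a non-trivial horizontal character $\eta$ with $\eta\circ g(an+r)$ constant in $n$. Lifting $\eta$ to $\tilde\eta:G\to\R$ via the simple connectedness of $G$, the real polynomial $n\mapsto\tilde\eta(g(n))$ would then take values in a single coset $c+\Z$ along the arithmetic progression $\{an+r\}$, hence be constant there (a polynomial is determined by its values on an infinite set), and hence constant on $\Z$, contradicting the dichotomy for $g$ itself. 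We therefore set $G'=G$, $\eps=\gamma=\id$, $g'=g$ in this case.

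Otherwise Leibman supplies a non-trivial horizontal character $\eta:G\to\R/\Z$ with $\eta\circ g$ constant, which we lift to $\tilde\eta:G\to\R$. Write $p(n):=\tilde\eta(g(n))$, so $p(n)-p(0)\in\Z$ for every $n$. The image $\tilde\eta(\Gamma)$ is a subgroup of $\Z$ that is moreover nonzero, since otherwise $H:=\ker\tilde\eta$ would contain $\Gamma$ and the quotient map $G/\Gamma\twoheadrightarrow G/H\cong\R$ would exhibit the noncompact $\R$ as a quotient of the compact $G/\Gamma$; say $\tilde\eta(\Gamma)=m\Z$ with $m\geq 1$. Choose $\gamma_0\in\Gamma$ with $\tilde\eta(\gamma_0)=m$, let $Y:=\log(\gamma_0)/m\in\mathfrak{g}$, and set
\[\eps:=\exp(p(0)Y),\qquad \gamma(n):=\exp\bigl((p(n)-p(0))Y\bigr).\]
Since $\gamma(n)^m=\gamma_0^{p(n)-p(0)}\in\Gamma$, the sequence $\gamma$ is rational in the sense of Definition~\ref{rat-def}; and since $\tilde\eta(\exp(tY))=t$ for all $t\in\R$, the element $g''(n):=\eps^{-1}g(n)\gamma(n)^{-1}$ satisfies $\tilde\eta(g''(n))=0$, so $g''$ takes values in $H$. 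The subgroup $H$ is a closed connected codimension-$1$ normal subgroup of $G$, and the $\Q$-rationality of $\tilde\eta$ in a Mal'cev basis (a consequence of $\tilde\eta(\Gamma)\subseteq\Z$) makes $H$ rational with $\Gamma_H:=H\cap\Gamma$ cocompact in $H$.

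Applying the inductive hypothesis to $g''$ viewed as a polynomial sequence into the lower-dimensional nilmanifold $H/\Gamma_H$, with the restricted filtration $H\cap G_\bullet$, yields a factorization $g''=\eps_H g'\gamma_H$ with $\eps_H\in H$ constant, $g':\Z\to G'$ totally equidistributed in a rational subgroup $G'\leq H$, and $\gamma_H$ a rational polynomial sequence in $H$. Composing gives
\[g(n)=(\eps\eps_H)\cdot g'(n)\cdot(\gamma_H(n)\gamma(n)),\]
the desired factorization in $G$: $\eps\eps_H\in G$ is constant, $g'$ is totally equidistributed in $G'/(G'\cap\Gamma)$ (noting $G'\cap\Gamma=G'\cap\Gamma_H$), and $\gamma_H\gamma$ is a polynomial sequence (by the Lazard--Leibman theorem that $\poly(\Z,G_\bullet)$ is a group) taking values in the subgroup of rational elements of $G$ (Lemma~\ref{ratpoint}), so $(\gamma_H(n)\gamma(n)\Gamma)_{n\in\N}$ is rational. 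The main obstacle I anticipate is the filtration book-keeping in Step~3: one must verify that the auxiliary one-parameter sequence $\gamma$ built above is genuinely polynomial with respect to some filtration of $G$ compatible with $G_\bullet$ (and not merely as a map into the abelian subgroup $\{\exp(tY):t\in\R\}$), and that the restricted filtration $H\cap G_\bullet$ produces a polynomial sequence $g''$ of no worse complexity, so that the induction on $\dim G$ honestly closes. These checks are routine calculations in Mal'cev coordinates combined with the Lazard--Leibman closure property.
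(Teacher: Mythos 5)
Your overall strategy coincides with the paper's: induct on $\dim G$, apply Leibman's dichotomy, peel off a constant and a rational polynomial factor so as to confine $g$ to the identity component of $\ker\eta$ (a codimension-one rational subgroup), then recurse. Your explicit construction of $\eps$ and $\gamma$ via $Y=\log(\gamma_0)/m$ is more concrete than the paper's sketch, which just asserts the factorization $g=g_0\gamma_0$ exists; this is a matter of presentation, not substance, and you correctly flag the filtration book-keeping as the routine-but-necessary verification.

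However, your derivation of ``equidistributed implies totally equidistributed'' has a genuine gap. You write that the real polynomial $p(n):=\tilde\eta(g(n))$ takes values in a single coset $c+\Z$ along the progression $\{an+r\}$ and ``hence [is] constant there (a polynomial is determined by its values on an infinite set).'' This inference is false: a real polynomial taking values in $c+\Z$ on an infinite set need not be constant there --- $p(n)=n$ takes values in $\Z$ on all of $\Z$ but is not constant. What you actually know is that $\eta\circ g$ is constant \emph{in $\R/\Z$} along the progression, which is weaker. The correct route from the dichotomy for $g(a\cdot+r)$ to a contradiction with the dichotomy for $g$ requires a denominator-clearing step: expanding $q:=\eta\circ g$ in the Taylor basis $\binom{n}{j}$ and using that $q(an+r)$ is constant, one shows the nonconstant Taylor coefficients $\alpha_j$ of $q$ satisfy $a^{N}\alpha_j\in\Z$ for some $N=O_d(1)$, so that $\eta':=a^N\eta$ is a nontrivial horizontal character with $\eta'\circ g$ constant. (Alternatively, one can simply cite the orbit-closure form of Leibman's theorem from \cite{leibman-single-poly}, which is what the paper implicitly does: if the orbit closure is all of the connected manifold $G/\Gamma$, it is a single piece, and equidistribution along arithmetic progressions follows directly.) As written, your argument for this step does not hold.
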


\begin{proof}  We give a sketch of this argument only; we will repeat this argument in more detail when proving Theorem \ref{mainthm-prelim} below.

We induct on the dimension $m$ of $G/\Gamma$, assuming that the claim has already been proven for all nilmanifolds of lesser dimension.  By replacing $g(n)$ with $g(0)^{-1} g(n)$ if necessary (absorbing the $g(0)$ factor into the $\eps$ term) we may normalise so that $g(0)=\id_G$.
If $(g(n)\Gamma)_{n \in \Z}$ is equidistributed on $G/\Gamma$, then it is totally equidistributed by Leibman's theorem, and we are done (with $g'=g$, $G' = G$, and $\eps, \gamma$ trivial).  So we may assume that $(g(n)\Gamma)_{n \in \Z}$ is not equidistributed.  By Leibman's theorem, there exists a non-trivial horizontal character $\eta: G \to \R/\Z$ such that $\eta \circ g$ is constant, in fact by our normalisation $g(0)=\id_G$ we must have $\eta \circ g \equiv 0$, thus $g$ takes values in $\ker(\eta)$.  It is then not difficult to factorise $g = g_0 \gamma_0$, where $\gamma_0$ is a polynomial sequence with $(\gamma_0(n)\Gamma)_{n \in \Z}$ rational and periodic, and $g_0$ is a polynomial sequence taking values in the proper rational subgroup $G' \leq G$, defined to be the connected component of $\ker(\eta)$ which contains the origin.  The claim then follows by applying the induction hypothesis to the sequence $(g_0(n) \Gamma')_{n \in \Z}$ in the nilmanifold $G'/\Gamma'$, which has dimension $m-1$, and using the fact that the product of two rational group elements is again rational, as well as the trivial observation that rational group elements of $G'$ are automatically rational group elements of $G$ also.
\end{proof}

\remark 
In words, this corollary asserts that in the qualitative setting, one can decompose 
\begin{equation*}\mbox{(arbitrary polynomial sequence)}  =  \mbox{(constant)} \times \mbox{(totally equidistributed)} \times \mbox{(periodic)}.
\end{equation*}
An inspection of the proof reveals that one can in fact take the constant $\varepsilon$ to be $g(0)$.

As a corollary we obtain a Ratner-Shah type theorem for polynomial sequences in nilmanifolds, first established by Leibman \cite{leibman-single-poly}:

\begin{corollary}[Leibman's Ratner-Shah type theorem for nilmanifolds]\label{ratnil}  Let $(g(n)\Gamma)_{n \in \Z}$ be a polynomial sequence in a nilmanifold $G/\Gamma$.   Then there exists a rational subgroup $G'$ of $G$, a group element $\eps \in G$, and a rational periodic sequence $(x_n)_{n \in \Z}$ in $G/\Gamma$ with some period $q$ such that for every $r \in \Z$, the sequence $(g(qn+r)\Gamma)_{n \in \Z}$ is totally equidistributed in $\eps G' x_r$.
\end{corollary}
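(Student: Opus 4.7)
The plan is to derive this result as an immediate reinterpretation of Corollary \ref{ratnil-factor}. I would first apply that corollary to obtain a factorization $g = \varepsilon g' \gamma$, where $\varepsilon \in G$ is constant, $g'$ is a polynomial sequence valued in a rational subgroup $G' \le G$ with $(g'(n)\Gamma')_{n \in \Z}$ totally equidistributed in $G'/\Gamma'$, and $\gamma$ is a polynomial sequence with $(\gamma(n)\Gamma)_{n \in \Z}$ rational. Lemma \ref{rat-poly-lem}(i) then upgrades this rational orbit to a periodic orbit of some period $q \ge 1$, and the rational periodic sequence $x_r := \gamma(r)\Gamma$ is precisely the one required by the corollary.

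Now fix $r \in \Z$. Periodicity gives $\gamma(qn+r)\Gamma = \gamma(r)\Gamma$ for every $n$, and so in $G/\Gamma$ we have
\[
g(qn+r)\Gamma \;=\; \varepsilon\, g'(qn+r)\, \gamma(qn+r)\,\Gamma \;=\; \varepsilon\, g'(qn+r)\, \gamma(r)\,\Gamma \;\in\; \varepsilon G'\gamma(r)\Gamma \;=\; \varepsilon G' x_r.
\]
To establish \emph{total} equidistribution in $\varepsilon G' x_r$, I would take an arbitrary sub-progression $\{qan'+qb+r : n' \in \N\}$ with $a \ne 0$, and invoke the total equidistribution of $g'$ with common difference $qa \ne 0$ to conclude that $(g'(qan'+qb+r)\Gamma')_{n' \in \N}$ is equidistributed in $G'/\Gamma'$. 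It remains to transport this to equidistribution of $(\varepsilon g'(qan'+qb+r)\gamma(r)\Gamma)_{n' \in \N}$ in $\varepsilon G' x_r$.

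The main obstacle is specifying, and matching up, the natural measure on the coset $\varepsilon G' x_r$. My plan is to identify
\[
\varepsilon G' x_r \;=\; (\varepsilon\gamma(r))\cdot G^{*}_r\Gamma/\Gamma, \qquad G^{*}_r := \gamma(r)^{-1} G' \gamma(r),
\]
and observe that $G^{*}_r$ is again a rational subgroup of $G$ (the rationality of $G'$ together with the rationality of $\gamma(r)$ in the sense of Definition \ref{rat-def} force $G^{*}_r \cap \Gamma$ to be cocompact in $G^{*}_r$). Then $\varepsilon G' x_r$ acquires a canonical sub-nilmanifold structure as a left translate of $G^{*}_r/(G^{*}_r\cap\Gamma)$ by the fixed element $\varepsilon\gamma(r)$. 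The conjugation map $y \mapsto \gamma(r)^{-1} y \gamma(r)$ is a Lie-group isomorphism $G' \to G^{*}_r$ that converts $g'(n)$ into a polynomial sequence $\tilde g(n) := \gamma(r)^{-1} g'(n)\gamma(r)$ in $G^{*}_r$, and sends the lattice $\Gamma'$ to a lattice commensurable with $G^{*}_r\cap\Gamma$ (commensurability being a consequence of the rationality of $\gamma(r)$). Since equidistribution is preserved under Lie-group isomorphisms and under passing between commensurable lattices, the equidistribution of $(g'(qan'+qb+r)\Gamma')$ in $G'/\Gamma'$ translates into equidistribution of $(\tilde g(qan'+qb+r)(G^{*}_r\cap\Gamma))$ in $G^{*}_r/(G^{*}_r\cap\Gamma)$; applying the left translation by $\varepsilon\gamma(r)$ then yields the desired total equidistribution of $(g(qn+r)\Gamma)_{n\in\Z}$ in $\varepsilon G' x_r$, completing the argument.
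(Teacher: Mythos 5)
Your overall strategy — apply Corollary \ref{ratnil-factor} directly, set $x_r := \gamma(r)\Gamma$, and transport the total equidistribution of $g'$ to the coset $\varepsilon G' x_r$ by conjugating the ambient subgroup by $\gamma(r)$ — is precisely the derivation the paper has in mind; the paper states Corollary \ref{ratnil} as an immediate consequence of Corollary \ref{ratnil-factor} and leaves the details to the reader, so you are supplying the intended argument, not a different one. The set-up via $G^{*}_r = \gamma(r)^{-1}G'\gamma(r)$, the rationality of $G^{*}_r$ (which the paper's appendix covers in the lemma on rational conjugates), and the commensurability of $\gamma(r)^{-1}\Gamma'\gamma(r)$ with $G^{*}_r\cap\Gamma$ are all correct.

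The one genuine gap is the sentence asserting that equidistribution is preserved "under passing between commensurable lattices". The push-forward to a coarser quotient is trivial, but the direction you actually need — from equidistribution in $G^{*}_r/\Lambda$ (with $\Lambda = \gamma(r)^{-1}\Gamma'\gamma(r)$) to equidistribution in $G^{*}_r/(G^{*}_r\cap\Gamma)$ — requires, at least once, passing to the common finite-index sublattice $\Lambda_0 := \Lambda \cap (G^{*}_r\cap\Gamma)$, i.e.\ to a \emph{finer} quotient. That step is not automatic and is exactly where nontrivial input is used: if $(\tilde g(an'+b)\Lambda_0)$ failed to be equidistributed in $G^{*}_r/\Lambda_0$ for some $a\ne 0$, then by Theorem \ref{leibman-thm} there would be a nontrivial horizontal character $\eta$ of $G^{*}_r$ annihilating $\Lambda_0$ with $\eta\circ \tilde g(an'+b)$ constant; multiplying by $[\Lambda:\Lambda_0]$ gives a character annihilating $\Lambda$ which is still nontrivial (since $G^{*}_r$ is connected, so a character is zero iff every positive multiple of it is) and still makes the projected orbit constant, contradicting the total equidistribution of $(g'(n)\Gamma')$ carried over to $G^{*}_r/\Lambda$. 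With this lattice-transfer lemma spelled out, your proof closes; without it, the claim about commensurable lattices is being used in exactly the direction that isn't free.
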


\remark  Shah \cite{shah} obtained a similar result for arbitrary discrete unipotent (but linear) flows on a finite volume homogeneous space; the case of continuous unipotent linear flows was treated earlier by Ratner \cite{ratner} (see \cite{dave-witte-morris} for further discussion).  Leibman's proof of Corollary \ref{ratnil} does not use these results, but instead proceeds in two stages.  Firstly, by iterating Theorem \ref{leon-green} (or more precisely a generalization of this theorem to the case when $G$ is not necessarily connected), a version of Corollary \ref{ratnil} for linear sequences is obtained.  Secondly, by utilising a lifting trick of Furstenberg \cite[p. 31]{furstenberg-book}, the polynomial case is deduced from the linear case.  As we shall discuss shortly, these arguments do not work well in the quantitative case, and one must instead grapple with polynomial sequences directly.

\textsc{Quantitative equidistribution results.} This paper stems from an attempt to establish quantitative versions of the above theorems for \emph{finite} orbits.  Unfortunately, the need for quantitative bounds on all aspects of these results forces us to introduce a substantial amount of new notation.

\begin{definition}[Asymptotic notation] We use $Y = O(X)$ or $Y \ll X$ to denote the estimate $|Y| \leq CX$ some absolute constant $C$.  When we need to indicate dependence of $C$ on various parameters, we shall indicate this by subscripts, thus for instance $O_{d,m}(X)$ denotes a quantity bounded in magnitude by $C_{d,m}X$ for some $C_{d,m}$ depending only on the quantities $d,m$.
\end{definition}

\begin{definition}[Circle norm]  If $x \in \R/\Z$, we use $\|x\|_{\R/\Z} := \dist(x,\Z)$ to denote the distance of $x$ to the origin (thus $\|a\md{\Z}\|_{\R/\Z} = |a|$ whenever $-1/2 < a \leq 1/2$). If $x \in \R$, we write $\|x\|_{\R/\Z}$ for $\|x\md{\Z}\|_{\R/\Z}$.
\end{definition}

Our first main result is the following quantitative version of Theorem \ref{leibman-thm}. Note that some of the terminology in this theorem will not be formally introduced until the next section, but this should not prevent the reader from gaining a rough appreciation of the statement.

\begin{theorem}[Quantitative Leibman theorem]\label{leib-quant}  Let $m, d \geq 0$, $0 < \delta < 1/2$, and $N \geq 1$.
Let $G/\Gamma$ be an $m$-dimensional nilmanifold together with a filtration $G_{\bullet}$ of degree $d$ and a $\frac{1}{\delta}$-rational Mal'cev basis $\X$ adapted to this filtration. Suppose that $g \in \poly(\Z,G_{\bullet})$. 
Then at least one of the following statements is true:
\begin{itemize}
\item[(i)] $(g(n)\Gamma)_{n \in [N]}$ is $\delta$-equidistributed in $G/\Gamma$.
\item[(ii)] There exists a non-trivial horizontal character $\eta: G \to \R/\Z$ with $|\eta| \ll \delta^{-O_{m,d}(1)}$ such that $\| \eta \circ g(n) - \eta \circ g(n-1) \|_{\R/\Z} \ll \delta^{-O_{m,d}(1)}/N$ for all $n \in [N]$.
\end{itemize}
\end{theorem}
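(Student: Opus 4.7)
My plan is to induct on the degree $d$ of the filtration $G_\bullet$, carrying the dimension $m$ as a secondary parameter. The base case $d = 1$ forces $[G,G] \subseteq G_2 = \{\id_G\}$, so $G$ is abelian, $G/\Gamma$ is a torus, and $g$ is a linear sequence in the torus; in this setting Theorem~\ref{leib-quant} reduces to a quantitative form of Kronecker/Weyl (the forthcoming Proposition~\ref{weyl-dichotomy}), which is the usual Fourier-analytic dichotomy. For the inductive step, assume $(g(n)\Gamma)_{n \in [N]}$ is not $\delta$-equidistributed, witnessed by a Lipschitz $F$ with $|\E_{n \in [N]} F(g(n)\Gamma) - \int F| > \delta \|F\|_{\Lip}$.

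\textbf{Fourier decomposition and case split.} The subgroup $G_d$ is central and abelian, so $G_d/(G_d \cap \Gamma)$ is a genuine \emph{vertical torus}. I would expand $F$ in vertical characters $\xi$ of this torus, use Lipschitz decay of Fourier coefficients to truncate at frequency $\lesssim \delta^{-O_{m,d}(1)}$, and pigeonhole a single $\xi$ of size $\leq \delta^{-O(1)}$ whose isotypic component $F_\xi$ still correlates at level $\delta^{O_{m,d}(1)}$ with the orbit. If $\xi = 0$, then $F_\xi$ descends to the quotient nilmanifold $G/(G_d \Gamma)$, which carries the push-forward filtration of degree $d-1$, and $\bar g(n) := g(n) G_d$ is a polynomial sequence there; the inductive hypothesis yields a horizontal character on $G/G_d$, which lifts to $G$ because $G_d \subseteq [G,G]$ leaves the horizontal torus unchanged. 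If $\xi \neq 0$, I apply van der Corput's inequality: for $H \asymp \delta^{O(1)} N$, for $\gtrsim \delta^{O(1)} H$ values of $h \in [H]$ the inner correlation $\E_n F_\xi(g(n+h)\Gamma) \overline{F_\xi(g(n)\Gamma)}$ exceeds $\delta^{O(1)}$. The tensor $F_\xi \otimes \overline{F_\xi}$ is invariant under the diagonal of $G_d$ (since $\xi(g_d)\overline{\xi(g_d)} = 1$), so it descends to a function on the derived nilmanifold $(G \times G)/(\Delta(G_d) \cdot (\Gamma \times \Gamma))$, on which $n \mapsto (g(n+h), g(n))$ is a polynomial sequence with respect to an induced filtration whose top piece has been collapsed by the diagonal quotient, effectively producing degree $\leq d-1$. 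The inductive hypothesis then supplies, for many $h$, a horizontal character $\zeta_h$ of complexity $\delta^{-O(1)}$ with derivative $\ll \delta^{-O(1)}/N$.

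\textbf{Main obstacle.} The hardest part is converting the $h$-indexed family $\{\zeta_h\}$ into a \emph{single} horizontal character $\eta$ of $G$ satisfying both the complexity bound $|\eta| \ll \delta^{-O_{m,d}(1)}$ and the smoothness estimate $\|\eta \circ g(n) - \eta \circ g(n-1)\|_{\R/\Z} \ll \delta^{-O_{m,d}(1)}/N$ uniformly in $n \in [N]$. Each $\zeta_h$ encodes a rational linear constraint on the discrete derivative $\partial_h g$; to recombine this into a rational constraint on $g$ itself I expect to need a bilinear Vinogradov/Weyl-type argument, combined with a Bohr-set or geometry-of-numbers step in the dual of the horizontal torus to locate a common character witnessing all the $\zeta_h$ simultaneously. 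In addition, the polynomial loss in $\delta$ at each of the (potentially many) inductive levels must be tracked carefully so that the final exponent $O_{m,d}(1)$ stays finite; achieving the \emph{polynomial} dependence on $\delta$, rather than a tower-type dependence, is what makes the quantitative theorem genuinely harder than Leibman's qualitative original.
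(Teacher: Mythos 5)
Your skeleton — induct on $d$, decompose into vertical characters $\xi$ of $G_d$, split on $\xi = 0$ versus $\xi \neq 0$, and apply van der Corput in the latter case — matches the paper's strategy, but two of your key moves have genuine gaps that would stop the argument.

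\textbf{The degree does not drop on $G \times G$.} You claim that after quotienting $(G \times G)$ by $\Delta(G_d)$, the induced filtration has degree $\leq d-1$. This is false: with the product filtration $G_i \times G_i$ the top group is $G_d \times G_d$, and $(G_d \times G_d)/\Delta(G_d) \cong G_d$ is still nontrivial, so the degree is unchanged. What the paper does is pass to the \emph{relative square} $G^\Box := G \times_{G_2} G$, equipped with the subtler filtration $(G^\Box)_i := G_i \times_{G_{i+1}} G_i$. One must first verify that $(G^\Box)_\bullet$ really is a filtration (this uses $[G_i,G_j]\subseteq G_{i+j}$ in an essential way), and that a normalized and conjugated version of $(g(n+h),g(n))$ lands in $\poly(\Z,(G^\Box)_\bullet)$ — this step is nontrivial and invokes the Lazard--Leibman group property of $\poly$. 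Only then is $(G^\Box)_d = G_d^\Delta$, so that quotienting by $G_d^\Delta$ genuinely drops the degree to $d-1$. Without the relative square, your ``degree $\leq d-1$'' claim cannot be salvaged.

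\textbf{Induction on $d$ alone does not close.} The paper's induction is on the pair $(d, m_*)$, where $m_* := m_{\mathrm{ab}} - m_{\mathrm{lin}}$ is the ``nonlinearity degree.'' After pigeonholing the $h$-dependent horizontal character $\eta_h$ on $G^\Box$ down to a single $\eta$, the paper decomposes $\eta(g',g) = \eta_1(g) + \eta_2(g'g^{-1})$ and reduces to a bracket-polynomial estimate $\|\beta + \alpha h + \zeta \cdot \{\gamma h\}\|_{\R/\Z} \ll \delta^{-O(1)}/N$. Your proposed ``bilinear Vinogradov/Weyl plus Bohr set/geometry-of-numbers'' recombination is in rough analogy to the bracket-polynomial lemma, but it misses the genuinely problematic case: when $\eta_2$ annihilates $[G,G]$, the vector $\zeta$ vanishes identically and the bracket term degenerates, so that no nontrivial horizontal character of $G$ is produced. (The paper gives an explicit Heisenberg example showing this happens whenever the filtration $G_\bullet$ is too coarse for $g$.) At this point one learns instead that $\eta_2 \circ g_2$ is slowly varying, and one must pass to a strictly finer filtration $G'_\bullet$ of the \emph{same} degree $d$ but smaller $m_*$, factor $g = \varepsilon g' \gamma$ with $g' \in \poly(\Z,G'_\bullet)$, and appeal to the inductive hypothesis for $(d, m_*-1)$. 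This secondary induction is indispensable; without it you are stuck exactly in the degenerate case. Your final worry about tracking polynomial losses across iterations is legitimate but handled, since the total number of steps is bounded by $O_{m,d}(1)$.
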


\remarks   The notions of a ``$\frac{1}{\delta}$-rational Mal'cev basis adapted to $G_{\bullet}$'', of the modulus $|\eta|$ of a horizontal character and of the metric which is implicit in the notion of $\delta$-equidistribution are technical and will be defined precisely in Definition \ref{ratnil-def}, Definition \ref{horiz-char-modulus}, and Definition \ref{metric-def} respectively.

Theorem \ref{leib-quant} asserts that the sequence $(g(n)\Gamma)_{n \in [N]}$ is either  $\delta$-equidistributed up to time $N$, or else it is very far from being equidistributed up to time $\delta^{O_{m,d}(1)}N$, being concentrated very close to a union of $\delta^{-O_{m,d}(1)}$ subtori.  One should view $N$ as being very large compared to $1/\delta$, otherwise the content of the proposition is trivial.  It is not hard to deduce Theorem \ref{leibman-thm} from Theorem \ref{leib-quant}; we leave this to the reader as an exercise.

For technical reasons it will be convenient later to strengthen the statement (ii) slightly, so as to also control higher ``derivatives'' $\partial^j (\eta \circ g)$; see the next section for more information.

Whereas in the qualitative setting one always works in the limit $N \to \infty$, in the quantitative setting one works with a fixed (but large) $N$.  As $N$ increases, there can be transitions in the behaviour of the finite sequence $(g(n) \Gamma)_{n \in [N]}$, in which the equidistribution (or lack thereof) changes significantly (cf. the ``coalescence of progressions'' phenomenon \cite[Chapter 12]{tao-vu}); these transitions are a new feature of the quantitative setting, which are not readily visible in the qualitative one.  We illustrate this with a simple example:

\example
Consider the (additive) example $G = \R$, $\Gamma = \Z$ and $g(n) = (\frac{1}{2}+\sigma)n$, where $0 < \sigma \leq \frac{\delta}{100}$ is a parameter.  In this case we have $m=d=1$.  If $N$ is much larger than $1/\sigma$, we see that $(g(n)\md{\Z})_{n \in [N]}$ is $\delta$-equidistributed.  On the other hand, if $N$ is much smaller than $1/\sigma$, we see that $(g(n)\md{\Z})_{n \in [N]}$ fails to be $\delta$-equidistributed, indeed it is highly concentrated around $0$ and $1/2$ in this case.  However, if we let $\eta: G \to \R/\Z$ be the non-trivial horizontal character $\eta(x) := 2x \md{\Z}$ we see that $\eta(g(n))$ is slowly varying in the sense of (ii).  The transitional regime when $N$ is comparable to $1/\sigma$ is interesting; there is enough irregularity to prevent $\delta$-equidistribution on the sequence $(g(n)\md{\Z})_{n \in [N]}$, but in order to obtain near-constancy of $\eta(g(n))$ one in fact has to pass to shorter sequences such as $(g(n)\md{\Z})_{n \in [c\delta N]}$.  The need to work on a variety of different scales like this is very much a feature of additive combinatorics, particularly those parts of it that have the flavour of ``quantitative ergodic theory''. The work of Bourgain \cite{bourgain} on Roth's theorem is another example.
\endexample

Of course, by specialising to linear sequences, Theorem \ref{leib-quant} also implies a quantitative version of Leon Green's theorem. The proof of Theorem \ref{leib-quant} could be simplified somewhat in this case. Such a theorem is not especially useful, however. The following example may help to illustrate why, in the quantitative setting, the consideration of linear sequences leads naturally to the ``polynomial'' world.

\example (The skew torus) Let us consider the Heisenberg example \eqref{heisen} once more, taking now
\[ a := \left(\begin{smallmatrix} 1 & 2\alpha & \alpha \\ 0 & 1 & 1 \\ 0 & 0 & 1\end{smallmatrix}\right)\]
where $\alpha := N^{-3/2}$. Set 
\[ g(n) := a^n = \left(\begin{smallmatrix} 1 & 2n\alpha & n^2\alpha \\ 0 & 1 & n \\ 0 & 0 & 1\end{smallmatrix}\right).\]
Translating to the fundamental domain, we obtain 
\[ g(n) \Gamma = \left[\left(\begin{smallmatrix} 1 & \{2n\alpha\} & \{-n^2\alpha\} \\ 0 & 1 & 0 \\ 0 & 0 & 1\end{smallmatrix}\right)\right].\]
(Here, and for the rest of the paper, we define $\{x\} := x - \lfloor x\rfloor$, where $\lfloor x \rfloor$ is the greatest integer less than or equal to $x$.)
The orbit $(g(n) \Gamma)_{n \in [N]}$ is certainly not close to equidistributed in $G/\Gamma$, and indeed the projected orbit $(\pi(g(n)\Gamma))_{n \in [N]}$ stays very close to the trivial subtorus $T \subseteq \R^2/\Z^2$ which consists simply of the point $\{(0,0)\}$. 

Now $\pi^{-1}(T)$ is of course isomorphic to a one-dimensional torus $\R/\Z$. However the orbit $(g(n)\Gamma)_{n \in [N]}$ does not approximate a linear orbit on this torus; rather, it has \emph{quadratic} behaviour. Thus $(g(n)\Gamma)_{n \in [N]}$ is very close to $(g'(n)\Gamma')_{n \in [N]}$ on $G'/\Gamma' \cong \R/\Z$, where 

\[ G' := \left( \begin{smallmatrix} 1 & 0 & \R \\ 0 & 1 & 0 \\ 0 & 0 & 1\end{smallmatrix}\right),\]
\[ \Gamma' := \left( \begin{smallmatrix} 1 & 0 & \Z \\ 0 & 1 & 0 \\ 0 & 0 & 1\end{smallmatrix}\right)\] and
\begin{equation} g'(n) := \left(\begin{smallmatrix} 1 & 0 & -n^2\alpha \\ 0 & 1 & 0 \\ 0 & 0 & 1\end{smallmatrix}\right).\end{equation}
Thus, in order to approximate the linear sequence $(g(n)\Gamma)_{n \in [N]}$ by a lower-dimensional sequence, the latter sequence needs to be polynomial.  Note however that if one had the luxury of passing from $[N]$ to a much shorter progression, e.g. $[N^{1/100}]$, then the lower-dimensional sequence would remain linear.  In the limit $N \to \infty$, $N$ and $N^{1/100}$ both go to infinity, which may help explain why in the qualitative setting one can avoid polynomial sequences entirely and work purely in the category of linear sequences.  Unfortunately, for the quantitative applications we have in mind (in particular, the number-theoretic application in \cite{ukmobius}) we cannot afford to reduce the scale $N$ in such a drastic manner\footnote{This is ultimately because it is known how to obtain non-trivial control on averages of number-theoretic functions such as the M\"obius function $\mu$ on intervals such as $[N, N + N\log^{-A} N]$, but not in intervals such as $[N, N+N^{1/100}]$, even if one assumes strong hypotheses such as GRH.}.
\endexample

In much the same way that Theorem \ref{leibman-thm} could be iterated in order to establish Corollary \ref{ratnil-factor},
we can iterate Theorem \ref{leib-quant} to obtain a quantitative factorization theorem. To state it we need quantitative versions of the ``rationality'' concepts of Definition \ref{rat-def} and also the new notion of \emph{smooth} sequences, which must be introduced in place of constant sequences in the finitary setting.

\begin{definition}[Rational sequences, quantitative definitions]\label{rat-def-quant}
Let $G/\Gamma$ be a nilmanifold and let $Q > 0$ be a parameter. We say that $\gamma \in G$ is \emph{$Q$-rational} if $\gamma^r \in \Gamma$ for some integer $r$, $0 < r \leq Q$. A \emph{$Q$-rational point} is any point in $G/\Gamma$ of the form $\gamma\Gamma$ for some $Q$-rational group element $\gamma$. A sequence $(\gamma(n))_{n \in \Z}$ is \emph{$Q$-rational} if every element $\gamma(n)\Gamma$ in the sequence is a $Q$-rational point. 
\end{definition}

\begin{definition}[Smooth sequences]\label{smooth-seq-def}  Let $G/\Gamma$ be a nilmanifold with a Mal'cev basis $\X$.  Let $(\varepsilon(n))_{n \in \Z}$ be a sequence in $G$, and let $M, N \geq 1$.  We say that $(\varepsilon(n))_{n \in \Z}$ is \emph{$(M,N)$-smooth} if we have $d(\varepsilon(n),\id_G) \leq M$ and $d(\varepsilon(n),\varepsilon(n-1)) \leq M/N$ for all $n \in [N]$, where the metric $d = d_\X$ on $G$ will be defined in Definition \ref{metric-def}.
\end{definition}

Note that the notion of a $(M,N)$-smooth sequence collapses to that of a constant sequence in the limit $N \to \infty$ (holding $M$ fixed).

\begin{theorem}[Factorization theorem]\label{mainthm-prelim}  Let $m,d \geq 0$, and let $M_0, N \geq 1$ and $A > 0$ be real numbers.
Suppose that $G/\Gamma$ is an $m$-dimensional nilmanifold together with a filtration $G_{\bullet}$ of degree $d$. Suppose that $\mathcal{X}$ is an $M_0$-rational Mal'cev basis $\mathcal X$ adapted to $G_{\bullet}$ and that $g \in \poly(\Z,G_{\bullet})$. Then there is an integer $M$ with $M_0 \leq M \ll M_0^{O_{A,m,d}(1)}$, a rational subgroup $G' \subseteq G$, a Mal'cev basis $\mathcal{X}'$ for $G'/\Gamma'$ in which each element is an $M$-rational combination of the elements of $\mathcal{X}$, and a decomposition $g = \varepsilon g' \gamma$ into polynomial sequences $\varepsilon, g', \gamma \in \poly(\Z,G_{\bullet})$ with the following properties:
\begin{enumerate}
\item $\varepsilon : \Z \rightarrow G$ is  $(M,N)$-smooth;
\item $g' : \Z \rightarrow G'$ takes values in $G'$, and the finite sequence $(g'(n)\Gamma')_{n \in [N]}$ is totally $1/M^A$-equidistributed in $G'/\Gamma'$, using the metric $d_{\mathcal{X}'}$ on $G'/\Gamma'$;
\item $\gamma: \Z \rightarrow G$ is $M$-rational, and $(\gamma(n)\Gamma)_{n \in \Z}$ is periodic with period at most $M$.
\end{enumerate}
\end{theorem}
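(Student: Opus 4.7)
The plan is to induct on the dimension $m$ of $G/\Gamma$, closely following the structure of the sketch for Corollary \ref{ratnil-factor} but using Theorem \ref{leib-quant} in place of the qualitative Leibman theorem. The base case $m = 0$ is trivial (take $\varepsilon = g' = \gamma = \id_G$).

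For the inductive step, I would apply Theorem \ref{leib-quant} to $g$ with an equidistribution tolerance $\delta = M_1^{-A_1}$, where $M_1$ is polynomial in $M_0$ and $A_1 = A_1(A, m, d)$ is a suitably large exponent chosen to absorb the losses incurred later in the argument. Either the conclusion of total $M^{-A}$-equidistribution already holds (which, for the \emph{total} part, will require applying the quantitative Leibman theorem on every sufficiently long arithmetic progression $P \subseteq [N]$, or a strengthened variant of it), in which case one terminates the induction with $G' = G$, $\varepsilon = \gamma = \id_G$, $g' = g$; or Theorem \ref{leib-quant} returns a non-trivial horizontal character $\eta : G \to \R/\Z$ of modulus $|\eta| \ll M_1^{O(1)}$ such that the real-valued polynomial $p(n) := \eta \circ g(n)$ satisfies $\|p(n) - p(n-1)\|_{\R/\Z} \ll M_1^{O(1)}/N$ for all $n \in [N]$.

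In the latter case the strategy is to decompose $p \equiv p_{\mathrm{smth}} + p_{\mathrm{rat}} \pmod{1}$, where $p_{\mathrm{rat}}$ has rational coefficients with denominators polynomially bounded in $M_1$ and $p_{\mathrm{smth}}$ has coefficients so small that $p_{\mathrm{smth}}(n)$ evolves slowly over $[N]$; this decomposition follows from a quantitative Weyl-type analysis of the successive discrete derivatives of $p$ using Proposition \ref{weyl-dichotomy}. Lifting this decomposition to $G$ along a one-parameter subgroup transverse to $G' := $ the identity component of $\ker(\eta)$ (a proper rational subgroup of codimension one), I obtain $g = \varepsilon_1 g_1 \gamma_1$ with $\varepsilon_1$ smooth, $\gamma_1$ rational and periodic with period polynomial in $M_1$, and $g_1$ taking values in $G'$. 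The Lazard--Leibman theorem, that $\poly(\Z, G_{\bullet})$ is a group, is used throughout to keep the intermediate factors polynomial and to commute them past one another. I then construct, from $\X$ and $\eta$, an induced filtration $G'_{\bullet}$ and a Mal'cev basis $\X'$ of $G'/\Gamma'$ whose elements are $M_1^{O(1)}$-rational combinations of $\X$, apply the inductive hypothesis to $g_1$ on $G'/\Gamma'$ with exponent $A$ and base rationality $M_1^{O(1)}$ to obtain $g_1 = \varepsilon_2 g' \gamma_2$, and finally assemble $g = \varepsilon_1 \varepsilon_2 g' \gamma_2 \gamma_1$, setting $\varepsilon := \varepsilon_1 \varepsilon_2$ and $\gamma := \gamma_2 \gamma_1$ after reordering via the polynomial group property.

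I anticipate three main technical difficulties. The first, and hardest, is the quantitative rational/smooth decomposition of $p$ together with upgrading ordinary equidistribution to \emph{total} equidistribution in case (a); this demands a careful reading of Theorem \ref{leib-quant} on every long subprogression of $[N]$ and a polynomial control on all denominators. The second is the construction of the Mal'cev basis $\X'$ of $G'$ as $M_1^{O(1)}$-rational combinations of $\X$, which will rely on the structural facts about Mal'cev bases developed in \S\ref{poly-sequences-sec}. The third is the bookkeeping needed to keep the final parameter $M$ polynomial in $M_0$: the exponent $A_1$ must be chosen large enough, as a function of $A, m, d$, that the $m$ successive polynomial losses compose to a polynomial of controlled degree, and one must verify that $\varepsilon_1 \varepsilon_2$ is genuinely $(M, N)$-smooth with respect to $d_\X$ (not merely $d_{\X'}$) and that $\gamma_2 \gamma_1$ remains polynomially rational and polynomially periodic, both of which rely on comparability estimates between the metrics $d_\X$ and $d_{\X'}$.
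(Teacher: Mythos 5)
Your proposal follows essentially the same route as the paper.  The paper isolates the single inductive step as Proposition~\ref{prop2.17}: given that $(g(n)\Gamma)_{n\in[N]}$ fails to be totally $\delta$-equidistributed, apply the quantitative Leibman theorem on the one offending subprogression $P$, transfer the resulting $C^\infty$-bound back to $[N]$ via Lemma~\ref{lem10.4}, take $G'=\ker(\eta)^\circ$, and build $\varepsilon,\gamma$ explicitly by perturbing the Taylor coefficients $t_{\vec j}$ of $\psi(g(n))$ (first to $u_{\vec j}$ with $k\cdot u_{\vec j}\in\Z$, then to a genuinely rational $v_{\vec j}$ with $k\cdot u_{\vec j}=k\cdot v_{\vec j}$), setting $g':=\varepsilon^{-1}g\gamma^{-1}$; this is iterated at most $m$ times in Theorem~\ref{mainthm-multi}, each pass strictly lowering $\dim G'$, which is your induction on dimension in disguise.

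Two points worth flagging.  First, the ``Weyl-type analysis'' you anticipate needing to split $p=\eta\circ g$ into smooth plus rational parts is avoided in the paper by using Theorem~\ref{main-theorem} rather than Theorem~\ref{leib-quant}: the former already returns the full $C^\infty[N]$ control $\|\eta\circ g\|_{C^\infty[N]}\ll\delta^{-O(1)}$ on all Taylor coefficients, so the decomposition is immediate once you choose nearby rationals coefficient-by-coefficient.  Your plan to re-derive this from the first-derivative bound of Theorem~\ref{leib-quant} via Lemma~\ref{strong-polynomial} and Proposition~\ref{weyl-dichotomy} would also work (one recovers $\|p\|_{C^\infty[N]}=N\|\partial p\|_{C^\infty[N]}$ after clearing a small denominator), but it duplicates effort the paper has already invested in proving Theorem~\ref{main-theorem}.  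Second, the lift should not be thought of as occurring ``along a one-parameter subgroup transverse to $G'$''—the modification lives in a full complement of the coordinate hyperplane $\ker(k\cdot\,)$ across all degrees $\vec j$ simultaneously—but this is a matter of phrasing; the coordinate-by-coordinate construction in the paper accomplishes what you intend.  Your three anticipated difficulties (rational/smooth splitting of $p$, construction of an $M^{O(1)}$-rational Mal'cev basis for $G'/\Gamma'$ via Proposition~\ref{sub-nil-basis}, and the metric comparisons and exponent bookkeeping under composition) are exactly the issues the paper addresses with Lemmas~\ref{comparison-lemma}, \ref{prod-smooth}, \ref{ratpoint} and~\ref{rat-poly-lem}; the paper additionally proves everything for multiparameter $\Z^t$-sequences, which you do not need for the single-parameter statement.
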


\remark 
In words, this corollary asserts that in the quantitative setting, one can decompose 
\begin{equation*} \mbox{(arbitrary polynomial sequence)}  =  \mbox{(smooth)} \times \mbox{(totally equidistributed)} \times \mbox{(periodic)}.
\end{equation*}
The notion of a subgroup $G'$ being $M$-rational relative to a Mal'cev basis $\X$ will be defined in Definition \ref{rational-subgroup-def}.  This result has some faint resemblance to the Szemer\'edi regularity lemma \cite{szemeredi}, although with the key difference that our bounds here are all polynomial in nature.

The derivation of Theorem \ref{mainthm-prelim} from Theorem \ref{leib-quant} will be performed in \S \ref{sec12}-\ref{factor-sec}.

We will use Theorem \ref{mainthm-prelim} in \cite{ukmobius} in order to establish the \emph{M\"obius and Nilsequences conjecture} $\mbox{MN}(s)$ from \cite{green-tao-linearprimes} for arbitrary step $s$.  For this application, it is important that all bounds here are only polynomial in $M$, and that the equidistribution is established on progressions of length linear in $N$ (as opposed to $N^c$ for some small $c > 0$).  

Just as Corollary \ref{ratnil-factor} implies a Ratner-type theorem, namely Corollary \ref{ratnil}, it is not hard to deduce the following result from Theorem \ref{mainthm-prelim}.

\begin{corollary}[Ratner-type theorem for polynomial nilsequences]\label{ratner-result} Let $m,d \geq 0$, $0 < \delta < 1/2$, and $N \geq 1$.
Suppose that $G/\Gamma$ is an $m$-dimensional nilmanifold, that $G_{\bullet}$ is a filtration of degree $d$ on $G$, and that $\mathcal{X}$ is a $1/\delta$-rational Mal'cev basis adapted to $G_{\bullet}$. Suppose that $g \in \poly(\Z,G_{\bullet})$. Then we may decompose $[N]$ as a union $P_1 \cup \dots \cup P_k$ of arithmetic progressions with length $\gg \delta^{O_{m,d}(1)}N$ and the same common difference $q$, $1 \leq q \ll \delta^{-O_{m,d}(1)}$, such that each orbit $(g(n)\Gamma)_{n \in P_i}$ is within $\delta$ \textup{(}using the metric $d_{\X}$\textup{)} of being equidistributed on $x_iG'y_i\Gamma/\Gamma \subseteq G/\Gamma$, where $x_i \in G$, $y_i \in G$ is $\delta^{-O_{m,d}(1)}$-rational, and $G'$ is a closed subgroup of $G$ which is $\delta^{-O_{m,d}(1)}$-rational relative to $\X$ \textup{(}this notion will be defined in the next section\textup{)}.
\end{corollary}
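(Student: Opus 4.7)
The plan is to deduce Corollary~\ref{ratner-result} from Theorem~\ref{mainthm-prelim} by applying the factorization theorem and then chopping $[N]$ into arithmetic progressions on which the smooth and rational parts of the factorization are essentially constant, so that on each such progression $g(n)\Gamma$ is approximately $x_i g'(n) y_i \Gamma$ with $g'(n)\Gamma'$ equidistributing in $G'/\Gamma'$.

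Concretely, I would apply Theorem~\ref{mainthm-prelim} with $M_0 := 1/\delta$ and a large constant $A = A(m,d)$ (it suffices to take $A = 3$), obtaining a factorization $g = \varepsilon g' \gamma$, a rational subgroup $G' \subseteq G$, and an integer $M$ with $1/\delta \leq M \ll \delta^{-O_{m,d}(1)}$; here $\varepsilon$ is $(M,N)$-smooth, $(g'(n)\Gamma')_{n \in [N]}$ is totally $1/M^A$-equidistributed in $G'/\Gamma'$ with respect to the metric $d_{\X'}$ from an $M$-rational Mal'cev basis $\X'$, and $\gamma$ is $M$-rational with $(\gamma(n)\Gamma)_{n \in \Z}$ periodic of some period $q \leq M$. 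I take this $q$ as the common difference demanded by the corollary. Within each residue class modulo $q$ I split $[N]$ into consecutive arithmetic progressions $P_i$ of common difference $q$ and length $L \asymp \delta N/(qM)$; since $qM \leq M^2 \ll \delta^{-O_{m,d}(1)}$, each $P_i$ has length $\gg \delta^{O_{m,d}(1)}N$ as required. On such a $P_i$, fix any $n^*_i \in P_i$ and set $x_i := \varepsilon(n^*_i)$, $y_i := \gamma(n^*_i)$. The smoothness bound $d_\X(\varepsilon(n),\varepsilon(n-1)) \leq M/N$ telescopes to $d_\X(\varepsilon(n), x_i) \leq qL \cdot M/N \ll \delta$ for $n \in P_i$, while period-$q$ periodicity of $\gamma(n)\Gamma$ gives $\gamma(n)\Gamma = y_i\Gamma$ throughout $P_i$, with $y_i$ automatically $M$-rational (hence $\delta^{-O_{m,d}(1)}$-rational). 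Hence $g(n)\Gamma$ lies within $O(\delta)$ of $x_i g'(n) y_i \Gamma$ in $d_\X$.

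To finish, since $|P_i| \geq \delta N/M^2 \geq N/M^A$ (using $M \geq 1/\delta$ and $A \geq 3$), the total $1/M^A$-equidistribution of $g'$ yields $1/M^A$-equidistribution of $(g'(n)\Gamma')_{n \in P_i}$ in $G'/\Gamma'$ under $d_{\X'}$, which one transfers along the map $g'\Gamma' \mapsto x_i g' y_i \Gamma$ to equidistribution of $(x_i g'(n) y_i \Gamma)_{n \in P_i}$ on $x_i G' y_i \Gamma/\Gamma$ under $d_\X$. I expect this last transfer step to be the main obstacle: one must identify the correct Haar measure on $x_i G' y_i \Gamma/\Gamma$ (noting that $\Gamma'$ is not obviously the stabilizer of $y_i\Gamma$ under the left $G'$-action on $G/\Gamma$, since $y_i$ need not normalize $\Gamma$), and verify that left-multiplication by $x_i$ and conjugation by the $M$-rational element $y_i$ distort $d_\X$-Lipschitz norms by at most a factor of $M^{O_{m,d}(1)}$. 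Both follow from the $M$-rationality of $\X'$ relative to $\X$ together with the standard bounds relating $d_\X$ to Mal'cev coordinates developed elsewhere in the paper, and once they are in hand one obtains equidistribution up to $M^{O_{m,d}(1)}/M^A$, which is $\ll \delta$ for $A$ chosen large enough in terms of $m$ and $d$.
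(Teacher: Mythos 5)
The paper does not actually supply a proof of Corollary~\ref{ratner-result}: at the end of \S\ref{factor-sec} it simply states ``We leave the straightforward deduction of Theorem~\ref{ratner-result} to the reader.'' So I can only assess your proposal on its own terms. Your overall strategy --- apply Theorem~\ref{mainthm-prelim}, chop $[N]$ into short progressions $P_i$ of common difference $q$ on which $\varepsilon$ is essentially constant and $\gamma\Gamma$ is literally constant, and set $x_i,y_i$ accordingly --- is the right one, and the bookkeeping in your first two paragraphs (the choice of $L$, the telescoping estimate $d_\X(\varepsilon(n),x_i)\le qL\cdot M/N$, the bound $q\le M$) is sound. One small correction: take $y_i$ to be an $M$-rational element of $G$ with $y_i\Gamma=\gamma(n_i^*)\Gamma$ rather than $y_i=\gamma(n_i^*)$ itself, since Definition~\ref{rat-def-quant} only guarantees that $\gamma(n_i^*)\Gamma$ is an $M$-rational \emph{point}.

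The genuine gap is in the transfer step, which you correctly single out as the crux but then claim follows from ``the $M$-rationality of $\X'$ relative to $\X$ together with the standard bounds relating $d_\X$ to Mal'cev coordinates.'' Those bounds control Lipschitz-norm distortion, but they do not resolve the stabilizer problem you yourself raise, and that problem is more than a matter of identifying a measure: the map $g'\Gamma'\mapsto x_i g' y_i\Gamma$ is not even well-defined on $G'/\Gamma'$ (for $\gamma'\in\Gamma'$ one needs $y_i^{-1}\gamma'y_i\in\Gamma$, which generally fails). The correct parametrization of $x_iG'y_i\Gamma/\Gamma$ is by $G'/\Lambda_i$ with $\Lambda_i:=G'\cap y_i\Gamma y_i^{-1}$, a lattice commensurable with but in general distinct from $\Gamma'=G'\cap\Gamma$. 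So what you need is equidistribution of $(g'(n)\Lambda_i)_{n\in P_i}$ in $G'/\Lambda_i$, whereas Theorem~\ref{mainthm-prelim} gives total equidistribution of $(g'(n)\Gamma')_{n\in[N]}$ in $G'/\Gamma'$. Equidistribution does not automatically pass between commensurable quotients (in particular it does not lift to a finite cover), so an additional argument is required: one re-invokes Theorem~\ref{main-theorem} on $G'/\Lambda_i$ (with an $M^{O_{m,d}(1)}$-rational Mal'cev basis for that nilmanifold built from $\X'$, which the rationality of $y_i$ permits) and observes that any obstructing horizontal character $\chi$ of $G'/\Lambda_i$ with small $\|\chi\circ g'\|_{C^\infty}$ gives, after multiplication by $a:=[\Gamma':\Gamma'\cap\Lambda_i]\ll M^{O_{m,d}(1)}$, an obstructing horizontal character of $G'/\Gamma'$; one then uses the easy converse direction (Lemma~\ref{smooth-slow-1} upgraded to a non-equidistribution statement, exactly as in the proof of Proposition~\ref{bracket-poly-lem}) to contradict the total $1/M^A$-equidistribution of $g'$ once $A$ is chosen large enough in terms of $m,d$. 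With this extra application of the quantitative Leibman theorem your proof goes through; as written, however, the key step is missing.
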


\remark  The reader may wish to compare this with \cite{e-m-v}, another recent result on quantitative variants of Ratner's theorem.

Let us conclude this introduction by remarking that our main theorem actually applies to \emph{multiparameter} polynomial mappings $g : \Z^t \rightarrow G$. In the infinitary setting such a generalization was obtained by Leibman \cite{leibman-multi-poly}, and his result has subsequently been applied in such papers as \cite{bergelson-host-kra} and \cite{leibman-orb-diag}. We have taken the trouble to derive multiparameter extensions of our main results with analogous finitary applications in mind; see Theorems \ref{main-theorem-multi} and Theorem \ref{mainthm-multi}.

\section{Precise statements of results}\label{sec2}

In this section we define various ``quantitative'' concepts (such as $Q$-rational Mal'cev bases, subgroups which are $Q$-rational relative to such a basis and the metrics $d_\X$ and $d_{G/\Gamma}$) which were needed to properly state the main results from the introduction section.  We also give a more precise version of Theorem \ref{leib-quant}, which we will then spend the next several sections proving.

\textsc{Mal'cev bases and metrics on $G/\Gamma$.} The notion of \emph{Mal'cev coordinates} play a vital r\^ole in the quantitative theory of nilmanifolds. They allow us to put a metric on $G/\Gamma$, which in turn allows us to define the notion of equidistribution; they also quantify the ``rationality'' of various objects associated to the nilmanifold. Mal'cev coordinates were introduced in \cite{malcev}, which contains a nice discussion; they are covered quite extensively in the book \cite{corwin-greenleaf}, particularly Chapters 1 and 5.  We will also need several more quantitative statements about Mal'cev coordinates, which we have placed in Appendix \ref{malcev-app}. We recommend that the reader dip into that appendix as and when required.

We will make use of the Lie algebra $\g$ of $G$ together with the exponential map $\exp : \g \rightarrow G$. When $G$ is a connected, simply-connected nilpotent Lie group the exponential map is a diffeomorphism; see \cite[Theorem 1.2.1]{corwin-greenleaf}. In particular, we have a logarithm map $\log: G \to \g$.  One does not really need to have an understanding of the exponential and logarithm maps beyond some of their formal properties, which we will list as we need them, in order to understand this paper.

\begin{definition}[Mal'cev bases]\label{malcev}
Let $G/\Gamma$ be a $m$-dimensional nilmanifold and let $G_{\bullet}$ be a filtration. A basis $\X = \{X_1,\dots,X_{m}\}$ for the Lie algebra $\g$ over $\R$ is called a \emph{Mal'cev basis} for $G/\Gamma$ adapted to $G_{\bullet}$ if the following four conditions are satisfied: 
\begin{enumerate}
\item For each $j = 0,\dots,m-1$ the subspace $\h_j := \Span(X_{j+1},\dots,X_m)$ is a Lie algebra ideal in $\g$, and hence $H_j := \exp \h_j$ is a normal Lie subgroup of $G$.
\item For every $0 \leq i \leq s$ we have $G_i = H_{m-m_i}$ (recall that $m_i = \dim G_i$);
\item Each $g \in G$ can be written uniquely as $\exp(t_1 X_1)\exp(t_2 X_2)\dots \exp(t_m X_m)$, for $t_i \in \R$;
\item $\Gamma$ consists precisely of those elements which, when written in the above form, have all $t_i \in \Z$.
\end{enumerate}
\end{definition}

\emph{Remarks.} Our main results only make sense if the nilmanifold $G/\Gamma$ is already equipped with a Mal'cev basis $\X$, since they involve quantitative dependencies that can only be described using such a basis. However it is a well-known result of Mal'cev \cite{malcev} that \emph{any} nilmanifold $G/\Gamma$ can be equipped with a Mal'cev basis adapted to the lower central series filtration. Indeed the very existence of a discrete and cocompact subgroup $\Gamma$ guarantees that the lower central series is rational by \cite[Theorem 5.1.8 (a)]{corwin-greenleaf} and \cite[Corollary 5.2.2]{corwin-greenleaf}. One may then apply \cite[Proposition 5.3.2]{corwin-greenleaf} to deduce the existence of a Mal'cev basis adapted to the lower central series. More generally there is a Mal'cev basis adapted to any filtration $G_{\bullet}$ which consists of rational subgroups (cf. Definition \ref{rat-sub-def}).

We refer to the $t_i$ as the \emph{Mal'cev coordinates} of $g$, and we define the \emph{Mal'cev coordinate map} $\psi = \psi_\X: G \to \R^m$ to be the map
\begin{equation}\label{psidef}
 \psi(g) := (t_1,\ldots,t_m),
 \end{equation}
 thus for instance $\Gamma = \psi^{-1}(\Z^m)$. If $\mathcal{X}'$ is another Mal'cev basis (relative to some filtration) then we write $\psi' = \psi_{X'}$. Only very occasionally will we need to use the notation $\psi_{\mathcal{Y}}$ to indicate the coordinate map relative to some further basis $\mathcal{Y}$.

\remarks  In the literature, Mal'cev coordinates are invariably discussed in the context of the lower central series filtration and are referred to as \emph{coordinates of the second kind}. Coordinates of the first kind or \emph{exponential coordinates} are derived by writing $\log g \in \g$ as a linear combination $\log g = s_1 X_1 + \ldots + s_m X_m$ of elements of the basis $\X$, and we write $\psi_{\exp}(g) = \psi_{\X,\exp}(g) := (s_1,\ldots,s_m)$ for the coordinates of $g$ obtained in this fashion.  However, we shall mostly work using coordinates of the second kind.

We can use a Mal'cev basis $\X$ to put a (slightly artificial) metric structure on $G$ and on $G/\Gamma$. 

\begin{definition}[Metrics on $G$ and $G/\Gamma$]\label{metric-def}  Let $G/\Gamma$ be a nilmanifold with Mal'cev basis $\X$. We define $d = d_\X : G \times G \rightarrow \R_{\geq 0}$ to be the largest metric such that $d(x,y) \leq |\psi(xy^{-1})|$ for all $x,y \in G$, where $|\cdot |$ denotes the $\ell^{\infty}$-norm on $\R^m$. More explicitly, we have
\[ d(x,y) = \inf\left\{ \sum_{i=0}^{n-1} \min(|\psi(x_{i-1}x_i^{-1})|, |\psi(x_ix_{i-1}^{-1})|): x_0,\dots,x_n \in G; x_0 = x; x_n = y  \right\}.\]
This descends to a metric on $G/\Gamma$ by setting
\[ d(x\Gamma,y\Gamma) := \inf \{d(x',y') : x',y' \in G; x' \equiv x \md{\Gamma}; y' \equiv y \md{\Gamma}\}.\]
It turns out that this is indeed\footnote{We note that this metric structure is a little more specific than in some of our previous papers, notably that in \cite[\S 8]{green-tao-linearprimes}.  This will not cause any difficulty, as the metrics in that paper are equivalent to the one given here, up to constants depending on $G, \Gamma$ and  $\X$.  Indeed, at small scales $d$ agrees with the distance function given by the unique right-invariant Riemannian metric on $G$ whose value at the origin is equal to that of the Euclidean metric at the origin of $\R^m$, pulled back by $\psi$; see also Lemma \ref{dx-bounds}.} a metric on $G/\Gamma$; this essentially follows from the discreteness of $\Gamma$ in $G$, and we will prove it in Lemma \ref{quotient-metric}.
Since $d$ is right-invariant, we also have
\[ d(x\Gamma, y\Gamma) = \inf_{\gamma \in \Gamma}  d(x,y\gamma).\]
\end{definition}

When the letter $d$ is used for a metric, it will always denote the metric $d_{\mathcal{X}}$ relative to some basis $\X$ that is already under discussion. The symbol $d'$ will be used for the metric defined using some other basis $\X'$. On the very rare occasions (for example in the proof of Lemma \ref{rat-bounds}) where the metric relative to some further basis is under consideration we will indicate this explicitly using subscripts.

\textsc{Quantitative rationality.}
Now we define the concept of rational nilmanifolds and subgroups.

\begin{definition}[Height]  The \emph{height} of a real number $x$ is defined as $\max(|a|,|b|)$ if $x=a/b$ is rational in reduced form, and $\infty$ if $x$ is irrational.
\end{definition}

\begin{definition}[Rationality of a basis]\label{ratnil-def}
Let $G/\Gamma$ be a nilmanifold and $Q > 0$.  We say that a Mal'cev basis $\X$ for $G/\Gamma$ is \emph{$Q$-rational} if all of the structure constants $c_{ijk}$ in the relations
\[ [X_i, X_j] = \sum_k c_{ijk} X_k\] are rational with height at most $Q$.
\end{definition}
 
\begin{definition}[Rational subgroups]\label{rational-subgroup-def}
Suppose that a nilmanifold $G/\Gamma$ is given together with a Mal'cev basis $\X = \{X_1,\dots,X_m\}$, and that $Q > 0$. Suppose that $G' \subseteq G$ is a closed connected subgroup. We say that $G'$ is \emph{$Q$-rational} relative to $\X$ if the Lie algebra $\g'$ has a basis $\X' = \{X'_1,\dots,X'_{m'}\}$ consisting of linear combinations $\sum_{i=1}^m a_iX_i$, where $a_i$ are rational numbers with height at most $Q$ for all $i$.
\end{definition}

\begin{definition}[Modulus of a horizontal character]\label{horiz-char-modulus}
Suppose that $G/\Gamma$ is a nilmanifold with a Mal'cev basis $\mathcal{X}$. Suppose that $\eta : G \rightarrow \R/\Z$ is a horizontal character, that is to say a homomorphism from $G$ to $\R/\Z$ which annihilates $\Gamma$. Then, when written in coordinates relative to $\mathcal{X}$, properties (iii) and (iv) of Proposition \ref{malcev} imply that $\eta(g) = k \cdot \psi(g)$ for some unique $k \in \Z^m$. We write $|\eta| := |k|$.
\end{definition}

\textsc{Smooth polynomial sequences.} 
For technical reasons it will be convenient to quantify the smoothness of sequences, such as the sequence $\varepsilon(n)$ appearing in Theorem \ref{mainthm-prelim}, in a slightly different manner from that used so far.

\begin{definition}[Smoothness norms]
Suppose that $g : \Z \rightarrow \R/\Z$ is a polynomial sequence of degree $d$. Then $g$ may be written uniquely as
\[ g(n) = \alpha_0 + \alpha_1 \binom{n}{1} + \dots + \alpha_d \binom{n}{d}\] where $\alpha_i$ is in fact equal to $\partial^i g(0)$.
For any $N > 0$ we define the \emph{smoothness norm} 
\[ \Vert g \Vert_{C^{\infty}[N]} := \sup_{1 \leq j \leq d} N^{j} \Vert \alpha_{j} \Vert_{\R/\Z}.\]
\end{definition}

The smoothness norm $\Vert \cdot \Vert_{C^{\infty}[N]}$ is designed to capture the notion of a polynomial sequence which is slowly-varying. Indeed, the following lemma is easily verified:

\begin{lemma}[Smooth polynomials vary slowly]\label{smooth-slow-1}
Let $g : \Z \rightarrow \R/\Z$ be a polynomial sequence of degree $d$, and let $N > 0$. 
 Then for any $n \in [N]$ we have
\[ \|g(n) - g(n-1)\|_{\R/\Z} \ll_{d} \frac{1}{N} \Vert g \Vert_{C^{\infty}[N]}.\]
\end{lemma}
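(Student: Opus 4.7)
The plan is direct and essentially a one-line computation once one applies Pascal's identity. First I would use the binomial-coefficient identity $\binom{n}{j} - \binom{n-1}{j} = \binom{n-1}{j-1}$, which implies
\[ g(n) - g(n-1) = \sum_{j=1}^{d} \alpha_j \binom{n-1}{j-1} \pmod{\Z}, \]
since the constant term $\alpha_0$ is killed by the difference. This reduces the problem to controlling a weighted sum of the $\alpha_j$'s.

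Next I would apply the triangle inequality for the circle norm $\|\cdot\|_{\R/\Z}$ (using that $\|kx\|_{\R/\Z} \leq |k|\,\|x\|_{\R/\Z}$ for $k \in \Z$) to get
\[ \|g(n) - g(n-1)\|_{\R/\Z} \leq \sum_{j=1}^{d} \binom{n-1}{j-1} \|\alpha_j\|_{\R/\Z}. \]
For $n \in [N]$ we have $n - 1 \leq N - 1$, so $\binom{n-1}{j-1} \leq \frac{(N-1)^{j-1}}{(j-1)!} \ll_{d} N^{j-1}$, giving
\[ \|g(n) - g(n-1)\|_{\R/\Z} \ll_{d} \sum_{j=1}^{d} N^{j-1} \|\alpha_j\|_{\R/\Z} = \frac{1}{N} \sum_{j=1}^{d} N^{j} \|\alpha_j\|_{\R/\Z}. \]
Finally, each term $N^{j}\|\alpha_j\|_{\R/\Z}$ is bounded by $\|g\|_{C^{\infty}[N]}$ by definition, so the sum is at most $d \cdot \|g\|_{C^{\infty}[N]}$, yielding the claimed bound with an implied constant depending only on $d$.

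There is no real obstacle here; the only point requiring any care is the first step, where one must remember that the binomial-basis representation $g(n) = \sum_j \alpha_j \binom{n}{j}$ is valid for polynomials with coefficients in any abelian group (in particular $\R/\Z$), so that Pascal's identity transfers verbatim. Everything else is the triangle inequality and crude size bounds on binomial coefficients.
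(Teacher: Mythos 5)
Your proof is correct and follows essentially the same route the paper takes (the paper leaves the single-parameter case as "easily verified" but spells out the identical argument for the multiparameter generalization in Lemma \ref{smooth-slow}): apply Pascal's identity to the binomial basis, use the triangle inequality with $\|k\alpha\|_{\R/\Z}\le |k|\,\|\alpha\|_{\R/\Z}$, and bound $\binom{n-1}{j-1}\ll_d N^{j-1}$ before factoring out $1/N$. No gaps.
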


In view of this lemma, we see that Theorem \ref{leib-quant} will be an immediate consequence of the following more precise statement. This is in fact the main technical result in our paper and we will use it to derive all our other main results.

\begin{theorem}[Quantitative Leibman theorem]\label{main-theorem} Let $m,d \geq 0$, $0 < \delta < 1/2$ and $N \geq 1$.  Suppose that $G/\Gamma$ is an $m$-dimensional nilmanifold together with a filtration $G_{\bullet}$ and that $\mathcal{X}$ is a $\frac{1}{\delta}$-rational Mal'cev basis adapted to $G_{\bullet}$. Suppose that $g \in \poly(\Z,G_{\bullet})$. If $(g(n)\Gamma)_{n \in [N]}$ is not $\delta$-equidistributed, then there is a horizontal character $\eta$ with $0 < | \eta | \ll \delta^{-O_{m,d}(1)}$ such that
\[ \Vert \eta \circ g \Vert_{C^{\infty}[N]} \ll \delta^{-O_{m,d}(1)}.\]
\end{theorem}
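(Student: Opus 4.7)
The plan is to prove Theorem~\ref{main-theorem} by induction on the dimension $m = \dim G$. The base case is $G$ abelian, in which case $G/\Gamma$ is a torus $\R^m/\Z^m$ and $g$ is a polynomial map of degree at most $d$; non-equidistribution yields a large Fourier coefficient at some frequency $k \in \Z^m \setminus \{0\}$, and the quantitative Weyl theorem (Proposition~\ref{weyl-dichotomy}) provides the required $C^\infty[N]$ control on the phase $k \cdot g$, which is precisely a horizontal character of the required form.

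For the inductive step assume $G$ is non-abelian, so $G_d \neq \{\id_G\}$ is a proper subgroup; note that $[G_d, G_d] \subseteq G_{2d} = \{\id_G\}$ forces $G_d$ to be abelian, so $G_d/(G_d \cap \Gamma)$ is a torus, which we call the \emph{vertical torus}. Let $F$ witness the failure of $\delta$-equidistribution. Decompose $F$ in Fourier series along the vertical torus as $F = \sum_\xi F_\xi$, where $F_\xi$ transforms under $G_d$ by the character $\chi_\xi$. Truncating at frequencies of size $\delta^{-O(1)}$ and pigeonholing, some $\xi$ satisfies $|\E_{n \in [N]} F_\xi(g(n)\Gamma)| \gg \delta^{O(1)} \|F\|_\Lip$.

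If $\xi = 0$, then $F_0$ is $G_d$-invariant and descends to a Lipschitz function on the quotient nilmanifold $(G/G_d)/(\Gamma G_d/G_d)$, which has strictly smaller dimension and inherits a Mal'cev basis of controlled rationality. The projected sequence $\bar g$ fails to be equidistributed there, so induction furnishes a horizontal character on $G/G_d$ with the required bounds; pulling back along $G \to G/G_d$ yields the desired horizontal character on $G$. If $\xi \neq 0$, apply van der Corput's inequality: for a $\gg \delta^{O(1)}$-dense set of shifts $h$ with $|h| \leq H \ll \delta^{-O(1)}$, the correlation $\E_n F_\xi(g(n+h)\Gamma)\overline{F_\xi(g(n)\Gamma)}$ is $\gg \delta^{O(1)}$. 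Because $F_\xi \otimes \overline{F_\xi}$ is invariant under the antidiagonal action of $G_d$ on $G \times G$, this correlation is a Lipschitz average along a polynomial sequence in the quotient of $G \times G$ by this antidiagonal $G_d$; the induced filtration on this quotient has strictly smaller top group, so the induction hypothesis applies. We obtain, for each good $h$, a horizontal character $\eta_h$ on this quotient with bounded modulus and small $C^\infty[N]$ norm along the derived sequence $n \mapsto (g(n+h), g(n))$.

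The main obstacle, and the bulk of the argument, is to convert this $h$-indexed family of smoothness statements into a single horizontal character $\eta$ on $G$ with $\|\eta \circ g\|_{C^\infty[N]} \ll \delta^{-O(1)}$. Pigeonholing over the $\delta^{-O(1)}$ possibilities for $\eta_h$ yields a single character $\eta^*$ valid on a dense subset $H^* \subseteq [-H, H]$ of shifts. Interpreting $\eta^*$ as a linear condition on the Mal'cev coordinates of $\partial_h g$ and using the explicit polynomial parametrisation $g(n) = a_1^{p_1(n)} \cdots a_k^{p_k(n)}$ of $\poly(\Z, G_\bullet)$, this reduces to a multilinear Vinogradov-type lemma: a polynomial in $h$ whose coefficients are polynomial expressions in the Mal'cev coordinates of $g$ takes near-integer values on a dense subset of $[-H, H]$, so each such coefficient must lie close to a rational of small denominator. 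Careful height bookkeeping, using the $\frac{1}{\delta}$-rationality of $\X$ to bound the denominators that arise through base change to the various quotient nilmanifolds and to track polynomial dependencies on $\delta$, then produces the sought $\eta$ on $G$ with modulus and $C^\infty[N]$ norm polynomial in $\delta^{-1}$.
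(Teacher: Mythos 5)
Your proposal correctly identifies the two key devices (vertical Fourier decomposition and van der Corput), but there are three concrete gaps that would prevent it from closing.

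\emph{Wrong induction parameter.} You propose to induct on $\dim G$, but the van der Corput step increases dimension. After applying Corollary~\ref{vdc} you find yourself studying the polynomial sequence $n \mapsto (g(n+h),g(n))$ on a quotient of $G \times G$ (or of $G^\Box = G \times_{G_2} G$) by the diagonal copy of $G_d$; in either case the resulting nilmanifold has dimension $\geq m$ in general (for $G^\Box$ the dimension is $m + m_2 - m_d$, and $m_2 \geq m_d$). So the inductive hypothesis does not apply. The paper instead inducts on the degree $d$ of the filtration (with a secondary inner loop on the nonlinearity degree $m_*$), and the point of the construction is that this parameter strictly decreases.

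\emph{Working in $G \times G$ rather than $G^\Box$.} You claim that the quotient of $G \times G$ by (what should be the diagonal, not antidiagonal) $G_d^\Delta$ ``has strictly smaller top group.'' This is false: $(G_d \times G_d)/G_d^\Delta \cong G_d$ is still nontrivial, so the degree does not drop. The crucial structural observation in the paper is that $g_h^\Box$ actually takes values in the \emph{relative square} $G^\Box = G \times_{G_2} G$, and that $(G^\Box)_d = G_d^\Delta$; only after restricting to $G^\Box$ and then quotienting by $G_d^\Delta$ does the filtration degree drop to $d-1$. This restriction step cannot be skipped.

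\emph{The endgame requires a second inductive loop.} Your final paragraph asserts that interpreting the $h$-indexed family of characters via a ``multilinear Vinogradov-type lemma'' and ``careful height bookkeeping'' produces the required character $\eta$ on $G$. This is not a bookkeeping problem. After decomposing the character on $\overline{G^\Box}$ into $\eta_1$ on $G$ and $\eta_2$ on $G_2$ (Lemma~\ref{eta-decomp}), applying the polynomials lemma (Lemma~\ref{polynomials}), and invoking the bracket polynomial lemma (Proposition~\ref{bracket-poly-lem}), one may arrive in a genuinely degenerate case where $\eta_2$ annihilates $[G,G]$, so that $\zeta = 0$ and the bracket polynomial condition becomes vacuous. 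The example opening \S\ref{sec6} shows this really happens: $G_\bullet$ may be too coarse to capture the differential structure of $g$. In that case one only learns \eqref{nonlinreduction}, which is not enough to deduce the theorem directly; one must pass to a finer filtration $G'_\bullet$ with smaller nonlinearity degree $m_*$ via Lemma~\ref{factorization} and invoke a second induction on $m_*$. Your proposal has no analogue of this. Relatedly, taking $H \ll \delta^{-O(1)}$ in van der Corput is too small: the paper takes $H = N$ and needs a $\gg \delta^{O(1)}N$-dense set of $h$'s in $[N]$ to run the bracket polynomial analysis, whereas a bounded set of shifts carries far less information about $g$.
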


\textsc{Notes on reading the paper.} As with so many papers, some parts of this work are merely technical and other parts represent deeper ideas of greater interest. There are quite a number of computations in this paper in which one has to show, say, that a certain integer is bounded polynomially by another, or that a certain basis is $O(\delta^{-O(1)})$-rational. All such computations are of the technical variety and should certainly be ignored on a first reading. They are all in a sense ``clear''; their proofs proceed by algebra of a type which could hardly be expected to introduce non-polynomial dependencies. It is possible that this could even be encoded in some relatively soft ``proof-theoretic'' language, but we have chosen not to follow such a path.

We begin with several sections containing motivating examples. In \S \ref{sec3} we will discuss linear flows on tori $\R^m/\Z^m$, in \S \ref{sec4} we shall discuss polynomial flows on $\R/\Z$, and in \S \ref{heisenberg-sec} we will look at linear flows on the $2$-step Heisenberg nilmanifold \eqref{heisen}. Some lemmas from these sections will be required in the sequel.

We then begin the study of the general case. In \S \ref{poly-sequences-sec} we study the algebraic properties of polynomial sequences on nilpotent groups following Lazard and Leibman. There is a rich general theory here which is not evident from the study of the abelian and Heisenberg examples.

We then turn to the full proof of Theorem \ref{main-theorem}, the quantitative Leibman theorem. This is the technical heart of the paper and is given in the (rather long) \S \ref{sec6}.

In \S \ref{multi-deduction} use a straightforward iteration argument to bootstrap Theorem \ref{main-theorem} to a multiparameter version of itself, namely Theorem \ref{main-theorem-multi}.
In \S \ref{sec13} we then establish a preliminary multiparameter factorization theorem, Proposition \ref{prop2.17}, which is a fairly short consequence of Theorem \ref{main-theorem-multi}. In \S \ref{factor-sec} we then iterate this proposition, obtaining a multiparameter theorem (Theorem \ref{mainthm-multi}) which then easily implies Theorem \ref{mainthm-prelim} (and hence Corollary \ref{ratner-result}) as special cases.  

The appendix contains basic results on bases and nilmanifolds.

There is unfortunately a large amount of notation in this paper.  In Figure \ref{fig1} the key objects in the argument are briefly described.

\begin{figure}
\begin{tabular}{|l|l|l|}
\hline
$G$ & nilpotent group  & Definition \ref{nil-def} \\
$G_{\bullet} = (G_i)_{i=0}^{\infty}$ & filtration on $G$ & Definition \ref{nil-def} \\
$G/\Gamma$ & nilmanifold & Definition \ref{nil-def} \\
$(G/\Gamma)_{\ab} = G/[G,G]\Gamma$ & horizontal torus &  Definition \ref{horiz-def} \\
$G_d/(\Gamma \cap G_d) \cong \R^{m_d}/\Z^{m_d}$ & vertical torus & Definition \ref{vert-def} \\
\hline
$d \geq 0$ & degree of the filtration $G_{\bullet}$ & Definition \ref{nil-def}\\
$s \geq 0$ & step of $G$ & Definition \ref{nil-def} \\
$m \geq 0$ & dimension of $G$ & Definition \ref{nil-def} \\
$m_i$ & dimension of $G_i$ & Definition \ref{nil-def} \\
$m_{\ab}$ & dimension of horizontal torus & Definition \ref{horiz-def}\\
$m_{\lin}$ & $m - m_2$ & \S \ref{sec6} \\
$m_* = m_{\ab} - m_{\lin}$ & nonlinearity degree of $G_{\bullet}$ & \S \ref{sec6}\\
$\eta: G \to \R/\Z$ & horizontal character & Definition \ref{horiz-def} \\
$\xi: G_d \to \R/\Z $ & vertical character & Definition \ref{vert-char-def}\\
\hline
$\X = (X_i)_{i=1}^m$, $\X' = (X'_i)_{i=1}^m$ & Mal'cev bases & Definition \ref{malcev} \\
$\psi,\psi'$ & coordinate maps relative to $\X,\X'$ & \eqref{psidef} \\
$d,d'$ & metrics defined using $\X,\X'$ & Definition \ref{metric-def} \\
$Q \geq 1$ & rationality bound for $\X$ (usually $Q=1/\delta$) & Definition \ref{ratnil-def} \\
$\pi: G \to (G/\Gamma)_{\ab}$ & projection onto the horizontal torus & Definition \ref{horiz-def} \\
\hline
$F: G/\Gamma \to \C$ & Lipschitz function & Definition \ref{metric-def} \\
$0 < \delta < 1/2$ & level of equidistribution & Definition \ref{almost-equidistribution} \\
$N \geq 1$ & length of sequence & Definition \ref{almost-equidistribution} \\
\hline
$g: \Z \to G$ & a polynomial sequence & Definition \ref{poly-def} \\
$\poly(\Z,G_{\bullet})$ & polynomial sequences with coeffs in $G_{\bullet}$ & Definition \ref{poly-def}\\
\hline
$t \geq 1$ & number of parameters & \S \ref{multi-deduction}\\
\hline
\end{tabular}
\caption{A list of key objects in the paper, together with brief descriptions of these objects, and the location where they are first defined or introduced.}
\label{fig1}
\end{figure}

\section{A quantitative Kronecker theorem}\label{sec3}

In this section we prove Theorem \ref{main-theorem} for linear sequences on the torus $\R^m/\Z^m$, that is to say we establish a quantitative Kronecker theorem. The methods and the result are very standard.

\begin{proposition}[Quantitative Kronecker Theorem]\label{quant-kron}
Let $m \geq 1$, let $0 < \delta < 1/2$, and let $\alpha \in \R^m$.  If the sequence $(\alpha n \md{ \Z^m})_{n \in [N]}$ is not $\delta$-equidistributed in the additive torus $\R^m/\Z^m$, then there exists $k \in \Z^m$ with $0 < |k| \ll \delta^{-O_m(1)}$ such that $\Vert k \cdot \alpha \Vert_{\R/\Z} \ll \delta^{-O_m(1)}/N$.
\end{proposition}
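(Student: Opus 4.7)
The plan is to prove this via standard Fourier analysis on the torus $\R^m/\Z^m$, reducing the equidistribution statement for $F$ to an exponential sum estimate via a harmonic-analysis decomposition, and then evaluating the resulting geometric series.

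First, by the definition of $\delta$-equidistribution there is a Lipschitz function $F : \R^m/\Z^m \to \C$ with $\|F\|_{\Lip} \leq 1$ such that $|\E_{n \in [N]} F(\alpha n) - \int F| > \delta$. I would smooth $F$ by convolving with a Fej\'er-type kernel $K_L$ of degree $L = \lceil C_m/\delta \rceil$ (a tensor product of one-dimensional Fej\'er kernels works); this produces a trigonometric polynomial $\tilde F := F * K_L$ with Fourier support in the box $\{k \in \Z^m : |k| \leq L\}$, satisfying $\|F - \tilde F\|_\infty \ll_m \|F\|_{\Lip}/L$. Choosing $C_m$ appropriately, the error term from replacing $F$ by $\tilde F$ is at most $\delta/2$ in both the average and the integral, so $|\E_{n \in [N]} \tilde F(\alpha n) - \int \tilde F| > \delta/2$.

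Expanding $\tilde F$ in its Fourier series, the zeroth term gives exactly $\int \tilde F$, so
\[
\Bigl| \sum_{0 < |k| \leq L} \hat{\tilde F}(k)\, \E_{n \in [N]} e(k \cdot \alpha n) \Bigr| > \delta/2.
\]
Since $|\hat{\tilde F}(k)| \leq \|F\|_\infty \leq 1$ and the number of nonzero $k$ with $|k| \leq L$ is $O_m(L^m) = O_m(\delta^{-m})$, the pigeonhole principle yields some $k \in \Z^m \setminus \{0\}$ with $|k| \leq L \ll \delta^{-1}$ and
\[
\bigl| \E_{n \in [N]} e(k \cdot \alpha n) \bigr| \gg_m \delta^{m+1}.
\]

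Finally, I would evaluate this exponential sum as a geometric series. Writing $\theta := k \cdot \alpha$, one has
\[
\bigl| \E_{n \in [N]} e(\theta n) \bigr| = \frac{1}{N}\left| \frac{e(\theta N) - 1}{e(\theta) - 1}\right| \leq \frac{1}{2 N \|\theta\|_{\R/\Z}}
\]
using the standard estimate $|e(\theta) - 1| \geq 4\|\theta\|_{\R/\Z}$. Combining with the lower bound $\gg_m \delta^{m+1}$ gives $\|k \cdot \alpha\|_{\R/\Z} \ll_m \delta^{-(m+1)}/N$ (and if this bound happens to exceed $1/2$, one has it trivially since $\|\cdot\|_{\R/\Z} \leq 1/2$, provided the implied exponent is large enough). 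This produces the required $k$ with both $|k|$ and $N \|k \cdot \alpha\|_{\R/\Z}$ of size $\delta^{-O_m(1)}$.

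The only mild obstacle is the harmonic-analytic step: making sure we can approximate a Lipschitz function uniformly by a low-frequency trigonometric polynomial with the right quantitative dependence on $\|F\|_{\Lip}$. The simple Fourier-inversion argument is not quite enough because Fourier series of Lipschitz functions on $\T^m$ need not converge absolutely for $m \geq 1$; convolving with a Fej\'er kernel (or any nonnegative smooth bump with Fourier support in a box of radius $L$) is the standard fix and gives uniform error $O_m(\|F\|_{\Lip}/L)$, which is all that is needed.
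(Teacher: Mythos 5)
Your proof is correct and takes essentially the same Fourier-analytic route as the paper: convolve with an approximate identity to approximate $F$ uniformly by a low-frequency trigonometric polynomial, pigeonhole over the $O_m(L^m)$ nonzero frequencies, and evaluate the resulting geometric sum. The only cosmetic difference is the choice of kernel --- you use the periodic Fej\'er kernel $K_L$, which is itself a trigonometric polynomial so no truncation is needed, whereas the paper convolves with the compactly supported triangle kernel $\frac{1}{\mes(Q)}1_Q * \frac{1}{\mes(Q)}1_Q$ and then truncates its Fourier series using a tail bound on $\widehat{K}$. One small slip in your write-up: the Fej\'er mean of a Lipschitz function on $\T^m$ satisfies $\|F - F*K_L\|_\infty \ll_m \|F\|_{\Lip}(\log L)/L$, not $\ll_m \|F\|_{\Lip}/L$ (to get the clean $O(1/L)$ one would use a Jackson-type kernel); this costs only an extra logarithm, which is absorbed into $\delta^{-O_m(1)}$ (for instance by taking $L \sim \delta^{-2}$), so the argument goes through unchanged.
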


\remark  We leave it to the reader to check that this really \emph{is} the specialization of Theorem \ref{main-theorem} to the case of linear orbits on the torus $\R^m/\Z^m$. This may be found helpful in understanding some of our notation. Note in particular that in this case the horizontal torus is simply $\R^m/\Z^m$, and we may take $\pi$ to be the identity map.

\proof By Definition \ref{almost-equidistribution}, there is a Lipschitz function $F : \R^m/\Z^m \rightarrow \R$ such that
\begin{equation}\label{equi-failure} |\E_{n \in [N]} F(\alpha n \md{\Z^m}) - \int_{\R^m/\Z^m} F\, d\theta| > \delta \Vert F \Vert_{\Lip}.\end{equation}
 At the expense of replacing $\delta$ by $\delta/2$ we may translate $F$, add a constant to it and rescale in such a way that $\int F = 0$ and $\Vert F \Vert_{\Lip} = 1$. By approximating $F$ by smooth functions we may assume that $F$ is smooth (we do this to avoid any technical issues regarding convergence of Fourier series). We now use a standard man{\oe}uvre to approximate $F$ by a function which has finite support in frequency space (cf. \cite[Lemma A.9]{green-tao-u3mobius}).

Consider the Fej\'er kernel $K: \R^m/\Z^m \to \R^+$ defined by
\[ K(\theta) := \frac{1}{\mes(Q)} 1_Q \ast \frac{1}{\mes(Q)} 1_Q(\theta) \]
where $Q := [-\frac{\delta}{16m},\frac{\delta}{16m}]^m \subset \R^m/\Z^m$ is a small cube, and $\ast$ denotes the usual convolution operation on the torus $\R^m/\Z^m$.  It is immediate that $K$ is a non-negative function supported in $Q$ with
\begin{equation}\label{fejer-1} \int_{\R^m/\Z^m} K = 1.\end{equation}
A simple calculation also establishes the estimate
\begin{equation}\label{fejer-3} \sum_{k \in \Z^m: |k| \geq M} |\widehat{K}(k)| \ll_m \delta^{-2m}M^{-1}
\end{equation}
for all $M > 1$, where the Fourier coefficient is defined by
\[ \widehat{K}(k) := \int_{\R^m/\Z^m} K(\theta) e(-\theta \cdot k)\, d\theta\]
and $e(x) := e^{2\pi i x}$ is the standard character on $\R/\Z$.  We also have the crude bound
\begin{equation}\label{fact-4} |\widehat{F}(k)| \leq \|F\|_{\infty} \leq \|F\|_{\Lip} \leq 1\end{equation} 
for all $k \in \Z^m$.

Set $F_1 := F \ast K$.  Since $\|F\|_{\Lip} = 1$, and $K$ is supported in $Q$ and satisfies \eqref{fejer-1}, a standard computation shows that
\[ \Vert F - F_1\Vert_{\infty} \leq \delta/8.\]
Choose $M := C_m\delta^{-2m-1}$ for some suitably large $C_m$, and set
\[ F_2(\theta) := \sum_{k \in \Z^m : 0 < |k| \leq M} \widehat{F}_1(k) e(k \cdot \theta).\]
Noting that $\widehat{F}_1(0) = 0$, facts \eqref{fejer-3}, \eqref{fact-4} and the Fourier inversion formula imply that 
\[ \Vert F_1 - F_2 \Vert_{\infty} \leq \delta/8.\]
It follows that $\Vert F - F_2 \Vert_{\infty} \leq \delta/4$, which means in view of the failure of \eqref{equi-failure} that
\[ |\E_{n \in [N]} F_2(n\alpha\Z^m)| \geq \delta/4.\]
Applying \eqref{fact-4} once more we see that there is some $k$, $0 < |k| \leq M$, such that 
\[ |\E_{n \in [N]} e(n k \cdot \alpha)| \gg_m \delta M^m \gg \delta^{O_m(1)}.\]
The result now follows immediately from the standard estimate
\[ |\E_{n \in [N]} e(nt)| \ll \min\left(1, \frac{1}{N\Vert t \Vert_{\R/\Z}}\right),\]
which follows from summing the geometric progression.\endproof

Let us now record a corollary of the $m = 1$ version of this result which will be used several times in the sequel. This gives stronger information in the case that $(n\alpha\md{\Z})_{n \in [N]}$ is \emph{very} far from being equidistributed. 

\begin{lemma}[Strongly recurrent linear functions are highly non-diophantine]\label{strong-linear}
Let $\alpha \in \R$, $0 < \delta < 1/2$, and $0 < \epsilon \leq \delta/2$, and let $I \subseteq \R/\Z$ be an interval of length $\epsilon$ such that $\alpha n \in I$ for at least $\delta N$ values of $n \in [N]$.  Then there is some $k \in \Z$ with $0 < |k| \ll \delta^{-O(1)}$ such that $\Vert k \alpha \Vert_{\R/\Z} \ll \epsilon \delta^{-O(1)}/N$.
\end{lemma}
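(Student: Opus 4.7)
My approach has two steps: first I will apply Proposition~\ref{quant-kron} to obtain a candidate frequency $k_0$, and then I will use the narrow interval structure of $I$ to refine the bound on $\|k_0\alpha\|_{\R/\Z}$.

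In the first step I will apply Proposition~\ref{quant-kron} to a Lipschitz tent function $F$ of half-width $\sim\delta$ centred on $I$. Since $\epsilon\leq\delta/2$, this tent satisfies $F\geq 1/2$ throughout $I$, so $\E_{n\in[N]}F(n\alpha)\geq\delta/2$; combined with $\int F\sim\delta$ and $\|F\|_{\Lip}\sim 1/\delta$, the sequence $(n\alpha)_{n\in[N]}$ fails to be $\delta^2$-equidistributed up to absolute constants. Proposition~\ref{quant-kron} then produces $k_0\in\Z$ with $0<|k_0|\leq\delta^{-C_1}$ and $\|k_0\alpha\|_{\R/\Z}\leq\delta^{-C_1}/N$ for some absolute constant $C_1$.

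In the second step let $\epsilon_0\in(-1/2,1/2]$ be the representative of $k_0\alpha\bmod 1$, so $|\epsilon_0|=\|k_0\alpha\|_{\R/\Z}$. If $\epsilon\geq\delta^{C_1+1}$, the bound $|\epsilon_0|\leq\delta^{-C_1}/N$ already gives $|\epsilon_0|\leq\epsilon\delta^{-2C_1-1}/N$ after absorbing constants, so $k:=k_0$ works. Otherwise $\epsilon<\delta^{C_1+1}$, in particular $k_0\epsilon<1$, and the image $k_0 I\bmod 1$ is a single arc of length $k_0\epsilon$. For every $n\in B:=\{n\in[N]:n\alpha\in I\}$ the value $n\epsilon_0\equiv k_0 n\alpha\bmod 1$ lies in this arc; since $(n\epsilon_0)_{n\in[N]}$ is an arithmetic progression in $\R/\Z$ with step $|\epsilon_0|$, a counting argument gives $|B|\leq k_0\epsilon/|\epsilon_0|+O(1)$, which combined with $|B|\geq\delta N$ yields $|\epsilon_0|\ll k_0\epsilon/(\delta N)\ll\epsilon\delta^{-C_1-1}/N$, as required. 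The main obstacle will be justifying the ``no wraparound'' nature of the counting in this regime: I must verify that if $(n\epsilon_0)_{n\in[N]}$ did wrap around the circle, then the number of completed loops times the arc length $k_0\epsilon$ would give $|B|\lesssim N k_0\epsilon$, forcing $k_0\geq\delta/\epsilon>\delta^{-C_1}$ and contradicting the bound on $k_0$, so that no wraparound actually occurs and the linear counting is valid.
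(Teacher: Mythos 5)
Your proof follows the same overall strategy as the paper's: first invoke Proposition~\ref{quant-kron} to produce a frequency $k_0$ with $|k_0| \ll \delta^{-C_1}$ and $\beta := \|k_0\alpha\|_{\R/\Z} \ll \delta^{-C_1}/N$, then use a pigeonhole/counting argument on the residual rotation $\beta$ to force $\beta \lesssim k_0\epsilon/(\delta N)$. Where you differ is in the bookkeeping of the counting step. The paper partitions $[N]$ into $O(k_0 + \beta N)$ arithmetic progressions of common difference $k_0$ and length at most $1/\beta$; this guarantees \emph{locally} that wraparound never occurs, and yields the uniform bound $|B| \leq (1 + \epsilon/\beta)(2k_0 + \beta N)$, which is then cleaned up by observing that for $N \gg \delta^{-O(1)}$ and $\epsilon \ll \delta^{O(1)}$ the cross term $k_0\epsilon/\beta$ dominates. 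You instead look at the \emph{global} AP $(n\epsilon_0)_{n \in [N]}$ and argue by contradiction that no wraparound occurs, after which the count $|B| \leq k_0\epsilon/|\epsilon_0| + O(1)$ is available. Both yield the same final inequality.

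There is, however, a small imprecision in your wraparound-exclusion step. You assert that if $(n\epsilon_0)_{n\in[N]}$ wraps, then $|B| \lesssim N k_0\epsilon$, but the correct bound is $|B| \leq (N|\epsilon_0|+1)(k_0\epsilon/|\epsilon_0|+1) = N k_0\epsilon + N|\epsilon_0| + k_0\epsilon/|\epsilon_0| + 1$; the extra terms $N|\epsilon_0|$ and $1$ are not automatically absorbed into $N k_0\epsilon$. They are negligible, but only because $N|\epsilon_0| \leq \delta^{-C_1}$ is small compared to $\delta N$ once $N \gg \delta^{-C_1-1}$ — which is a separate hypothesis you haven't made explicit. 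Your proposal should therefore carve off the regime $N \ll \delta^{-O(1)}$ at the outset (where the lemma is handled directly, e.g.\ by noting that two values $n_1 < n_2$ in $B$ give $k := n_2 - n_1$ with $\|k\alpha\|_{\R/\Z} \leq 2\epsilon$ and $|k| < N$), as the paper does, and also tighten $\epsilon < \delta^{C_1+1}$ to, say, $\epsilon \leq c\,\delta^{C_1+1}$ for a small absolute $c$ so that $k_0\epsilon \leq c\delta$ beats $\delta/4$ in the contradiction. Once these two adjustments are made the argument is sound, and is essentially a reorganization of the paper's partition argument; the paper's version is marginally cleaner in that it never needs to rule out wraparound at all.
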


\proof Taking $F$ to be a Lipschitz approximation to the interval $I$, we see immediately that our assumption precludes $(\alpha n \md{\Z})_{n \in [N]}$ from being $\delta^{10}$-equidistributed. It follows from the case $m = 1$ of Proposition \ref{quant-kron} that there is some $k \in \Z$, $|k| \ll \delta^{-C}$, such that $\Vert k\alpha\Vert_{\R/\Z} \ll \delta^{-C}/N$, where $C=O(1)$. Write $\beta := \Vert k \alpha \Vert_{\R/\Z}$. Let $n_0 \in \Z$ be arbitrary, and suppose that $n'$ ranges over any interval of integers $J$ of length at most $1/\beta$. The number of $n'$ for which $\alpha (n_0 + kn') \Z \in I$ is then at most $1 + \epsilon/\beta$. Since $[N]$ may be divided into $\leq 2k + \beta N$ progressions of the form $\{ n_0 + kn' : n' \in J\}$ we obtain from our assumption the inequality
\begin{equation}\label{compare} \delta N \leq \# \{n \in [N] : \alpha n \Z \in I\} \leq (1 + \frac{\epsilon}{\beta})(2k + \beta N) \ll k + \frac{\epsilon k}{\beta} + \beta N + \epsilon N .\end{equation}
Now the lemma is trivial if $N \ll \delta^{-10C}$ and follows immediately from Proposition \ref{quant-kron} when $\epsilon \gg \delta^{10C}$, so suppose that neither of these is the case. Then all of the terms except the second on the right-hand side of \eqref{compare} are negligible, and we deduce that
\[ \delta N \ll k \epsilon/\beta.\]
This immediately implies the result.\endproof

The main idea in the proof of Proposition \ref{quant-kron}, of course, was that the space of Lipschitz functions is essentially spanned by the space of pure phase functions $e(k \cdot \theta)$. Thus we were able to assert that if the condition \eqref{equi-failure} fails for some $F$, then it also fails (albeit with a smaller value of $\delta$) for a pure phase function with not-too-large frequency.  

A similar observation turns out to be essential in the analysis of polynomial sequences on general nilmanifolds $G/\Gamma$ (cf. the proof of \cite[Theorem 2.17]{leibman-single-poly}). Though we will not be discussing general sequences for quite a while, this does seem to be an appropriate place to state and prove a lemma which generalizes the observations just made.  For this, we will be working primarily on the vertical torus:

\begin{definition}[Vertical torus]\label{vert-def} Suppose that $G/\Gamma$ is a nilmanifold and that $G_{\bullet}$ is a filtration of degree $d$. Note that $G_d$ then lies in the centre of $G$. We define the \emph{vertical torus} to be $G_d/(\Gamma \cap G_d)$, and the \emph{vertical dimension} $m_d$ to be $m_d := \dim G_d$; the last $m_d$ coordinates of the Mal'cev coordinate map $\psi$ may be used to canonically identify $G_d$ and $G_d/(\Gamma \cap G_d)$ with $\R^{m_d}$ and $\R^{m_d}/\Z^{m_d}$ respectively.  Also observe that the vertical torus acts canonically on the nilmanifold $G/\Gamma$, thus we can define\footnote{Here we have a slight clash between the additive notation for the torus $\R^{m_d}/\Z^{m_d}$ and the multiplicative notation for the group $G$.  We hope this will not confuse the reader.} $\theta y \in G/\Gamma$ for all $\theta \in \R^{m_d}/\Z^{m_d}$ and $y \in G/\Gamma$.
\end{definition}

\begin{definition}[Vertical characters]\label{vert-char-def}
A \emph{vertical character} is a continuous homomorphism $\xi : G_d \rightarrow \R/\Z$ such that $\Gamma \cap G_d \subseteq \ker \xi$ (in particular, $\xi$ can also be meaningfully defined on $G_d/\Gamma_d \cong \R^{m_d}/\Z^{m_d}$). Any such character has the form $\xi(x) = k \cdot x$ for a unique $k \in \Z^{m_d}$, where we identify $G_d$ with $\R^{m_d}$.  We refer to $k$ as the \emph{frequency} of the character $\xi$, and $|\xi| :=|k|$ as the \emph{frequency magnitude}.  For instance the trivial character $\xi \equiv 0$ has frequency $0$.
\end{definition}

\begin{definition}[Vertical oscillation]\label{vert-freq-def}
Let $F : G/\Gamma \rightarrow \C$ be a Lipschitz function and suppose that $\xi$ is a vertical character. We say that $F$ has \emph{vertical oscillation $\xi$} if we have $F(g_d\cdot x) = e(\xi(g_d)) F(x)$ for all $g_d \in G_d$ and $x \in G/\Gamma$. 
\end{definition} 

The next definition is a repetition of Definition \ref{almost-equidistribution}, except that we specialize to functions with a fixed vertical oscillation $\xi$. 

\begin{definition}[Equidistribution along a vertical character]\label{vert-freq-equi}
Let $g : \Z \rightarrow G$ be a polynomial sequence. We say that $(g(n)\Gamma)_{n \in [N]}$ is \emph{$\delta$-equidistributed} along a vertical character $\xi$ if
\[ \left|\E_{n \in [N]} F(g(n)\Gamma) - \int_{G/\Gamma} F\right| \leq \delta \Vert F \Vert_{\Lip}\]
for all Lipschitz functions $F : G/\Gamma \rightarrow \C$ with vertical oscillation $\xi$.
\end{definition}

The next lemma states that in order to check whether a sequence is equidistributed, it suffices to test that sequence against functions possessing a vertical oscillation.

\begin{lemma}[Vertical oscillation reduction]\label{vert-freq-red}  Let $G/\Gamma$ be a nilmanifold together with a filtration $G_{\bullet}$ of degree $d$. Let $m_d$ be as above, and let $0 < \delta < 1/2$.
Suppose that $g : \Z \rightarrow G$ is a polynomial sequence and that $(g(n)\Gamma)_{n \in [N]}$ is not $\delta$-equidistributed. Then there is a vertical character $\xi$ with $|\xi| \ll \delta^{-O_{m_d}(1)}$ such that $(g(n)\Gamma)_{n \in [N]}$ is not $\delta^{O_{m_d}(1)}$-equidistributed along the vertical oscillation $\xi$.
\end{lemma}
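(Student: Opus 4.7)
The plan is to Fourier-decompose the obstruction along the action of the vertical torus. First I would unpack the failure of $\delta$-equidistribution to obtain a Lipschitz $F$ with $\|F\|_{\Lip} = 1$ satisfying the defining inequality; after subtracting a constant we may assume $\int_{G/\Gamma} F = 0$ and $|\E_{n \in [N]} F(g(n)\Gamma)| \geq \delta/2$.

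Next, since the vertical torus $G_d/(\Gamma \cap G_d) \cong \R^{m_d}/\Z^{m_d}$ acts on $G/\Gamma$, I would introduce the vertical Fourier components
\[ F_k(x) := \int_{\R^{m_d}/\Z^{m_d}} F(\theta \cdot x)\, e(-k\cdot\theta)\, d\theta \qquad (k \in \Z^{m_d}). \]
A change of variable $\theta \mapsto \theta - g_d$ shows that each $F_k$ has vertical oscillation $\xi_k(g_d) := k\cdot g_d$, so $|\xi_k| = |k|$ in the sense of Definition \ref{vert-char-def}. Because $G_d$ is central, left multiplication by a vertical torus element agrees with right multiplication and therefore preserves the right-invariant metric $d_\X$; consequently $\|F_k\|_{\Lip} \leq \|F\|_{\Lip} = 1$. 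Fubini also gives $\int_{G/\Gamma} F_k = 0$ for every $k$ (using $\int F = 0$ in the $k=0$ case).

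To make the formal expansion $F \sim \sum_k F_k$ convergent in $L^\infty$, I would regularise by convolving in the vertical direction with a Fej\'er kernel $K$ on $\R^{m_d}/\Z^{m_d}$ supported in a small cube of side $\sim \delta/(100 m_d)$, just as in the proof of Proposition \ref{quant-kron}. With $\tilde F := K *_{\mathrm{vert}} F = \sum_k \widehat K(k) F_k$, the Lipschitz bound together with a comparison between the vertical torus metric and $d_\X$ yields $\|F - \tilde F\|_\infty \leq \delta/10$. Truncating the sum to $|k| \leq M := C_{m_d} \delta^{-O_{m_d}(1)}$, the usual Fej\'er tail bound $\sum_{|k|>M} |\widehat K(k)| \ll \delta^{-2m_d}/M$ absorbs a further $\delta/10$. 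Pigeonholing over the $\ll M^{m_d} = \delta^{-O_{m_d}(1)}$ remaining values of $k$ (and using $|\widehat K(k)| \leq 1$) produces a single $k$ for which $|\E_{n \in [N]} F_k(g(n)\Gamma)| \gg \delta^{O_{m_d}(1)}$. Combined with $\int F_k = 0$ and $\|F_k\|_{\Lip} \leq 1$, this is precisely the failure of $\delta^{O_{m_d}(1)}$-equidistribution along $\xi_k$.

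The only real technical step is the metric comparison: one must verify that a vertical displacement by $\theta \in \R^{m_d}/\Z^{m_d}$ of size $|\theta| \leq \delta/(100 m_d)$ produces a $d_\X$-displacement of comparable size on $G/\Gamma$. This is where $\X$ being adapted to $G_\bullet$ really enters, since it ensures that the last $m_d$ Mal'cev coordinates parametrise $G_d$ bi-Lipschitzly; a clean statement should be extractable from the metric lemmas in Appendix \ref{malcev-app}.
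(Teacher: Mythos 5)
Your proposal is correct and follows essentially the same route as the paper's own argument: reduce to $\int F = 0$ and $\|F\|_{\Lip}=1$, decompose $F$ into vertical Fourier modes $F^\wedge(\cdot;k)$ via convolution against a Fej\'er kernel in the central $G_d$-fibre, truncate to frequencies $|k| \ll \delta^{-O_{m_d}(1)}$, and pigeonhole to extract one mode with a large average. You have in fact supplied slightly more detail than the paper (in particular the observation that centrality of $G_d$ makes the vertical action metric-preserving, so $\|F_k\|_{\Lip} \leq \|F\|_{\Lip}$, and the final metric comparison $d(\theta y, y) \leq |\psi(\theta)| = |\theta|$ needed for the $\|F - \tilde F\|_\infty$ bound), both of which the paper silently absorbs into the phrase ``exactly the same estimates as in the proof of Proposition~\ref{quant-kron}.''
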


\proof We merely sketch this, for the argument is little more than a repetition of that used to prove Proposition \ref{quant-kron}. We begin with the same reductions. That is, assuming the existence of an $F : G/\Gamma \rightarrow \C$ such that 
\begin{equation}\label{equi-failure-gen}
\left|\E_{n \in [N]} F(g(n)\Gamma) - \int_{G/\Gamma} F\right| \geq \delta \Vert F \Vert_{\Lip},
\end{equation}
we weaken $\delta$ to $\delta/2$ and assume that $\int_{G/\Gamma} F = 0$, that $\Vert F \Vert_{\Lip} = 1$ and that $F$ is smooth.

Let $K$ be the same Fej\'er-type kernel as before, and now take $F_1: G \to \C$ to be the function obtained by convolving with $K$ in each $G_d/(\Gamma \cap G_d) \cong \R^{m_d}/\Z^{m_d}$-fibre,  that is to say
\[ F_1(y) := \int_{\R^{m_d}/\Z^{m_d}} F(\theta y)K(\theta)d\theta.\]
Fourier expansion on $\R^{m_d}/\Z^{m_d}$ gives
\[ F_1(y) = \sum_{k \in \Z^{m_d}} F^{\wedge}(y;k)\widehat{K}(k), \]
where
\[ F^{\wedge}(y;k) := \int_{\R^{m_d}/\Z^{m_d}} F(\theta y) e(-k \cdot \theta)d\theta.\]
Now for $g_d \in G_d \cong \R^{m_d}$ we have
\[ F^{\wedge}(g_d y;k) = \int F((\theta + g_d)y)  e(-k \cdot \theta)\, d\theta = e(k \cdot g_d)F^{\wedge}(y;f),\]
thus each function $F^{\wedge}(y;k)$ has vertical oscillation $\xi$, where $\xi(x) := k \cdot x$ is the vertical character with frequency $k$.

Using exactly the same estimates as in the proof of Proposition \ref{quant-kron}, we have $\Vert F - F_2 \Vert_{\infty} \leq \delta/4$, where
\[ F_2(y) := \sum_{k \in \Z^{m_d} : |k| \leq Q} F^{\wedge}(y;k) \widehat{K}(k)\] for some $Q = C_{m_d}\delta^{-2m_d - 1}$.
The rest of the argument proceeds exactly as before, and we see that if we take $\tilde F(y) := F^{\wedge}(y;k)$ for suitable $k \in \Z^{m_d}$, $|k| \ll \delta^{-O_{m_d}(1)}$,
we have
\[ \left|\E_{n \in [N]} \tilde F(g(n) \Gamma) - \int_{G/\Gamma} \tilde F\right| \gg \delta^{O_{m_d}(1)}\Vert \tilde F \Vert_{\Lip}.\]
Thus  $(g(n)\Gamma)_{n \in [N]}$ is not $\delta^{O_{m_d}(1)}$-equidistributed along the vertical character $\xi$, as desired.
\endproof

\section{The van der Corput trick and polynomial flows on tori}\label{sec4}

In the last section we introduced one important trick - the idea of decomposing a Lipschitz function into phases using Fourier analysis. In this section we introduce a second trick - namely, the use of van der Corput's inequality - and use this trick to study polynomial sequences on tori $\R^m/\Z^m$. Although our language is somewhat different, this is really just a \emph{reprise} of the standard theory of Weyl sums as used for instance in the study of Waring's problem (see, for example, \cite{vaughan}).

\begin{lemma}[van der Corput inequality]\label{vdc-lemma}
Let $N,H$ be positive integers and suppose that $(a_n)_{n \in [N]}$ is a sequence of complex numbers. Extend $(a_n)$ to all of $\Z$ by defining $a_n := 0$ when $n \notin [N]$. Then
\[ |\E_{n \in [N]} a_n |^2 \leq \frac{N + H}{H N}\sum_{|h| \leq H}\left(1 - \frac{|h|}{H}\right)\E_{n \in [N]} a_n \overline{a_{n+h}}.\]
\end{lemma}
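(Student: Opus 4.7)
The plan is to prove this via the classical ``shift-and-average'' manoeuvre followed by Cauchy--Schwarz, which is the standard route to van der Corput's inequality. Write $S := \sum_{n \in [N]} a_n = N \E_{n \in [N]} a_n$, and observe that since $(a_n)$ vanishes off $[N]$, every $m \in [N]$ is hit exactly $H$ times as $n+k$ ranges over $n \in \Z$, $k \in \{0,1,\dots,H-1\}$. Hence
\[ H S \;=\; \sum_{n \in \Z} \sum_{k=0}^{H-1} a_{n+k}. \]
The inner sum $\sum_{k=0}^{H-1} a_{n+k}$ is supported on $n \in \{2-H, 3-H, \dots, N\}$, an interval of at most $N + H$ integers.

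Next I would apply Cauchy--Schwarz on that restricted $n$-range to pull the length $N+H$ out:
\[ H^2 |S|^2 \;\leq\; (N+H) \sum_{n \in \Z} \Bigl|\sum_{k=0}^{H-1} a_{n+k}\Bigr|^{2}. \]
Expanding the square and re-indexing via $h := k_2 - k_1$ (after the change of variables $m := n+k_1$) gives
\[ \sum_{n \in \Z} \Bigl|\sum_{k=0}^{H-1} a_{n+k}\Bigr|^{2} \;=\; \sum_{k_1, k_2 = 0}^{H-1} \sum_{m \in \Z} a_m \overline{a_{m + k_2 - k_1}} \;=\; \sum_{|h| < H} (H - |h|) \sum_{m \in \Z} a_m \overline{a_{m+h}}, \]
using the standard count $\#\{(k_1,k_2) : k_2 - k_1 = h\} = \max(H - |h|, 0)$.

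Now $\sum_{m \in \Z} a_m \overline{a_{m+h}} = N \E_{n \in [N]} a_n \overline{a_{n+h}}$ (the tail vanishes since $a_m = 0$ off $[N]$). Pulling out one factor of $H$ from $H - |h|$, substituting back, and dividing through by $H^2 N^2$ yields
\[ |\E_{n \in [N]} a_n|^2 \;\leq\; \frac{N+H}{HN} \sum_{|h| < H} \Bigl(1 - \frac{|h|}{H}\Bigr) \E_{n \in [N]} a_n \overline{a_{n+h}}, \]
and the range $|h| < H$ may be replaced by $|h| \leq H$ for free since the $|h| = H$ term has weight zero. There is really no obstacle here — the entire argument is bookkeeping around the shift identity and Cauchy--Schwarz; the only mild care needed is in verifying that the $n$-range in the Cauchy--Schwarz step has length at most $N+H$ and that the weight $\max(H - |h|, 0)/H$ matches $(1 - |h|/H)_{+}$.
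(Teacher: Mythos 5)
Your proof is correct and follows essentially the same route as the paper's: the shift-and-average identity $HS = \sum_n \sum_{k=0}^{H-1} a_{n+k}$, Cauchy--Schwarz on the $n$-range of length at most $N+H$, and expansion of the square with the re-indexing $h = k_2 - k_1$. The only difference is that you spell out the final bookkeeping (the count $\#\{(k_1,k_2): k_2-k_1=h\} = H-|h|$ and the resulting weight) which the paper leaves to the reader as ``equivalent to the right hand side.''
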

\proof We have
\[ \sum_n a_n = \frac{1}{H} \sum_{-H < n \leq N} \sum_{h = 0}^{H-1} a_{n+h}.\] Thus, applying the Cauchy-Schwarz inequality, we have
\begin{align*}
\big|\sum_n a_n \big|^2 &= \frac{1}{H^2} \big| \sum_{-H < n \leq N} \sum_{h = 0}^{H-1} a_{n+h}\big|^2 \\ &\leq \frac{N+H}{H^2} \sum_{-H < n \leq N} \big| \sum_{h = 0}^{H-1} a_{n+h} \big|^2 \\ & = \frac{N+H}{H^2} \sum_{-H < n \leq N} \sum_{h = 0}^{H-1} \sum_{h' = 0}^{H-1} a_{n+h} \overline{a}_{n+h'},
\end{align*}
which is equivalent to the right hand side of the claimed inequality.\endproof

We will use the following simple (and rather crude) corollary of this, which we phrase in the contrapositive.

\begin{corollary}[van der Corput]\label{vdc}
Let $N$ be a positive integer and suppose that $(a_n)_{n \in [N]}$ is a sequence of complex numbers with $|a_n| \leq 1$. Extend $(a_n)$ to all of $\Z$ by defining $a_n := 0$ when $n \notin [N]$. Suppose that $0 < \delta < 1$ and that 
\[ |\E_{n \in [N]} a_n | \geq \delta.\]
Then for at least $\delta^2 N/8$ values of $h \in [N]$ we have
\[ |\E_{n \in [N]} a_{n+h}\overline{a_n} | \geq \delta^2/8.\]
\end{corollary}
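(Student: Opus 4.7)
The plan is to apply the van der Corput inequality (Lemma \ref{vdc-lemma}) with the choice $H = N$ and then extract the conclusion via pigeonhole. Squaring the hypothesis $|\E_{n \in [N]} a_n| \geq \delta$ and inserting into Lemma \ref{vdc-lemma}, with $b_h := \E_{n \in [N]} a_n \overline{a_{n+h}}$, gives
\[ \delta^2 \;\leq\; \frac{2}{N} \sum_{|h| < N}\Bigl(1 - \tfrac{|h|}{N}\Bigr) b_h. \]
I would separate off the $h = 0$ contribution (which is $\tfrac{2}{N}\E_n|a_n|^2 \leq \tfrac{2}{N}$) and pair $h$ with $-h$, using the identity $b_{-h} = \overline{b_h}$ that follows from reindexing the average. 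The remaining sum is real and dominated by its absolute value, yielding
\[ \sum_{h=1}^{N-1} \Bigl(1 - \tfrac{h}{N}\Bigr) |b_h| \;\geq\; \frac{\delta^2 N}{4} - \frac{1}{2}. \]
Crucially, I keep the Fej\'er weight $(1 - h/N)$. Since $|b_h|$ agrees with $|\E_n a_{n+h}\overline{a_n}|$, this first-moment bound controls exactly the quantity appearing in the claim.

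The second step is pigeonhole. Letting $B := \{h \in [N] : |b_h| \geq \delta^2/8\}$, I would bound the sum above by splitting on $B$: for $h \in B$ use the crude $(1 - h/N)|b_h| \leq 1$, and for $h \notin B$ use $(1 - h/N)|b_h| < (1 - h/N)\delta^2/8$, then invoke $\sum_{h=1}^{N-1}(1 - h/N) = (N-1)/2$. This gives
\[ |B| + \frac{(N-1)\delta^2}{16} \;\geq\; \frac{\delta^2 N}{4} - \frac{1}{2}, \]
i.e.\ $|B| \geq 3\delta^2 N/16 - 1/2$, which is at least $\delta^2 N/8$ once $N \geq 8/\delta^2$. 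For smaller $N$ the claim $|B| \geq \delta^2 N/8$ is essentially vacuous (as $\delta^2 N/8 < 1$), so this range can be disposed of by inspection.

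The main subtlety — and effectively the only obstacle — is that one must retain the Fej\'er weight $(1 - h/N)$ throughout, rather than discarding it early; discarding it halves the effective mass on the right-hand side and the desired count $\delta^2 N/8$ then falls just out of reach (one obtains only $|B| \geq \delta^2 N/8 - 1/2$ with the wrong direction of rounding). Keeping the weight closes this gap by a factor of $2$, and beyond that the argument is entirely standard: Lemma \ref{vdc-lemma} plus pigeonhole.
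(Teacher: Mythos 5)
Your proof is correct and follows the same route as the paper's: apply Lemma \ref{vdc-lemma} with $H = N$, then pigeonhole. You are in fact more careful than the paper. The paper's own argument discards the Fej\'er weight $(1-|h|/N)$ before splitting on the good and bad $h$, and its displayed chain reduces (after fixing the obvious typo $|\E a_n|\to|\E a_n|^2$) to
\[ \delta^2 \;\leq\; \frac{2}{N}\Bigl(1 + 2\bigl(\tfrac{\delta^2 N}{8} + \tfrac{\delta^2 N}{8}\bigr)\Bigr) \;=\; \frac{2}{N} + \delta^2, \]
which rearranges to $0 \leq 2/N$ and is \emph{not} a contradiction for any $N$, so the paper's closing sentence does not hold as written. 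Keeping the Fej\'er weight, as you do, replaces the good-$h$ contribution $N\delta^2/8$ by $\tfrac{\delta^2}{8}\sum_{h=1}^{N-1}(1-h/N) < \delta^2 N/16$, which supplies exactly the missing factor; your bound $|B| \geq 3\delta^2 N/16 - 1/2$ is correct and your identification of this as ``the main subtlety'' is apt. This is a genuine repair, not a cosmetic alternative.

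One caveat: the range $N < 8/\delta^2$ is \emph{not} vacuous, and ``disposed of by inspection'' does not go through. When $0 < \delta^2 N/8 < 1$ the conclusion still demands $|B| \geq 1$, and this can fail: take $N = 1$, $a_1 = 1$, $\delta = 0.9$; the hypothesis holds, yet $\E_{n \in [1]} a_{n+1}\overline{a_n} = a_2\overline{a_1} = 0$ and $|B| = 0$. The paper shares this defect (its ``vacuous if $N \leq 4/\delta^2$'' is an overstatement, quite apart from the constant being off). In context it is harmless because every invocation of Corollary \ref{vdc} in the paper first assumes $N \gg \delta^{-O(1)}$, but a scrupulous formulation would either add the hypothesis $N \geq 8/\delta^2$ or weaken the conclusion to ``more than $\delta^2 N/8 - 1$ values of $h$''.
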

\proof The result is vacuous if $N \leq 4/\delta^2$, so assume this is not the case. Suppose for a contradiction that the result is false. Apply Lemma \ref{vdc-lemma} with $H = N$. Then it is easy to see that we have
\[ \delta^2  \leq |\E_{n \in [N]} a_n|  \leq \frac{2}{N}\sum_{|h| \leq N}|\E_{n \in [N]} a_n \overline{a_{n+h}}|  \leq \frac{2}{N}\left( 1 + 2\left(\frac{\delta^2N}{8} + \frac{\delta^2 N}{8}\right)\right),\] where we have used the trivial estimate $|\E_{n \in [N]} a_n \overline{a_{n+h}}| \leq 1$ for those $h \in [N]$ such that $|\E_{n \in [N]} a_n \overline{a_{n+h}}| \geq \delta^2/8$, of which there are no more than $\delta^2 N/8$. Rearranging and using the fact that $N > 4/\delta^2$ we see that this is a contradiction.\endproof

The next proposition is the main result of this section, and is Theorem \ref{main-theorem} in the case $G = \R$, $\Gamma = \Z$ and with $g : \Z \rightarrow G$ an arbitrary polynomial.

\begin{proposition}[Weyl]\label{weyl-dichotomy}
Suppose that $g : \Z \rightarrow \R$ is a polynomial of degree $d$, and let $0 < \delta < 1/2$. Then either $(g(n)\md{\Z})_{n \in [N]}$ is $\delta$-equidistributed, or else there is an integer $k$, $1 \leq k \ll \delta^{-O_d(1)}$, such that $\Vert kg \md{\Z}\Vert_{C^{\infty}[N]} \ll \delta^{-O_d(1)}$.
\end{proposition}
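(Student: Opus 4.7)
The plan is to induct on the degree $d$. The base case $d = 1$ is exactly the $m=1$ instance of Proposition \ref{quant-kron}, since a linear polynomial $g(n) = \alpha_0 + \alpha_1 n$ has $\|k g\|_{C^\infty[N]} = N \|k \alpha_1\|_{\R/\Z}$. For $d \geq 2$, the first move is to repeat the Fourier-decomposition argument from the proof of Proposition \ref{quant-kron}, but applied on $\R/\Z$ rather than $\R^m/\Z^m$: the assumed failure of $\delta$-equidistribution yields an integer $k_0$ with $1 \leq |k_0| \ll \delta^{-O(1)}$ and $|\E_{n \in [N]} e(k_0 g(n))| \gg \delta^{O(1)}$. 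Applying van der Corput (Corollary \ref{vdc}) then gives $|\E_{n \in [N]} e(k_0 g_h(n))| \gg \delta^{O(1)}$ for at least $\delta^{O(1)} N$ values of $h \in [N]$, where $g_h(n) := g(n+h) - g(n)$ is a polynomial of degree $d-1$. The induction hypothesis applied to each such $k_0 g_h$ supplies a multiplier $k_h$ with $1 \leq k_h \ll \delta^{-O(1)}$ and $\|k_h k_0 g_h\|_{C^\infty[N]} \ll \delta^{-O(1)}$.

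Pigeonholing on $k_h$ fixes a single $K \ll \delta^{-O(1)}$ and a set $H \subseteq [N]$ of size $\gg \delta^{O(1)} N$ such that, expanding $g_h(n) = \sum_{j=0}^{d-1} \beta_j^{(h)} \binom{n}{j}$ with $\beta_j^{(h)} = \sum_{i=j+1}^{d} \alpha_i \binom{h}{i-j}$, one has $\|K \beta_j^{(h)}\|_{\R/\Z} \ll \delta^{-O(1)}/N^j$ for all $h \in H$ and $1 \leq j \leq d - 1$. From here a top-down descent extracts control of $\alpha_d, \alpha_{d-1}, \ldots, \alpha_2$ one coefficient at a time: the $j = d-1$ constraint reads $\|K \alpha_d h\|_{\R/\Z} \ll \delta^{-O(1)}/N^{d-1}$ on $H$, so Lemma \ref{strong-linear} produces a bounded multiplier giving $\|K^{(d)} \alpha_d\|_{\R/\Z} \ll \delta^{-O(1)}/N^d$; at subsequent stages $J = d-2, d-3, \ldots, 1$, the $\beta_{J}^{(h)}$ constraint combined with the bounds already secured on $\alpha_{J+2}, \ldots, \alpha_d$ (which make the higher-order binomial-in-$h$ contributions to $\beta_J^{(h)}$ bounded by $\delta^{-O(1)}/N^{J}$ on $[N]$) isolates $\|K^{(J+2)} \alpha_{J+1} h\|_{\R/\Z} \ll \delta^{-O(1)}/N^{J}$ on $H$, to which Lemma \ref{strong-linear} is applied once more. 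After finitely many iterations this yields a single integer $K^* \ll \delta^{-O(1)}$ with $\|K^* \alpha_j\|_{\R/\Z} \ll \delta^{-O(1)}/N^j$ for every $2 \leq j \leq d$. When $N$ is too small for Lemma \ref{strong-linear} to apply, the proposition holds trivially with $k = 1$, since $\|g\|_{C^\infty[N]} \leq N^d/2$.

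The main obstacle is the linear coefficient $\alpha_1$, which is invisible to the descent through the $\beta_j^{(h)}$. To handle it we return to the original Weyl sum. Writing $K^* = Bk_0$ with $B \ll \delta^{-O(1)}$, and expressing $k_0 \alpha_j = c_j/B + \eta_j$ for $j \geq 2$ with $c_j \in \Z$ and $|\eta_j| \ll \delta^{-O(1)}/(BN^j)$, the phase factorizes as
\[ e(k_0 g(n)) = e(k_0 \alpha_0)\, e(k_0 \alpha_1 n)\, e(\phi_1(n))\, e(\phi_2(n)), \]
where $\phi_1(n) := \sum_{j \geq 2} c_j \binom{n}{j}/B$ is periodic in $n$ modulo $1$ with period $P \leq B \cdot d! \ll \delta^{-O(1)}$, while $\phi_2(n) := \sum_{j \geq 2} \eta_j \binom{n}{j}$ obeys $|\phi_2(n) - \phi_2(n-1)| \ll \delta^{-O(1)}/N$ on $[N]$. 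Splitting $[N]$ into residue classes modulo $P$ and applying Abel summation on each class bounds the Weyl sum by $\ll P \delta^{-O(1)}/(N \|k_0 P \alpha_1\|_{\R/\Z})$; combined with the lower bound $|\E_{n \in [N]} e(k_0 g(n))| \gg \delta^{O(1)}$ this forces $\|k_0 P \alpha_1\|_{\R/\Z} \ll \delta^{-O(1)}/N$. Setting $k := \operatorname{lcm}(K^*, k_0 P) \ll \delta^{-O(1)}$ then delivers $\|k g\|_{C^\infty[N]} \ll \delta^{-O(1)}$ and closes the induction.
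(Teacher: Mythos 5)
Your proof is correct, and it takes a genuinely different route from the paper's. The paper proceeds in two stages: it first isolates Lemma \ref{weyl} (Weyl's exponential sum estimate), whose inductive statement controls only the \emph{leading} coefficient $\alpha_d$; it then derives Proposition \ref{weyl-dichotomy} from Lemma \ref{weyl} by a separate iteration, at each step restricting to an arithmetic progression with step $Q \approx q d!$ so that the already-controlled top coefficients become approximately integer-valued, and then invoking Lemma \ref{weyl} again on the truncated lower-degree polynomial. Your proof instead inducts directly on the full dichotomy statement. This is more ``greedy'' -- after van der Corput and the induction hypothesis you control \emph{all} the Taylor coefficients $\beta_j^{(h)}$ of $g_h = g(\cdot + h) - g(\cdot)$ at once, and you then unwind the identity $\beta_j^{(h)} = \sum_{i > j} \alpha_i \binom{h}{i-j}$ by a top-down descent through Lemma \ref{strong-linear}. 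The price is that $\alpha_1$ contributes only to the constant coefficient $\beta_0^{(h)} = g(h) - g(0)$, which the $C^\infty[N]$ norm does not see, so it is invisible to the descent; your Abel-summation argument (exploiting the near-rationality of the higher coefficients to factor out a $P$-periodic phase $\phi_1$ and a slowly varying phase $\phi_2$) is a correct patch for this. It is worth noting that this patch is really the paper's restriction-to-a-subprogression trick in disguise: both arguments reduce the Weyl sum, after making the higher-order terms approximately constant on residue classes mod $P \ll \delta^{-O_d(1)}$, to a linear exponential sum whose standard bound forces $\|k_0 P \alpha_1\|_{\R/\Z} \ll \delta^{-O_d(1)}/N$. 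The paper's two-stage organisation avoids the need for this patch at the cost of stating and proving an auxiliary lemma; your organisation avoids the auxiliary lemma at the cost of a more complex descent plus the $\alpha_1$ cleanup. Both are valid and give the same polynomial-in-$\delta^{-1}$ bounds.
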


We will deduce this from the following, which is nothing but a reformulation of Weyl's exponential sum estimate (see e.g. \cite{vaughan}).

\begin{lemma}[Weyl's exponential sum estimate]
\label{weyl}
Suppose that $g : \Z \rightarrow \R$ is a polynomial of degree $d$ with leading coefficient $\alpha_d$ and that 
\[ |\E_{n \in [N]} e(g(n))| \geq \delta\]
for some $0 < \delta < 1/2$. Then there is $k \in \Z$, $|k| \ll \delta^{-O_d(1)}$, such that 
\[ \Vert k\alpha_d \Vert_{\R/\Z} \ll \delta^{-O_d(1)}/N^d.\]
\end{lemma}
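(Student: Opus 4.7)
\proof[Proof sketch]
I plan to prove the lemma by induction on the degree $d$, with van der Corput (Corollary \ref{vdc}) providing the reduction step and Lemma \ref{strong-linear} used at the end to convert ``many nearby multiples'' into a Diophantine estimate on $\alpha_d$ itself.

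For the base case $d=1$, write $g(n) = \alpha_1 n + \alpha_0$, so that $|\E_{n \in [N]} e(g(n))| = |\E_{n \in [N]} e(\alpha_1 n)|$. Summing the geometric progression yields the familiar bound $|\E_{n \in [N]} e(\alpha_1 n)| \ll \min(1, 1/(N\|\alpha_1\|_{\R/\Z}))$, and the hypothesis immediately gives $\|\alpha_1\|_{\R/\Z} \ll 1/(\delta N)$, so $k=1$ works.

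For the inductive step, assume the result holds in degree $d-1$. Apply Corollary \ref{vdc} to $a_n := e(g(n))$: for at least $\delta^2 N/8$ values of $h \in [N]$, one has
\[ \bigl|\E_{n \in [N]} e\bigl(g(n+h)-g(n)\bigr)\bigr| \geq \delta^2/8. \]
The difference $g(n+h)-g(n)$ is a polynomial of degree $d-1$ in $n$ whose leading coefficient is $d\alpha_d h$. The inductive hypothesis therefore supplies, for each such $h$, an integer $k_h$ with $0 < |k_h| \ll \delta^{-O_d(1)}$ and
\[ \Vert k_h\, d\alpha_d\, h \Vert_{\R/\Z} \ll \delta^{-O_d(1)}/N^{d-1}. \]
By pigeonholing over the polynomially many possible values of $k_h$, we can fix a single $k_0$ with $|k_0| \ll \delta^{-O_d(1)}$ such that the set
\[ S := \bigl\{ h \in [N] : \Vert k_0 d\alpha_d h \Vert_{\R/\Z} \leq C\delta^{-O_d(1)}/N^{d-1} \bigr\} \]
has size $|S| \gg \delta^{O_d(1)} N$.

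Now I apply Lemma \ref{strong-linear} with $\alpha := k_0 d\alpha_d$, the shrinkage parameter $\delta' \gg \delta^{O_d(1)}$ and the interval length $\epsilon \ll \delta^{-O_d(1)}/N^{d-1}$ centred at the origin of $\R/\Z$. (If $\epsilon > \delta'/2$, equivalently $N \ll \delta^{-O_d(1)}$, the conclusion of the lemma is trivial, so we may assume $\epsilon \leq \delta'/2$.) The lemma produces an integer $k_1$ with $0 < |k_1| \ll \delta^{-O_d(1)}$ and
\[ \Vert k_1 k_0 d\alpha_d \Vert_{\R/\Z} \ll \epsilon \delta^{-O_d(1)}/N \ll \delta^{-O_d(1)}/N^{d}. \]
Taking $k := k_1 k_0 d$ gives the desired conclusion.

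The main obstacle is essentially bookkeeping: the inductive hypothesis provides $k_h$'s that depend on $h$, so one must pigeonhole (losing a polynomial factor in $\delta$) before converting a recurrence statement for the linear sequence $h \mapsto k_0 d\alpha_d h$ into a Diophantine estimate on $\alpha_d$. The use of Lemma \ref{strong-linear} rather than just Proposition \ref{quant-kron} is crucial here, because the naive application of the latter would only yield $\Vert k \alpha_d \Vert_{\R/\Z} \ll \delta^{-O(1)}/N$, losing the crucial factor $N^{-(d-1)}$; Lemma \ref{strong-linear} is precisely designed to exploit the extra smallness of the interval $\epsilon$ and return a bound scaling like $\epsilon/N$.
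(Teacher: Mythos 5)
Your proof is correct and follows essentially the same route as the paper's: induction on $d$, van der Corput to pass to the differenced polynomial, pigeonholing over the inductively supplied integers $k_h$, and then Lemma \ref{strong-linear} (rather than Proposition \ref{quant-kron}) to upgrade the recurrence of the linear sequence $h \mapsto k_0 d\alpha_d h$ to the full Diophantine bound with the factor $N^{-d}$. The explanatory remark at the end about why Lemma \ref{strong-linear} is indispensable here is accurate and matches the role it plays in the paper's argument.
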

\proof We proceed by induction on $d$, the result having been established in \S \ref{sec3} in the case $d = 1$. We may assume that $N > \delta^{-C'_d}$ for some large $C'_d$ since the result is trivial otherwise. Applying van der Corput's estimate in the form of Corollary \ref{vdc} we deduce that there are $\gg \delta^2 N$ values of $h \in [N]$ such that
\[ |\E_{n \in [N]} e(g(n+h) - g(n))| \gg \delta^2.\]
For each such $h$, $g(n+h) - g(n)$ is a polynomial with degree $d-1$ and leading coefficient $hd\alpha_d$. Thus by the induction hypothesis there is, for $\gg \delta^2$ values of $h \in [N]$, some $1 \leq q_h \ll \delta^{-O_d(1)}$ such that we have
\[ \Vert h q_hd \alpha_d \Vert_{\R/\Z} \ll \delta^{-O_d(1)}/N^{d-1}\] for each of these values of $h$. Pigeonholing in the $q_h$, this implies that there is $q$, $1 \leq q \ll \delta^{-O_d(1)}$, such that
\[ \Vert hq \alpha_d \Vert_{\R/\Z, \delta^{-O_d(1)}} \ll \delta^{-O_d(1)}/N^{d-1}\] for $\gg \delta^{O_d(1)} N$ values of $h \in [N]$. Since $N$ is so large, Lemma \ref{strong-linear} may applied to conclude that there is $q' \ll \delta^{-O_d(1)}$ such that
\[ \Vert qq' \alpha_d \Vert_{\R/\Z} \ll \delta^{-O_d(1)}/N^d.\]
Redefining $q := qq'$, the result follows.\endproof

\emph{Proof of Proposition \ref{weyl-dichotomy}.} In this proof we allow all implied constants to depend on $d$.
Suppose that $g : \Z \rightarrow \R$
is a polynomial sequence of degree $d$ such that the orbit $(g(n)\Z)_{n \in [N]}$ on $\R/\Z$ is not $\delta$-equidistributed. Expand $g$ as a Taylor series
\begin{equation}\label{taylor-1} g(n) = \binom{n}{d} \alpha_d + \dots + \binom{n}{1} \alpha_1  + \alpha_0\end{equation}
and suppose as a hypothesis for induction on $r$, $0\leq r < d$, that we have shown that each of the coefficients $\alpha_{d},\alpha_{d-1},\dots,\alpha_{d-r}$ is nearly rational in the sense that $\Vert q\alpha_{d-i} \Vert_{\R/\Z} \ll \delta^{-O(1)}/N^{d-i}$ for some $q \ll \delta^{-O(1)}$ for $i = 0,\dots,r$. (The implied constants in the $O()$ notation may increase with each induction step, but there are only $d$ such steps, and we are allowing these constants to depend on $d$, so this is harmless.)  The statement we are trying to prove, Proposition \ref{weyl-dichotomy}, is the case $r = d-1$.

Now by the argument used in proving Proposition \ref{quant-kron} (or indeed by simply quoting Lemma \ref{vert-freq-red}), there is $k \in \Z$, $0 < |k| \ll \delta^{-O(1)}$, such that 
\begin{equation}\label{g-large} |\E_{n \in [N]} e(k g(n))| \gg \delta^{O(1)}.\end{equation}
The base case $r = 0$ of the induction follows immediately from Lemma \ref{weyl}. Suppose now that we have established the result for some $r$, and wish to establish it for $r+1$.
Set 
\[ g'(n) := g(n) - \binom{n}{d} \alpha_d - \dots - \binom{n}{d-r} \alpha_{d-r} = \binom{n}{d-r-1} \alpha_{d-r-1} + \dots + \alpha_0.\]
Set $Q := qd!$, and write $\alpha_{d-i} = a_{d-i}/q + O(\delta^{-O(1)}/N^{d-i})$, $i = 0,\dots,r$ for some integers $a_{d-i}$.
For any $n_0 \in \Z$ for any $n' \in \Z$ we have
\begin{align*}
g'(n_0 + Qn') - g'(n_0) &  = g(n_0 + Qn') - g(n_0) - \frac{1}{q}\sum_{i=0}^{r} a_{d-i} \left[ \binom{n_0 + Qn'}{d-i} - \binom{n_0}{d-i} \right] \\ & + O(\delta^{-O(1)}) \sum_{i = 0}^r \frac{1}{N^{d-i}}  \left[ \binom{n_0 + Qn'}{d-i} - \binom{n_0}{d-i} \right].\end{align*}
Set $N' := \lfloor \delta^{C'_d}N\rfloor$ for some suitably large $C'_d$ and suppose that $n' \in [N']$ and also that $|n_0| \leq 2N$. Then the last term here is $O(\delta^{C'_d - O(1)})$. The first term is an integer, since 
\[ \binom{n_0 + Q}{j} - \binom{n_0}{j} = \sum_{i=1}^j \binom{Q}{i} \binom{n_0}{j-i} \equiv 0 \md{q}\] for all $j \leq d$.
Thus we see that if $n' \in [N']$ and $|n_0| \leq 2N$ then
\begin{equation}\label{approx-8} g'(n_0 + Qn') - g'(n_0) = g(n_0 + Qn') - g(n_0) + O(\delta^{C'_d-O(1)}) \md{\Z}.\end{equation}
Splitting $[N]$ into progressions of common difference $Q$ and length $[N']$ plus a negligible error we see from \eqref{g-large} that there is $n_0$, $|n_0| \leq 2N$, such that 
\[ |\E_{n' \in [N']} e(k g(n_0 + Qn'))| \gg \delta^{O(1)}.\]
It follows from \eqref{approx-8} that
\[ |\E_{n' \in [N']} e(k g'(n_0 + Qn'))| \gg \delta^{O(1)}.\]
By Lemma \ref{weyl} we see that the leading coefficient $\alpha' := kQ^{d-r-1}\alpha_{d-r-1}/(d-r-1)!$ of this polynomial is nearly rational in the sense that there is $1 \leq q' \ll \delta^{-O(1)}$ such that $\Vert q'\alpha'\Vert_{\R/\Z} \ll \delta^{-O(1)}/N^{d-r-1}$. It follows that there is $1 \leq q'' \ll \delta^{-O(1)}$ such that $\Vert  q'' \alpha_{d-r-1} \Vert_{\R/\Z} \ll \delta^{-O(1)}/N^{d-r-1}$. Setting $\tilde q := qq''$ we now clearly have $1 \leq \tilde q \ll \delta^{-O(1)}$ and also $\Vert \tilde q \alpha_{d-i}\Vert_{\R/\Z} \ll \delta^{-O(1)}/N^{d-i}$ for $i = 0,\dots,r+1$.

This concludes the proof of the inductive step and hence of the proposition.

We will also need a ``strong recurrence'' result for polynomials $g : \Z \rightarrow \R$, generalizing the linear result, Lemma \ref{strong-linear}, that we obtained in the last section. This is in fact an easy deduction from the Proposition \ref{weyl-dichotomy} and Lemma \ref{strong-linear}.

\begin{lemma}[Strongly recurrent polynomials are highly non-diophantine]\label{strong-polynomial}
Let $d \geq 0$, and suppose that $g : \Z \rightarrow \R$ is a polynomial sequence of degree $d$. Suppose that $0 < \delta < 1/2$ and $\epsilon \leq \delta/2$, that $I \subseteq \R/\Z$ is an interval of length $\epsilon$, and that $g(n)\md{\Z} \in I$ for at least $\delta N$ values of $n \in [N]$. Then there is a $k \in \Z$, $0 < |k| \ll \delta^{-O_d(1)}$, such that $\Vert kg \md{\Z}\Vert_{C^{\infty}[N]} \ll \epsilon \delta^{-O_d(1)}$. 
\end{lemma}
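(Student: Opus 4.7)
The plan is to prove the lemma by induction on $d$, in the same spirit as the passage from Lemma \ref{strong-linear} to Lemma \ref{weyl} and Proposition \ref{weyl-dichotomy}. The base case $d = 1$ is exactly Lemma \ref{strong-linear}. For the inductive step I would split according to the size of $\epsilon$. In the ``large'' regime $\epsilon \geq \delta^{C_d}$ (for a suitably large $C_d = O_d(1)$ to be chosen), a Lipschitz approximation of $1_I$ of Lipschitz norm $O(1/\epsilon)$ turns the hypothesis into failure of $c\delta\epsilon$-equidistribution for $(g(n)\md{\Z})_{n\in[N]}$, and Proposition \ref{weyl-dichotomy} produces $k$ with $|k| \ll (\delta\epsilon)^{-O_d(1)}$ and $\|kg\|_{C^{\infty}[N]} \ll (\delta\epsilon)^{-O_d(1)}$; since $\epsilon \geq \delta^{C_d}$, both bounds simplify to $\ll \delta^{-O_d(1)}$, and trading a power of $\delta$ for a factor $\epsilon \cdot \epsilon^{-1}$ yields $\|kg\|_{C^{\infty}[N]} \ll \epsilon\delta^{-O_d(1)}$ as required.

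In the remaining regime $\epsilon < \delta^{C_d}$ I would run a van der Corput--style pigeonhole. The $(\delta N)^2$ ordered pairs $(n_1,n_2) \in [N]^2$ with $g(n_1), g(n_2) \in I \md{\Z}$ produce, upon pigeonholing over $h = n_2 - n_1 \in [-N,N]$, at least $\gg \delta^2 N$ values of $h$ for which the polynomial $\partial_h g(n) := g(n+h) - g(n)$ of degree at most $d-1$ lies mod $\Z$ in $I - I$ (an interval of length $2\epsilon$) for $\gg \delta^2 N$ values of $n \in [N]$. Smallness of $\epsilon$ ensures the condition $\epsilon' \leq \delta'/2$ needed to invoke the inductive hypothesis on $\partial_h g$, producing for each such $h$ some $q_h \ll \delta^{-O_d(1)}$ with $\|q_h \partial_h g\|_{C^{\infty}[N]} \ll \epsilon\delta^{-O_d(1)}$. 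Pigeonholing the $q_h$ to a common $q$ and reading off the leading Taylor coefficient of $\partial_h g$, which equals $\alpha_d h$, yields $\|q\alpha_d h\|_{\R/\Z} \ll \epsilon\delta^{-O_d(1)}/N^{d-1}$ for $\gg \delta^{O_d(1)} N$ values of $h \in [N]$. A single application of Lemma \ref{strong-linear} to the linear map $h \mapsto q\alpha_d h$ then upgrades this to $k_1 \ll \delta^{-O_d(1)}$ with $\|k_1 \alpha_d\|_{\R/\Z} \ll \epsilon\delta^{-O_d(1)}/N^d$.

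With the top coefficient pinned down, I would reduce to a degree $d-1$ problem exactly as in the proof of Proposition \ref{weyl-dichotomy}. Writing $k_1\alpha_d = m_d + O(\epsilon\delta^{-O_d(1)}/N^d)$ with $m_d \in \Z$ and setting $\tilde g(n) := g(n) - \alpha_d \binom{n}{d}$, the integrality $m_d \binom{n}{d} \in \Z$ gives $k_1 g(n) \equiv k_1 \tilde g(n) + O(\epsilon\delta^{-O_d(1)}) \md{\Z}$ for $n \in [N]$. Pigeonholing the level set of $g$ across the $|k_1|$ sub-intervals into which $k_1 \cdot I$ decomposes mod $\Z$, I find $\gg \delta^{O_d(1)}N$ values of $n$ with $k_1 \tilde g(n) \md{\Z}$ in an interval of length $O(\epsilon\delta^{-O_d(1)})$. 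A second invocation of the inductive hypothesis, now applied to the degree-$(d-1)$ polynomial $k_1 \tilde g$, yields $k_2 \ll \delta^{-O_d(1)}$ with $\|k_2 k_1 \tilde g\|_{C^{\infty}[N]} \ll \epsilon\delta^{-O_d(1)}$. Setting $k := k_2 k_1$, the Taylor coefficients of $kg$ and $k\tilde g$ coincide in degrees $j < d$, while the degree-$d$ coefficient is controlled by $|k_2|$ times the $k_1$-bound on $\alpha_d$, giving $\|kg\|_{C^{\infty}[N]} \ll \epsilon\delta^{-O_d(1)}$. The only nontrivial care is in the bookkeeping of polynomial losses in $\delta$ accumulated through the two nested inductive steps and in choosing $C_d$ large enough to make the constraints $\epsilon' \leq \delta'/2$ automatic at each invocation; this is the sole reason for the case split.
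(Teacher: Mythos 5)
Your argument is correct in strategy but takes a genuinely different route from the paper's. The paper does not run a fresh induction on $d$: instead it observes that for every integer $1 \leq \lambda \leq \delta/(2\epsilon)$ the dilated polynomial $\lambda g$ concentrates on the interval $\lambda I$ of length $\lambda\epsilon \leq \delta/2$, applies Proposition \ref{weyl-dichotomy} (already proved) to each $\lambda g$ so as to control $\|q_\lambda \lambda\alpha_j\|_{\R/\Z}$ for all coefficients $j$ simultaneously, pigeonholes the denominators $q_\lambda$, and finally applies Lemma \ref{strong-linear} once per $j$ \emph{in the dilation variable $\lambda$} (over a range of size $\sim \delta/\epsilon$), which is exactly where the extra factor of $\epsilon$ emerges. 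Your proof instead re-runs the whole Weyl differencing machine directly on the recurrence statement: van der Corput passes to $\partial_h g$ of degree $d-1$, the inductive hypothesis pins down $\alpha_d$, and a second inductive call on $k_1\tilde g$ handles the remaining coefficients. This is self-contained and conceptually clean, but it requires two nested invocations of the inductive hypothesis per level and correspondingly more careful bookkeeping of the polynomial $\delta$-losses and of the running constraints $\epsilon' \leq \delta'/2$; the paper's dilation trick avoids this by pushing the whole degree induction into the black box Proposition \ref{weyl-dichotomy}, leaving only one application of the linear lemma. Two small points in your write-up deserve tightening but do not affect soundness: (a) a Lipschitz approximation of $1_I$ with Lipschitz constant $O(1/\epsilon)$ does not by itself give failure of $c\delta\epsilon$-equidistribution once $\epsilon$ is comparable to $\delta/2$, since the excess $\delta - O(\epsilon)$ can vanish or go negative; one should instead approximate the indicator of a co-centred interval of length $\sim\delta$, obtaining failure of $c\delta^2$-equidistribution, which still suffices in the regime $\epsilon \geq \delta^{C_d}$; (b) when you pass from $g(n)\in I$ to $k_1 g(n) \md{\Z}$, the image $k_1\cdot I$ reduces mod $\Z$ to a \emph{single} arc of length $|k_1|\epsilon$ (assuming $|k_1|\epsilon < 1$, which holds in your regime), so no pigeonholing over ``$|k_1|$ sub-intervals'' is actually needed and no extra $\delta$-loss is incurred at that step.
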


\proof In this proof we allow all implied constants to depend on $d$. If $\epsilon \gg \delta^{C_d}$ for some large $C_d$ depending only on $d$ then the result follows immediately from Proposition \ref{weyl-dichotomy}, so assume this is not the case. Expand $g$ in a Taylor series as in \eqref{taylor-1}, with coefficients $\alpha_0,\dots,\alpha_d$. It follows from the assumption that none of the polynomials $\lambda g$, $\lambda \leq \delta/2\epsilon$, is $\delta^{O(1)}$-equidistributed on $[N]$. Thus by Proposition \ref{weyl-dichotomy} we have see that for each $\lambda \leq \delta/2\epsilon$ there is $q_{\lambda} \ll \delta^{-O(1)}$ such that $\Vert q_{\lambda} \lambda \alpha_i \Vert_{\R/\Z} \ll \delta^{-O(1)}/N^i$ for $i = 0,\dots,d$. Pigeonholing in the possible values of $q_{\lambda}$ we see that there is $q \ll \delta^{-O(1)}$ such that for $\gg \delta^{O(1)}/\epsilon$ values of $\lambda \leq \delta/2\epsilon$ we have $\Vert \lambda q \alpha_i\Vert_{\R/\Z} \ll \delta^{-O(1)}/N^i$ for each $i = 0,\dots,d$. It follows from Lemma \ref{strong-linear} that for each $i$ there is $q_i \ll \delta^{-O(1)}$ such that $\Vert q_i\alpha_i \Vert_{\R/\Z} \ll \epsilon \delta^{-C_d}/N^i$. Writing $\tilde q := q_1 \dots q_d$ we see that $\tilde q \ll \delta^{-O(1)}$ and that $\Vert q \alpha_i \Vert_{\R/\Z} \ll \epsilon \delta^{-O(1)}/N^i$ for all $i$. This concludes the proof of the proposition.\endproof 

\section{The Heisenberg example}\label{heisenberg-sec}

In this section we discuss the first example which is not just a rephrasing of classical work on equidistribution, establishing Theorem \ref{main-theorem} for a linear sequence on the Heisenberg nilmanifold \eqref{heisen}, thus $s=d = 2$, and $m=3$.
Strictly speaking, this section is not necessary in order to prove Theorem \ref{main-theorem} in the general case, however we present this ``worked example'' here in order to illustrate the key ideas of the main argument in a simplified model setting.  (Also, a key computation in this setting, namely Proposition \ref{bracket-poly-lem}, will be reused in the main argument.) As in the preceding section, the idea is to use van der Corput's inequality to reduce the problem to a simpler problem, and in particular to reduce to a ``$1$-step'' or ``abelian'' problem that can be treated by the tools of the previous section.  This turns out to work, but it will take a certain amount of algebraic manipulation to see the $1$-step structure emerge from van der Corput's inequality applied to the $2$-step Heisenberg situation.

Let us begin with a brief tour of the Heisenberg example \eqref{heisen}.
We have $\g = \left(\begin{smallmatrix} 0 & \R & \R \\ 0 & 0 & \R \\ 0 & 0 & 0\end{smallmatrix}\right)$, with the exponential map being given by
\[ \exp\left(\begin{smallmatrix} 0 & x & y \\ 0 & 0 & z \\ 0 & 0 & 0\end{smallmatrix}\right) = \left(\begin{smallmatrix} 1 & x & y + \frac{1}{2}xz \\ 0 & 1 & z \\ 0 & 0 & 1\end{smallmatrix}\right)\] and the logarithm map by
\[ \log\left(\begin{smallmatrix} 1 & x & y \\ 0 & 1 & z \\ 0 & 0 & 1\end{smallmatrix}\right) = \left(\begin{smallmatrix} 0 & x & y - \frac{1}{2}xz \\ 0 & 0 & z \\ 0 & 0 & 0\end{smallmatrix}\right).\]
Observe that $\log \Gamma$ is not quite a lattice in $\R^3$, although it is a finite union of lattices.

Consider the elements $X_1,X_2,X_3 \in \g$ defined by $X_1 := \left(\begin{smallmatrix} 0 & 1 & 0 \\ 0 & 0 & 0 \\ 0 & 0 & 0\end{smallmatrix}\right)$, $X_2 := \left(\begin{smallmatrix} 0 & 0 & 0 \\ 0 & 0 & 1 \\ 0 & 0 & 0\end{smallmatrix}\right)$ and $X_3 := \left(\begin{smallmatrix} 0 & 0 & 1 \\ 0 & 0 & 0 \\ 0 & 0 & 0\end{smallmatrix}\right)$. It is easy to see that $\mathcal{X} = \{X_1,X_2,X_3\}$ is a Mal'cev basis adapted to the lower central series filtration $G_{\bullet}$. A simple computation confirms that
\[ \exp(t_1 X_1)\exp(t_2 X_2)\exp(t_3X_3) = \left(\begin{smallmatrix} 1 & t_1 & t_1t_2 + t_3 \\ 0 & 1 & t_2 \\ 0 & 0 & 1\end{smallmatrix}\right),\] and so the Mal'cev coordinate map $\psi_\X: G \to \R^3$ is given by
\[ \psi_\X \left(\begin{smallmatrix} 1 & x & y \\ 0 & 1 & z \\ 0 & 0 & 1\end{smallmatrix}\right) = (x,z,y - xz).\]
The horizontal torus is isomorphic to $(\R/\Z)^2$, and the projection $\pi: G \to (\R/\Z)^2$ is given by $\pi \left(\begin{smallmatrix} 1 & x & y \\ 0 & 1 & z \\ 0 & 0 & 1\end{smallmatrix}\right) = (x,z)$.

We shall be working through the special case of Theorem \ref{main-theorem} in the case when $g : \Z \rightarrow G$ is a linear sequence. To simplify the exposition very slightly we will assume that this sequence has no constant term, thus $g(n) = a^n$ for some $a \in G$. Note that $g \in \poly(\Z,G_{\bullet})$, where $G_{\bullet}$ is the lower central series filtration. Thus the sequence $g$ has degree $2$.

\begin{proposition}[Main theorem, Heisenberg case]\label{heisenberg-case}
Let $G/\Gamma$ be the $2$-step Heisenberg nilmanifold with the Mal'cev basis $\X$ described above, and let $g : \Z \rightarrow G$ be a linear sequence of the form $g(n) = a^n$. Let $\delta > 0$ be a parameter and let $N \geq 1$ be an integer. Then either $(g(n)\Gamma)_{n \in [N]}$ is $\delta$-equidistributed, or else there is a horizontal character $\eta$ with $0 < |\eta| \ll \delta^{-O(1)}$ such that $\Vert \eta(a)\Vert_{\R/\Z} \ll \delta^{-O(1)}/N$.
\end{proposition}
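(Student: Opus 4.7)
The plan is to mirror the van der Corput strategy developed in Section \ref{sec4}, reducing this $2$-step problem on the Heisenberg nilmanifold to a $1$-step problem on the horizontal torus $\R^2/\Z^2$, where the quantitative Kronecker theorem (Proposition \ref{quant-kron}) applies directly.

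First I apply the vertical-oscillation reduction (Lemma \ref{vert-freq-red}) to replace the failure of $\delta$-equidistribution by the failure of $\delta^{O(1)}$-equidistribution along some vertical character $\xi : G_2 \to \R/\Z$ of frequency $|k| \ll \delta^{-O(1)}$. In the trivial case $k = 0$, the test function $F$ is $G_2$-invariant, hence descends to the horizontal torus, and the failure of equidistribution is witnessed on the horizontal sequence $(\pi(g(n)))_{n \in [N]} = (n\alpha_1, n\alpha_3 \bmod \Z^2)_{n \in [N]}$; Proposition \ref{quant-kron} then immediately produces integers $l_1,l_2$ with $\max(|l_1|,|l_2|) \ll \delta^{-O(1)}$ and $\|l_1\alpha_1+l_2\alpha_3\|_{\R/\Z} \ll \delta^{-O(1)}/N$, which is the required horizontal-character condition since $\eta(a) = l_1\alpha_1+l_2\alpha_3$.

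When $k \neq 0$ the real work begins. Since a function of nontrivial vertical oscillation automatically has zero integral on $G/\Gamma$, the failure of equidistribution reads $|\E_n F(g(n)\Gamma)|\gg \delta^{O(1)}$. Apply the van der Corput inequality (Corollary \ref{vdc}) to $a_n := F(g(n)\Gamma)$ to obtain $\gg \delta^{O(1)}N$ values of $h \in [N]$ for which
\[ \left|\E_n F(a^h g(n)\Gamma)\,\overline{F(g(n)\Gamma)}\right|\gg \delta^{O(1)}. \]
Set $\tilde F_h(x) := F(a^h x)\overline{F(x)}$. Because $G_2$ is central, $a^h$ commutes with every $z \in G_2$ and the two vertical oscillations of $F$ cancel in the product; thus $\tilde F_h$ is $G_2$-invariant and descends to a Lipschitz function $\Psi_h$ on $\R^2/\Z^2$. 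A direct computation in Mal'cev coordinates using the matrix product formula for $a^h x$ yields
\[ \Psi_h(t_1,t_2) = c_h\, e(-kh\alpha_3 t_1)\, \tilde F^{\flat}(t_1+h\alpha_1,\,t_2+h\alpha_3)\,\overline{\tilde F^{\flat}(t_1,t_2)}, \]
where $c_h$ has unit modulus and $\tilde F^\flat$ denotes the horizontal factor of $F$ after stripping its vertical oscillation; the compensating phase $e(-kh\alpha_3 t_1)$ is precisely what converts the twisted periodicity of $\tilde F^\flat$ into honest $\Z^2$-periodicity of $\Psi_h$.

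Now Fourier-expand $\Psi_h$ on $\R^2/\Z^2$ and truncate at frequency $Q := \delta^{-O(1)}$ using Lipschitz decay of Fourier coefficients. For each $h$ in the large set, $|\E_n \Psi_h(n\alpha_1,n\alpha_3)|\gg \delta^{O(1)}$ forces that either \textbf{(a)} there is a nonzero $(l_1,l_2) \in \Z^2$ with $|l_1|,|l_2| \leq Q$ and $\|l_1\alpha_1+l_2\alpha_3\|_{\R/\Z} \ll \delta^{-O(1)}/N$; or \textbf{(b)} $|\widehat{\Psi}_h(0,0)| = |\int_{\R^2/\Z^2}\Psi_h| = |\langle a^h F, F\rangle_{L^2(G/\Gamma)}| \gg \delta^{O(1)}$. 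In case (a) even a single $h$ produces the desired horizontal character $\eta(g) := l_1 t_1(g) + l_2 t_2(g) \bmod \Z$ and we are done. In case (b), which must hold for $\gg \delta^{O(1)}N$ values of $h$, the explicit form of $\int\Psi_h$ computed above reveals a bracket-polynomial-type structure of the shape $e(kh^2\alpha_1\alpha_3)$ times an almost-periodic function of $h\alpha_1$ and $h\alpha_3$; extracting a linear Diophantine obstruction on $\alpha_1,\alpha_3$ from this bilinear expression is precisely the content of the key computation Proposition \ref{bracket-poly-lem}, which combines a further van der Corput step with the strong-recurrence tool Lemma \ref{strong-linear} to dissect the twisted autocorrelation into a linear horizontal character.

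The main obstacle is case (b): converting the genuinely $2$-step (bilinear) Heisenberg obstruction into a $1$-step (linear) obstruction on the horizontal torus, which is the heart of Proposition \ref{bracket-poly-lem}. Everything else is a routine combination of the tools assembled in Sections \ref{sec3}--\ref{sec4}.
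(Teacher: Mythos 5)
Your outline correctly identifies the skeleton of the paper's argument (vertical-oscillation reduction, van der Corput, $G_2$-invariance of the autocorrelation function, descent to the horizontal torus, and bracket-polynomial extraction), and the trivial case $k=0$ is handled exactly as in the paper. However, there is a genuine gap in the $k\ne 0$ case at the step where you Fourier-expand $\Psi_h$ and truncate at frequency $Q=\delta^{-O(1)}$ ``using Lipschitz decay.''

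The problem is that $\Vert \Psi_h\Vert_{\Lip}$ is \emph{not} bounded by $\delta^{-O(1)}$ uniformly in $h$. Your own formula makes this visible: with $\phi := \tilde F^{\flat}$ the twisted-periodic horizontal factor of $F$, one has
\[
\Psi_h(t_1,t_2) = c_h\,e(-kh\alpha_3 t_1)\,\phi(t_1+h\alpha_1,\,t_2+h\alpha_3)\,\overline{\phi(t_1,t_2)}.
\]
The shifted factor $\phi(t_1+h\alpha_1,t_2+h\alpha_3)$ has bounded gradient (shifting does not change the Lipschitz constant of $\phi$ on its fundamental domain), but the compensating phase $e(-kh\alpha_3 t_1)$ has gradient of size $\sim |kh\alpha_3|$; nothing cancels it, and $\Vert\Psi_h\Vert_{\Lip}\sim |kh\alpha_3|$. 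Equivalently, the $\Z^2$-Fourier expansion of $\Psi_h$ is concentrated near the frequency $(-\lfloor kh\alpha_3\rfloor, 0)$, which for $h$ in the critical range $h\sim N$ is of size $\sim N$. Truncating at frequency $\delta^{-O(1)}$ therefore discards the dominant Fourier modes, and the dichotomy between your cases (a) and (b) is incomplete: there is a third possibility, namely that the significant Fourier coefficient sits at the $h$-dependent frequency $\sim -\lfloor kh\alpha_3\rfloor$, and it is precisely there (once written out as $(-kh\alpha_3+\{kh\alpha_3\})\alpha_1+\ell\alpha_3$) that the bracket polynomial appears. An argument that keeps track of this high-frequency shift can be made to work, but it is not what your proposal does, and it is considerably more delicate than the phrasing suggests.

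The paper sidesteps this difficulty with the $G^\Box := G\times_{G_2}G$ construction. Instead of putting $a^h$ inside the test function, one puts its fractional part $\{a^h\}$ (which has bounded Mal'cev coordinates) inside the function, $F_h^{\Box}(x,y):=F(\{a^h\}x)\overline{F(y)}$, and compensates by \emph{conjugating the orbit}: evaluating along $\tilde a_h^{\,n}=(\{a^h\}^{-1}a^n\{a^h\},\,a^n)$ reproduces exactly the van der Corput correlation $F(a^{n+h}\Gamma)\overline{F(a^n\Gamma)}$. Because $|\psi(\{a^h\})|\le 1$, Lemma~\ref{approx-left} gives an $h$-uniform Lipschitz bound on $F_h^{\Box}$, so the $h$-dependence has been pushed into the polynomial sequence rather than into the test function. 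The price is that the horizontal torus of $G^{\Box}$ is $\R^3/\Z^3$ rather than your $\R^2/\Z^2$, and the bracket-polynomial step (Proposition~\ref{bracket-poly-lem}) is applied to a three-component vector; that extra dimension is exactly what encodes, in a Lipschitz-controlled way, the large frequency shift your descent to $\R^2/\Z^2$ cannot see.
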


\remark 
Note that, since $g(n)$ is linear, the last condition here is equivalent to the statement that $\Vert \eta \circ g \Vert_{C^{\infty}[N]} \ll \delta^{-O(1)}$.

\proof By Lemma \ref{vert-freq-red} we may assume that there is a function $F : G/\Gamma \rightarrow \C$ with a vertical oscillation $\xi$ with $\|\xi\| \ll \delta^{-O(1)}$, and $\Vert F \Vert_{\Lip} =1$, such that 
\begin{equation}\label{assumption} \left|\E_{n \in [N]} F(a^n \Gamma) - \int_{G/\Gamma} F\right| \gg \delta^{O(1)}.\end{equation}
We split into two cases: $\xi \equiv 0$ and $\xi \not \equiv 0$.

If $\xi \equiv 0$, then $F$ is $G_2$-invariant, which means we may factor through $\pi$ to get a function $\tilde F : \R^2/\Z^2 \rightarrow \C$ defined by
\[ F(x) = \tilde F(\pi (x)).\]
It is clear that $\Vert \tilde F\Vert_{\Lip} \leq 1$. Equation \eqref{assumption} implies that
\[ |\E_{n \in [N]} \tilde{F}(n \pi(a)) - \int_{\R^2/\Z^2} \tilde{F}| \gg \delta^{O(1)}\Vert \tilde F \Vert_{\Lip}.\] Proposition \ref{heisenberg-case} in this case now follows immediately from Proposition \ref{quant-kron}.  Note how the $G_2$-invariance allowed us to reduce a $2$-step problem into a $1$-step one.

Suppose then that $\xi \not \equiv 0$. The integral of $F$ over every translate of $G_2/(\Gamma \cap G_2)$ is then zero, and hence $\int_{G/\Gamma} F = 0$. Thus \eqref{assumption} becomes
\[ |\E_{n \in [N]} F(a^n \Gamma) | \geq \delta^{O(1)}.\]
We now come to one of the key ideas of the proof, which is to apply the van der Corput lemma, Corollary \ref{vdc}. This tells us that there are $\gg \delta^{O(1)}N$ values of $h \in [N]$ such that 
\begin{equation}\label{to-use-10}|\E_{n \in [N]} F(a^{n+h} \Gamma)\overline{F(a^n \Gamma)} | \gg \delta^{O(1)}.\end{equation}
It is very natural to try and interpret this in terms of a nilsequence on the product nilmanifold $G^2/\Gamma^2$. To do this we first observe by direct computation that any $x \in G$ may be factored uniquely as $\{x\}[x]$, where $\psi(\{x\}) \in [0,1)^3$ and $[x] \in \Gamma$. 

Let us, then, factor $a^h = \{a^h\}[a^h]$. The inequality \eqref{to-use-10} implies that
\[ |\E_{n \in [N]} F(a^n \{a^h\} \Gamma)\overline{F(a^n \Gamma)}| \gg \delta^{O(1)}\] for $\gg \delta^{O(1)}N$ values of $h$. This can be rewritten as
 \begin{equation}\label{eq55} |\E_{n \in [N]} \tilde F_h(\tilde a_h^{n} \Gamma^2)| \gg \delta^{O(1)}\end{equation} for $\gg \delta^{O(1)}N$ values of $h$,
 where $\tilde F_h : G^2/\Gamma^2 \rightarrow \C$ is given by
 \[ \tilde F_h(x,y) := F(\{a^h\} x) \overline{F(y)}\] and the element $\tilde a_h$ is given by
 \[ \tilde a_h := (\{a^h\}^{-1} a \{a^h\},a).\]
 At first sight, the estimates \eqref{eq55} do not appear much better than our original estimate \eqref{assumption}; indeed, it seems ``worse'' since we are now working on a $6$-dimensional $2$-step nilmanifold rather than a $3$-dimensional $2$-step one. 

The crucial observation, however, is that all the elements $\tilde a_h$ in fact lie not just in $G^2$, but in the smaller group
\[ G^{\Box} = G \times_{G_2} G := \{(g,g') : g^{-1} g' \in G_2\}.\] This is also a $2$-step nilpotent, connected, simply connected Lie group (of dimension 4). It is not hard to check that $[G^{\Box},G^{\Box}]$ is the \emph{diagonal group} $G_2^{\Delta} := \{(g_2,g_2) : g_2 \in G_2\}$, and that one can take for a Mal'cev basis of $G^{\Box}/\Gamma^{\Box}$ the collection $\X^{\Box} = \{X_1^{\Box},X_2^{\Box},X_3^{\Box},X_4^{\Box}\}$ given by
\[ X_1^{\Box} = \left(\begin{smallmatrix} 0 & 1 & \{0,0\} \\ 0 & 0 & 0 \\ 0 & 0 & 0 \end{smallmatrix}\right), X_2^{\Box} = \left(\begin{smallmatrix} 0 & 0 & \{0,0\} \\ 0 & 0 & 1 \\ 0 & 0 & 0 \end{smallmatrix}\right), X_3^{\Box} = \left(\begin{smallmatrix} 0 & 0 & \{1,0\} \\ 0 & 0 & 0 \\ 0 & 0 & 0 \end{smallmatrix}\right) \quad \mbox{and} \quad X_4^{\Box} = \left(\begin{smallmatrix} 0 & 0 & \{1,1\} \\ 0 & 0 & 0 \\ 0 & 0 & 0 \end{smallmatrix}\right), \] where we have written
\[ \left(\begin{smallmatrix} 0 & x & \{y,y'\} \\ 0 & 0 & z\\ 0 & 0 & 0 \end{smallmatrix}\right) := \left( \left(\begin{smallmatrix} 0 & x & y \\ 0 & 0 & z \\ 0 & 0 & 0 \end{smallmatrix}\right),\left(\begin{smallmatrix} 0 & x & y' \\ 0 & 0 & z \\ 0 & 0 & 0 \end{smallmatrix}\right)  \right).\]
This allows us to identify the horizontal torus of $G^{\Box}/\Gamma^{\Box}$ with $\R^3/\Z^3$ by projecting onto the first three coordinates.

Now \eqref{eq55} implies that for $\gg \delta^{O(1)}N$ values of $h$ we have
\begin{equation}\label{eq72} |\E_{n \in [N]} F_h^{\Box}((a_h^\Box)^n\Gamma^{\Box})| \gg \delta^{O(1)},\end{equation}
where $F_h^{\Box}$ and $a_h^{\Box}$ are the restrictions of $\widetilde{F}_h$ and $\tilde a_h$ to $G^{\Box}$, and $\Gamma^{\Box} := \Gamma \times_{\Gamma \cap G_2} \Gamma$.  By inspecting the action of $G_2^2$ on $F_h^\Box$ (and the hypothesis $\xi \not \equiv 0$) we also conclude that $\int_{G^\Box/\Gamma^\Box} F_h^\Box = 0$.

Now, the group $G^\Box$ is still $2$-step nilpotent, so we do not appear to have reduced to a $1$-step situation yet.  However, recall that $F$ has vertical oscillation $\xi$.  Using this and the fact that $g_2$ is central in $G$, we obtain
\[ F_h^{\Box}((g_2,g_2) \cdot (g,g'))  = F(\{a^h\} g_2 g) \overline{F(g_2 g')} = \xi(g_2) \overline{\xi(g_2)} F(\{a^h\} g) \overline{F(g')} = F_h^{\Box}((g,g')).\]
Thus $F_h^{\Box}$ is $[G^{\Box},G^{\Box}]$-invariant. In \eqref{eq72} we may therefore factor through the projection $\pi^{\Box}$ to obtain
\[ |\E_{n \in [N]} \tilde {F}_h(n \pi^{\Box}(\tilde a_h))| \gg \delta^{O(1)}\] for $\gg \delta^{O(1)}N$ values of $h$, where the function $\tilde F_h : \R^3/\Z^3 \rightarrow \C$ is defined by
\[ \tilde{F}_h (\pi^{\Box}(x)) = F_h^{\Box}(x \Gamma^{\Box}).\]
We leave it to the reader to check that $\Vert \tilde{F}_h\Vert_{\Lip} = O(1)$ (in the general case to follow this computation is given in more detail).  Since $F_h^\Box$ has mean zero, we see that $\tilde F_h$ has mean zero also.

We are now finally in a situation in which we may apply ``$1$-step'' tools.  Indeed, from Proposition \ref{quant-kron} we see that for each $h$ there is some $k_h^{\Box} \in \Z^3$, $|k^{\Box}_h| \ll \delta^{-O(1)}$ such that 
\[ \Vert k_h^{\Box} \cdot \pi^{\Box}(\tilde a_h) \Vert_{\R/\Z} \ll \delta^{-O(1)}/N.\]
Pigeonholing in $h$, we may assume that $k_h^{\Box} = k^{\Box}$ is independent of $h$.
Define $\eta : G^{\Box} \rightarrow \R/\Z$ by
\[ \eta(x) := k^{\Box} \cdot \pi^{\Box}(x).\] Then $\eta$ is an additive homomorphism which annihilates $[G^{\Box},G^{\Box}]$ and $\Gamma^{\Box}$, and we have
\begin{equation}\label{star-5} \Vert \eta (\tilde a_h) \Vert_{\R/\Z} \ll \delta^{-O(1)}/N\end{equation} for $\gg \delta^{O(1)} N$ values of $h \in[N]$.

Our task now is to ``piece together'' these pieces of information for many different $h$ to deduce Proposition \ref{heisenberg-case}. We begin by factoring the character $\eta$ on $G^\Box$ into two simpler components, which originate from $G$ (or $G_2$) rather than $G^\Box$.

\begin{lemma}[Decomposition of $\eta$]\label{g-box-struct}
There exist horizontal characters $\eta_1 : G \rightarrow \R/\Z$ and $\eta_2 : G_2 \rightarrow \R/\Z$ on on $G$ and $G_2$ respectively \textup{(}thus $\eta_1$ annihilates $\Gamma$ and $\eta_2$ annihilates $\Gamma \cap G_2$\textup{)} such that 
\begin{equation}\label{decomposition} \eta(g',g) = \eta_1(g) + \eta_2(g'g^{-1})\end{equation}
for all $(g,g') \in G^{\Box}$. 
Furthermore we have $| \eta_1 |, |\eta_2| \ll \delta^{-O(1)}$.
\end{lemma}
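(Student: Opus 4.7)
The plan is to exploit the fact that, because $G_2$ is central in $G$, the group $G^\Box$ factors as the commuting product of its diagonal subgroup $G^\Delta := \{(h,h) : h \in G\}$ and the subgroup $G_2 \times \{\id_G\}$ (which lies in $G^\Box$ since $g_2^{-1} \cdot \id_G \in G_2$). These two subgroups meet trivially and commute elementwise (using centrality of $G_2$), so every $(g',g) \in G^\Box$ decomposes uniquely as
\[ (g',g) \;=\; (g'g^{-1},\, \id_G) \cdot (g, g), \qquad g'g^{-1} \in G_2. \]

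Given this, I would simply define $\eta_1(g) := \eta(g, g)$ and $\eta_2(g_2) := \eta(g_2, \id_G)$. Both are continuous additive homomorphisms on $G$ and $G_2$ respectively, as restrictions of the homomorphism $\eta$ to subgroups of $G^\Box$, and applying $\eta$ to the above factorization gives the required identity \eqref{decomposition}. The lattice-annihilation properties come for free from the fact that $\eta$ annihilates $\Gamma^\Box$: if $\gamma \in \Gamma$ then $(\gamma,\gamma) \in \Gamma^\Box$, forcing $\eta_1(\gamma) = 0$; and if $\gamma_2 \in \Gamma \cap G_2$ then $(\gamma_2, \id_G) \in \Gamma^\Box$ (since $\gamma_2^{-1} \in G_2 \cap \Gamma$), forcing $\eta_2(\gamma_2) = 0$. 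Thus $\eta_1$ and $\eta_2$ are horizontal characters on $G$ and $G_2$ respectively.

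The quantitative bounds come by reading off coefficients in the Mal'cev basis. Writing
\[ \eta(g',g) \;=\; k^\Box \cdot \pi^\Box(g',g) \;=\; k_1 t_1 + k_2 t_2 + k_3 t_3 \]
with $|k^\Box| = \max(|k_1|,|k_2|,|k_3|) \ll \delta^{-O(1)}$ in the coordinates relative to $\X^\Box$, the explicit description of $\X^\Box$ recorded just above the lemma shows that evaluating at $(g,g) \in G^\Delta$ kills the $X_3^\Box$-coordinate (so $t_3 = 0$) while $(t_1,t_2)$ matches the first two Mal'cev $\X$-coordinates of $g$. Hence $\eta_1 = (k_1,k_2,0)\cdot \psi_\X$ as a horizontal character on $G$, with $|\eta_1| \leq |k^\Box|$. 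Similarly, evaluating at $(g_2,\id_G)$ gives $(t_1,t_2)=0$ and $t_3$ equal to the Mal'cev coordinate of $g_2$ along $X_3$, giving $|\eta_2| = |k_3| \leq |k^\Box|$.

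The only genuine effort is the coordinate bookkeeping in the last step, i.e.\ verifying that the Mal'cev coordinate map for $\X^\Box$ restricts compatibly to the Mal'cev maps for $\X$ on $G^\Delta$ and for the induced basis on $G_2$; however, this is routine given the explicit form of $\X^\Box$, and the whole proof amounts to little more than unpacking the product structure $G^\Box \cong G_2 \times G^\Delta$.
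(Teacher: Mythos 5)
Your proof is correct and follows essentially the same route as the paper: both factor $(g',g) = (g'g^{-1},\id_G)\cdot(g,g)$, define $\eta_1(g) := \eta(g,g)$ and $\eta_2(g_2) := \eta(g_2,\id_G)$, read off the lattice-annihilation from $\Gamma^\Delta \subseteq \Gamma^\Box$ and $(\Gamma\cap G_2)\times\{\id_G\}\subseteq\Gamma^\Box$, and obtain the bounds from Mal'cev coordinates. You have actually worked out the coordinate bookkeeping for the bounds, which the paper leaves as an exercise, and your computation is right: in $\X^\Box$-coordinates the diagonal element $(g,g)$ has $t_3 = 0$ with $(t_1,t_2) = (\psi_\X(g)_1,\psi_\X(g)_2)$, and $(g_2,\id_G)$ has $(t_1,t_2) = 0$ with $t_3$ the $X_3$-coordinate of $g_2$, giving $|\eta_1|, |\eta_2| \leq |k^\Box| \ll \delta^{-O(1)}$. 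The observations about trivial intersection and elementwise commutativity of $G^\Delta$ with $G_2\times\{\id_G\}$ are true (the latter because $G_2$ is central in the Heisenberg $G$) but are not actually needed — the decomposition formula only requires that $\eta$ is a homomorphism to the abelian group $\R/\Z$ and that $(g'g^{-1},\id_G)\cdot(g,g) = (g',g)$.
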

\proof Since $\eta$ is an additive homomorphism we have $\eta(g',g) = \eta((g'g^{-1},1) \cdot (g,g)) = \eta(g,g) + \eta(g'g^{-1},1)$. Thus if we define $\eta_1(g) := \eta(g,g)$ and $\eta_2(g_2) := \eta(g_2,\id_G)$ then \eqref{decomposition} is immediately seen to hold. 
Now $\eta_1$ is a horizontal character because $\eta$ annihilates $\Gamma^{\Box}$, which contains $\Gamma^{\Delta}$. Furthermore $\Gamma^{\Box}$ also contains $(\Gamma \cap G_2) \times \id_G$, and hence $\eta_2$ annihilates $\Gamma \cap G_2$ as claimed. The bounds on $|\eta_1|$ and $|\eta_2|$ are left as an exercise to the reader; one may compute explicitly with the Mal'cev bases $\X^{\Box}$ and $\X$ on $G^{\Box}/\Gamma^{\Box}$ and $G/\Gamma$ respectively. \endproof

Using this decomposition and the fact that, in the Heisenberg group, we have the identity $x^{-1}yx y^{-1} = [x,y]$ since $[x,y]$ is central, we see that 
\[ \eta(\tilde a_h) = \eta_1(a) + \eta_2 ([a, \{a^h\}]).\]
Now a straightforward computation with matrices confirms that if $\psi(x) = (t_1,t_2,t_3)$ and $\psi(y) = (u_1,u_2,u_3)$ then $\psi([x,y]) = (0,0,t_1u_2- t_2u_1)$, and also that if $\psi(a) = (\gamma_1,\gamma_2,\ast)$ then $\psi(\{a^h\}) = (\{\gamma_1 h\}, \{\gamma_2 h\},\ast)$, where we do not care about the values of the coordinates marked with an asterisk $\ast$. 
Thus if we write $\gamma := (\gamma_1,\gamma_2) = \pi(a)$ and $\zeta := (-\gamma_2,\gamma_1)$ then
\[ \eta (\tilde a_h) = k_1 \cdot \gamma + k_2 \zeta \cdot \{\gamma h\},\]
where $k_1,k_2 = O(\delta^{-O(1)})$ are the frequencies of $\eta_1, \eta_2$ respectively.
Thus if \eqref{star-5} holds then 
\begin{equation}\label{eq89} \Vert  k_1 \cdot \gamma + k_2 \zeta \cdot \{\gamma h\} \Vert_{\R/\Z} \ll \delta^{-O(1)}/N\end{equation} for $\gg \delta^{O(1)} N$ values of $h$. 
The next proposition derives diophantine information concerning $\gamma$ and $\zeta$ from a hypothesis such as this. In fact we handle a slightly more general situation, since this will be useful when we come to handle the general case of Theorem \ref{main-theorem}. In the following proposition we shall take $\alpha = 0$ and $m = 2$; the proof when $\alpha = 0$ is actually considerably shorter and the reader may care to work through that case to better understand the argument.

\begin{proposition}[Bracket polynomial lemma]\label{bracket-poly-lem} Let $\delta \in (0,1)$ and let $N \geq 1$ be an integer. Suppose that $\alpha, \beta \in \R$ and that $|\alpha| \leq 1/\delta N$. Suppose that $\gamma \in \R^m/\Z^m$ and that $\zeta \in \R^m$ satisfies $|\zeta| \leq 1/\delta$. Suppose that for at least $\delta N$ values of $h \in [N]$ we have
\begin{equation}\label{assump-10} \Vert \beta + \alpha h + \zeta \cdot \{\gamma h\} \Vert_{\R/\Z} \leq 1/\delta N.\end{equation}
Then either $|\zeta_i| \ll_m \delta^{-O_m(1)}/N$ for all $1 \leq i \leq m$, or else there is some $k \in \Z^m$, $|k| \ll_m \delta^{-O_m(1)}$, such that $\Vert k \cdot \gamma \Vert_{\R/\Z} \ll_m \delta^{-O_m(1)}/N$. 
\end{proposition}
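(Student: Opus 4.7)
My approach is a contradiction argument, combining a pigeonholing reduction with the quantitative Kronecker theorem (Proposition \ref{quant-kron}) for the sequence $(\gamma h \bmod \Z^m)_{h \in [N]}$. The high-level idea is that if neither conclusion holds, then the orbit of $\gamma$ on the torus must be very well equidistributed, while the bracket polynomial hypothesis simultaneously forces $\{\gamma h\}$ to lie in a thin slab of $[0, 1)^m$ whose volume is too small to accommodate $\delta N$ values of $h$.

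I would begin by pigeonholing on the nearest integer. Since $|\alpha h| \leq 1/\delta$ and $|\zeta \cdot \{\gamma h\}| \leq m/\delta$, and we may normalise $\beta \in [0, 1)$, the expression $\beta + \alpha h + \zeta \cdot \{\gamma h\}$ lies in an interval of length $O_m(1/\delta)$, so its nearest integer $n_h$ takes $O_m(1/\delta)$ values. Some value $n$ is attained by $\gg_m \delta^2 N$ good $h$; replacing $\beta$ by $\beta - n$, we may assume
\[
\beta + \alpha h + \zeta \cdot \{\gamma h\} \in [-1/\delta N, 1/\delta N]
\]
as a real inequality (not merely modulo $\Z$), for $\gg_m \delta^2 N$ values of $h \in [N]$. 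This constrains $\{\gamma h\}$ to lie in the slab
\[
V_h := \bigl\{y \in [0, 1)^m : \zeta \cdot y \in -\beta - \alpha h + [-1/\delta N,\, 1/\delta N]\bigr\},
\]
whose volume is $O(1/(|\zeta|_\infty \delta N))$.

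Now I would argue by contradiction: suppose, for a large constant $K = K(m)$ to be chosen, that $|\zeta|_\infty > \delta^{-K}/N$ and $\|k \cdot \gamma\|_{\R/\Z} > \delta^{-K}/N$ for every $k \in \Z^m$ with $0 < |k| \leq \delta^{-K}$. By the contrapositive of Proposition \ref{quant-kron}, the latter hypothesis forces $(\gamma h \bmod \Z^m)_{h \in [N]}$ to be $\delta^c$-equidistributed on $\R^m/\Z^m$ for some $c > 0$ proportional to $K$ up to a factor depending only on $m$. Approximating the indicator of $V_h$ by a Lipschitz bump function of $L^\infty$-norm $\leq 1$, Lipschitz constant $O(|\zeta|_\infty \delta N)$, and integral $O(1/(|\zeta|_\infty \delta N))$, and applying equidistribution uniformly in the $h$-parameter, one obtains the upper bound
\[
\#\{\text{good } h\} \ll \frac{1}{|\zeta|_\infty \delta} + |\zeta|_\infty \delta^{c+1} N^2.
\]
Combined with the lower bound $\gg_m \delta^2 N$ and optimised over admissible $|\zeta|_\infty \in (\delta^{-K}/N, 1/\delta]$, this produces a contradiction once $K$ (hence $c$) is sufficiently large.

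The principal obstacle is the trade-off in this final counting step: the Lipschitz norm of the slab approximation scales as $|\zeta|_\infty \delta N$, while the slab volume scales as $1/(|\zeta|_\infty \delta N)$, and $|\zeta|_\infty$ ranges over several orders of magnitude. To force both terms below $\delta^2 N$ throughout this range requires $c$ to be taken polynomially large in $m$, with careful tracking of implicit constants from Proposition \ref{quant-kron}. A secondary subtlety is that $V_h$ depends on $h$ through the $\alpha h$ term; since $|\alpha| N \leq 1/\delta$, this motion is mild and can be absorbed either by a union bound over the $O(|\zeta|_\infty/\delta)$ integer-shift positions of the slab, or by passing to the enlarged torus $\R^{m+1}/\Z^{m+1}$ and tracking $(\gamma h, \alpha h)$ jointly via Proposition \ref{quant-kron}.
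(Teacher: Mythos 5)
Your overall strategy --- contradiction via quantitative Kronecker combined with a thin-slab volume count --- is the same as the paper's, but two steps contain genuine gaps that you misdiagnose as merely technical.

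The first is the Lipschitz scale. You thicken at the scale of the slab width $\sim 1/(|\zeta|_\infty\delta N)$, getting Lipschitz constant $O(|\zeta|_\infty\delta N)$, so the error term in your count bound is $|\zeta|_\infty\delta^{c+1}N^2$. When $|\zeta|_\infty$ is near its permitted ceiling $1/\delta$, this is $\sim\delta^cN^2$, and $\delta^cN^2\ll\delta^2N$ forces $N\ll\delta^{2-c}$: a bounded range of $N$. No choice of $c$ (hence $K$) fixes this, since the error scales like $N^2$ while the target $\delta^2N$ scales like $N$; a polynomial gain in $\delta$ cannot beat a factor of $N$. Your claim that the trade-off closes once $K$ is large enough is therefore incorrect. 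The correct move --- and the one the paper makes --- is to decouple the thickening scale from the slab width. Since one may assume $\sup_i|\zeta_i|\gg\delta^{O(1)}/N$ (else conclusion (i) holds), the hypothesis $\|\beta+\alpha h+\zeta\cdot\{\gamma h\}\|_{\R/\Z}\leq 1/\delta N$ may be \emph{weakened} to $\|\theta+\zeta\cdot\{\gamma h\}\|_{\R/\Z}\leq\frac{\delta}{10}\sup_i|\zeta_i|$, whose associated slab has geometric width $\sim\delta$ in the maximising coordinate, independent of $N$ and of $|\zeta|_\infty$; a bump at thickening scale $\delta$ then has Lipschitz constant $O(1/\delta)$, and the argument closes with a fixed exponent.

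The second is the $\alpha h$ drift, which you flag but do not handle. As $h$ runs over $[N]$, the centre of $I(h):=-\beta-\alpha h+[-1/\delta N,1/\delta N]$ moves through a range of length up to $|\alpha|N\leq 1/\delta$ while $|I(h)|=2/\delta N$, so there are $\sim N$ essentially distinct slab positions, not $O(|\zeta|_\infty/\delta)$; a naive union bound is hopeless. The $(m+1)$-dimensional lift is not a free fix either: since $|\alpha|\leq 1/\delta N$, the frequency $(0,1)$ \emph{always} satisfies $\|(0,1)\cdot(\gamma,\alpha)\|_{\R/\Z}\leq 1/\delta N$, so Proposition~\ref{quant-kron} applied on $\R^{m+1}/\Z^{m+1}$ may simply return a frequency $(0,k_{m+1})$ that says nothing about $\gamma$ --- ruling that case out requires further argument (the paper does a version of this lift inside Claim~\ref{cl}, and it genuinely takes work there). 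The paper instead prepares in two steps you omit: Lemma~\ref{strong-linear} first upgrades $|\alpha|\leq 1/\delta N$ to $|\alpha|\ll\sup_i|\zeta_i|\,\delta^{-O(1)}/N$, and then $[N]$ is partitioned into subintervals $I$ of length $\sim\delta^{O(1)}N$ on which $\alpha h$ moves by $\ll\delta\sup_i|\zeta_i|$, negligible at the scale of the widened slab. The equidistribution argument is then run on a single well-chosen $I$, with Proposition~\ref{quant-kron} giving $\|k\cdot\gamma\|_{\R/\Z}\ll\delta^{-O(1)}/|I|\ll\delta^{-O(1)}/N$.
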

\proof If $\sup_i |\zeta_i| \leq 1/\delta N$ then we are done, so assume this is not the case. Then the assumption implies that $\Vert \beta + \alpha h \Vert_{\R/\Z} \leq (1 + m)\sup_i|\zeta_i|$ for $\geq \delta N$ values of $h \in [N]$. Then Lemma \ref{strong-linear} implies that there is $q \ll \delta^{-C}$ such that $\Vert q \alpha \Vert_{\R/\Z} \ll_m \sup_i|\zeta_i|\delta^{-C}/N$ for some absolute constant $C > 0$. Since we are assuming that $|\alpha| \leq 1/\delta N$ this forces us to conclude that in fact $|\alpha| \ll_m \sup_i |\zeta_i|\delta^{-C}/N$ unless $N \ll_m \delta^{-O(1)}$, in which case the result is trivial in any case.

Split $[N]$ into intervals of length between $N'$ and $2N'$, where $N' := c_m \delta^{C + 1} N$ and $c_m > 0$ is a small number to be chosen later. By the pigeonhole principle, we can find one of these intervals $I$ in which there are $\geq \delta |I|$ values of $h$ such that \eqref{assump-10} holds. If $c_m$ is chosen sufficiently small then $\alpha h$ does not vary by more than $\frac{\delta}{20} \sup_i|\zeta_i|$ on such an interval, and we conclude that there is $\theta$ such that 
\[ \Vert \theta + \zeta \cdot \{\gamma h\}\Vert_{\R/\Z} \leq \frac{\delta}{20} \sup_i|\zeta_i| + \frac{1}{\delta N}\] for at least $\delta |I|$ values of $h \in I$.
Now if $\sup_i |\zeta_i| \leq \frac{20}{\delta^2 N}$ then the proposition holds, so we may assume that this is not the case, in which eventuality we have
\begin{equation}\label{to-handle} \Vert \theta + \zeta \cdot \{\gamma h\}\Vert_{\R/\Z} \leq \frac{\delta}{10} |\zeta_i| \end{equation} for some $i \in [m]$ and for at least $\delta |I|$ values of $h \in I$.  We then set
\[ \Omega := \left\{t \in \R^m/\Z^m : \Vert \theta + \zeta \cdot \{t\} \Vert_{\R/\Z} \leq \frac{\delta}{10} |\zeta_i|\right\}\]
and 
\[ \tilde\Omega := \{ x \in \R^m/\Z^m: \dist(x,\Omega) < \delta/10 \}.\]
For fixed $u \in \R^m/\Z^m$ the slice
\[ \{ t \in \tilde\Omega : t_j = u_j \;\;\mbox{for $j \neq i$}\}\]
is a union of intervals of length less than $\delta/2$, and so $\vol(\tilde\Omega) \leq \delta/2$. 
Let $F: \R^m/\Z^m \to \R^+$ be the function
\[ F(x) := \max\left(1 - \frac{10\dist(x,\Omega)}{\delta},0\right).\]
Then $F = 1$ on $\Omega$ and so our assumption implies that
\begin{equation}\label{fact-11} \E_{n \in I} F(\gamma n) \geq \delta .\end{equation}
On the other hand $F$ is supported on $\widetilde{\Omega}$ and so
\begin{equation}\label{fact-12} \int_{\R^m/\Z^m} F(x) \, dx \leq \vol(\tilde{\Omega}) \leq \frac{\delta}{2}.\end{equation}
Thus of course
\[|\E_{n \in I} F(\gamma n) - \int_{\R^m/\Z^m} F(x)\, dx| \leq \frac{\delta}{2}.\]
However $F$ has been constructed so that $\Vert F \Vert_{\Lip} \ll 1/\delta$ (we leave this as an exercise) and so we conclude that $(\gamma n)_{n \in I}$ is not $c\delta^2$-equidistributed. 
Applying Proposition \ref{quant-kron} we conclude that there is $1 \leq k \ll \delta^{-O_m(1)}$ such that $\Vert k \cdot \gamma \Vert_{\R/\Z} \ll \delta^{-O_m(1)}/N' \ll \delta^{-O_m(1)}/N$, and the claim follows.
\endproof

Recall that in our efforts to prove Proposition \ref{heisenberg-case} had established the condition \eqref{eq89}. Applying Proposition \ref{bracket-poly-lem} and recalling that $\gamma = (\gamma_1,\gamma_2)$ and $\zeta = (-\gamma_2,\gamma_1)$ we see that in all cases there is some nonzero $k' \in \Z^2$ with $|k'| \ll \delta^{-O(1)}$ such that $\Vert k' \cdot \gamma\Vert_{\R/\Z} \ll \delta^{-O(1)}/N$, that is to say $\Vert k' \cdot \pi(a) \Vert_{\R/\Z} \ll \delta^{-O(1)}/N$. This concludes the proof of Proposition \ref{heisenberg-case}.\endproof

Let us pause for a moment to consider the form of the argument just presented. There were two places where we reduced matters to a simpler situation. First of all in the case $\xi \equiv 0$ we were able to consider $F$ as a function on a $1$-step nilmanifold. Secondly when we applied the van der Corput trick we found ourselves with a function $F_h^{\Box}$ which had 0 as a vertical frequency, and so we were again able to reduce to the $1$-step case, although we had to restrict the ambient nilmanifold (from $G^2/\Gamma^2$ to $G^\Box/\Gamma^\Box$) and also quotient out by a commutator group $[G^\Box,G^{\Box}]$ before the $1$-step structure became manifest. This already makes it clear that some kind of induction is going on, and in the general case we will see this quite clearly.

\section{Polynomial sequences in nilpotent groups}
\label{poly-sequences-sec}

Our analysis of linear sequences on the Heisenberg example captured much of the essence of the proof of Theorem \ref{main-theorem} in general. What it did not reveal, however, was the rather subtle structure of the space of polynomial sequences $g : \Z \rightarrow G$. In this section we begin by establishing a remarkable result of Lazard \cite{lazard}, which asserts that $\poly(\Z,G_{\bullet})$ is a group for any filtration $G_{\bullet}$. Lazard's proof uses the Lie algebra $\g$ and it works if $G$ is a connected and simply-connected Lie group (as in the present paper). However it turns out that the result is true with no topological assumptions on $G$, and indeed in the greater generality of so-called polynomial mappings from $H$ to $G$, where $H$ is an arbitrary group. This result is due to Leibman \cite{leibman-group-2} (see also \cite{leibman-group-1} for a proof of the special case $H = \Z$).

We will then use the Lazard-Leibman results to derive sundry further results concerning the representation of elements of $\poly(\Z,G_{\bullet})$ in coordinates. In fact, keeping in mind our intention to prove multiparameter results in \S \ref{sec12}, we develop the theory of polynomial maps $\poly(\Z^t,G_{\bullet})$. 

\begin{definition}[Polynomial maps]
Let $H$ be a group and let $G$ be a nilpotent group with a filtration $G_{\bullet}$. If $g : H \rightarrow G$ is a map and if $h \in H$ we write $\partial_h g$ for the map defined by $\partial_h g(x) = g(xh)g(x)^{-1}$. We say that $g$ is a polynomial map with coefficients in $G_{\bullet}$ if we have $\partial_{h_i}\dots \partial_{h_1}g(x) \in G_i$ for all choices of $i$ and for all $h_1,\dots,h_i \in H$ and $x \in G$. We write $\poly(H,G_{\bullet})$ for the collection of all such mappings. If $g : H \rightarrow G$ is a map we say that $g$ is a \emph{polynomial sequence of degree at most $d$} if there exists a filtration $G_{\bullet}$ of degree at most $d$ such that $g$ has coefficients in $G_{\bullet}$.
\end{definition}

\begin{proposition}[Lazard-Leibman theorem \cite{leibman-group-2}]\label{poly-precise}  Let $H$ be a group, let $G$ be a nilpotent group, and let $G_{\bullet}$ be a filtration. Then $\poly(H,G_{\bullet})$, the space of polynomial maps $g: H \to G$ having coefficients in $G_{\bullet}$, is a group.
\end{proposition}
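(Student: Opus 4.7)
I would prove a slightly stronger statement by induction. For each $i \geq 0$, let $\poly_i(H,G_\bullet)$ denote the set of maps $g: H \to G$ such that $\partial_{h_j}\cdots\partial_{h_1} g$ takes values in $G_{i+j}$ for every $j \geq 0$ and every $h_1,\dots,h_j \in H$; in particular $\poly_0(H,G_\bullet) = \poly(H,G_\bullet)$, and $g \in \poly_i$ forces $g$ itself to take values in $G_i$. The plan is to establish simultaneously, for all $i,j \geq 0$, the pair of statements

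(a) $\poly_i(H,G_\bullet)$ is closed under pointwise multiplication and inversion;

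(b) the pointwise commutator satisfies $[\poly_i,\poly_j] \subseteq \poly_{i+j}$.

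The case $i=0$ of (a) is then the proposition.

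\textbf{Induction setup.} Both claims are proved simultaneously by downward induction, on $i$ for (a) and on $i+j$ for (b). The base cases are trivial once the relevant filtration piece vanishes: if $i > d$ then $\poly_i$ contains only the constant map $\id_G$, and if $i+j > d$ then $[g_1(x),g_2(x)] \in [G_i,G_j] \subseteq G_{i+j} = \{\id_G\}$, so (b) reduces to the constant-map case.

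\textbf{Key identity.} The inductive step rests on the Leibniz-type identity
\[
\partial_h(g_1 g_2)(x) \;=\; (\partial_h g_1)(x) \cdot [g_1(x),(\partial_h g_2)(x)] \cdot (\partial_h g_2)(x),
\]
obtained by inserting $g_1(x)^{-1}g_1(x)$ into $g_1(xh)g_2(xh)g_2(x)^{-1}g_1(x)^{-1}$ and using the trivial rewriting $g_1 \cdot a \cdot g_1^{-1} = [g_1,a] \cdot a$. Suppose $g_1,g_2 \in \poly_i$. Then $\partial_h g_1,\partial_h g_2 \in \poly_{i+1}$ directly from the definition of $\poly_i$, while $[g_1,\partial_h g_2] \in [\poly_i,\poly_{i+1}] \subseteq \poly_{2i+1} \subseteq \poly_{i+1}$ by the inductive version of (b). All three factors on the right thus lie in $\poly_{i+1}$, which is a group by the inductive version of (a), so their product lies in $\poly_{i+1}$. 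Since $g_1(x)g_2(x) \in G_i$ is automatic, we conclude $g_1 g_2 \in \poly_i$. Closure under inversion and under the commutator operation (b) is proved by the completely analogous identities $\partial_h(g^{-1})(x) = g(x)^{-1}(\partial_h g)(x)^{-1} g(x)$ and a somewhat longer expansion of $\partial_h[g_1,g_2](x)$, each of which breaks the derivative into a product of factors that the simultaneous induction already controls at the strictly higher level.

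\textbf{Main obstacle.} The essential point, and the reason the strengthening to the auxiliary spaces $\poly_i$ is needed, is that the commutator correction term in the Leibniz identity would be useless if merely known to lie in $\poly_0=\poly$: the induction closes only because this term in fact lies in a strictly higher $\poly_{i'}$, which is exactly what the simultaneous statement (b) provides. Once one commits to the joint induction and arranges the well-ordering so that $\poly_{i+1}$ is available when analysing $\poly_i$, all the work is algebraic verification of identities in a group, with no further input beyond the basic filtration relation $[G_i,G_j] \subseteq G_{i+j}$.
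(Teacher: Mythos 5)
Your approach is a genuine departure from the paper's, which characterises $\poly(H,G_\bullet)$ as the maps sending parallelepipeds $H^{[k]}$ into the Host--Kra cube groups $\HK^k(G_\bullet)$ and then reads off the group property immediately from the fact that each $\HK^k(G_\bullet)$ is a subgroup of $G^{\{0,1\}^k}$. Your direct downward induction, closer in spirit to Leibman's original argument, handles multiplication cleanly: in
\[
\partial_h(g_1 g_2) = (\partial_h g_1)\cdot[g_1,\partial_h g_2]\cdot(\partial_h g_2),
\]
all three factors lie in $\poly_{i+1}$ (the middle one via (b) at sum $2i+1 \geq i+1$), and the product closes using (a) at level $i+1$; this part is correct.

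The inversion step, however, is not correctly argued. You assert that $\partial_h(g^{-1}) = g^{-1}(\partial_h g)^{-1}g$ ``breaks the derivative into a product of factors that the simultaneous induction already controls at the strictly higher level,'' but the outer factors $g^{\pm1}$ are at level $i$ -- exactly the level you are trying to establish -- so the induction does not close and the argument as written is circular. Rewriting as $\partial_h(g^{-1}) = {}^{g^{-1}}(\partial_h g)^{-1}$ and appealing to (b) does not escape this: $[g^{-1},(\partial_h g)^{-1}]$, or equivalently $[\partial_h g, g^{-1}] = {}^{g^{-1}}[g,\partial_h g]$, still leaves a conjugation by $g^{-1}$ to control, and the identity ${}^{g^{-1}}w = w\cdot[w^{-1},g^{-1}]$ reintroduces $g^{-1}$ once more. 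What is genuinely needed is an additional assertion in the simultaneous induction -- for example, that for $g \in \poly_i$ and $m > i$ conjugation by $g^{-1}$ preserves $\poly_m$ -- proved by its own downward induction on $m$ (base case $m > d$ trivial, inductive step via $\partial_{h'}({}^{g^{-1}}w) = {}^{g^{-1}}(\hbox{something in }\poly_{m+1})$). That extra layer is a missing idea, not a routine detail; a similar but milder care is required in the ``somewhat longer expansion'' of $\partial_h[g_1,g_2]$, where a naive expansion produces the factor $\partial_h g_1\cdot\partial_h g_2$ whose level $\min(i,j)+1$ is not yet available and which must be split differently. The cube-group reformulation in the paper sidesteps all of this, which is precisely its advantage.
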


\remarks  This result is contained in \cite{leibman-group-2} (although the result is only stated in the case that $G_{\bullet}$ is the lower central series filtration, the proof does not use this fact). Our proof is a little different, relying on the machinery of Host-Kra cube groups. These featured for the first time in \cite[\S 5, \S 11]{host-kra} and were discussed subsequently in \cite[Appendix E]{green-tao-linearprimes}. See also the recent preprint \cite{host-kra-linfty}. We thank Sasha Leibman for helpful conversations concerning these methods. 

One should mention at this point the Hall-Petresco theorem \cite{hall,petresco}, which established a special case of the Lazard-Leibman theorem. This theorem states that if $G_{\bullet}$ is the lower central series filtration then the sequence $n \mapsto a^n b^n$ lies in $\poly(\Z,G_{\bullet})$ for any $a,b \in G$.

In this section it is convenient to generalise the notion of a filtration somewhat. By a \emph{prefiltration} $G_{\bullet}$ on a nilpotent group $G$ we mean a sequence
\[ G \supseteq G_0 \supseteq G_1 \supseteq \dots \supseteq G_d \supseteq \{\id_G\}\] of subgroups with the property that $[G_i,G_j] \subseteq G_{i+j}$ for all $i,j \geq 0$. The only difference between a prefiltration and a filtration (cf. Definition \ref{nil-def}) is that we no longer require that $G = G_0 = G_1$. The definition of $\poly(H,G_{\bullet})$ extends in a completely obvious way to prefiltrations.

For each integer $k \geq 0$ we are going to define the \emph{Host-Kra cube group} $\HK^k(G_{\bullet})$ associated to the prefiltration $G_{\bullet}$. This will be a subgroup of $G^{\{0,1\}^k}$, the product of $2^k$ copies of $G$ indexed by the cube $\{0,1\}^k$. Before giving the definition, we need to set up some nomenclature concerning these cubes.

Each element $\omega \in \{0,1\}^k$ corresponds in an obvious way to a subset of $[k]$, and we write $\omega \subseteq \omega'$ when the corresponding sets are nested. An \emph{upper face} $F$ is a subset of $\{0,1\}^k$ of the form $F(\omega_0) := \{\omega \in \{0,1\}^k : \omega  \supseteq \omega_0\}$. There are, of course, $2^k$ upper faces, one for each $\omega_0 \in \{0,1\}^k$. The \emph{codimension} $\codim(F)$ of $F$ is simply the number of ones in $\omega_0$. Note that if $F,F'$ are two upper faces then $F \cap F'$ is also an upper face, and $\codim(F \cap F') \leq \codim(F) + \codim(F')$.

Given an upper face $F$ and an element $x \in G$ we write $x^F$ for the element of $G^{\{0,1\}^k}$ defined by
\[ (x^F)_{\omega} = \left\{ \begin{array}{ll}  x & \mbox{if $\omega \in F$} \\ \id_G & \mbox{otherwise}.  \end{array}\right.\]
Write $G_{(F)}$ for the subgroup of $G^{\{0,1\}^k}$ consisting of all elements $x^F$ with $x \in G_{\codim(F)}$, where $G_i$ is the $i$th group in the prefiltration $G_{\bullet}$; we call such a group an \emph{upper face group}.

\begin{definition}[Host-Kra cube group]\label{hk-cube-def} Let $G_{\bullet}$ be a prefiltration on a nilpotent group $G$, and let $k \geq 0$ be an integer. Then the \emph{Host-Kra cube group} $\HK^{k}(G_{\bullet})$ is the subgroup of $G^{\{0,1\}^k}$ generated by the upper face groups $G_{(F)}$.
\end{definition}

The Host-Kra cube group can, it turns out, be described in a rather explicit way. Write $\prec$ for the reverse lexicographic ordering on $\{0,1\}^k$, thus $\omega \prec \omega'$ if an only if there is some $j$ such that $\omega_j < \omega'_j$ and $\omega_i = \omega'_i$ for $i = j+1 ,\dots,k$. This induces an ordering on the upper faces $F$. We write $F(\omega) \succ F(\omega')$ if and only if $\omega \prec \omega'$. Let $F_0 \prec F_1 \prec \dots \prec F_{2^k-1}$ be the complete list of upper faces in this order; thus $F_0 = \{1^{k}\}$ and $F_{2^k-1} = \{0,1\}^{k}$.

\begin{lemma}[Description of Host-Kra cube group] \label{hk-cube-struct}
We have
\[ \HK^k(G_{\bullet}) = G_{(F_0)} \cdot G_{(F_1)} \cdot \dots G_{(F_{{2^k-1}})}.\]
That is, every element of $\HK^k(G)$ may be written as $\gamma^{F_0}_0 \dots \gamma^{F^{{2^k-1}}}_{2^k - 1}$ where $\gamma_i \in G_{\codim(F_i)}$. The representation is in fact unique.
\end{lemma}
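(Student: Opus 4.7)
The plan is to first isolate the fundamental commutator identity for face groups, then use it in a collection argument for existence, and finally read off uniqueness from a direct coordinate computation. The identity is $[G_{(F)}, G_{(F')}] \subseteq G_{(F \cap F')}$: pointwise one has $[x^F, y^{F'}] = [x,y]^{F \cap F'}$, and $[x,y] \in [G_{\codim F}, G_{\codim F'}] \subseteq G_{\codim F + \codim F'} \subseteq G_{\codim(F \cap F')}$ by the filtration property together with $\codim(F \cap F') \leq \codim F + \codim F'$. Two structural remarks accompany this: $F \cap F' = F(\omega_F \cup \omega_{F'})$ is itself an upper face, and the elementary observation $\omega \subsetneq \omega'' \Rightarrow \omega \prec \omega''$ (look at the highest differing bit) gives $F \cap F' \preceq F$ and $F \cap F' \preceq F'$ in the canonical order. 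Thus commutators of face-group elements always land in face groups that sit no later in the canonical product.

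For existence I would show that $S := G_{(F_0)} \cdots G_{(F_{2^k-1})}$ is a subgroup of $G^{\{0,1\}^k}$, whence $S = \HK^k(G_\bullet)$. Given two canonical products $P = \prod_i \gamma_i^{F_i}$ and $Q = \prod_i \delta_i^{F_i}$ I would collect the factors of $Q$ into $P$ one at a time: to move $\delta_j^{F_j}$ leftward past $\gamma_i^{F_i}$ (with $i > j$) I would use $\gamma_i^{F_i} \delta_j^{F_j} = c \cdot \delta_j^{F_j} \gamma_i^{F_i}$ with $c = [\gamma_i^{F_i}, \delta_j^{F_j}] \in G_{(F_i \cap F_j)}$. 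Since $F_i \cap F_j \preceq F_j$, the new factor $c$ either already sits at position $F_j$ (and is absorbed into the running $\delta_j$-slot by multiplication inside $G_{\codim F_j}$) or sits at a strictly earlier position, where it is handled recursively. A well-ordered induction on a complexity pair such as (least out-of-order position, number of remaining transpositions) drives the rearrangement to termination, and closure under inverses follows by reversing a canonical product and invoking multiplicative closure.

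For uniqueness, suppose $\prod_i \gamma_i^{F_i} = \prod_i \delta_i^{F_i}$; I would run a downward induction on $j$, at each stage evaluating both sides at the coordinate $\omega = \omega_j$. The $\omega_j$-component of $\gamma_i^{F_i}$ is $\gamma_i$ if $\omega_i \subseteq \omega_j$ and $\id_G$ otherwise; the observation above shows that $\omega_i \subsetneq \omega_j$ forces $i > j$, while $\omega_i = \omega_j$ forces $i = j$. Hence the $\omega_j$-entry equals $\gamma_j$ times a product of $\gamma_i$'s with $i > j$, all of which already agree with the $\delta_i$'s by the induction hypothesis; therefore $\gamma_j = \delta_j$. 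The main obstacle is really the collection step: the ordering and complexity measure must be chosen so that every commutator-generating swap either strictly reduces the global complexity or produces a factor in an already-processed slot, and the leftward-propagation property of commutators established above is exactly what makes this possible.
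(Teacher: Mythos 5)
Your proof is correct and rests on the same key commutator inclusion $[G_{(F)},G_{(F')}] \subseteq G_{(F\cap F')}$ and collection argument as the paper's, which simply says to shift all $G_{(F_0)}$-factors to the left, then the $G_{(F_1)}$-factors, and so on, leaving the routine bookkeeping and the uniqueness claim to the reader. You go further and supply the uniqueness proof (which the paper does not actually need); the observation that $\omega \subsetneq \omega''$ implies $\omega \prec \omega''$ is precisely what makes the coordinate-by-coordinate downward induction go through, and the rest of your write-up fills in exactly the details the paper omits.
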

\proof The key point here is the inclusion \begin{equation}\label{commutator}[G_{(F)}, G_{(F')}] \subseteq G_{(F \cap F')}.\end{equation} This follows immediately from the fact that \[ [G_{\codim(F)}, G_{\codim(F')}] \subseteq G_{\codim(F) + \codim(F')} \subseteq G_{\codim(F \cap F')}.\]
Using this fact repeatedly, we shift all elements coming from $G_{(F_0)}$ to the left. We then shift all elements coming from $G_{(F_1)}$ to the left, and so on. We leave the routine details and the proof that the representation is unique (which we do not actually need) to the reader.\endproof

\textsc{Host-Kra cube groups and polynomial maps.} It is now time to develop the link between Host-Kra cube groups $\HK^k(G_{\bullet})$ and polynomial maps $g \in \poly(H,G_{\bullet})$. To do this we introduce the notion of a \emph{parallelepiped} on $H$. This is an element in $H^{\{0,1\}^k}$ of the form $(xh^{\omega})_{\omega \in \{0,1\}^k}$, where $x \in H$, $h = (h_1,\dots,h_k)$ is a $k$-tuple of elements of $H$, and $h^{\omega} := h_1^{\omega_1} \dots h_k^{\omega_k}$. For example the tuple $(x,xh_1,xh_2,xh_1h_2)$ is a parallelepiped in $H^{\{0,1\}^2}$, and $(x,xh_1,xh_2,xh_1h_2,xh_3,xh_1h_2, xh_2h_3, xh_1h_2h_3)$ is a parallelepiped in $H^{\{0,1\}^3}$. Write $H^{[k]}$ for the set of parallelepipeds in $H^{\{0,1\}^k}$ (if $H$ is abelian $H^{[k]}$ is actually a group, but this need not be the case in general and in any case is not important here).

Suppose that $g : H \rightarrow G$ is a map. Then for any $k \geq 0$ there is an obvious induced map $g^{\{0,1\}^k} : H^{\{0,1\}^k} \rightarrow G^{\{0,1\}^k}$. 

\begin{proposition}[Characterization of polynomial maps]\label{poly-char}
Suppose that $H$ is a group, that $G$ is a nilpotent group together with a prefiltration $G_{\bullet}$, and that $g : H \rightarrow G$. Then $g$ lies in $\poly(H,G_{\bullet})$ if and only if $g^{\{0,1\}^k}$ maps $H^{[k]}$ to $\HK^k(G_{\bullet})$ for all $k \geq 0$.\end{proposition}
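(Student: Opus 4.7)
The plan is to prove the equivalence by induction on $k$, with both directions relying on a structural ``splitting lemma'' for $\HK^k(G_\bullet)$ that parallels the recursive definition of polynomial maps via their difference operators.

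The key lemma I would prove first is as follows. Identify $\{0,1\}^k$ with $\{0,1\}^{k-1} \times \{0,1\}$ via the last coordinate, so that $G^{\{0,1\}^k} \cong G^{\{0,1\}^{k-1}} \times G^{\{0,1\}^{k-1}}$. Then $(a,b)$ lies in $\HK^k(G_\bullet)$ if and only if $a \in \HK^{k-1}(G_\bullet)$ and the pointwise ratio $(a_{\omega'}^{-1} b_{\omega'})_{\omega'}$ lies in $\HK^{k-1}(G_{\bullet+1})$, where $G_{\bullet+1}$ denotes the shifted prefiltration with $(G_{\bullet+1})_i := G_{i+1}$. The proof sorts the $2^k$ upper faces of $\{0,1\}^k$ by the value of $\omega_0^{(k)}$: faces with $\omega_0^{(k)} = 0$ project to upper faces of the same codimension in $\{0,1\}^{k-1}$ and generate the diagonal copy of $\HK^{k-1}(G_\bullet)$, while faces with $\omega_0^{(k)} = 1$ have codimension exactly one greater in $\{0,1\}^k$ than their restriction to $\{0,1\}^{k-1}$ and so generate $\{\id_G\} \times \HK^{k-1}(G_{\bullet+1})$. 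The key computational step is to verify that $\{\id_G\} \times \HK^{k-1}(G_{\bullet+1})$ is normalized by the diagonal copy of $\HK^{k-1}(G_\bullet)$; this reduces via the commutator inclusion \eqref{commutator} to the observation that a $(G_r, G_{r'+1})$-commutator at face $F \cap F'$ of codimension at most $r+r'$ in $\{0,1\}^{k-1}$ lies in $G_{r+r'+1}$, precisely the correct group for a generator of $\HK^{k-1}(G_{\bullet+1})$.

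Granting the splitting lemma, the forward direction follows by induction on $k$, with base case $k=0$ reducing to the statement $g(x) \in G_0$, which is the $i=0$ clause of the definition of $\poly(H, G_\bullet)$. For the inductive step, a parallelepiped $(xh^\omega)_{\omega \in \{0,1\}^k}$ decomposes under our identification into a lower half $(xh^{\omega'})_{\omega' \in \{0,1\}^{k-1}}$ and upper half $(xh^{\omega'}h_k)_{\omega'}$, and each is itself an element of $H^{[k-1]}$. The image of the lower half under $g$ lies in $\HK^{k-1}(G_\bullet)$ by the inductive hypothesis applied to $g$. The pointwise ratio of upper to lower is $(\tilde\partial_{h_k}g(xh^{\omega'}))_{\omega'}$, where $\tilde\partial_h g(y) := g(y)^{-1} g(yh)$; a routine check shows $\tilde\partial_h g \in \poly(H, G_{\bullet+1})$ for every $h$ (using the identity $\tilde\partial_h g = g^{-1} \cdot (\partial_h g) \cdot g$, the definition of $\poly$, and the normality of each $G_i$ in $G$), so the inductive hypothesis applied to $\tilde\partial_{h_k} g$ places the ratio inside $\HK^{k-1}(G_{\bullet+1})$. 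The splitting lemma then assembles the two halves into an element of $\HK^k(G_\bullet)$.

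The reverse direction is the same induction run in reverse: given that $g$ satisfies the Host-Kra condition at every level $k$, the splitting lemma shows that $\tilde\partial_{h_k} g$ satisfies the Host-Kra condition at level $k-1$ for the shifted prefiltration $G_{\bullet+1}$; by induction on $k$ this gives $\tilde\partial_h g \in \poly(H, G_{\bullet+1})$ for every $h$, and conjugating back via $\partial_h g = g \cdot \tilde\partial_h g \cdot g^{-1}$ together with normality yields $\partial_h g \in \poly(H, G_{\bullet+1})$, which combined with the base-case condition $g(H) \subseteq G_0$ is exactly the polynomial property. The main obstacle is the splitting lemma itself — in particular the nontrivial implication that every element of $\HK^k(G_\bullet)$ admits a ``diagonal times upper-half'' factorization — which I would handle by imitating the reverse-lex reordering in the proof of Lemma \ref{hk-cube-struct}, tracking the additional layer of bookkeeping needed to ensure that upper-half-only generators land in the shifted prefiltration $G_{\bullet+1}$ rather than in $G_\bullet$.
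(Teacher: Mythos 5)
Your splitting lemma is a genuine strengthening of what the paper uses. The paper's only-if direction proves exactly the ``if'' half of your lemma, namely the two inclusions $\HK^{k-1}(G_{\bullet})^{\Delta} \subseteq \HK^k(G_{\bullet})$ and $\{\id_G\} \times \HK^{k-1}(G_{\bullet+1}) \subseteq \HK^k(G_{\bullet})$ (the paper writes $\overleftarrow{G}_{\bullet}$ for your $G_{\bullet+1}$); but for the reverse direction the paper does something quite different, not in the spirit of a splitting lemma at all: it invokes the explicit normal form $g^{\{0,1\}^k}(z) = \gamma_0^{F_0}\eta_1\cdots\eta_k$ from Lemma~\ref{hk-cube-struct} and peels off coordinates to identify $\gamma_0 = \partial_{h_1}\cdots\partial_{h_k}g(x) \in G_k$ directly. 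Your bi-directional lemma, together with the normalization verification you sketch (which is correct), would unify both directions of the proposition into a single clean recursion, and that is a real improvement in organization.

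There is, however, a gap in your use of the right-difference $\tilde\partial_h g(y) := g(y)^{-1}g(yh)$ in place of the paper's left-difference $\partial_h g(y) := g(yh)g(y)^{-1}$, and it is not cosmetic. You assert that $\tilde\partial_h g \in \poly(H,G_{\bullet+1})$ follows by a ``routine check'' from the identity $\tilde\partial_h g = g^{-1}(\partial_h g)g$, the definition of $\poly$, and normality of the $G_i$. Normality of $G_i$ controls conjugation by a \emph{fixed} element of $G$; here you are conjugating the polynomial map $\partial_h g$ pointwise by the non-constant map $g$, and there is no a priori reason that this pointwise conjugate remains in $\poly(H,G_{\bullet+1})$. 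The natural way to see that it does --- that $\poly(H,G_{\bullet+1})$ is a normal subgroup of $\poly(H,G_{\bullet})$ --- is precisely the Lazard--Leibman group property that this proposition is meant to establish, so as written the argument is circular. (One can verify $\partial_{h_1}(\tilde\partial_h g) \in G_2$ by hand using commutator identities and $[G_1,G_1]\subseteq G_2$, but this is a genuine calculation, not a consequence of normality, and the higher derivatives need a careful induction.) The same issue recurs symmetrically in your reverse direction when you ``conjugate back'' from $\tilde\partial_h g \in \poly(H,G_{\bullet+1})$ to $\partial_h g \in \poly(H,G_{\bullet+1})$.

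The fix is immediate and is exactly the paper's choice: use $\partial_h g$ rather than $\tilde\partial_h g$, and state your splitting lemma with the ratio $b_{\omega'}a_{\omega'}^{-1}$ in place of $a_{\omega'}^{-1}b_{\omega'}$ --- the two formulations are interchangeable by precisely the normalization you already proved. The pointwise ratio of the upper half of a parallelepiped to its lower half is then $(\partial_{h_k}g)^{\{0,1\}^{k-1}}$ applied to the lower half, and the membership $\partial_h g \in \poly(H, G_{\bullet+1})$ is literally a restatement of the definition of $g \in \poly(H, G_{\bullet})$: an $i$-th derivative of $\partial_h g$ is an $(i+1)$-st derivative of $g$. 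With that single change your argument goes through.
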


\emph{Remark.} The reader might find it useful, as an exercise to get to grips with the notation, to verify this in the case $H = G$ and $g$ being the identity mapping.

We note that Proposition \ref{poly-precise} is an immediate consequence of Proposition \ref{poly-char}. Indeed if $g^{\{0,1\}^k}$ and $\tilde{g}^{\{0,1\}^k}$ both map $H^{[k]}$ to $\HK^k(G_{\bullet})$ then so does $(g\tilde g)^{\{0,1\}^k}$, since $\HK^k(G_{\bullet})$ is a group.

\emph{Proof of Proposition \ref{poly-char}.} We start by establishing the \emph{only if} direction of the proposition, proving by induction on $k$ that $g^{\{0,1\}^k}$ does indeed map $H^{[k]}$ to $\HK^k(G_{\bullet})$ when $g \in \poly(H,G_{\bullet})$. This is clear when $k = 0$. Suppose it is known for a given value of $k \geq 0$. If $X$ is a set, we may regard $X^{\{0,1\}^{k+1}}$ as a product of two copies of $X^{\{0,1\}^k}$, the first factor corresponding to those $\omega$ with $\omega_{k+1} = 0$ and the second to those $\omega$ with $\omega_{k+1} = 1$. With this notation, every $\tilde z \in H^{[k+1]}$ may be written $\tilde z = (z,zh_{k+1})$, where $z :=  (xh^{\omega})_{\omega \in \{0,1\}^{k}}$. We may factor $g^{\{0,1\}^{k+1}}(\tilde z)$ as a product of two elements, namely
\begin{equation}\label{product} g^{\{0,1\}^{k+1}}(\tilde z) =  (\id_G^{\{0,1\}^k}, (\partial_{h_{k+1}}g)^{\{0,1\}^k}(z)) \cdot (g^{\{0,1\}^k}(z), g^{\{0,1\}^k}(z)).\end{equation} By the inductive hypothesis we have $g^{\{0,1\}^k}(z) \in \HK^k(G_{\bullet})$. The derivative $\partial_{h_{k+1}}g : H \rightarrow G$ is a polynomial map with coefficients in the prefiltration $\overleftarrow{G}_{\bullet}$ defined by $\overleftarrow{G}_i := G_{i+1}$ (note that this \emph{is} a prefiltration, since \[ [\overleftarrow{G}_i,\overleftarrow{G}_j] = [G_{i+1},G_{j+1}] \subseteq G_{i+j+2} \subseteq G_{i+j+1} = \overleftarrow{G}_{i+j}).\] By a second application of the inductive hypothesis we therefore have $(\partial_{h_{k+1}}g)^{\{0,1\}^k}(z) \in \HK^k(\overleftarrow{G}_{\bullet})$. In view of \eqref{product} it therefore suffices to show the inclusions 
\[ \HK^k(G_{\bullet})^{\Delta} \subseteq \HK^{k+1}(G_{\bullet})\] 
(where $\HK^{k}(G_{\bullet})^{\Delta}$ is the diagonal subgroup $\{ (t,t) : t \in \HK^k(G_{\bullet})\}$) and
\[ \id_G^{\{0,1\}^k} \times \HK^k(\overleftarrow{G}_{\bullet}) \subseteq \HK^{k+1}(G_{\bullet}).\]
To check the first inclusion it suffices to check elements $(\gamma^F,\gamma^F)$ where $\gamma \in G_{\codim(F)}$. But it is easy to see that $(\gamma^F,\gamma^F) = \gamma^{\tilde{F}}$ inside $G^{\{0,1\}^{k+1}}$, where the codimension of the face $\tilde F$ inside $\{0,1\}^{k+1}$ equals $\codim(F)$, and the inclusion follows. To check the second inclusion it suffices to check elements $(\id_G^{\{0,1\}^k},\gamma_F)$ where $\gamma \in \overleftarrow{G}_{\codim(F)} = G_{\codim(F) + 1}$. But it is again easy to see that $(\id_G^{\{0,1\}^k},\gamma^F) = \gamma^{\tilde F}$, where now the codimension of $\tilde F$ inside $\{0,1\}^{k+1}$ is $\codim(F) +1$. This concludes the proof of the \emph{only if} part of Proposition \ref{poly-char}; the perceptive reader will have noticed that we have not yet made any essential use of the main property of prefiltrations, namely the nesting property that $[G_i,G_j] \subseteq G_{i+j}$.

We turn now to the proof of the \emph{if} direction of the proposition. We are to show that if $g^{\{0,1\}^k}$ maps $H^{[k]}$ to $\HK^k(G_{\bullet})$ for all $k$, then $g \in \poly(H,G_{\bullet})$. Pick an element $z = (xh^{\omega})_{\omega \in \{0,1\}^{k}}$ in $H^{[k]}$. By Lemma \ref{hk-cube-struct} (which does use the nesting property of $G_{\bullet}$) we may write 
\[ g^{\{0,1\}^k}(z) = \gamma^{F_0}_0 \dots \gamma^{F_{2^k-1}}_{2^k-1}\] where $\gamma_i \in G_{\codim(F_i)}$. Write $\eta_j := \gamma^{F_{2^{j-1}}}_{2^{j-1}} \dots \gamma^{F_{2^j-1}}_{2^{j}-1}$, $j = 1,\dots,k$, so that
\begin{equation}\label{equality} g^{\{0,1\}^k}(z) = \gamma^{1^k}_0 \eta_1 \dots \eta_k.\end{equation}
One may check that the $\eta_i$ enjoy the following support properties: $(\eta_i)_{\omega} = \id_G$ unless $\omega_{i+1},\dots,\omega_k$ are all nonzero, and $(\eta_i)_{\omega} = (\eta_i)_{\omega'}$ if $\omega,\omega'$ differ only in the $\omega_i$ coordinate. One may now examine \eqref{equality} coordinatewise, peeling off $\eta_k,\eta_{k-1},\dots$ in turn, to eventually conclude that 
\[ \gamma_0 = \partial_{h_1} \dots \partial_{h_k}g(x).\]
Now we know that $\gamma_0 \in G_{\codim(F_0)} = G_k$, and thus we have proved that $\partial_{h_1} \dots \partial_{h_k}g$ takes values in $G_k$, as required.\endproof

\textsc{Polynomial maps in coordinates.} From now on we specialise to the case of polynomial maps from $\Z^t$ to $G$ and revert to dealing with filtrations as opposed to prefiltrations. Our aim in this section is to describe the elements of $\poly(\Z^t,G_{\bullet})$ using the Mal'cev coordinate map $\psi : G \rightarrow \R^m$ relative to some Mal'cev basis $\X$ for $G/\Gamma$ adapted to the filtration $G_{\bullet}$.

\begin{definition}[Multi-binomial coefficients]
Let $t \geq 1$ be an integer. Suppose that $\vec{n} = (n_1,\dots,n_t)$ and that $\vec{j} = (j_1,\dots,j_t) \in \Z_{\geq 0}^t$ is a set of indices. Then we write 
\[ \binom{\vec{n}}{\vec{j}} := \prod_{i=1}^t \binom{n_i}{j_i}.\]
\end{definition}

A version of the following lemma may be found in \cite[\S 4]{leibman-orb-diag}.

\begin{lemma}[Description of $\poly(\Z^t,G_{\bullet})$ in bases]\label{basis-description}
Suppose that $G/\Gamma$ is a nilmanifold of dimension $m$ and that $\X$ is a Mal'cev basis for $G/\Gamma$ adapted to some filtration $G_{\bullet}$. Then $g \in \poly(\Z^t,G_{\bullet})$ if and only if the coordinates $\psi(g(\vec{n}))$ have the form
\[ \psi(g(\vec{n})) = \sum_{\vec{j}}t_{\vec{j}} \binom{\vec{n}}{\vec{j}},\]
where each $t_{\vec{j}}$ lies in $\R^m$ and is such that $(t_{\vec{j}})_i = 0$ if $i \leq m - m_{|\vec{j}|}$, where $|\vec{j}| := j_1 + \dots + j_t$. 
\end{lemma}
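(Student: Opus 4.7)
The plan is to prove both implications, with the harder (``only if'') direction proceeding by induction on the degree $d$ of $G_\bullet$.

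For the ``if'' direction, since $s\mapsto \exp(sX_i)$ is a one-parameter subgroup, we may factor
\[
  g(\vec n) \;=\; \prod_{i=1}^m \prod_{\vec j}\exp\!\Bigl( (t_{\vec j})_i \binom{\vec n}{\vec j} X_i\Bigr)
\]
(the inner product is unambiguous because $X_i$ commutes with itself). By the Lazard--Leibman theorem (Proposition \ref{poly-precise}) it suffices to show that each monomial factor $f_{\vec j,i}(\vec n) := \exp\!\bigl((t_{\vec j})_i \binom{\vec n}{\vec j} X_i\bigr)$ lies in $\poly(\Z^t,G_\bullet)$. The hypothesis forces $(t_{\vec j})_i = 0$ unless $i > m - m_{|\vec j|}$, in which case $X_i \in \h_{m-m_{|\vec j|}}$, so $f_{\vec j,i}$ takes values in $H_{m-m_{|\vec j|}} = G_{|\vec j|}$. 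Using the one-parameter subgroup property together with the fact that $\binom{\vec n + h}{\vec j} - \binom{\vec n}{\vec j}$ is a polynomial in $\vec n$ of degree at most $|\vec j|-1$, a short computation shows that each discrete derivative $\partial_h f_{\vec j,i}$ has the form $\exp(q_h(\vec n) X_i)$ with $q_h$ a polynomial of degree $\leq |\vec j|-1$; iterating, after $r$ derivatives we still obtain a $G_{|\vec j|}$-valued map, and the identity for $r > |\vec j|$. Since $G_{|\vec j|} \subseteq G_r$ for all $r \leq |\vec j|$, this verifies the polynomial condition on $f_{\vec j,i}$.

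For the ``only if'' direction, induct on $d$, the case $d=0$ being trivial. For the inductive step, note that $G_d$ is central (since $[G,G_d]\subseteq G_{d+1}=\{\id_G\}$), so the quotient map $\pi: G\to G/G_d$ is a Lie group homomorphism and $\pi \circ g$ is a polynomial sequence in $\poly(\Z^t,(G/G_d)_\bullet)$ with respect to the induced filtration of degree at most $d-1$. Moreover the first $m-m_d$ elements of $\X$ descend to a Mal'cev basis of $G/G_d$ adapted to this induced filtration, with dimension parameters $m-m_d$ and $m_i-m_d$. Applying the inductive hypothesis gives $\psi_{G/G_d}(\pi(g(\vec n))) = \sum_{|\vec j|\leq d-1} t'_{\vec j} \binom{\vec n}{\vec j}$ with the analogous constraint in $\R^{m-m_d}$. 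Using the ``if'' direction (applied in $G_\bullet$), define
\[
  \tilde g(\vec n) \;:=\; \prod_{i=1}^{m-m_d}\exp\!\Bigl(\sum_{|\vec j|\leq d-1} (t'_{\vec j})_i \binom{\vec n}{\vec j}\, X_i\Bigr) \;\in\; \poly(\Z^t,G_\bullet),
\]
which satisfies $\pi(\tilde g) = \pi(g)$; then $h := \tilde g^{-1} g$ lies in $\poly(\Z^t,G_\bullet)$ by Lazard--Leibman and, because $\pi(h) = \id_G$, takes values in $G_d$.

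Because $G_d$ is abelian and $\exp$ restricts to a group isomorphism $\h_{m-m_d}\to G_d$, the map $v := \log \circ\, h: \Z^t \to \R^{m_d}$ has discrete derivatives that coincide with the group-theoretic derivatives of $h$; the condition $\partial_{h_1}\cdots \partial_{h_{d+1}}h \in G_{d+1}=\{\id_G\}$ therefore forces $v$ to be a polynomial of degree at most $d$, yielding $v(\vec n) = \sum_{|\vec j|\leq d} u_{\vec j}\binom{\vec n}{\vec j}$ for some $u_{\vec j} \in \R^{m_d}$. Since $\X$ is adapted to $G_\bullet$, elements of $G_d$ have vanishing first $m-m_d$ Mal'cev coordinates, so right-multiplying the canonical factorisation of $\tilde g$ by $h$ preserves the first $m-m_d$ coordinates and simply augments the last $m_d$ coordinates of each $t_{\vec j}$ by $u_{\vec j}$. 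This yields $\psi(g(\vec n)) = \sum_{\vec j} t_{\vec j} \binom{\vec n}{\vec j}$ with the required vanishing: for $i \leq m-m_d$ the constraint is inherited from the inductive hypothesis, and for $i > m-m_d$ it is automatic because $|\vec j|\leq d$ implies $m - m_{|\vec j|}\leq m - m_d < i$. I expect the main subtlety to be managing this coordinate bookkeeping: confirming that $\tilde g\cdot h$ is in Mal'cev canonical form and that the vanishing indices line up correctly with the filtration dimensions $m_i$, both of which rest on the precise way in which $\X$ is adapted to $G_\bullet$.
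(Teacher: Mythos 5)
Your proof is correct, and the ``if'' direction is essentially the same as the paper's (each monomial factor $\exp\bigl((t_{\vec j})_i \binom{\vec n}{\vec j}X_i\bigr)$ is the paper's $a^{\binom{\vec n}{\vec j}}$ with $a\in G_{|\vec j|}$; both then invoke the Lazard--Leibman group property). The ``only if'' direction, however, is organised differently. The paper does a downward induction on $k$: assuming $g$ takes values in $H_{k-1}$, it projects onto the one-dimensional quotient $H_{k-1}/H_k$, extracts that scalar polynomial, subtracts off the corresponding one-parameter sequence, and descends to $H_k$; this peels off one Mal'cev coordinate per step and never invokes the abelian structure of a whole filtration piece. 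You instead induct on the degree $d$ of the filtration: you quotient by the entire last step $G_d$ (an abelian block of dimension $m_d$) in one go, apply the inductive hypothesis in $G/G_d$, lift the resulting coefficient data back into $G$ via the already-proved ``if'' direction, and finish by observing that the correction $h=\tilde g^{-1}g$ lands in the abelian group $G_d$, where $\log$ linearises the discrete derivatives. Your route uses more machinery per step (the Lazard--Leibman group structure both for forming $h$ and for the lift $\tilde g$, and the identification of $G_d$ with $\R^{m_d}$) but needs only $d$ inductive steps instead of $m$ and makes the filtration structure more visible; the paper's route is more elementary per step, trading this for a longer, coordinate-by-coordinate descent. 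Both are valid, and your coordinate bookkeeping at the end --- checking that the vanishing indices $i\le m-m_{|\vec j|}$ split correctly across the first $m-m_d$ and last $m_d$ coordinates --- is handled correctly.
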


\remark The presence of the discrete subgroup $\Gamma$ is not at all relevant to this lemma; however we have only defined Mal'cev bases in this context.

\proof We start with the \emph{if} direction. If $g(n)$ has the form stated then it is a product of sequences of the form $\vec{n} \mapsto a^{\binom{\vec{n}}{\vec{j}}}$, where $a \in G_{|\vec{j}|}$. By the group property of $\poly(\Z^t,G_{\bullet})$ it therefore suffices to establish the result in the case that $g(\vec{n})$ is actually \emph{equal} to such a sequence. By induction one sees that the derivative $\partial_{h_1} \dots \partial_{h_k} g(\vec{n})$ equals $a^{p(h_1,\dots,h_k;\vec{n})}$, where the maximal degree $\alpha_1 + \dots + \alpha_t$ of a monomial $n_1^{\alpha_1} \dots n_t^{\alpha_t}$ appearing in $p$ is at most $\max(|\vec{j}| - k,0)$. Thus we see that this derivative lies in $G_{|\vec{j}|}$ if $k \leq |\vec{j}|$, and is zero otherwise. It follows that $g \in \poly(\Z^t,G_{\bullet})$.

To prove the \emph{only if} direction, let $\h_j \subset \g$ be the subspace
\[ \h_j := \Span( X_{j+1},\dots,X_m)\] and set $H_j := \exp(\h_j)$. By the nesting property of the Mal'cev basis $\X$ (see \eqref{nest}) we see that $H_j \lhd G$.

Suppose as a hypothesis for downward induction on $k$ that the statement has been proved for all $g \in \poly(\Z^t,G_{\bullet})$ with $g(\vec{n}) \in H_k$ for all $\vec{n}$, for a certain value of $k$. This is trivial for $k = m$, in which case $g(\vec{n}) = \id_G$. Suppose that $g(\vec{n}) \in H_{k-1}$ for all $\vec{n}$. Let $\pi : H_{k-1} \rightarrow H_{k-1}/H_k \cong \R$ be the natural projection. Then $p_{k-1}(\vec{n}) := \pi(g(\vec{n})\Gamma)$ is a polynomial map from $\R^t$ to $\R$. Suppose that $k-1 < m-m_i$, and that $i$ is minimal subject to this property. Then for any $h_1,\dots,h_i \in \Z^t$ we have $\partial_{h_1}\dots \partial_{h_i} g \in G_{i} = H_{m-m_i}$, and therefore $\partial_{h_1}\dots \partial_{h_i} p_{k-1}(\vec{n}) = 0$. Thus the total degree of any monomial in $p_{k-1}$ is at most $i-1$.  Therefore we may write the sequence $h(\vec{n})$ defined by
\[ h(\vec{n}) := \exp(X_{k-1})^{p_{k-1}(\vec{n})}\] as a product of sequences $\exp(X_{k-1})^{t_{\vec{j}}\binom{\vec{n}}{\vec{j}}}$ with $|\vec{j}| \leq i-1$. By the minimality of $i$ we have $X_{k-1} \in \g_{i-1}$, and so each of these sequences lies in $\poly(\Z^t,G_{\bullet})$, and hence so does $h$. It follows that the sequence $\tilde g(n) := g(n) h(n)^{-1}$ lies in $\poly(\Z^t,G_{\bullet})$. But this new sequence $\tilde g$ has $\tilde g(n) \in H_k$, and hence we may proceed by induction.\endproof

A useful and easily-derived corollary of Lemma \ref{basis-description} is that $\poly(\Z^t,G_{\bullet})$ is closed under dilations.

\begin{corollary}[Dilation of polynomial sequences]\label{poly-restrict}  Suppose that $g \in \poly(\Z^t,G_{\bullet})$ and that $a_1,\dots,a_t,b_1,\dots,b_t \in \Z$. Then the sequence $\vec{n} \mapsto g(a_1 + b_1n_1,\dots,a_t + b_t n_t)$ also lies in $G_{\bullet}$.\endproof
\end{corollary}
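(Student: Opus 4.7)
The cleanest route is to work directly from the definition. Let $\phi : \Z^t \to \Z^t$ denote the affine map $\phi(\vec{n}) := (a_1 + b_1 n_1, \ldots, a_t + b_t n_t)$, and write $B\vec{h} := (b_1 h_1, \ldots, b_t h_t)$, so that $\tilde g := g \circ \phi$. The crucial observation is that $\phi$ satisfies the additive identity $\phi(\vec{n} + \vec{h}) = \phi(\vec{n}) + B\vec{h}$. Consequently
\[ \partial_{\vec{h}} \tilde g(\vec{n}) = g(\phi(\vec{n}) + B\vec{h}) \, g(\phi(\vec{n}))^{-1} = (\partial_{B\vec{h}} g)(\phi(\vec{n})), \]
i.e.\ $\partial_{\vec{h}} \tilde g = (\partial_{B\vec{h}} g) \circ \phi$. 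Iterating this identity,
\[ \partial_{\vec{h}_i} \cdots \partial_{\vec{h}_1} \tilde g(\vec{n}) = (\partial_{B\vec{h}_i} \cdots \partial_{B\vec{h}_1} g)(\phi(\vec{n})), \]
which lies in $G_i$ since $g \in \poly(\Z^t, G_{\bullet})$ and $B\vec{h}_1, \ldots, B\vec{h}_i$ are themselves elements of $\Z^t$. Hence $\tilde g \in \poly(\Z^t, G_{\bullet})$.

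An equally short alternative goes through Proposition \ref{poly-char}. The map $\phi$ sends each parallelepiped $(\vec{x} + \sum_j \omega_j \vec{h}_j)_{\omega \in \{0,1\}^k}$ in $(\Z^t)^{[k]}$ to the parallelepiped with base point $\phi(\vec{x})$ and edges $B\vec{h}_1, \ldots, B\vec{h}_k$, so $\phi^{\{0,1\}^k}$ carries $(\Z^t)^{[k]}$ into itself. Therefore $\tilde g^{\{0,1\}^k} = g^{\{0,1\}^k} \circ \phi^{\{0,1\}^k}$ sends $(\Z^t)^{[k]}$ into $\HK^k(G_{\bullet})$, and the \emph{if} direction of Proposition \ref{poly-char} concludes the argument. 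One could also verify the claim via Lemma \ref{basis-description}, expanding each $\binom{a_i + b_i n_i}{j_i}$ as $\sum_{k_i \leq j_i} c_{i,k_i} \binom{n_i}{k_i}$; regrouping expresses $\psi(\tilde g(\vec{n}))$ in the required form, with the new coefficient of $\binom{\vec{n}}{\vec{k}}$ a linear combination of the old $t_{\vec{j}}$'s with $\vec{j} \geq \vec{k}$ componentwise, hence $|\vec{j}| \geq |\vec{k}|$ and $m - m_{|\vec{j}|} \geq m - m_{|\vec{k}|}$, so the low-coordinate vanishing of the $t_{\vec{j}}$ passes to the new coefficients.

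There is no real obstacle here: each of these three approaches is a few lines, and the corollary is essentially a formal consequence of the fact that affine maps $\Z^t \to \Z^t$ intertwine the discrete-derivative operator $\partial_{\vec{h}}$ with the operator $\partial_{B\vec{h}}$. I would present the direct-derivative proof, as it avoids any recourse to coordinates or to the Host--Kra machinery.
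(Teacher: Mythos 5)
Your proof is correct, and in fact you give three valid routes. The paper itself leaves the proof to the reader, flagging it only as an ``easily-derived corollary of Lemma~\ref{basis-description}'', so the intended argument is your third one: substitute $\vec{m} = \phi(\vec{n})$ into the coordinate expansion $\psi(g(\vec{m})) = \sum_{\vec{j}} t_{\vec{j}} \binom{\vec{m}}{\vec{j}}$, re-expand $\binom{a_i+b_in_i}{j_i}$ in the basis $\binom{n_i}{k_i}$ with $k_i \le j_i$, and observe that since $\vec{k} \le \vec{j}$ forces $|\vec{k}| \le |\vec{j}|$, the new coefficient of $\binom{\vec{n}}{\vec{k}}$ inherits the vanishing of its first $m - m_{|\vec{k}|}$ coordinates from the $t_{\vec{j}}$'s. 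Your preferred direct-derivative argument is a genuinely different (and arguably cleaner) route: the single observation that the affine map $\phi$ intertwines $\partial_{\vec{h}}$ with $\partial_{B\vec{h}}$, so that $\partial_{\vec{h}_i}\cdots\partial_{\vec{h}_1}(g\circ\phi) = (\partial_{B\vec{h}_i}\cdots\partial_{B\vec{h}_1} g)\circ\phi \in G_i$, requires neither coordinates nor the group structure on $\poly$, and it applies verbatim to $\poly(H,G_\bullet)$ for any group $H$ and any group endomorphism $B$ of $H$ precomposed with a right-translation, not just to affine maps of $\Z^t$. The Host--Kra route is also sound. For this paper, where Lemma~\ref{basis-description} has just been proved and is about to be used heavily, the coordinate argument fits the surrounding text; but your direct argument is shorter and more portable, and you are right that it is the natural choice.
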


We remarked in the introduction that a sequence $g : \Z \rightarrow G$ is polynomial with coefficients in some filtration $G_{\bullet}$ if and only if $g$ has the form \begin{equation}\label{old-poly} g(n) = a_1^{p_1(n)} \dots a_k^{p_k(n)}\end{equation} for polynomials $p_1,\dots,p_k$ with integer coefficients. Although this result is not required in the paper it is certainly conceivable that one might wish to apply the main theorems of the paper to a sequence which is presented in an explicit form such as \eqref{old-poly}, and does not obviously satisfy the more abstract condition of Definition \ref{poly-def}. 

The fact that every polynomial sequence has the form \eqref{old-poly} is an easy consequence of Lemma \ref{basis-description}. To establish the converse, consider first the lower central series filtration $G_{\bullet}$ which has degree $s$, the step of the nilpotent Lie group $G$. Let $d$ be the maximum degree occurring amongst the polynomials $p_i$ and define a finer filtration $G'_{\bullet}$ of degree $sd$ by setting $G'_{i} := G_{\lceil i/d\rceil}$. This \emph{is} a filtration since \[ [G'_{i},G'_{j}] = [G_{\lceil i/d\rceil},G_{\lceil j/d\rceil}] \subseteq G_{\lceil i/d\rceil + \lceil j/d\rceil} \subseteq G_{\lceil (i+j)/d\rceil} = G'_{i+j}.\] Any sequence of the form $n \mapsto a^{\binom{n}{j}}$, $j \leq d$ has coefficients in $G'_{\bullet}$ since $G'_{i} = G$ for $i = 0,1,\dots,d$ and the $(d+1)$st derivative of such a sequence is trivial. Since $g$ is a product of such sequences and $\poly(\Z,G'_{\bullet})$ is a group we see that $g \in \poly(\Z,G'_{\bullet})$.

We note that if $G/\Gamma$ has a $Q$-rational Mal'cev basis adapted to the lower central series then, by the results of the appendix, there is a $Q^{O_{d,s}(1)}$-rational Mal'cev basis for $G/\Gamma$ adapted to $G'_{\bullet}$.

We leave it to the reader to formulate and prove an analogous result for polynomial mappings from $\Z^t$ to $G$.

\section{The general case of the main theorem}\label{sec6}

We are now in a position to attack the general case of Theorem \ref{main-theorem}. Our analysis of the Heisenberg example in \S \ref{heisenberg-sec} suggested that the argument will involve an induction on the degree $d$ of $G_{\bullet}$. In that case there were two different scenarios in which we reduced from the case $d = 2$ to the case $d = 1$. Whilst the same is true in general, the introduction of genuinely polynomial sequences (rather than just linear ones) necessitates a further inductive loop on the quantity $m_* := m_{\ab} - m_{\lin}$, which we call the \emph{nonlinearity degree}. To see why, consider the following slightly informal example.

\example \label{example5}
Let $G/\Gamma$ be the Heisenberg example, and let $g(n) = \left(\begin{smallmatrix} 1 & \alpha_1 & \alpha_3  \\ 0 & 1 & \alpha_2  \\ 0 & 0 & 1\end{smallmatrix}\right)^n$, where $\alpha_1,\alpha_2$ and $\alpha_3$ are highly independent over $\Q$. Then there is no horizontal character $\eta$ of low frequency such that $\Vert \eta \circ g \Vert_{C^{\infty}[N]}$ is small.

Now we have $\partial g = g$ and $\partial^i g = \id_G$ for $i \geq 2$, and so $g$ has coefficients in the subgroup sequence $G_{\bullet}$ defined by $G_{(0)} := G_{(1)} := G_{(2)} := G$, $G_{(3)} := G_{(4)} := G_2$, and $G_{(i)} := \{\id_G\}$ for $i \geq 5$. With this choice we have $G^{\Box} = G \times G$. However $g_h^{\Box}$ takes values in $G \times_{G_2} G$, and hence 
$\eta^{\Box} \circ g_h^\Box = 0$ for any horizontal character $\eta^\Box$ with frequency of the form $(a,b,-a,-b) \in \Z^4$.  Thus, a lack of uniform distribution for $g_h^\Box$ does not imply lack of uniform distribution for $g$.
\hfill 

The problem in the above example is that the filtration $G_{\bullet}$ was far too ``coarse'' to accurately capture the differential structure of the sequence $g$. Indeed $g$ also takes values in the minimal (lower central series) filtration, as we saw in \S \ref{heisenberg-sec}.

In the light of the above example we can expect that it will sometimes be necessary to pass to a ``finer'' filtration of the same degree $d$, in order to properly capture the differential structure of $g$. This finer filtration will have a smaller value of the nonlinearity degree $m_*$, and thus we introduce an extra inductive loop to incorporate this parameter.
To be precise we shall prove, by induction on $d$ and $m_*$, the following slight variant of Theorem \ref{main-theorem}.

\begin{theorem}[Variant of Main Theorem]\label{mainthm-ind}
Let $m,d \geq 0$ be integers with $m_* \leq m$. Let $0 < \delta < 1/2$ and suppose that $N \geq 1$. Suppose that $G/\Gamma$ is a nilmanifold and that $G_{\bullet}$ is a filtration of degree $d$ and with nonlinearity degree $m_*$. Suppose that $\mathcal{X}$ is a $1/\delta$-rational Mal'cev basis adapted to $G_{\bullet}$ and suppose that $g \in \poly(\Z,G_{\bullet})$. If $(g(n)\Gamma)_{n \in [N]}$ is not $\delta$-equidistributed then there is a horizontal character $\eta$ with $0 < |\eta| \ll \delta^{-O_{m,m_*,d}(1)}$ such that 
\[ \Vert \eta \circ g \Vert_{C^{\infty}[N]} \ll \delta^{-O_{m,m_*,d}(1)}.\] 
\end{theorem}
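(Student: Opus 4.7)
The plan is to prove Theorem \ref{mainthm-ind} by induction on the lexicographically ordered pair $(d, m_*)$, following the template set by the Heisenberg case in \S\ref{heisenberg-sec}. The base case $d = 1$ reduces, after the vertical reduction below, to a combination of the quantitative Kronecker theorem (Proposition \ref{quant-kron}) and Proposition \ref{weyl-dichotomy}, which is needed to handle the polynomial coefficients that arise from filtration refinements. The secondary parameter $m_*$ plays precisely the role foreshadowed by Example \ref{example5}: when the product-group construction below yields a filtration whose degree does not drop, one refines the filtration to decrease $m_*$ while keeping $d$ fixed.

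Step one is to apply Lemma \ref{vert-freq-red} and pass to a Lipschitz function $F$ of definite vertical oscillation $\xi$ with $|\xi| \ll \delta^{-O(1)}$ and $\Vert F\Vert_{\Lip}=1$, which fails the equidistribution integral bound by $\gg \delta^{O(1)}$. If $\xi \equiv 0$, then $F$ is $G_d$-invariant and descends to a Lipschitz function on $(G/G_d)/(\Gamma G_d/G_d)$, whose induced filtration has degree $d-1$; the induction hypothesis produces a non-trivial horizontal character $\bar\eta$ on the quotient, which lifts to a horizontal character $\eta$ on $G$ with the required properties, since every horizontal character annihilates $G_d \subseteq [G,G]$. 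The substantive case is $\xi \not\equiv 0$, where $\int F = 0$. Van der Corput (Corollary \ref{vdc}) yields $\gg \delta^{O(1)} N$ values of $h \in [N]$ with $|\E_{n \in [N]} F(g(n+h)\Gamma) \overline{F(g(n)\Gamma)}| \gg \delta^{O(1)}$; factoring $g(n+h)$ in the style of \S\ref{heisenberg-sec} and invoking the Lazard--Leibman theorem (Proposition \ref{poly-precise}) one exhibits a polynomial sequence $g_h^{\Box} \in \poly(\Z, G^{\Box}_{\bullet})$ on the fibred product $G^{\Box} := G \times_{G_d} G$, together with a test function $F_h^{\Box}(x,y) := F(\{g(h)\} x)\overline{F(y)}$ that is invariant under the diagonal subgroup $G_d^{\Delta} \subseteq [G^{\Box}, G^{\Box}]$, because $\xi$ and $\overline{\xi}$ cancel. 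Descending to $G^{\Box}/G_d^{\Delta}$ is precisely the mechanism by which the inductive invariant decreases, either in $d$ or, failing that, in $m_*$.

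Applying the induction hypothesis to $(g_h^{\Box}(n)\Gamma^{\Box})_{n \in [N]}$ on this simpler nilmanifold produces, for many $h$ and after pigeonholing in the finitely many available characters, a single horizontal character $\eta^{\Box}$ with $|\eta^{\Box}| \ll \delta^{-O(1)}$ and $\Vert \eta^{\Box} \circ g_h^{\Box}\Vert_{C^\infty[N]} \ll \delta^{-O(1)}$. As in Lemma \ref{g-box-struct}, one decomposes $\eta^{\Box} = \eta_1 \oplus \eta_2$ into a horizontal character $\eta_1$ on $G$ and a character $\eta_2$ on $G_d/(\Gamma \cap G_d)$. Evaluating the smoothness bound coefficient by coefficient in $n$ produces, for many $h$, constraints of the form $\Vert \eta_1(\text{polynomial coeff.~in }h) + \eta_2(\text{bracket expression in }g(h))\Vert_{\R/\Z}$ small. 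A multi-parameter version of Proposition \ref{bracket-poly-lem}, applied in conjunction with Proposition \ref{weyl-dichotomy} and the coordinate description of Lemma \ref{basis-description}, then extracts either directly the desired conclusion $\Vert \eta_1 \circ g\Vert_{C^\infty[N]} \ll \delta^{-O(1)}$, or else a diophantine relation among lower-order coefficients of $g$ which forces some horizontal character to be smooth along $g$.

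The main obstacle I expect is the bookkeeping required to verify that the quotient $G^{\Box}/G_d^{\Delta}$ with its natural filtration really has strictly smaller $(d, m_*)$ than $(G, G_{\bullet})$ in every case; when the degree fails to drop (as in Example \ref{example5}), one must pass to a strictly finer filtration, a non-obvious construction that nevertheless must preserve polynomial control of the Mal'cev basis rationality via the technical machinery of Appendix \ref{malcev-app}. A second delicate point is the multi-parameter bracket polynomial lemma: even Proposition \ref{bracket-poly-lem} splits into cases depending on the relative sizes of $|\zeta|$ and $1/\delta N$, and handling simultaneous smoothness constraints on several bracket coefficients requires iterating the argument carefully while keeping all losses polynomial in $\delta^{-1}$.
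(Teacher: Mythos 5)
Your high-level plan matches the paper's: a double induction on $(d,m_*)$, with the outer loop on $d$ handled by vertical frequency reduction followed by van der Corput and a passage to a fibred product on which the filtration degree drops, and the inner loop on $m_*$ triggered by filtration refinement. However there are two substantive errors in the middle of the argument.

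First, the fibred product. You take $G^{\Box} := G \times_{G_d} G$, but the correct object is $G^{\Box} := G \times_{G_2} G$. The reason is that after normalizing $g(0) = \id_G$, the ``nonlinear part'' $g_{\nonlin}(n) := g(n) g(1)^{-n}$ takes values in $G_2$ (this follows from $\partial^2 g \equiv \id_G \md{G_2}$, i.e.\ $G$ is abelian modulo $G_2$), so the sequence $\tilde g_h(n)$ whose two $G$-components differ by an element of $G_2$ lives in $G\times_{G_2} G$, not in $G\times_{G_d} G$, and the latter is generally strictly smaller when $d>2$. Moreover the quotient $G^{\Box}/G_d^{\Delta}$ only acquires a filtration of degree $d-1$ because one uses the filtration $(G^{\Box})_i := G_i \times_{G_{i+1}} G_i$, which satisfies $(G^{\Box})_1 = G \times_{G_2} G$ and $(G^{\Box})_d = G_d^{\Delta}$; this is the algebraic identity that makes the degree drop, and it only works for the fibred product over $G_2$. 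A related confusion is your claim that $G_d^{\Delta} \subseteq [G^{\Box},G^{\Box}]$; what is actually used is that $G_d^{\Delta}$ is the top group of the filtration $(G^{\Box})_{\bullet}$, not that it lies in the commutator subgroup. Correspondingly, in the $\eta^{\Box}$-decomposition, $\eta_2$ is a horizontal character on $G_2$ (annihilating $\Gamma \cap G_2$ and $[G,G_2]$), not a character on $G_d/(\Gamma\cap G_d)$.

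Second, the mechanism for decreasing $m_*$ is misidentified. You say $m_*$ must drop ``when the degree fails to drop'' in the quotient construction, but with the correct fibred product the degree always drops from $d$ to $d-1$ after quotienting by $G_d^{\Delta}$. The $m_*$-reduction arises later, inside the analysis at degree $d-1$: after the $\eta = \eta_1 + \eta_2$ decomposition and the bracket-polynomial estimate, one splits into cases. In one case one reads off a horizontal character for $g$ directly; in another one obtains characters $\tau_i(x) := r\eta_2([x, \exp X_i])$ and is done unless all $\tau_i$ vanish. If they all vanish then $\eta_2$ annihilates $[G,G]$, $\zeta = 0$, and one concludes $\|\eta_2 \circ g_{\nonlin}\|_{C^\infty[N]} \ll \delta^{-O(1)}$, which says $g$ has some essentially linear behaviour inside $G_2$. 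Only then does one refine the filtration, setting $G'_i := G_i \cap \ker \tilde\eta_2$ for $i\geq 2$ (with $G'_0 = G'_1 = G$), which has $m'_* \leq m_* - 1$. Crucially, to transfer the non-equidistribution of $(g(n)\Gamma)$ to a sequence valued in $G'$ one needs the factorization $g = \varepsilon g' \gamma$ of Lemma \ref{factorization}, with $\varepsilon$ smooth, $g' \in \poly(\Z,G'_{\bullet})$, and $\gamma$ rational and periodic; the induction hypothesis at $(d, m_*-1)$ is then applied to $g'$ on a sub-progression, and the smoothness/periodicity of $\varepsilon,\gamma$ allow one to pull the resulting horizontal character back to $g$. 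This factorization step is not in your outline and the inner induction cannot close without it. You also need the polynomials lemma (Lemma \ref{polynomials}) to reshape the smoothness bound $\|\eta\circ g_h^{\Box}\|_{C^\infty[N]} \ll \delta^{-O(1)}$ into the form $\|\beta + q\alpha h + \zeta\cdot\{\gamma h\}\|_{\R/\Z}$ small, before the bracket polynomial lemma can be applied.
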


It is clear that this does imply Theorem \ref{main-theorem}, since the dependence of the $O(1)$ exponents on $m_*$ may be suppressed once Theorem \ref{mainthm-ind} has been proven by induction. In our proof there will be an outer inductive loop over $d$ and an inner one over $m_*$. In other words we shall assume that Theorem \ref{mainthm-ind} holds for all pairs $(d',m'_*)$ in which either $d' < d$ or for which $d' = d$ and $m'_* < m_*$, and deduce the case $(d,m_*)$. 

Henceforth we allow all constants implicit in the $\ll$ or $O$-notation to depend on $d,m$ and $m_*$.

We begin with some simple reductions. By Lemma \ref{vert-freq-red} we may assume that the orbit $(g(n)\Gamma)_{n \in [N]}$ is not $\delta^{O(1)}$-equidistributed along some vertical frequency $\xi \in \Z^{m_d}$ with $|\xi| \ll \delta^{-O(1)}$. Thus there is some function $F : G/\Gamma \rightarrow \C$ with $\Vert F \Vert_{\Lip} \leq 1$ and vertical frequency $\xi$ such that 
\begin{equation}\label{non-equi} |\E_{n \in [N]} F(g(n)\Gamma) - \int_{G/\Gamma} F| \gg \delta^{O(1)}.\end{equation}
If $\xi = 0$ then $F$ is $G_d$-invariant and we may descend to $G/G_d$, together with the filtration $G_{\bullet}/G_d$ which has length $d-1$, and invoke our inductive hypothesis. We pause to give the rather straightforward details.

Write $\overline{G} := G/G_d$ and $\overline{\Gamma} := \Gamma/(\Gamma \cap G_d)$. Then $\overline{G}/\overline{\Gamma}$ is a nilmanifold togther with a filtration $\overline{G}_{\bullet}$ of length $d-1$, where $\overline{G}_i := G_i/G_d$. The Mal'cev basis $\X = \{X_1,\dots,X_m\}$ may be reduced to give a $\frac{1}{\delta}$-rational Mal'cev basis $\overline{\X} = \{\overline{X}_1,\dots,\overline{X}_{\overline{m}}\}$ for $\overline{G}/\overline{\Gamma}$ adapted to $\overline{G}_{\bullet}$, where $\overline{m} := m - m_d$. 

Write $\overline{g} : \Z \rightarrow \overline{G}$ for the reduction of $g \md{G_d}$
By the $G_d$-invariance the function $F$ descends to a Lipschitz function $\overline{F} : \overline{G}/\overline{\Gamma} \rightarrow \C$ with $\Vert \overline{F} \Vert_{\Lip} \leq \Vert F \Vert_{\Lip}$, and so \eqref{non-equi} implies that
\[ \left|\E_{n \in [N]} \overline{F}(\overline{g}(n)\overline{\Gamma}) - \int_{\overline{G}/\overline{\Gamma}} \overline{F}\right| \geq \delta \Vert \overline{F} \Vert_{\Lip}.\]
(Here we have used the fact that normalised Haar measure on $\overline{G}/\overline{\Gamma}$ is obtained by quotienting that on $G/\Gamma$ by $G_d$.)

We may now apply the inductive hypothesis to obtain a horizontal character $\overline{\eta}: \overline{G} \to \C$ on $\overline{G}$ of frequency magnitude $0 < |\overline \eta| \ll \delta^{-O(1)}$ such that
\[ \Vert \overline \eta \circ \overline{g} \Vert_{C^{\infty}[N]} \ll \delta^{-O(1)}.\]
If we let $\eta: G \to \C$ be the horizontal character on $G$ defined by $\eta(x) = \overline{\eta}(\overline{x})$ then we have $\overline{\eta} \circ \overline{g} = \eta \circ g$ and $|\eta| = |\overline{\eta}|$. This concludes the proof in the case $\xi = 0$.

Suppose henceforth that $\xi \neq 0$. Since $F$ has $\xi$ as a vertical frequency, \eqref{non-equi} becomes
\begin{equation}\label{non-equi-2} |\E_{n \in [N]} F(g(n)\Gamma)| \gg \delta^{O(1)}.\end{equation}
We proceed initially with two additional reductions. The first is to the case $g(0) = \id_G$. Factorize $g(0) = \{g(0)\}[g(0)]$ as in Lemma \ref{fund-dom-def}. Set $\tilde g(n) := \{g(0)\}^{-1}g(n)g(0)^{-1}\{g(0)\}$. Then we have $|\E_{n \in [N]} \tilde F(\tilde g(n)\Gamma)| \geq \delta$, where $\tilde F(x) := F(\{g(0)\}x)$. But $\tilde F$ still has vertical oscillation $\xi$ and, by Lemma \ref{approx-left}, it has Lipschitz constant $O(1)$. Noting that $\Vert \eta \circ g\Vert_{C^{\infty}[N]} = \Vert \eta \circ \tilde g \Vert_{C^{\infty}[N]}$ we see that if we have Theorem \ref{mainthm-ind} for $\tilde g$ then we also have it for $g$.

The second reduction is to the case when $|\psi(g(1))| \leq 1$ (this is needed in the lead up to \eqref{to-work-with-2}). To do this, factorize $g(1) = \{g(1)\}[g(1)]$ as in Lemma \ref{fund-dom-def}. Set $\tilde g(n) := g(n)[g(1)]^{-n}$. Then $\tilde g(n)\Gamma = g(n)\Gamma$, $\tilde g(0) = \id_G$, $\tilde g \in \poly(\Z,G_{\bullet})$ and $\pi(\tilde g(n)\Gamma) = \pi(g(n)\Gamma)$, so proving Theorem \ref{mainthm-ind} for $g$ is equivalent to proving it for $\tilde g$. 

Henceforth we assume $g(0) = \id_G$ and $|\psi(g(1))| \leq 1$. 

As in \S \ref{heisenberg-sec} we apply Van der Corput's Lemma (Corollary \ref{vdc}) to \eqref{non-equi-2} to deduce that for $\gg \delta^{O(1)}N$ values of $h$, we have
\begin{equation}\label{eq801} |\E_{n \in [N]} F(g(n+h)\Gamma) \overline{F(g(n) \Gamma)}| \gg \delta^{O(1)}.\end{equation}
For each fixed $h$ this may be interpreted as a statement about the polynomial sequence $(g(n+h),g(n))$ on the product group $G^2$. However, guided by our experience with the Heisenberg group, it is natural to try and interpret it as a sequence on a somewhat smaller group. To this end, we define the \emph{nonlinear part} $g_{\nonlin}$ of $g$ by
\begin{equation}\label{g2-def}
g_{\nonlin}(n) := g(n) g(1)^{-n}.
\end{equation}
Motivated by what we did in \S \ref{heisenberg-sec}, we may then rewrite \eqref{eq801} in the form
\begin{equation}\label{use-soon-1} |\E_{n \in [N]} \tilde F_h(\tilde g_h(n)\Gamma^2) | \gg \delta^{O(1)},\end{equation}
where
\[ \tilde F_h(x,y) := F(\{g(1)^h\}x) \overline{F(y)}\] and
\begin{equation}\label{gh-def} \tilde g_h(n) := (\{g(1)^h\}^{-1} g_{\nonlin}(n+h) g(1)^n \{g(1)^h\}, g_{\nonlin}(n)g(1)^n).\end{equation}
It turns out that $g_h$ takes values in $G^{\Box} := G \times_{G_2} G$, just as we found in our analysis of the Heisenberg case. To prove this note that have $G_2 \supseteq [G,G]$, and so $G$ becomes abelian after quotienting out by the normal subgroup $G_2$. Thus we need only prove that $g_{\nonlin}(n) \in G_2$ for all $n$. We have $\partial^2 g(n) = \id_G$ modulo $G_2$.  Since $g(0)=\id_G$, this implies by an easy induction that $g(n) = g(1)^n$ modulo $G_2$, and so $g_{\nonlin}$ does indeed take values in $G_2$.

We may therefore replace \eqref{use-soon-1} by 
\begin{equation}\label{use-soon-1point1}
| \E_{n \in [N]} F^{\Box}_h (g^{\Box}_h(n)\Gamma^{\Box})| \gg \delta^{O(1)}
\end{equation}
by restricting everything in that equation to an object on $G^{\Box}$.

Note that, exactly as in the Heisenberg case, $F_h^{\Box}$ is invariant under $G_d^{\Delta} = \{(g_d,g_d) : g_d \in G_d\}$. Indeed, since $G_d$ is central in $G$, we have
\begin{align*} F_h^{\Box}((g_d,g_d) \cdot x^{\Box}) & = F(\{g(1)^h\} g_d x) \overline{F(g_d x')} \\ & = e(\xi(g_d))e(-\xi(g'_d))F(\{g(1)^h\} x) \overline{F(x')} \\ & = F_h^{\Box}(x^{\Box}).\end{align*}
Thus $F_h^{\Box}$ descends to a function $\overline{F^{\Box}_h}$ on $\overline{G^{\Box}} := G^{\Box}/G_d^{\Delta}$ and we may write \eqref{use-soon-1point1} as
\begin{equation}\label{use-soon-1point2}
| \E_{n \in [N]} \overline{F^{\Box}_h} (\overline{g^{\Box}_h}(n)\overline{\Gamma^{\Box}})| \gg \delta^{O(1)},
\end{equation}
where $\overline{\Gamma^{\Box}} := \Gamma^{\Box}/(\Gamma \cap G_d^{\Delta})$.

The next proposition is central to our whole argument in that it clarifies the sense in which $\overline{G^{\Box}}$ is ``less complex'' than $G$.

\begin{proposition}[Reduction in degree]
Define $(G^{\Box})_i := G_i \times_{G_{i+1}} G_i$ for $i = 1,\dots,d$. Then $(G^{\Box})_{\bullet}$ is a filtration on $G^{\Box}$ of degree $d$. Since $(G^{\Box})_d = G_d^{\Delta}$, it descends under quotienting by $G_d^{\Delta}$ to a filtration $\overline{(G^{\Box})}_{\bullet}$ of degree $d-1$ on $\overline{G^{\Box}}$. Each polynomial sequence $g_h^{\Box}$ lies in $\poly(\Z,(G^{\Box})_{\bullet})$, and hence each reduced polynomial sequence $\overline{g_h^{\Box}}$ lies in $\poly(\Z,(\overline{G^{\Box}})_{\bullet})$.
\end{proposition}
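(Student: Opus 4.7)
The proof breaks into two independent parts: (a) verifying that $(G^{\Box})_{\bullet}$ is indeed a filtration of the asserted degree, and (b) checking that each $g_h^{\Box}$ is polynomial with respect to it.

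For (a), the preliminary algebraic checks are straightforward: $(G^{\Box})_i$ is the preimage of the diagonal of $G_i/G_{i+1}$ inside $G_i \times G_i$, so it is a subgroup; since $G_{i+1} \subseteq G_2$ for $i \geq 1$, this subgroup sits inside $G^{\Box}$; nesting is immediate; and the boundary computations $(G^{\Box})_0 = (G^{\Box})_1 = G^{\Box}$, $(G^{\Box})_d = G_d^{\Delta}$, $(G^{\Box})_{d+1} = \{\id_G\}$ are direct. The substance is the filtration property $[(G^{\Box})_i,(G^{\Box})_j] \subseteq (G^{\Box})_{i+j}$: writing a generic pair as $(a,a\alpha) \in (G^{\Box})_i$ and $(b,b\beta) \in (G^{\Box})_j$ with $\alpha \in G_{i+1}$ and $\beta \in G_{j+1}$, the commutator identity $[xx',y] = x[x',y]x^{-1}\cdot[x,y]$ and its partner for the second slot, combined with $[G_{i+1},G_j] \cup [G_i,G_{j+1}] \subseteq G_{i+j+1}$, force $[a\alpha,b\beta] \equiv [a,b] \pmod{G_{i+j+1}}$, which is exactly the slack condition for membership of $([a,b],[a\alpha,b\beta])$ in $(G^{\Box})_{i+j}$. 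The descent to $\overline{G^{\Box}}$ is then automatic: $G_d^{\Delta}$ lies in every $(G^{\Box})_i$ for $i \leq d$, and quotienting sends $(G^{\Box})_d$ to the identity, so the induced filtration has degree $d-1$.

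For (b), setting $c := \{g(1)^h\}$ and using the explicit formula for $g_h^{\Box}(n)$, a short induction on $i$ shows that the fixed left and right constants ($c^{-1}$, $g(1)^{-h}$, $c$) telescope out under first differences, leaving
\[ \partial_{k_1}\cdots\partial_{k_i}\, g_h^{\Box}(n) \;=\; \bigl( c^{-1} f(n+h) c,\; f(n) \bigr), \qquad f := \partial_{k_1}\cdots\partial_{k_i} g. \]
By the polynomial hypothesis, $f$ takes values in $G_i$; since $G_i$ is normal in $G$ the conjugate $c^{-1} f(n+h) c$ lies in $G_i$ as well. It then remains to check the slack condition $(c^{-1} f(n+h) c)^{-1} f(n) \in G_{i+1}$, which I would do by splitting the product as $[c^{-1}, f(n+h)^{-1}] \cdot f(n+h)^{-1} f(n)$: the commutator lies in $[G,G_i] \subseteq G_{i+1}$, and $f(n+h)^{-1} f(n) = f(n)^{-1} (\partial_h f(n))^{-1} f(n)$ lies in $G_{i+1}$ because $\partial_h f = \partial_h\partial_{k_1}\cdots\partial_{k_i} g \in G_{i+1}$ by the polynomial hypothesis on $g$ and $G_{i+1}$ is normal in $G$. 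The descent $\overline{g_h^{\Box}} \in \poly(\Z,(\overline{G^{\Box}})_{\bullet})$ is then immediate, since polynomiality is preserved under group quotient homomorphisms.

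The only real obstacle is notational book-keeping --- in particular, tracking which of the fixed factors survive repeated differentiation, and simultaneously verifying the two conditions (membership in $G_i$ and slack in $G_{i+1}$) that define $(G^{\Box})_i$. The underlying algebra is entirely routine once one appreciates the guiding structural point: the $G_{i+1}$-slack built into the definition of $(G^{\Box})_i$ precisely matches the extra level of filtration one gains in a commutator $[G_i,G_j] \subseteq G_{i+j+1}$ whenever one of the inputs is improved by one step.
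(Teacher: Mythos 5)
Your proof is correct. For the filtration property your computation with generic elements $(a,a\alpha)$, $(b,b\beta)$ is in substance the same as the paper's, which decomposes $(G^{\Box})_i$ as the group generated by $G_i^{\Delta}$ and $G_{i+1}^2$ and reduces to four commutator inclusions via a small auxiliary lemma; both arguments bottom out at the inclusions $[G_i,G_{j+1}],\ [G_{i+1},G_j] \subseteq G_{i+j+1}$. Your argument for the polynomiality of $g_h^{\Box}$, by contrast, is genuinely different and more elementary. The paper relies on two structural devices: the Lazard--Leibman theorem (that $\poly(\Z,(G^{\Box})_{\bullet})$ is a group, closed under $G^2$-conjugation, which lets one peel off the conjugating constant and the linear factor $(g(1)^n,g(1)^n)$), and the monomial decomposition of Lemma \ref{basis-description}, which factors the nonlinear part into sequences $n \mapsto g_i^{\binom{n}{i}}$ whose doubled versions are checked one by one. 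Your route replaces both with a single direct computation: the interior $g(1)^{-h}$ cancels already at the first difference, the $c^{\pm 1}$ persist only as an outer conjugation which is inert under further differencing, and one lands on the closed form
\[
\partial_{k_1}\cdots\partial_{k_i}\, g_h^{\Box}(n) \;=\; \bigl( c^{-1} f(n+h)\, c,\; f(n) \bigr), \qquad f := \partial_{k_1}\cdots\partial_{k_i} g,
\]
valid for $i \geq 1$, from which membership in $(G^{\Box})_i$ falls out of two one-line facts: $[c^{-1},f(n+h)^{-1}]\in[G,G_i]\subseteq G_{i+1}$, and $f(n+h)^{-1}f(n)$ being a conjugate of $(\partial_h f(n))^{-1}\in G_{i+1}$. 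This is self-contained, shorter, and makes transparent exactly how the $G_{i+1}$-slack in the definition of $(G^{\Box})_i$ is earned. (For $i = 0$ the formula still carries the $g(1)^{-h}$ factor, so you are implicitly relying on the fact, established just before the proposition in the text, that $g_h^{\Box}$ takes values in $G^{\Box}$ at all; that is fine, but worth flagging.) The paper's heavier machinery is in place because the group law on $\poly$ and Lemma \ref{basis-description} are reused elsewhere, but your argument would be perfectly adequate as a drop-in replacement here.
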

\proof We start with a lemma.
\begin{lemma}
Suppose that $H_1,H_2$ and $K_1, K_2$ are normal subgroups of a group $G$, that $H_1, H_2$ generate a group $H$ and that $K_1,K_2$ generate a group $K$. Then $[H,K]$ is generated by the groups $[H_i,K_j]$, $1 \leq i,j \leq 2$.
\end{lemma}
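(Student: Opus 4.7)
The plan is to define $N$ as the subgroup of $G$ generated by the four commutator subgroups $[H_i, K_j]$, $1 \leq i,j \leq 2$, and show $[H,K] \subseteq N$; the reverse inclusion is immediate since $H_i \subseteq H$ and $K_j \subseteq K$.

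First I would check that $N$ is in fact normal in $G$. Each individual subgroup $[H_i, K_j]$ is normal, because for any $g \in G$, $h \in H_i$, $k \in K_j$ one has
\[ g[h,k]g^{-1} = [ghg^{-1}, gkg^{-1}],\]
which lies in $[H_i, K_j]$ thanks to the assumed normality of both $H_i$ and $K_j$ in $G$. A subgroup generated by finitely many normal subgroups is itself normal, so $N \lhd G$.

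Next, since each $H_i$ is already a subgroup (and similarly for the $K_j$), any $h \in H$ may be written as a word $h = a_1 a_2 \cdots a_n$ in which each letter $a_\ell$ lies in $H_1 \cup H_2$, and any $k \in K$ as a word $k = b_1 b_2 \cdots b_m$ with $b_{\ell'} \in K_1 \cup K_2$. Repeated use of the standard commutator identities
\[ [xy,z] = x[y,z]x^{-1}\,[x,z], \qquad [x,yz] = [x,y]\,y[x,z]y^{-1}\]
then expresses $[h,k]$ as a product of $G$-conjugates of elementary commutators $[a_\ell, b_{\ell'}]$. Each such elementary commutator belongs to $[H_{i_\ell}, K_{j_{\ell'}}] \subseteq N$, and normality of $N$ in $G$ ensures that all the conjugating factors introduced by the expansion can be absorbed, so $[h,k] \in N$. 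Since the generators $[h,k]$ of $[H,K]$ all lie in $N$, we conclude $[H,K] \subseteq N$.

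There is no serious obstacle here; the only thing to watch is the bookkeeping when iterating the two commutator identities, and the essential point that makes everything work is precisely that the $H_i$ and $K_j$ are normal in $G$, so that conjugation by arbitrary elements of $G$ preserves the generating family $\{[H_i,K_j]\}$.
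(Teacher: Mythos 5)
Your proof is correct and rests on the same key observation as the paper's: the subgroups $[H_i,K_j]$ are normal in $G$ (since each $H_i$ and $K_j$ is), so the group $N$ they generate is normal. The paper then finishes more slickly by passing to $G/N$, where the images of the $H_i$ commute with the images of the $K_j$ and hence $\bar H$ commutes with $\bar K$; your explicit expansion of $[h,k]$ via the commutator identities, with normality of $N$ absorbing the conjugating factors, is just the hands-on version of the same argument.
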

\proof The groups $[H_i,K_j]$ are all normal, and thus the group they generate is also normal.  If we quotient by that group, then $H_1, H_2$ commute with $K_1, K_2$, and thus $H$ commutes with $K$.  The claim follows.\endproof

Now observe that $(G^{\Box})_i$ is generated by $G_{i+1}^2$ and $G_{i}^{\triangle}$. In view of the lemma it therefore suffices to establish that all four of the quantities
\[ [G^{\triangle}_{i}, G^{\triangle}_{j}], [G^{\triangle}_{i}, G_{j+1}^2], [G_{i+1}^2, G^{\triangle}_{j}], [G_{i+1}^2, G_{j+1}^2]\]
lie in $G_{i+j}^{\Box}$. Using the fact that $G_{\bullet}$ is a filtration, the first quantity is manifestly contained in $G^{\triangle}_{i+j}$ and the last three lie in $G_{i+j+1}^2$. It follows immediately that $(G^{\Box})_{\bullet}$ is indeed a filtration.

Next we show that $g_h^{\Box} \in \poly(\Z,(G^{\Box})_{\bullet})$. Here we make serious use of the fact that $\poly(\Z,G_{\bullet})$ is a group for the first time. Recall that 
\begin{equation}\label{gh-box-def} g_h^{\Box}(n) := \left(\{g(1)^h\}^{-1} g_{\nonlin}(n+h) g(1)^n \{g(1)^h\}, g_{\nonlin}(n)g(1)^n\right).\end{equation}
Now $\poly(\Z,(G^{\Box})_{\bullet})$ is a group, and it is also closed under conjugation by elements of $G^2$. Since $(g(1)^n,g(1)^n)$ is obviously in $\poly(\Z,(G^{\Box})_{\bullet})$, it suffices to check that $(g_2(n+h), g_2(n)) \in \poly(\Z,(G^{\Box})_{\bullet})$. Of course, $g_2 \in \poly(\Z,G_{\bullet})$ and hence, by Lemma \ref{basis-description}, it is a product of elements $g_i^{\binom{n}{i}}$ with $g_i \in G_{i}$. It therefore suffices to show that $(g_i^{\binom{n+h}{i}}, g_i^{\binom{n}{i}}) \in (G^{\Box})_{\bullet}$. Taking $j$th derivatives, it suffices to check that $g_i^{\binom{n+h}{i-j}} \equiv g_i^{\binom{n}{i-j}} \md{G_{j+1}}$. For $j < i$ this follows from the fact that $g_i \in G_{i}$, whilst for $j \geq i$ it is trivial.\endproof

In order to apply the inductive hypothesis, we must specify a Mal'cev basis $\overline{\mathcal{X}^{\Box}}$ for $\overline{G^{\Box}}/\overline{\Gamma^{\Box}}$ adapted to the sequence $\overline{(G^{\Box})}_{\bullet}$, and it must then be checked that $\overline{F_h^{\Box}}$ is Lipschitz with respect to the metric $d_{\overline{\mathcal{X}^{\Box}}}$. These are rather tedious matters and we recommend that the reader take the following lemma on trust on a first reading of the paper.

\begin{lemma}[Rationality bounds for the relative square]\label{rat-bounds}
There is an $O(\delta^{-O(1)})$-rational Mal'cev basis $\mathcal{X}^{\Box} = \{X^{\Box}_1,\dots,X^{\Box}_{m^{\Box}}\}$ for $G^{\Box}/\Gamma^{\Box}$ adapted to the filtration $(G^{\Box})_{\bullet}$ with the property that $\psi_{\X^{\Box}}(x,x')$ is a polynomial of degree $O(1)$ with rational coefficients of height $\delta^{-O(1)}$ in the coordinates $\psi(x), \psi(x')$. With respect to the metric $d_{\mathcal{X}^{\Box}}$ we have $\Vert F^{\Box}_h \Vert_{\Lip} \ll \delta^{-O(1)}$ uniformly in $h$.
\end{lemma}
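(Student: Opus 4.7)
The plan is to construct $\mathcal{X}^\Box$ explicitly from $\mathcal{X}$ using \emph{diagonal} and \emph{off-diagonal} vectors. Writing $\g^\Box = \{(X,X') \in \g\times\g : X - X' \in \g_2\}$, one observes that $\g^\Box$ is spanned by the $m$ diagonal vectors $(X_j,X_j)$ for $1 \le j \le m$ together with the $m_2$ off-diagonal vectors $(0,X_j)$ for $m-m_2 < j \le m$. Using $(\g^\Box)_i = \g_i \times_{\g_{i+1}} \g_i$, one checks that $(\g^\Box)_i$ is spanned by those $(X_j,X_j)$ with $j > m-m_i$ together with those $(0,X_j)$ with $j > m-m_{i+1}$. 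I would therefore list the $m+m_2$ vectors in blocks indexed by $i = 1,2,\dots,d$, placing the block for $i=d$ at the end: block $i$ consists of the new diagonals $(X_j,X_j)$ with $m-m_i < j \le m-m_{i+1}$ and the new off-diagonals $(0,X_j)$ with $m-m_{i+1} < j \le m-m_{i+2}$, ordered so as to respect the nesting-by-ideals condition of Definition \ref{malcev}(i). Axioms (ii)--(iv) then follow routinely from the corresponding properties of $\mathcal{X}$ together with the characterisation $\Gamma^\Box = \{(\gamma,\gamma'): \gamma,\gamma^{-1}\gamma' \in \Gamma\}$.

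For the quantitative claims, I would compute the structure constants of $\mathcal{X}^\Box$ via
\[
[(X_i,X_i),(X_j,X_j)] = ([X_i,X_j],[X_i,X_j]), \quad [(X_i,X_i),(0,X_j)] = [(0,X_i),(0,X_j)] = (0,[X_i,X_j]),
\]
and re-express each right-hand side in $\mathcal{X}^\Box$; this only invokes the structure constants of $\mathcal{X}$ themselves, so $\mathcal{X}^\Box$ is $O(\delta^{-O(1)})$-rational. To obtain the polynomial coordinate formula I would write $(x,x') = (x,x) \cdot (\id_G, x^{-1}x')$ in $G^\Box$. The $\mathcal{X}^\Box$-coordinates of $(x,x)$ are a reshuffling of $\psi(x)$, and those of $(\id_G,x^{-1}x')$ are a reshuffling of $\psi(x^{-1}x')$ padded by zeros, using that $x^{-1}x' \in G_2$ forces its $\psi$-coordinates outside $\g_2$ to vanish. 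The map $\psi(x^{-1}x')$ is itself a polynomial in $\psi(x),\psi(x')$ of degree $O(1)$ and height $O(\delta^{-O(1)})$ by the Baker--Campbell--Hausdorff-type identities collected in Appendix \ref{malcev-app}. Combining the two factors inside $G^\Box$ is one further application of the same identities in the new basis, producing $\psi_{\mathcal{X}^\Box}(x,x')$ as a polynomial of the claimed type in $(\psi(x),\psi(x'))$.

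Finally, for the Lipschitz bound on $F_h^\Box$, the normalisation $|\psi(g(1))| \le 1$ ensures $|\psi(\{g(1)^h\})| \le 1$, and hence an appendix estimate in the spirit of Lemma \ref{approx-left} gives that left-multiplication by $\{g(1)^h\}$ has Lipschitz constant $O(1)$ on $(G,d_{\mathcal{X}})$. Consequently $(x,x') \mapsto F(\{g(1)^h\}x)\overline{F(x')}$ is Lipschitz with constant $O(1)$ on $G\times G$ in the product metric $d_{\mathcal{X}} \oplus d_{\mathcal{X}}$, uniformly in $h$. To convert this into a bound in the metric $d_{\mathcal{X}^\Box}$ I would use the polynomial coordinate relations of the previous step \emph{in reverse}: each projection $(x,x')\mapsto x$, $(x,x') \mapsto x'$ is $O(\delta^{-O(1)})$-Lipschitz from $(G^\Box,d_{\mathcal{X}^\Box})$ to $(G,d_{\mathcal{X}})$ on bounded balls, since the coordinates of $x$ and $x'$ are themselves polynomial in $\psi_{\mathcal{X}^\Box}(x,x')$ with bounded rational coefficients. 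This metric comparison is the main technical obstacle, because the Mal'cev metrics are defined as infima over chains and the reverse Lipschitz direction does not follow mechanically from a coordinate bound; one must invoke the full apparatus of Appendix \ref{malcev-app} to get both directions of a bi-Lipschitz equivalence with the Euclidean coordinate metric. Putting everything together yields $\|F_h^\Box\|_{\Lip} \ll \delta^{-O(1)}$ uniformly in $h$, as claimed.
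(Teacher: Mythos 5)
The route you take differs genuinely from the paper's. The paper views $G^\Box$ as a subgroup of $G \times G$, treats $\X \times \X$ as a $\delta^{-O(1)}$-rational \emph{weak} basis for $(G/\Gamma)^2$, and invokes Proposition~\ref{sub-nil-basis} (whose proof, via Proposition~\ref{malcev-exist}, rescales the basis vectors one at a time to generate the correct lattice quotients) to produce $\X^\Box$. You instead write down a fully explicit candidate basis out of diagonals $(X_j,X_j)$ and off-diagonals $(0,X_j)$ and propose to check the axioms directly. This is a legitimate and arguably more transparent plan; your observation that the off-diagonals can be taken only for $j > m - m_2$ (because $G_2 \supseteq [G,G]$ contains the defect $g^{-1}g'$) is exactly right, and your factorisation $(x,x')=(x,x)\cdot(\id_G,x^{-1}x')$ for the coordinate-polynomial claim is a clean substitute for the paper's device of extending $\X^\Box$ by $(X_1,0),\dots,(X_{m_{\lin}},0)$ to a weak basis of $G\times G$ and applying Lemma~\ref{1-2-lem}.

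The gap is the sentence ``Axioms (ii)--(iv) then follow routinely.'' Axiom~(iv) --- that $\psi_{\X^\Box}^{-1}(\Z^{m^\Box}) = \Gamma^\Box$ --- is precisely the piece of Definition~\ref{malcev} that the weak-basis machinery exists to secure, and for your explicit basis it is \emph{not} routine. With the block ordering you propose, the second component of $\prod_k \exp(t_k X^\Box_k)$ sees the same $X_j$ twice (once from the block-$i$ off-diagonal $(0,X_j)$, once from the block-$(i{+}1)$ diagonal $(X_j,X_j)$), with other non-commuting factors in between, so the $\X$-Mal'cev coordinates of $\gamma'$ are not simply a re-arrangement of the $t_k$. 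One must run a peeling-off induction: the diagonal coordinates recover $\psi_\X(\gamma)\in\Z^m$, after which one is reduced to an element of $\{\id_G\}\times(\Gamma\cap G_2)$ whose off-diagonal coordinates one then shows lie in $\Z$, and so on block by block. This does work, but it needs to be written out; as stated, your proposal silently assumes the conclusion of Proposition~\ref{malcev-exist} with all rescaling constants equal to $1$, which is a nontrivial claim. (A smaller point: your description of $\Gamma^\Box$ should also impose $\gamma^{-1}\gamma'\in G_2$, i.e.\ $\Gamma^\Box = G^\Box\cap(\Gamma\times\Gamma)$, and in the structure-constant computation one should note that $(0,[X_i,X_j])$ is expressible in $\X^\Box$ because $[\g,\g]\subseteq\g_2$.)

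Once a Mal'cev basis is in hand, the two remaining assertions go through as you indicate and essentially as in the paper: the Lipschitz estimate is Lemma~\ref{approx-left} plus the bi-Lipschitz metric comparison, which is exactly the content of Lemma~\ref{comparison-lemma}; you correctly identify this as the technical crux but could have cited it directly rather than gesturing at ``the full apparatus of Appendix~\ref{malcev-app}.''
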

\proof We consider $G^{\Box}$ as a subgroup of $G \times G$. Recall (cf. Definition \ref{weak-basis-def}) the definition of a \emph{weak basis}. It is clear that $\X \times \X = \{(X_1,0),(0,X_1),\dots,(X_m,0),(0,X_m)\}$ is a $\delta^{-O(1)}$-rational weak basis for $G/\Gamma \times G/\Gamma$ and that each of the groups $(G^{\Box})_i := G_i \times_{G_{i+1}} G_i$ is $\delta^{-O(1)}$-rational with respect to this basis. By Proposition \ref{sub-nil-basis} it follows that there is a Mal'cev basis $\X^{\Box} = \{X^{\Box}_1,\dots,X^{\Box}_{m^{\Box}}\}$ for $G^{\Box}/\Gamma^{\Box}$, adapted to the filtration $(G^{\Box})_{\bullet}$, with the property that each $X^{\Box}_i$ is a $\delta^{-O(1)}$-rational combination of the elements of $\X \times \X$. By adding the elements $(X_1,0),\dots,(X_{m_{\lin}},0)$ to $\X^{\Box}$ we obtain a weak basis $\mathcal{Y}$ for $G/\Gamma \times G/\Gamma$ which enjoys the nesting property \eqref{nest}. From Lemma \ref{1-2-lem} it follows that each coordinate of $\psi_{\mathcal{Y}}(x,x')$ is a polynomial of degree $O(1)$ and with coefficients $\delta^{-O(1)}$ in the coordinates $\psi_{\X \times \X}(x,x')$. Restricting to those pairs $(x,x')$ which lie in $G^{\Box}$, we obtain the stated property.

Recall that $F_h^{\Box}(x^{\Box}) = F(\{g(1)^h\}x) \overline{F(x')}$. Now by definition we have $|\psi_{\X}(\{g(1)^h)\}| \leq 1$. By Lemma \ref{approx-left} (and Lemma \ref{fund-dom-def}, which guarantees that every $x \in G/\Gamma$ has a representative with coordinates bounded by $O(1)$) we see that $(x,x') \mapsto F(\{g(1)^h\} x) \overline{F(x')}$ defines a function on $G \times G$ whose Lipschitz constant with respect to the product metric $d \times d$ is $\ll \delta^{-O(1)}$. Now by Lemma \ref{comparison-lemma} and the construction of $\X^{\Box}$ we therefore have $\Vert F_h^{\Box} \Vert_{\Lip} \ll \delta^{-O(1)}$ where, remember, the Lipschitz constant is being computed with respect to the metric $d_{\X^{\Box}}$.\endproof

Let us now resume the discussion starting from \eqref{use-soon-1point2}. We begin by reprising some of the straightforward arguments at the start of the section (where we dealt with the case $\xi = 0$). By reducing the first $\overline{m^{\Box}} := m^{\Box} - m_d$ elements of $\mathcal{X}^{\Box}$ we obtain an $O(\delta^{-O(1)})$-rational Mal'cev basis $\overline{\mathcal{X}^{\Box}} = \{\overline{X^{\Box}_1},\dots,\overline{X^{\Box}_{\overline{m^{\Box}}}}\}$ for $\overline{G^{\Box}}/\overline{\Gamma^{\Box}}$ adapted to the filtration $\overline{(G^{\Box})}_{\bullet}$. With respect to the metric $d_{\overline{\mathcal{X}^{\Box}}}$ we have $\Vert \overline{F_h^{\Box}} \Vert_{\Lip} \ll \delta^{-O(1)}$.

Since $\overline{(G^{\Box})}_{\bullet}$ has degree $d-1$ our inductive hypothesis is applicable and we conclude that for $\gg \delta^{O(1)}$ values of $h \in [N]$ there is some horizontal character $\overline{\eta}_h : \overline{G^{\Box}} \rightarrow \R/\Z$ with $0 < |\overline{\eta}_h| \ll \delta^{-O_m(1)}$ and
\[ \Vert \overline{\eta}_h \circ \overline{g^{\Box}_h} \Vert_{C^{\infty}[N]} \ll \delta^{-O_m(1)}.\] By pigeonholing in $h$ we may assume that $\overline{\eta} = \overline{\eta_h}$ is independent of $h$. Writing $\eta : G^{\Box} \rightarrow \R/\Z$ for the horizontal character defined by $\eta(x) = \overline{\eta}(\overline{x})$, we see that $0 < |\eta| \ll \delta^{-O_m(1)}$ and that 
\begin{equation}\label{h-biases-2} \Vert \eta \circ g_h^{\Box} \Vert_{C^{\infty}[N]} \ll \delta^{-O_m(1)}.\end{equation}

The next lemma, which is almost identical to Lemma \ref{g-box-struct}, allows us to write $\eta$ in terms of maps defined on $G$ rather than $G^{\Box}$.

\begin{lemma}\label{eta-decomp} We have a decomposition $\eta(g', g) = \eta_1(g) + \eta_2(g' g^{-1})$ for all $(g',g) \in G^\Box$, where $\eta_1 : G \rightarrow \R/\Z$ is a horizontal character on $G$, and $\eta_2 : G_{2} \rightarrow \R/\Z$ is a horizontal character on $G_{2}$ which also annihilates $[G,G_{2}]$. Furthermore we have $| \eta_1 |, |\eta_2 | \ll \delta^{-O(1)}$.
\end{lemma}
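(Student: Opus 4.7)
The plan is to follow the template of Lemma \ref{g-box-struct} with one extra ingredient needed to handle the additional claim that $\eta_2$ vanishes on $[G, G_2]$. Concretely, I would define
\[ \eta_1(g) := \eta(g, g), \qquad \eta_2(g_2) := \eta(g_2, \id_G), \]
noting that both make sense since $(g,g) \in G^\Box$ for every $g \in G$, and $(g_2, \id_G) \in G^\Box$ whenever $g_2 \in G_2$. The desired decomposition then follows from writing $(g', g) = (g'g^{-1}, \id_G) \cdot (g, g)$, which is a valid factorization in $G^\Box$ because $g'g^{-1} \in G_2$, together with the homomorphism property of $\eta$.

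The first two checks are essentially the same as in the Heisenberg case. Continuity and additivity of $\eta_1, \eta_2$ are inherited from $\eta$. For $\eta_1$ annihilating $\Gamma$, we use that $(\gamma,\gamma) \in \Gamma^\Box$ for each $\gamma \in \Gamma$; for $\eta_2$ annihilating $\Gamma \cap G_2$, we use that $(\gamma_2, \id_G) \in \Gamma^\Box$ whenever $\gamma_2 \in \Gamma \cap G_2$.

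The new ingredient is the claim that $\eta_2$ vanishes on $[G, G_2]$. The key observation is the commutator identity
\[ [(x, x), (y, \id_G)] = (xyx^{-1}y^{-1}, x \cdot \id_G \cdot x^{-1} \cdot \id_G^{-1}) = ([x, y], \id_G), \]
valid for all $x \in G$ and $y \in G_2$. Since $(x,x) \in G^\Box$ and $(y, \id_G) \in G^\Box$, this shows $([x,y], \id_G)$ lies in $[G^\Box, G^\Box]$. But $\eta$ is a horizontal character on $\overline{G^\Box}$ lifted to $G^\Box$, so it annihilates $[G^\Box, G^\Box]$; therefore $\eta_2([x,y]) = \eta([x,y], \id_G) = 0$. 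Since such commutators generate $[G, G_2]$ and $\eta_2$ is a homomorphism, we conclude $\eta_2$ annihilates $[G, G_2]$.

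The only remaining part is the quantitative bound $|\eta_1|, |\eta_2| \ll \delta^{-O(1)}$. Here I would invoke Lemma \ref{rat-bounds}: the coordinates $\psi_{\X^\Box}(x, x')$ are polynomials of degree $O(1)$ in $\psi(x), \psi(x')$ with coefficients of height $\delta^{-O(1)}$, so the inverse change of coordinates is also $\delta^{-O(1)}$-rational on the horizontal parts. Writing $\eta(x^\Box) = k \cdot \psi_{\X^\Box}(x^\Box)$ with $|k| \ll \delta^{-O(1)}$ and specializing to $x^\Box = (g,g)$ or $(g_2, \id_G)$ expresses $\eta_1(g), \eta_2(g_2)$ as $\delta^{-O(1)}$-rational linear combinations of the horizontal coordinates of $g$ or $g_2$, giving the stated bounds on the frequency moduli. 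This last step is the only part that is not purely structural; it is the sort of routine bookkeeping the authors flag as ``clear'' and I would not carry out the coordinate algebra in detail.
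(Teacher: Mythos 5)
Your proof is correct and follows the same route as the paper: the same definitions $\eta_1(g) := \eta(g,g)$, $\eta_2(g_2) := \eta(g_2,\id_G)$, the same factorization $(g',g) = (g'g^{-1},\id_G)\cdot(g,g)$, the same use of $\Gamma^\Delta$ and $(\Gamma \cap G_2)\times\id_G$ sitting inside $\Gamma^\Box$, the same commutator observation (the paper phrases it as $[G^\Delta, G_2\times\id_G] = [G,G_2]\times\id_G \subseteq [G^\Box,G^\Box]$, which is exactly your computation), and the same appeal to Lemma \ref{rat-bounds} for the quantitative bounds. Your write-up just spells out a couple of steps the paper leaves implicit.
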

\proof If we define $\eta_1(g) := \eta(g,g)$ and $\eta_2(g_2) := \eta(g_2,\id_G)$ for $g \in G$ and $g_2 \in G_{2}$ then the decomposition follows since $\eta$ is an additive homomorphism.  Since $\eta$ annihilates $[G^\Box,G^\Box]$, which contains $[G^\Delta,G_{2} \times \id_G] = [G,G_{2}] \times \id_G$, we see that $\eta_2$ annihilates $[G,G_{2}]$; since $\eta$ annihilates $\Gamma^\Box$, which contains both $\Gamma^\Delta$ and $(\Gamma \cap G_{2}) \times \id_G$, we see that $\eta_1$ and $\eta_2$ annihilate $\Gamma$ and $\Gamma \cap G_{2}$ respectively.

It remains to check the boundedness properties. Writing 
\[ \eta(x,x') = k^{\Box} \cdot \psi_{\mathcal{X}^{\Box}}(x,x'),\] where $k^{\Box} \in \Z^{m_{\Box}}$, we have by definition that $|k^{\Box}| \ll \delta^{-O(1)}$. The integer vectors $k_1$ and $k_2$ used to define $|\eta_1|$ and $|\eta_2|$ are then given by
\[ k_1 \cdot \psi(x) = \eta_1(x) = \eta(x,x) = k^{\Box} \cdot \psi_{\mathcal{X}^{\Box}}(x,x)\]
and 
\[ k_2 \cdot \psi(x) = \eta_2(x) = \eta(x,\id_G) = k^{\Box} \cdot \psi_{\mathcal{X}^{\Box}}(x,\id_G).\]
That $|k_1|,|k_2| \ll \delta^{-O(1)}$ now follows immediately from the fact, established in Lemma \ref{rat-bounds}, that $\psi_{\mathcal{X}^{\Box}}(x,x')$ is a polynomial of degree $O(1)$ with rational coefficients of height $O(\delta^{-O(1)})$ in the coordinates $\psi(x),\psi(x')$.
\endproof

Now let us return to \eqref{h-biases-2}, and reinterpret this in terms of the decomposition of $\eta$ just given.
Recalling the formula \eqref{gh-box-def} for $g_h^{\Box}(n)$ we therefore have
\[ \eta(g_h^{\Box}(n)) = \eta_1(g(n)) + \eta_2(\{g(1)^h\}^{-1} g_{\nonlin}(n+h) g(1)^n \{g(1)^h\} g(1)^{-n} g_{\nonlin}(n)^{-1})\] which, since $\eta_2$ vanishes on $[G,G_{2}]$, is equal to
\begin{align*}  &  \eta_1(g(n)) + \eta_2(g_{\nonlin}(n+h) \{g(1)^h\}^{-1}  g(1)^n \{g(1)^h\} g(1)^{-n} g_{\nonlin}(n)^{-1})\\  = & \eta_1(g(n)) + \eta_2(g_{\nonlin}(n+h)) - \eta_2(g_{\nonlin}(n)) + \eta_2( \{g(1)^h\}^{-1}  g(1)^n \{g(1)^h\} g(1)^{-n}).\end{align*} 
Now one easily verifies by induction on $n$ that $y^{-1}x^n y x^{-n} \equiv [x,y]^n \md{[G,[G,G]]}$. Since $\eta_2$ annihilates $[G,G_{2}]$, which contains $[G,[G,G]]$, we can therefore simplify the above a little further to
\begin{align}\nonumber \eta(g_h^{\Box}(n)) & = \eta_1(g(n)) + \eta_2(g_{\nonlin}(n+h)) - \eta_2(g_{\nonlin}(n)) + n\eta_2( [g(1),\{g(1)^h\}])\\ & := P(n) + Q(n+h) - Q(n) + \sigma(h) n,\label{eta-form}\end{align}
where $P,Q : \Z \rightarrow \R/\Z$ are polynomial sequences of degree at most $d$.

The next lemma is specifically designed to handle the situation that has arisen here. In this lemma it is convenient to reprise a notation from earlier papers of ours (such as \cite{green-tao-u3mobius}): if $\alpha \in \R/\Z$ and $Q > 1$ we write $\Vert \alpha \Vert_{\R/\Z,Q} := \inf_{1 \leq q \leq Q} \Vert q \alpha \Vert_{\R/\Z}$.  In a similar spirit, for any $f: \Z \to \R/\Z$ define
$$ \Vert f \Vert_{C^\infty[N], Q} := \inf_{1 \leq q \leq Q} \Vert qf \|_{C^\infty[N]}.$$

\begin{lemma}[Polynomials lemma]\label{polynomials}
Suppose that $P,Q : \Z \rightarrow \R/\Z$ are polynomial sequences of degree at most $d$ with $P(0) = 0$ and $Q(0) = \partial Q(0) = 0$ and that $\sigma : [N] \rightarrow \R/\Z$ is an arbitrary map. Suppose that there are $\gg \delta^{O(1)} N$ values of $h \in [N]$ such that 
\[ \Vert P(n) + Q(n+h) - Q(n) + \sigma(h)n \Vert_{C^{\infty}[N]} \ll \delta^{-O(1)}.\]
Then $\Vert \partial^i Q \Vert_{\R/\Z,\delta^{-O(1)}} \ll \delta^{-O(1)}/N^i$ for $i \geq 3$, and
\[ \Vert P(1) + \alpha h + \sigma(h) \Vert_{\R/\Z,\delta^{-O(1)}} \ll \delta^{-O(1)}/N\] for $\gg \delta^{O(1)} N$ values of $h \in [N]$, where 
\begin{equation}\label{alphadef}
\alpha := \partial^2 Q(0).
\end{equation}
\end{lemma}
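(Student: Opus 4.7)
The plan is to read off Taylor coefficients of the polynomial
\[ R_h(n) := P(n) + Q(n+h) - Q(n) + \sigma(h) n \]
in the binomial basis $\binom{n}{j}$ and feed the hypothesis $\|R_h\|_{C^{\infty}[N]} \ll \delta^{-O(1)}$ into Lemma \ref{strong-linear}. Write $P(n) = \sum_j \gamma_j \binom{n}{j}$ and $Q(n) = \sum_j \beta_j \binom{n}{j}$; the normalisations in the hypothesis give $\gamma_0 = \beta_0 = \beta_1 = 0$. Since $\partial^j$ commutes with shifts in $n$ and annihilates the linear term $\sigma(h) n$ once $j \geq 2$, a direct computation yields
\[ \partial^j R_h(0) = \gamma_j + \sum_{k=1}^{d-j} \beta_{j+k} \binom{h}{k} \qquad (j \geq 2), \]
while for $j = 1$, using $\gamma_1 = P(1)$ and $\beta_2 = \alpha$,
\[ \partial R_h(0) = P(1) + \alpha h + \sum_{k=2}^{d-1} \beta_{1+k} \binom{h}{k} + \sigma(h). \]
The hypothesis thus says that each of these expressions has $\R/\Z$-norm $\ll \delta^{-O(1)}/N^j$ for all $h$ in a common good set of size $\gg \delta^{O(1)} N$.

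For part (a) I would proceed by reverse induction on $i$, from $i = d$ down to $i = 3$, producing at each stage an integer $q_i \ll \delta^{-O(1)}$ with $\|q_i \beta_i\|_{\R/\Z} \ll \delta^{-O(1)}/N^i$. The base case $i = d$ is the $j = d-1$ instance of the first display, which reduces to $\|\gamma_{d-1} + \beta_d h\|_{\R/\Z} \ll \delta^{-O(1)}/N^{d-1}$ for many $h$; Lemma \ref{strong-linear} supplies the required $q_d$ at once. For the induction step, assume the conclusion for $i+1, \ldots, d$ and set $q := q_{i+1} \cdots q_d \ll \delta^{-O(1)}$. For each $k \geq 2$ and each $h \in [N]$ the crude bound
\[ \|q \beta_{i-1+k} \binom{h}{k}\|_{\R/\Z} \ll N^k \cdot \delta^{-O(1)}/N^{i-1+k} = \delta^{-O(1)}/N^{i-1} \]
shows that the higher-order tail in the $j = i-1$ instance may be absorbed after multiplication by $q$, leaving
\[ \|q \gamma_{i-1} + q \beta_i h\|_{\R/\Z} \ll \delta^{-O(1)}/N^{i-1} \qquad \text{for many } h. \]
A further application of Lemma \ref{strong-linear} then yields $q'_i \ll \delta^{-O(1)}$ with $\|q'_i q \beta_i\|_{\R/\Z} \ll \delta^{-O(1)}/N^i$, and we set $q_i := q'_i q$. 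Since there are only $O(d)$ steps in the induction, the accumulated denominator stays $\ll \delta^{-O(1)}$.

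For part (b), set $q^* := q_3 \cdots q_d \ll \delta^{-O(1)}$. Then part (a) gives $\|q^* \beta_i\|_{\R/\Z} \ll \delta^{-O(1)}/N^i$ for each $i \geq 3$, and hence $\|q^* \beta_{1+k} \binom{h}{k}\|_{\R/\Z} \ll \delta^{-O(1)}/N$ for each $k \geq 2$, uniformly in $h \in [N]$. Multiplying the $j = 1$ estimate by $q^*$ and absorbing this tail yields $\|q^*(P(1) + \alpha h + \sigma(h))\|_{\R/\Z} \ll \delta^{-O(1)}/N$ for many $h$, which is (b).

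The main obstacle is nothing more than careful bookkeeping: one must verify that the $O(d)$ invocations of Lemma \ref{strong-linear} keep the accumulated denominators $q_i$ and the errors within their stated polynomial targets, with implied constants depending only on $d$, and one must check the side condition $\epsilon \leq \delta/2$ required by Lemma \ref{strong-linear}. The latter holds whenever $N$ exceeds a sufficiently large power of $\delta^{-1}$; for smaller $N$ all of the conclusions are trivial upon taking $q_i = 1$.
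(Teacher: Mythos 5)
Your proof is correct, but it takes a genuinely different (and more elementary) route than the paper's. The paper extracts all of part (a) in a single stroke: it observes that $\partial^2 R_h(0) = \partial^2(P-Q)(0) + \partial^2 Q(h)$, views this as a strongly recurrent \emph{polynomial} in $h$ of degree $d-2$, and invokes Lemma \ref{strong-polynomial} once to obtain $\Vert \partial^2(P-Q)(0) + \partial^2 Q\Vert_{C^\infty[N],\delta^{-O(1)}} \ll \delta^{-O(1)}/N^2$, from which every $\beta_i$ with $i \geq 3$ is controlled simultaneously. You instead avoid Lemma \ref{strong-polynomial} entirely and peel off one Taylor coefficient at a time via a descending induction, at each step multiplying by the accumulated denominator, absorbing the already-controlled tail $\sum_{k\geq 2}\beta_{i-1+k}\binom{h}{k}$ into the allowed error, and then invoking only the linear Lemma \ref{strong-linear}. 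Since the paper's Lemma \ref{strong-polynomial} is itself proved by a descending induction on the degree (through Proposition \ref{weyl-dichotomy}), you are in effect re-proving the needed special case of that lemma in situ; this makes your argument slightly longer and requires the doubly-compounding denominators $q_i = q_i' \, q_{i+1}\cdots q_d$, but as you note the number of steps is $O(d)$ so the bounds remain $\delta^{-O_d(1)}$. Part (b) in both treatments is then the same final move: read off the $j=1$ Taylor coefficient of $R_h$ and absorb the tail using the bounds from part (a). The only caveats — the side condition $\epsilon \leq \delta/2$ in Lemma \ref{strong-linear}, which forces one to assume $N \gg \delta^{-O_d(1)}$ and dispose of small $N$ trivially, and the fact that the hypothesis holds for \emph{all} derivatives simultaneously on a common set of $h$, so no further pigeonholing in $h$ is needed — you have both handled.
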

\begin{proof} The assumption implies, looking at the second derivative at $n = 0$, that 
\[ \Vert  \partial^2 (P - Q)(0) + \partial^2 Q(h)\Vert_{\R/\Z} \ll \delta^{-O(1)}/N^2\] for $\gg \delta^{O(1)} N$ values of $h \in [N]$.
 Applying Lemma \ref{strong-polynomial} then implies that 
\[ \Vert \partial^2 (P-Q)(0) + \partial^2 Q \Vert_{C^{\infty}[N],\delta^{-O(1)}} \ll \delta^{-O(1)}/N^2.\]
Thus, as stated, we have
\[ \Vert \partial^i Q \Vert_{\R/\Z, \delta^{-O(1)}} \ll \delta^{-O(1)}/N^i\] for $i \geq 3$, which means 
 in view of the Taylor expansion of $Q$ that we can write
\[ Q(n) = \alpha \binom{n}{2} + R(n),\]
 where $R(0) = R(1) = R(2) = 0$ and $\Vert R \Vert_{C^{\infty}[N],\delta^{-O(1)}} \ll \delta^{-O(1)}$. Substituting back into our assumption yields that
 \[ \left\Vert P(n) + (\alpha h + \sigma(h))n + R(n+h) - R(h) + \alpha\binom{h}{2} \right\Vert_{C^{\infty}[N]} \ll \delta^{-O(1)}\] for $\gg \delta^{O(1)} N$ values of $h \in [N]$.
Differentiating at zero and recalling that $P(0) = 0$ we obtain
\[ \Vert P(1) + \sigma(h) + \alpha h + \partial R(h)\Vert_{\R/\Z} \ll \delta^{-O(1)}/N,\]
which implies in view of the properties of $R$ that
\[ \Vert P(1) + \sigma(h) + \alpha h \Vert_{\R/\Z, \delta^{-O(1)}} \ll \delta^{-O(1)}/N.\]
This completes the proof.\end{proof}

Now let us recall \eqref{eta-form}. We know that $\Vert \eta \circ g_h^{\Box} \Vert_{C^{\infty}[N]} \ll \delta^{-O(1)}$ for $\gg\delta^{O(1)} N$ values of $h$, so let us apply the lemma with $P := \eta_1 \circ g$, 
\begin{equation}\label{qn-def}
Q := \eta_2 \circ g_{\nonlin},
\end{equation}
and $\sigma(h) := \eta_2 ([g(1),\{g(1)\}^h])$. By pigeonholing in $h$ we see that there is some $q \leq \delta^{-O(1)}$ for which
\[ \Vert q \eta_1(g(1)) + q\eta_2([g(1), \{g(1)^h\}]) + q\alpha h \Vert_{\R/\Z} \ll \delta^{-O(1)}/N.\] By redefining  $\eta_1$ and $\eta_2$ (none of the boundedness properties of Lemma \ref{eta-form} are lost by doing this) we may write this as
\begin{equation}\label{to-work-with} \Vert \eta_1(g(1)) + \eta_2([g(1), \{g(1)^h\}]) + q\alpha h \Vert_{\R/\Z} \ll \delta^{-O(1)}/N.
\end{equation}

We now proceed as in \S \ref{heisenberg-sec}, using Mal'cev bases to work with explicit bracket polynomials. 

Since $\eta_2$ annihilates $[G,[G,G]] \subseteq [G,G_{2}]$, we see that the map $x \mapsto \eta_2([g(1),x])$ is a homomorphism.  Thus there exists $\zeta \in \R^m$ such that
\begin{equation}\label{zetadef}
\eta_2([g(1),x]) = \zeta \cdot \psi(x) \md{\Z}
\end{equation}
for all $x \in G$. 
Since $\eta_2$ annihilates $[G,G_{2}]$, all but the first $m_{\lin}$ coordinates of $\zeta$ are zero. Since we have reduced to the case $|\psi(g(1))|  \leq 1$ and the basis $\mathcal{X}$ is $\frac{1}{\delta}$-rational it follows that $|\zeta| \ll \delta^{-O(1)}$.

We now define $\beta := \eta_1(g(1))$ and $\gamma := \psi(g(1))$.
Now since $[G,G] \subseteq G_2$ the map $\psi_{\lin} : G \rightarrow \R^{m_{\lin}}$ which picks out the first $m_{\lin}$ Mal'cev coordinates is a homomorphism, and therefore the first $m_{\lin}$ coordinates of $\psi(g(1)^h)$ are just $\gamma h$. We may now rewrite \eqref{to-work-with} as 
\begin{equation}\label{to-work-with-2}
\Vert \beta + q\alpha h + \zeta \cdot \{\gamma h\} \Vert_{\R/\Z} \ll \delta^{-O(1)}/N
\end{equation}
for $\gg \delta^{O(1)} N$ values of $h \in [N]$.

This assumption is the same as in Proposition \ref{bracket-poly-lem}, except that we do not have a bound on $|q\alpha|$. However, we have

\begin{claim}\label{cl} At least one of the following statements holds:
\begin{enumerate}
\item There is $r \ll \delta^{-O(1)}$ such that $\Vert r\zeta_i \md{\Z} \Vert_{\R/\Z} \ll \delta^{-O(1)}/N$ for $i = 1,\dots, m_{\lin}$; 
\item There exists $k \in \Z^{m_{\lin}}$, $0 < |k| \ll \delta^{-O(1)}$ such that $\Vert k \cdot \gamma\Vert_{\R/\Z} \ll \delta^{-O(1)}/N$.
\end{enumerate}
\end{claim}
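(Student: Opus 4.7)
The approach is to mimic the proof of Proposition \ref{bracket-poly-lem}, replacing its ``short interval'' splitting (which relied on the bound $|\alpha| \leq 1/\delta N$) with a ``short sub-arithmetic-progression'' splitting that requires only $\|rq\alpha\|_{\R/\Z}$ to be small. If $\sup_i |\zeta_i| \leq 1/\delta N$ (discarding the trivial case $N \ll \delta^{-O(1)}$), then $\|\zeta_i \md{\Z}\|_{\R/\Z} = |\zeta_i| \leq 1/\delta N$ for every $i$, which is conclusion (i) with $r = 1$. Otherwise $\sup_i |\zeta_i| > 1/\delta N$, and the triangle inequality applied to \eqref{to-work-with-2} yields $\|\beta + q\alpha h\|_{\R/\Z} \leq (1 + m_{\lin})\sup_i|\zeta_i|$ for $\gg \delta^{O(1)} N$ values of $h \in [N]$. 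Lemma \ref{strong-linear} then produces an integer $r \ll \delta^{-O(1)}$ with $\|rq\alpha\|_{\R/\Z} \ll \sup_i|\zeta_i|\,\delta^{-O(1)}/N$.

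Set $\tilde q := rq$, write $rq\alpha = p + \eta$ with $p \in \Z$ and $|\eta| \ll \sup_i|\zeta_i|\,\delta^{-O(1)}/N$, and partition $[N]$ into arithmetic progressions of common difference $\tilde q$ and length $N' = c_m \delta^{C_0} N$ for small $c_m$ and large $C_0$. On such a sub-progression $\{h_0 + \tilde q n' : n' \in [N']\}$, the identity $\tilde q \cdot q\alpha = q(rq\alpha) \equiv q\eta \pmod 1$ gives $q\alpha h \equiv q\alpha h_0 + q\eta n' \pmod 1$, with total variation $|q\eta n'| \leq (\delta/20)\sup_i|\zeta_i|$ by the choice of $c_m, C_0$. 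Pigeonholing over the $O(\delta^{-O(1)})$ sub-progressions, some $I$ contains a $\gg \delta^{O(1)}$-density of good values of $h$, and on $I$ we obtain
\[ \|\theta + \zeta \cdot \{\gamma h\}\|_{\R/\Z} \leq (\delta/20)\sup_i|\zeta_i| + C\delta^{-O(1)}/N \]
for $\theta := \beta + q\alpha h_0$. We may further assume $\sup_i|\zeta_i| \geq 20 C\delta^{-O(1)}/N$ (else again conclusion (i) holds with $r = 1$), bounding the right side by $(\delta/10)\sup_i|\zeta_i|$.

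The rest of the argument mirrors the second half of the proof of Proposition \ref{bracket-poly-lem}. Define $\Omega := \{t \in [0,1)^{m_{\lin}} : \|\theta + \zeta \cdot t\|_{\R/\Z} \leq (\delta/10)\sup_i|\zeta_i|\}$ and its $(\delta/10)$-thickening $\tilde\Omega$, noting $\vol(\tilde\Omega) \leq \delta/2$; the Lipschitz bump $F(x) := \max(1 - 10\,\dist(x,\Omega)/\delta, 0)$ satisfies $\|F\|_{\Lip} \ll 1/\delta$, $\E_{n \in I} F(\gamma n) \geq \delta^{O(1)}$, and $\int F \leq \delta/2$. Thus the reparametrized linear sequence $(\gamma h_0 + (\tilde q \gamma) n')_{n' \in [N']}$ on $\R^{m_{\lin}}/\Z^{m_{\lin}}$ is not $\delta^{O(1)}$-equidistributed, so Proposition \ref{quant-kron} produces $k \in \Z^{m_{\lin}}$ with $0 < |k| \ll \delta^{-O(1)}$ and $\|k \cdot (\tilde q \gamma)\|_{\R/\Z} \ll \delta^{-O(1)}/N'$; setting $k' := \tilde q k$ gives $|k'| \ll \delta^{-O(1)}$ and $\|k' \cdot \gamma\|_{\R/\Z} \ll \delta^{-O(1)}/N$, which is conclusion (ii). The main obstacle is precisely the first reduction step: without a bound on $|q\alpha|$ one cannot invoke Proposition \ref{bracket-poly-lem} directly, but exploiting that $rq\alpha$ is close to an integer lets us render $q\alpha h$ nearly constant on short sub-progressions, after which the slab-and-Lipschitz argument proceeds unchanged.
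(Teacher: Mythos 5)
Your argument is correct, and it takes a genuinely different route from the paper.  The paper's proof treats Proposition \ref{bracket-poly-lem} as a black box and applies it twice: first with the augmented data $\zeta' := (\zeta,1)$, $\gamma' := (\gamma, q\alpha)$ and $\alpha' := 0$, so that the unbounded drift $q\alpha h$ is absorbed into the bracket term $\zeta' \cdot \{\gamma' h\}$ and the hypothesis $|\alpha'| \leq 1/\delta N$ is trivially met; if this yields a nonzero pair $(k,r)$ with $\|k\cdot\gamma + qr\alpha\|_{\R/\Z} \ll \delta^{-O(1)}/N$ and $r \neq 0$, it multiplies \eqref{to-work-with-2} through by $r$, substitutes $qr\alpha \equiv \tilde\alpha - k\cdot\gamma$ (with $|\tilde\alpha| \ll \delta^{-O(1)}/N$), and applies the proposition a second time, now with a bounded linear drift.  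You instead bypass the proposition and adapt its proof directly: Lemma \ref{strong-linear} produces $r \ll \delta^{-O(1)}$ making $rq\alpha$ almost integral, and you split $[N]$ into short arithmetic progressions of common difference $\tilde q := rq$ rather than into short intervals, which is exactly what is needed to make $q\alpha h$ nearly constant mod $\Z$ on each piece in the absence of a bound on $|q\alpha|$; the Lipschitz-bump-plus-Kronecker step then goes through unchanged, and the extra $\tilde q$ factor is harmlessly absorbed into $k'$.  The two approaches have comparable polynomial losses; the paper's is shorter on the page because it reuses the lemma, while yours makes transparent what the lemma's hypothesis $|\alpha|\leq 1/\delta N$ is actually used for and avoids the slightly non-obvious dimension lift.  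One point you should make explicit: after pigeonholing onto a sub-progression the density of good $h$ has degraded to $\gg \delta^{O(1)}$ rather than $\geq \delta$, so the thresholds defining $\Omega$, $\tilde\Omega$ and the variation exponent $C_0$ must be calibrated against this smaller density (say $\delta_1 = \delta^{C_1}$) to guarantee $\int F \leq \delta_1/2 < \E_{h\in I} F(\gamma h)$; this is the same routine recalibration of exponents that is left implicit throughout the paper, but stated as written the gap $\E F - \int F$ is not manifestly positive.
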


\begin{proof} We apply Proposition \ref{bracket-poly-lem} with $\zeta' := (\zeta, 1) \in \R^{m_{\lin}} \times \R$, $\gamma' := (\gamma, q\alpha) \in \R^{m_{\lin}} \times \R$ and $\alpha' := 0$, deducing that either $|\zeta'_i| \ll \delta^{-O(1)}/N$ for all $i = 1,\dots,m_{\lin}$ (in which case (i) holds) or else there exist $k \in \Z^{m_{\lin}}$ and $r \in \Z$, not both zero and with $|k|, |r| \ll \delta^{-O(1)}$, such that $\Vert k \cdot \gamma + qr\alpha \Vert_{\R/\Z} \ll \delta^{-O(1)}/N$. If $r = 0$ then (ii) holds, so assume that $r \neq 0$. Multiplying \eqref{to-work-with-2} through by $r$ we see that for $\geq \delta N$ values of $h \in [N]$ we have
\[ \Vert \tilde\beta + \tilde\alpha h + \tilde \zeta \cdot \{\gamma h\} \Vert_{\R/\Z} \ll \delta^{-O(1)}/N,\] where $\tilde \beta := r \beta$, $\tilde \alpha := \{k\cdot \gamma + qr\alpha\}$ satisfies $|\tilde \alpha| \leq \delta^{-O(1)}/N$ and $\tilde \zeta := r \zeta - k$. Thus we may apply Proposition \ref{bracket-poly-lem} once more to conclude that either $|\tilde \zeta_i| \ll \delta^{-O(1)}/N$ for $i = 1,\dots,m_{\lin}$, which implies (i), or else there is a nonzero $\tilde k \in \Z^{m_{\lin}}$ such that $\Vert \tilde k \cdot \gamma \Vert_{\R/\Z} \ll \delta^{-O(1)}/N$, which implies (ii). This establishes the claim.
\end{proof}

If Claim \ref{cl}(ii) holds then consider the map $\eta : G \rightarrow \R/\Z$ defined by
\[ \eta(x) := k \cdot \psi(x) \md{\Z}.\]
Since $k \in \Z^{m_{\lin}}$, $\eta$ is a horizontal character and we have $|\eta| = |k| \ll \delta^{-O(1)}$. Finally we have
\[ \eta \circ g(n) = \eta(g(1)^n) = n k \cdot \gamma \md{\Z},\]
and so $\Vert \eta \circ g \Vert_{C^{\infty}[N]} \ll \delta^{-O(1)}$. This completes the proof of Theorem \ref{mainthm-ind} in this case.

Suppose then Claim \ref{cl}(i) of the claim holds. For each $i = 1,\dots,m$ consider the map $\tau_i : G \rightarrow \R/\Z$ defined by
\[ \tau_i(x) := r\eta_2([x,\exp(X_i)]).\] Since $[\Gamma,\Gamma] \subseteq \Gamma$ and $[G,G] \subseteq G_2$ we see from the properties established in Lemma \ref{eta-decomp} that $\tau_i$ is a horizontal character which annihilates $G_2$. It is not hard to establish that $|\tau_i| \ll \delta^{-O(1)}$. To do this we write (as usual)
\[ \tau_i(x) = k_i \cdot \psi(x) \md{\Z},\]  where $k_i \in \Z^{m}$ (and in fact $k_i \in \Z^{m_{\lin}}$ since $\tau_i$ annihilates $G_2$). From the definition of $\tau_i$, the bound $r \ll \delta^{-O(1)}$, the $\frac{1}{\delta}$-rationality of the basis $\mathcal{X}$ and Lemma \ref{mult-basis-lem} we have \[ (k_i)_j = \tau_i(\exp(X_j)) = r \eta_2([\exp(X_j),\exp(X_i)]) \ll \delta^{-O(1)}, \] and so indeed $|\tau_i| = |k_i| \ll \delta^{-O(1)}$. 
Now we have
\[ \tau_i \circ g(n) = n\tau_i(g(1)) = rn\zeta_i \md{\Z}\]
where the last equality follows from \eqref{zetadef}. By property (i), this implies that 
\[ \Vert \tau_i \circ g \Vert_{C^{\infty}[N]} \ll \delta^{-O(1)},\]
and so once again we have proved Theorem \ref{mainthm-ind} unless $\tau_i = 0$ for all $i = 1,\dots,m$.

So far we have been successful in deducing Theorem \ref{mainthm-ind} by induction on the degree $d$, but we know from the example at the start of this section that it is not always possible to make such a deduction as $G_{\bullet}$ may be ``reducible'' for $g$. It turns out that the case we have not yet covered corresponds to this situation.

Suppose then that $\tau_i = 0$ for all $i$, so that $\eta_2([x,\exp(X_i)]) = 0$ for all $x \in G$ and all $i \in [m]$. Since the homomorphism $\eta_2$ annihilates $[G,[G,G]] \subseteq [G, G_{2}]$, we see using the identity $[x,yz] = [x,z][z^{-1},[x,y]][x,y]$ that the map $y \mapsto \eta_2([x,y])$ is a homomorphism for any fixed $x$.  It follows that $\eta_2([x,y]) = 0$ for all $x,y \in G$, or in other words that $\eta$ annihilates $[G,G]$.  Thus $\zeta = 0$ (cf. \eqref{zetadef}) and \eqref{to-work-with-2} degenerates to
\[ \Vert \beta + q\alpha h \Vert_{\R/\Z} \ll \delta^{-O(1)}/N\]
for $\gg \delta^{O(1)} N$ values of $h \in [N]$. By Lemma \ref{strong-linear} this implies that 
\[ \Vert \alpha \Vert_{\R/\Z,\delta^{-O(1)}} \ll \delta^{-O(1)}/N^2,\]
and thus by \eqref{alphadef}
\[ \Vert \partial^2 Q \Vert_{\R/\Z,\delta^{-O(1)}} \ll \delta^{-O(1)}/N^2.\]
 where $Q$ was defined in \eqref{qn-def}. We have $Q(0) = Q(1) = 0$ and, by Lemma \ref{polynomials}, $\Vert \partial^i Q \Vert_{\R/\Z,\delta^{-O(1)}} \ll \delta^{-O(1)}/N^i$ for $i \geq 3$. Thus 
\[ \Vert \eta_2 \circ g_2 \Vert_{C^{\infty}[N],\delta^{-O(1)}} \ll \delta^{-O(1)}.\]
Thus there exists $q$, $1 \leq q \leq \delta^{-O(1)}$, such that
\[ \Vert q\eta_2 \circ g_2 \Vert_{C^{\infty}[N]} \ll \delta^{-O(1)}.\] For notational simplicity we rename $q\eta_2$ as $\eta_2$, thus
\begin{equation}\label{nonlinreduction}  \Vert \eta_2 \circ g_2 \Vert_{C^{\infty}[N]} \ll \delta^{-O(1)}.\end{equation}
Roughly speaking, this statement means that $g$ exhibits some essentially \emph{linear} behaviour (in the ``direction'' orthogonal to $\eta_2$) inside $G_2$. For our purposes this means that $G_2$ was too large to accurately capture the quadratic and higher order terms of $g$, and we must pass to a finer filtration $G'_{\bullet}$ which does not have this drawback. This is the point in the proof where we induct on the nonlinearity degree $m_*$.

Now $\eta_2 : G_2 \rightarrow \R/\Z$ has the form 
\[ \eta_2(x) = k \cdot \psi(x) \md{\Z},\] where $k \in \Z^{m_2} \subseteq \Z^m$ satisfies $|k| \ll \delta^{-O(1)}$. In the ensuing discussion we will also need the lift $\tilde \eta_2 : G_2 \rightarrow \R$ defined by
\[ \tilde \eta_2(x) := k \cdot \psi(x).\]
Now the map $\theta : G_2 \times G_2 \rightarrow \R$ defined by $\theta(x,y) := \tilde \eta_2(xy) - \tilde \eta_2(x) - \tilde \eta_2(y)$ is continuous, $\Z$-valued and vanishes when $x = y = \id_G$. Since $G_2 \times G_2$ is connected it follows that $\theta = 0$ identically, and hence the lift $\tilde \eta_2$ is a homomorphism.
\begin{lemma}[A finer subgroup sequence]
Define $G'_{0} = G'_{1} = G$ and $G'_{i} = G_{i} \cap \ker \tilde\eta_2$ for $i \geq 2$. Then $G'_{\bullet} = (G'_{i})_{i=0}^{\infty}$ is a filtration with degree at most $d$ and nonlinearity degree $m'_* \leq m_* - 1$. Each $G'_i$ is closed, connected and $\delta^{-O(1)}$-rational \textup{(}with respect to our Mal'cev basis $\X$ on $G/\Gamma$ adapted to $G_{\bullet}$\textup{)}.
\end{lemma}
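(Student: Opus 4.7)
My plan is to verify the four assertions in the lemma—filtration, degree, nonlinearity degree, rationality—in that order, using as the central input the stronger fact that $\tilde\eta_2$ (not merely $\eta_2$) vanishes on all of $[G,G]$. To see this, note that $\eta_2$ was just shown to kill $[G,G]$, so $\tilde\eta_2$ takes integer values there; since $[G,G]$ is connected (as the subgroup generated by the connected set of commutators in the connected group $G$), contains $\id_G$, and $\tilde\eta_2(\id_G)=0$, continuity forces $\tilde\eta_2\equiv 0$ on $[G,G]$.

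Given this, the filtration identity $[G'_i, G'_j] \subseteq G'_{i+j}$ follows by cases. It is trivial for $i+j \leq 1$; for $i+j \geq 2$, the inclusions $[G'_i, G'_j] \subseteq [G_i, G_j] \subseteq G_{i+j}$ and $[G'_i, G'_j] \subseteq [G,G] \subseteq \ker \tilde\eta_2$ together give $[G'_i, G'_j] \subseteq G_{i+j} \cap \ker \tilde\eta_2 = G'_{i+j}$. The degree bound is immediate since $G'_{d+1} \subseteq G_{d+1} = \{\id_G\}$. Closedness of each $G'_i$ is plain. For connectedness, $\tilde\eta_2\colon G_2 \to \R$ is a continuous homomorphism from a connected simply-connected nilpotent Lie group to an abelian group, so there is a unique linear functional $d\tilde\eta_2\colon \g_2 \to \R$ with $\tilde\eta_2 \circ \exp = d\tilde\eta_2$; hence $G'_i = \exp(\g_i \cap \ker d\tilde\eta_2)$ is the exponential image of a linear subspace of $\g_i$, and therefore connected.

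For the nonlinearity degree, the quantity $m_{\ab} = \dim G - \dim[G,G]$ depends only on $G$, not on the filtration, so $m'_{\ab} = m_{\ab}$. Here I would also flag the tacit hypothesis $\tilde\eta_2 \not\equiv 0$: if $\tilde\eta_2 \equiv 0$ then $\eta_2 \equiv 0$, and \eqref{eta-form} collapses to $\eta \circ g_h^{\Box}(n) = \eta_1(g(n))$; nontriviality of $\eta$ then forces $\eta_1$ to be a nonzero horizontal character on $G$ with $\|\eta_1 \circ g\|_{C^{\infty}[N]} \ll \delta^{-O(1)}$ and $|\eta_1| \ll \delta^{-O(1)}$, already yielding Theorem \ref{mainthm-ind} without the need to pass to a finer filtration. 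Granted $\tilde\eta_2 \neq 0$, $d\tilde\eta_2$ is a nonzero linear functional on $\g_2$, so $\dim G'_2 = \dim G_2 - 1$, $m'_{\lin} = m_{\lin} + 1$, and $m'_* = m_{\ab} - m'_{\lin} = m_* - 1$.

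Finally, for rationality, I evaluate $d\tilde\eta_2$ on the basis: writing $\tilde\eta_2(x) = k \cdot \psi(x)$ with $k \in \Z^{m_2}$, $|k| \ll \delta^{-O(1)}$, I obtain $d\tilde\eta_2(X_j) = \tilde\eta_2(\exp X_j) = k_j$ for $j > m - m_2$. Thus $\g_i \cap \ker d\tilde\eta_2$ is cut out from $\Span(X_{m-m_i+1},\dots,X_m)$ by a single linear equation with integer coefficients of size $\ll \delta^{-O(1)}$, and the linear-algebra routines in the appendix then yield a $\delta^{-O(1)}$-rational basis for $G'_i$ expressed in terms of $\X$. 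The one mildly delicate point in the whole argument is the upgrade of the $\R/\Z$-valued annihilation of $[G,G]$ by $\eta_2$ to the honest $\R$-valued annihilation by $\tilde\eta_2$; once that is in place, the rest is dimension-counting and routine linear algebra.
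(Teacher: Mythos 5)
Your proof is correct and follows the same path as the paper's: linearize $\tilde\eta_2 \circ \exp$, read off the filtration and rationality properties, and count dimensions for $m'_*$. You make two worthwhile tightenings. First, you explicitly upgrade the $\R/\Z$-valued annihilation $\eta_2([G,G])=0$ to the $\R$-valued $\tilde\eta_2([G,G])=0$ via connectedness of $[G,G]$; the paper's check of $[G,G'_i]\subseteq G'_{i+1}$ tacitly needs this (it cites $\ker\eta_2$ where $\ker\tilde\eta_2$ is what is actually required), and your unified inclusion $[G'_i,G'_j]\subseteq[G,G]\subseteq\ker\tilde\eta_2$ covers all cases at once rather than splitting on whether both indices exceed $1$. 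Second, you correctly flag that the conclusion $m'_*\leq m_*-1$ needs $\tilde\eta_2\not\equiv 0$ rather than merely $\eta\neq 0$ as the paper's one-line justification suggests, and your patch is the right one: when $\eta_2\equiv 0$ the relation \eqref{eta-form} degenerates so that $\eta_1$ is already a nonzero horizontal character on $G$ of controlled frequency with $\Vert\eta_1\circ g\Vert_{C^\infty[N]}\ll\delta^{-O(1)}$, so Theorem \ref{mainthm-ind} follows without ever invoking this lemma. Your route to linearity of $\tilde\eta_2\circ\exp$ via the differential of a continuous (hence smooth) Lie group homomorphism is a standard alternative to the paper's Baker--Campbell--Hausdorff argument factoring through $G_2/[G_2,G_2]$, and buys nothing either way.
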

\proof Let $\pi : G_2 \rightarrow G_2/[G_2,G_2]$ be the natural projection. It follows from the Baker-Campbell-Hausdorff formula $\exp(X)\exp(Y) = \exp(X + Y + \frac{1}{2}[X,Y] + \dots)$ that $\pi \circ \exp : \g_2 \rightarrow G_2/[G_2,G_2]$ is a linear map. Since $\tilde \eta_2 : G_2 \rightarrow \R$ factors through $G_2/[G_2,G_2]$ it follows that $\tilde \eta_2 \circ \exp : \g_2 \rightarrow \R$ is also a linear map. For $i = m_{\lin} + 1,\dots,m$ we have $\tilde \eta_2 \circ \exp(X_i) = k_i$, an integer of magnitude $O(\delta^{-O(1)})$. Thus by simple linear algebra we see that each Lie algebra $\g'_i = \g_i \cap \ker(\tilde \eta_2 \circ \exp)$ is spanned by $O(\delta^{-O(1)})$-rational combinations of the $X_i$. Thus the $G'_i$ are $O(\delta^{-O(1)})$-rational closed connected subgroups as claimed.
 
If $i,j \geq 2$ then it is clear that $[G'_{i}, G'_{j}] \subseteq G'_{i+j}$ since $\eta_2 : G_{2} \rightarrow \R$ is a homomorphism. We must also check that $[G,G'_{i}] \subseteq G'_{i+1}$ for $i \geq 2$, which follows from the fact that $[G,G_{i}] \subseteq [G,G_{2}] \subseteq \ker \eta_2$. The statement about $m'_*$ is immediate from the fact that $\eta$ is nontrivial, and it is obvious that the degree of $G'_{\bullet}$ is at most $d$.\endproof

We now come to the main result of this section, which allows us to pass to a new sequence $g' \in \poly(\Z,G'_{\bullet})$ with smaller nonlinearity degree than $g$. 

\begin{lemma}[Factorization lemma]\label{factorization} Suppose that \eqref{nonlinreduction} holds. Then we may factor $g = \varepsilon g'\gamma$, where
\begin{enumerate}
\item $\varepsilon \in \poly(\Z,G_{\bullet})$, $\varepsilon(0) = \id_G$, $\varepsilon$ is $(\delta^{-O(1)},N)$-smooth \textup{(}cf. Definition \ref{smooth-seq-def}\textup{)} and $\Vert \eta \circ \varepsilon \Vert_{C^{\infty}[N]} \ll \delta^{-O(1)}$ for all horizontal characters $\eta : G \rightarrow \R/\Z$ with $0 < \|\eta\| \ll \delta^{-O(1)}$;
\item $g' \in \poly(\Z,G'_{\bullet})$;
\item $\gamma \in \poly(\Z,G_{\bullet})$ and $\gamma(n)\Gamma$ is periodic with period $Q \ll \delta^{-O(1)}$.
\end{enumerate}
\end{lemma}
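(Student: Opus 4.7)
The plan is to use the hypothesis $\Vert \eta_2 \circ g_{\nonlin} \Vert_{C^{\infty}[N]} \ll \delta^{-O(1)}$ to Taylor-expand the real-valued polynomial $f := \tilde\eta_2 \circ g_{\nonlin}$, where $\tilde\eta_2 : G_2 \to \R$ is the unique continuous homomorphic lift of $\eta_2$ (which exists because $G_2$ is connected and simply connected), and then to construct $\varepsilon$ and $\gamma$ as products of one-parameter exponentials realising the ``smooth'' and ``integer'' parts of this expansion. Since $g_{\nonlin}(0) = g_{\nonlin}(1) = \id_G$, the Newton expansion reads $f(n) = \sum_{j=2}^{d} \beta_j \binom{n}{j}$, and the hypothesis lets me write $\beta_j = p_j + e_j$ with $p_j \in \Z$ and $|e_j| \ll \delta^{-O(1)}/N^j$. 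A useful additional observation is that $\beta_j = 0$ whenever $\tilde\eta_2$ vanishes on $\g_j$, because $\partial^{(j)} g_{\nonlin} \in G_j$ and $\tilde\eta_2$ commutes with discrete differentiation.

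Next, for each $j$ with $\tilde\eta_2|_{\g_j} \neq 0$ I would choose $Y_j^{*} \in \g_j$ with $\tilde\eta_2(\exp Y_j^{*}) = 1$: some basis element $X_i$ spanning a direction in $\g_j$ satisfies $\tilde\eta_2(\exp X_i) \in \Z \setminus \{0\}$ with $|\tilde\eta_2(\exp X_i)| \ll \delta^{-O(1)}$, so I can take $Y_j^{*} = X_i/\tilde\eta_2(\exp X_i)$, a $\delta^{-O(1)}$-rational combination of basis elements. I then set
\[ \varepsilon(n) := \prod_{j=2}^{d} \exp\bigl(e_j \tbinom{n}{j} Y_j^{*}\bigr), \qquad \gamma(n) := \prod_{j=2}^{d} \exp\bigl(p_j \tbinom{n}{j} Y_j^{*}\bigr), \]
with a fixed product order. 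Each factor $n \mapsto \exp(a \binom{n}{j} Y_j^{*})$ belongs to $\poly(\Z,G_{\bullet})$, since its $i$-th discrete derivative takes values in the one-parameter subgroup $\exp(\R Y_j^{*}) \subseteq G_j$, which is contained in $G_i$ for $i \leq j$ and is trivial for $i > j$; the Lazard--Leibman group property (Proposition \ref{poly-precise}) then gives $\varepsilon, \gamma \in \poly(\Z,G_{\bullet})$.

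The main obstacle is verifying that $g' := \varepsilon^{-1} g \gamma^{-1}$ lies in the \emph{smaller} $\poly(\Z,G'_{\bullet})$. Here the key point is that the sums defining $\varepsilon, \gamma$ start at $j=2$, so $\varepsilon(1) = \gamma(1) = \id_G$ and hence $g'(1) = g(1)$; moreover $\varepsilon(n), \gamma(n) \in G_2$ commute with everything in $G$ modulo $[G,G_2] \subseteq G_3$. A short rearrangement therefore gives
\[ g'_{\nonlin}(n) \equiv g_{\nonlin}(n)\, \varepsilon(n)^{-1} \gamma(n)^{-1} \pmod{G_3}, \]
and applying the homomorphism $\tilde\eta_2$, which annihilates $[G,G_2] \supseteq G_3$ by the properties established in Lemma \ref{eta-decomp}, collapses the commutator corrections and yields $\tilde\eta_2 \circ g'_{\nonlin}(n) = f(n) - e(n) - p(n) = 0$ identically in $n$. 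Since $\tilde\eta_2$ commutes with $\partial$ and $\partial^{(j)} g' = \partial^{(j)} g'_{\nonlin}$ for $j \geq 2$, every such derivative lands in $G_j \cap \ker \tilde\eta_2 = G'_j$, giving $g' \in \poly(\Z,G'_{\bullet})$.

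The remaining assertions reduce to straightforward coordinate estimates. For (1): via Baker--Campbell--Hausdorff inside $G_2$ (whose structure constants are $\delta^{-O(1)}$-rational), each factor contributes to $\psi(\varepsilon(n))$ a polynomial with $j$-th Taylor coefficient of size $O(\delta^{-O(1)}/N^j)$, so $|\psi(\varepsilon(n))| \ll \delta^{-O(1)}$ and $|\psi(\varepsilon(n)) - \psi(\varepsilon(n-1))| \ll \delta^{-O(1)}/N$ hold on $[N]$, and hence $\Vert \eta \circ \varepsilon \Vert_{C^{\infty}[N]} \ll \delta^{-O(1)}$ for any horizontal character of bounded norm. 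For (3): raising each factor $\exp(p_j \binom{n}{j} Y_j^{*})$ to the bounded integer $\tilde\eta_2(\exp X_i)$ produces $(\exp X_i)^{p_j \binom{n}{j}} \in \Gamma$, so each $\gamma(n)$ is $\delta^{-O(1)}$-rational, and Lemma \ref{rat-poly-lem} then delivers periodicity of $(\gamma(n)\Gamma)$ with period $\ll \delta^{-O(1)}$.
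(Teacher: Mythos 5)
Your plan is essentially the paper's: Taylor-expand $\tilde\eta_2 \circ g_{\nonlin}$, split each coefficient $\beta_j$ into a small real part $e_j$ and a nearby integer $p_j$, and build $\varepsilon$ and $\gamma$ to absorb these two parts. The choice to realise these as products of one-parameter exponentials $\exp(a\binom{n}{j}Y_j^*)$ with $Y_j^* \in \g_j$, rather than by specifying the Mal'cev coordinates of $\varepsilon$ and $\gamma$ directly as the paper does, is a cosmetic variant (the paper's $u_i$ is in fact chosen to agree with $t_i$ in all but one coordinate, which is exactly your $Y_j^*$ direction), and your verifications of (1) and (3) are in order. The only genuine problems are two incorrect statements in your verification of (2).

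First, you write that $\tilde\eta_2$ ``annihilates $[G,G_2] \supseteq G_3$''. This inclusion is backwards: a filtration gives $[G,G_2]\subseteq G_3$, and Lemma \ref{eta-decomp} only says $\tilde\eta_2$ annihilates $[G,G_2]$, which does not force it to annihilate $G_3$. So your congruence $g'_{\nonlin}(n)\equiv g_{\nonlin}(n)\varepsilon(n)^{-1}\gamma(n)^{-1} \pmod{G_3}$ is too weak to conclude that $\tilde\eta_2$ kills the error. The fix is to track the error precisely: carrying out the rearrangement shows the correction is $[g(1)^{-n},\gamma(n)]\in[G,G_2]$, which $\tilde\eta_2$ does annihilate, so one should work modulo $[G,G_2]$ from the start. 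Second, the assertion ``$\partial^{(j)} g' = \partial^{(j)} g'_{\nonlin}$ for $j\geq 2$'' is false in a nonabelian group: writing $g' = g'_{\nonlin}\cdot g'(1)^{(\cdot)}$ one finds $\partial_h g'(n) = g'_{\nonlin}(n+h)\,g'(1)^h\,g'_{\nonlin}(n)^{-1}$, which differs from $\partial_h g'_{\nonlin}(n)\cdot g'(1)^h$ by a conjugation. The clean repair — and what the paper does — is to invoke the group property of $\poly(\Z,G'_\bullet)$ (Proposition \ref{poly-precise}): the factor $n\mapsto g'(1)^n$ lies in $\poly(\Z,G'_\bullet)$ trivially, and $g'_{\nonlin}$ lies in $\poly(\Z,G'_\bullet)$ because it belongs to $\poly(\Z,G_\bullet)$, takes values in $G_2$, and satisfies $\tilde\eta_2\circ g'_{\nonlin}\equiv 0$, so that (since $\tilde\eta_2$ is a homomorphism on $G_2$) every $G_2$-valued iterated derivative of $g'_{\nonlin}$ is annihilated by $\tilde\eta_2$ as well, hence lies in $G_i\cap\ker\tilde\eta_2 = G'_i$.
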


We remark that this lemma is strikingly similar in form to Proposition \ref{prop2.17} below. The proof of the latter result will, in fact, be closely modelled on the proof of this one, but will be rather easier. 

\proof By Lemma \ref{basis-description} and the fact that $g_2(0) = g_2(1) = \id_G$ we have
\[\psi(g_2(n)) = \binom{n}{2} t_2 + \binom{n}{3} t_3 + \dots + \binom{n}{d} t_d,\]
where $t_i \in \R^m$ and the coordinate $(t_i)_j$ is equal to $0$ if $j \leq m-m_{i}$. Thus
\[ \tilde\eta_2 \circ g_2(n) = \sum_{i = 2}^d k \cdot t_i \binom{n}{i}\] From \eqref{nonlinreduction} we thus have
\[ \Vert k \cdot t_i \Vert_{\R/\Z} \ll \delta^{-O(1)}/N^i,\]
$i = 2,\dots,d$. Since $|k| \ll \delta^{-O(1)}$ we may choose vectors $u_i \in \R^{m}$ with $(u_i)_j = 0$ if $j \leq m-m_{i}$ such that $|t_i - u_i| \ll \delta^{-O(1)}/N^i$ and $k \cdot u_i \in \Z$ for $i = 2,\dots,d$.

We may now pick vectors $v_i$ in $\R^{m}$ with $(v_i)_j = 0$ if $j \leq m-m_{i}$, all of whose coordinates are rationals over some denominator $q \ll \delta^{-O(1)}$, such that $k \cdot u_i = k \cdot v_i$ for $i = 2,\dots,d$.

Define sequences $\varepsilon,\gamma : \Z \rightarrow G$ by
\begin{equation}\label{seq-def} \psi(\varepsilon(n)) := \sum_{i=2}^d \binom{n}{i} (t_i - u_i) \quad \mbox{and} \quad \psi(\gamma(n)) := \sum_{i = 2}^d \binom{n}{i} v_i,\end{equation} and set
\[ g'(n) := \varepsilon(n)^{-1} g(n) \gamma(n)^{-1}.\]
Observe from Lemma \ref{basis-description} that $\varepsilon,\gamma$ lie in $\poly(\Z,G_{\bullet})$ and take values in $G_{2}$. We verify the properties of $\varepsilon, g'$ and $\gamma$ in turn.

That $\varepsilon(0) = \id_G$ is obvious. To see that $\varepsilon$ is $(\delta^{-O(1)},N)$-smooth we must confirm that $d(\varepsilon(n),\varepsilon(n-1)) \ll \delta^{-O(1)}/N$ for all $n \in [N]$. Now as a fairly immediate consequence of the definition of $\varepsilon$ we have that
\[ |\psi(\varepsilon(n)) - \psi(\varepsilon(n-1))| \ll \delta^{-O(1)}/N\] and 
\[ |\psi(\varepsilon(n))| \ll \delta^{-O(1)}\] for all $n \in [N]$.
The smoothness therefore follows from Lemma \ref{dx-bounds}.
Finally we must establish the statement about $\eta \circ \varepsilon$, where $\eta : G \rightarrow \R/\Z$ is a horizontal character. It is clear that any horizontal character $\eta : G \rightarrow \R/\Z$ is represented in coordinates as
\[ \eta(g) = k \cdot \psi(g) \md{\Z},\] where $k_i = \eta(\exp(X'_i))$ and so in particular $|k| \ll \delta^{-O(1)}$ if $\Vert \eta \Vert \ll \delta^{-O(1)}$. It follows immediately from the definition of $\varepsilon$ that $\Vert \eta \circ \varepsilon \Vert_{C^{\infty}[N]} \ll \delta^{-O(1)}$, as required.

Next we show that $g' \in \poly(\Z,G'_{\bullet})$.  Now we have
\[ g'(n) = \varepsilon^{-1}(n) g(n) \gamma(n)^{-1} = \varepsilon(n)^{-1} g_2(n) \gamma(n)^{-1} \cdot g(1)^n \cdot [g(1)^{-n},\gamma(n)].\] 
The first derivative of the sequence $n \mapsto g(1)^n$ is $g(1)$ and all higher derivatives are just $\id_G$, so this sequence has coefficients in \emph{any} subgroup sequence. Also the sequence $[g(1)^{-n},\gamma(n)]$ lies in $\poly(\Z,G'_{\bullet})$ since it is in $\poly(\Z,G_{\bullet})$ and takes values in $[G,G_{2}]$, which is annihilated by $\eta$.

By the group property of $\poly(\Z,G'_{\bullet})$ it therefore suffices to check that $\varepsilon^{-1} g_2 \gamma^{-1} \in \poly(\Z,G'_{\bullet})$. Since this sequence lies in $\poly(\Z,G_{\bullet})$, we need only check that it is annihilated by $\tilde\eta_2$, that is to say that
\[ -\eta(\gamma(n))- \eta(\varepsilon(n)) + \eta(g_2(n)) = 0.\]
Computing using coordinates we see that the left-hand side here is
\[ \sum_{i = 2}^d k \cdot (-v_i + u_i - t_i + t_i) \binom{n}{i},\]
which does indeed vanish by our construction of $u_i$ and $v_i$.

Finally we must check that $\gamma(n)\Gamma$ is periodic. By definition and Lemma \ref{ratpoint} we see that $\gamma$ is $\delta^{-O(1)}$-rational (cf. Definition \ref{rat-def-quant}), and then the result follows instantly from Lemma \ref{rat-poly-lem} (ii).\endproof

We will shortly be completing the proof of Theorem \ref{mainthm-ind} in the case that \eqref{nonlinreduction} holds, which is the only case left to handle. We isolate a technical lemma which allows us to deduce $C^{\infty}[N]$-properties of polynomials $p(n)$ from properties of $p(an + b)$. 

\begin{lemma}[Single-parameter extrapolation]\label{lem10.4a} Suppose that $Q,N \geq 1$ are integers and $a, b$ are rationals with height at most $Q$ such that $b \neq 0$. Let $p : \Z \rightarrow \R/\Z$ be a polynomial sequence of degree $d$ and write $\tilde p(n) := p(a + bn)$. Then there is some $q \in \Z$, $1 \leq |q| \ll_{d} Q^{O_{d}(1)}$, such that 
\[ \Vert qp \Vert_{C^{\infty}[N]} \ll_{d} Q^{O_{d}(1)} \Vert \tilde p \Vert_{C^{\infty}[N]}.\]
\end{lemma}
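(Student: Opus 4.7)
The plan is to pass to the monomial basis, where the change of variables $n \mapsto a+bn$ acts by an explicit upper-triangular rational matrix whose entries involve only powers of $a$ and $b$, and then clear denominators by multiplying by an appropriate integer $q$ of height $Q^{O_d(1)}$.

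More precisely, write $p(n) = \sum_{k=0}^d c_k n^k$ and $\tilde p(n) = \sum_{j=0}^d \tilde c_j n^j$. Expanding $\tilde p(n) = p(a+bn)$ gives the triangular identity
\[\tilde c_j = b^j \sum_{k \geq j} \binom{k}{j} a^{k-j} c_k,\]
which inverts (using $p(x) = \sum_j (\tilde c_j/b^j)(x-a)^j$, re-expanded in $x$) to
\[c_k = \sum_{j \geq k} \binom{j}{k}(-a)^{j-k} b^{-j}\, \tilde c_j.\]
Write $a = p_a/q_a$ and $b = p_b/q_b$ with $|p_a|, |q_a|, |p_b|, |q_b| \leq Q$, and set $q_0 := (q_a p_b)^d$. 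Then $q_0/b^j \cdot (-a)^{j-k}$ equals $(-p_a)^{j-k} q_a^{d-(j-k)} p_b^{d-j} q_b^{j}$ up to sign, an integer of magnitude $\leq d! Q^{2d} = Q^{O_d(1)}$. Hence $q_0 c_k = \sum_{j \geq k} M_{kj} \tilde c_j$ as \emph{real numbers}, with $M_{kj} \in \Z$ of magnitude $Q^{O_d(1)}$.

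To convert between the monomial coefficients $(c_k)$ and the binomial coefficients $(\alpha_j)$ featuring in $\|\cdot\|_{C^\infty[N]}$, one uses Stirling numbers: $\alpha_j = \sum_k j! S(k,j)\, c_k$ has integer coefficients of size $O_d(1)$ (and vanishing for $k<j$), while $c_k = \sum_{m \geq k} (s(m,k)/m!)\, \alpha_m$ has denominators dividing $d!$. Setting $q := d!\, q_0$, which is an integer with $1 \leq |q| \ll Q^{O_d(1)}$, one concatenates these three triangular relations to obtain
\[q \alpha_j = \sum_{m \geq j} K_{jm} \tilde\alpha_m\]
as an identity of real numbers, where each $K_{jm} \in \Z$ has $|K_{jm}| \ll Q^{O_d(1)}$ (the crucial $d!/m!$ and $j!$ factors clear all remaining denominators since $m\leq d$).

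Applying $\|\cdot\|_{\R/\Z}$ and using the triangle inequality together with $\|\tilde\alpha_m\|_{\R/\Z} \leq \|\tilde p\|_{C^\infty[N]}/N^m$ yields, for each $j \in \{1,\dots,d\}$,
\[\|q \alpha_j\|_{\R/\Z} \leq \sum_{m\geq j} |K_{jm}|\, \|\tilde\alpha_m\|_{\R/\Z} \ll Q^{O_d(1)} \|\tilde p\|_{C^\infty[N]}\sum_{m\geq j} N^{-m} \ll Q^{O_d(1)} \|\tilde p\|_{C^\infty[N]}\, N^{-j},\]
so multiplying by $N^j$ and taking the sup over $j$ gives $\|qp\|_{C^\infty[N]} \ll Q^{O_d(1)} \|\tilde p\|_{C^\infty[N]}$, as required. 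The only delicate point—and the main thing worth checking carefully—is the denominator bookkeeping in the last displayed identity: one must verify that the explicit factor $d!\,(q_a p_b)^d$ is simultaneously divisible enough to turn every $K_{jm}$ into an integer of size $Q^{O_d(1)}$, which is immediate from the triangularity $m\geq l\geq k\geq j$ constraining the exponents that actually appear.
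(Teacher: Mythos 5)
Your proof is correct. The paper defers the single-parameter case to the multiparameter Lemma \ref{lem10.4}, whose proof works directly in the binomial basis: it expands $\binom{(n-a)/b}{j} = \sum_{j'\leq j}c(a,b,j',j)\binom{n}{j'}$, observes by back-substitution (starting from $c(a,b,j,j)=b^{-j}$) that the $c(a,b,j',j)$ are rationals of height $Q^{O_j(1)}$, reads off $\alpha_j = \sum_{j'\geq j}c(a,b,j,j')\tilde\alpha_{j'}$, and takes $q$ to be the product of all denominators. You instead detour through the monomial basis, where the change of variables $n\mapsto a+bn$ has the closed-form triangular matrix $\binom{j}{k}(-a)^{j-k}b^{-j}$ and the required denominator $q_0=(q_ap_b)^d$ can be written down explicitly, and then convert back to Taylor coefficients via Stirling numbers, at the cost of two extra triangular conversions and a $d!$ factor. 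Structurally the two arguments make the same observation (an upper-triangular rational change of Taylor coefficients with entries of height $Q^{O_d(1)}$), so yours is a legitimate alternative -- and it has the small merit of making the source of the $Q^{O_d(1)}$ completely explicit rather than implicit in an inductive height estimate. One point worth stating explicitly in a write-up: your identity ``$q\alpha_j=\sum_m K_{jm}\tilde\alpha_m$ as an identity of real numbers'' requires first fixing a real lift $P(n)=\sum_j\alpha_j\binom{n}{j}$ of $p$ (choosing representatives $\alpha_j\in[0,1)$, say) and defining $\tilde P(n):=P(a+bn)$, so that both sides live in $\R$; the paper tacitly does the same, and since the final estimate involves only $\|\cdot\|_{\R/\Z}$ of the coefficients, nothing depends on the choice of lift.
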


We will defer the proof of this lemma to the next section, in which we prove a more general multiparameter version of it (see Lemma \ref{lem10.4}).

Recall now that in our efforts to prove Theorem \ref{mainthm-ind} by induction we had reduced to the following situation: $g : \Z \rightarrow G$ is a polynomial sequence with $g(0) = \id_G$ and $|\psi(g(1))| \leq 1$, and there is a function $F : G/\Gamma \rightarrow \C$ with nontrivial vertical oscillation $\xi$ and $\Vert F \Vert_{\Lip} \leq 1$ such that
\[ |\E_{n \in [N]} F(g(n)\Gamma)| \geq \delta.\]
Furthermore we reduced to the case when $g$ is ``reducible'' in the sense that \eqref{nonlinreduction} holds. This allows us to
factor $g$ as in Lemma \ref{factorization}, obtaining
\[ |\E_{n \in [N]} F(\varepsilon(n) g'(n) \gamma(n)\Gamma)| \geq \delta.\]
Choose a $Q \ll \delta^{-O(1)}$ such that $\gamma(n)\Gamma$ is periodic with period $Q$, and split $[N]$ up into progressions of length between $N'$ and $2N'$, where $N' := \lfloor \delta^C N\rfloor$, and common difference $Q$. By the pigeonhole principle, there is some such progression $\{n_0 + nQ : n \in [N']\}$ such that 
\[ \left|\E_{n \in [N']} F\left(\varepsilon(n_0 + nQ) g'(n_0 + nQ) \{\gamma(n_0)\} \Gamma\right)\right| \geq \delta/2.\]
Now since $\varepsilon$ is $(\delta^{-O(1)},N)$-smooth we see, using the right-invariance of $d$, that if $C$ is sufficiently large then
\begin{equation}\label{waiting-for-F-dash} \left|\E_{n \in [N']} F\left(\varepsilon(n_0) g'(n_0 + nQ) \{\gamma(n_0)\} \Gamma\right)\right| \geq \delta/4.\end{equation}
Now $g' \in \poly(\Z,G'_{\bullet})$ and hence, by Lemma \ref{poly-restrict}, the sequence \[ \tilde g(n) := \{g(n_0)\}^{-1}\varepsilon(n_0)g'(n_0 + nQ)\{\gamma(n_0)\}\] is also in $\poly(\Z,G'_{\bullet})$. The inequality \eqref{waiting-for-F-dash} may be rewritten as
\begin{equation}\label{F-dash} |\E_{n \in [N']} \tilde F (\tilde g(n)\Gamma)| \geq \delta/4,\end{equation}
where $\tilde F(x) := F(\{g(n_0)\} x)$. By Lemma \ref{approx-left} we have $\Vert \tilde F \Vert_{\Lip} \ll \delta^{-O(1)}$. Noting that $\tilde g(0) = \id_G$, we may thus apply the inductive hypothesis that Theorem \ref{mainthm-ind} holds with parameters $(d,m_*-1)$, deducing that there is some horizontal character $\tilde \eta$ with $0 < \| \tilde \eta \| \ll \delta^{-O(1)}$ such that 
\[ \Vert \tilde\eta \circ \tilde g \Vert_{C^{\infty}[N]} \ll \delta^{-O(1)}.\]
From Lemma \ref{lem10.4a} and the definition of $\tilde g$ it follows that there is a horizontal character $\eta$ with $0 < \|\eta\| \ll \delta^{-O(1)}$, such that 
\[ \Vert \eta \circ g'' \Vert_{C^{\infty}[N]} \ll \delta^{-O(1)},\] where
\[ g''(n) := \{g(n_0)\}^{-1}\varepsilon(n_0)g'(n)\{\gamma(n_0)\}.\]
Since $g'(0) = \id_G$, it follows that 
\[ \Vert \eta \circ g' \Vert_{C^{\infty}[N]} \ll \delta^{-O(1)}.\]
To complete the proof of the result we must, of course, replace $g'$ by $g := \varepsilon g' \gamma$. To do this, note first that by multiplying $\eta$ by an integer of size $O(\delta^{-O(1)})$ if necessary we in fact have
\[ \Vert \eta \circ \gamma\Vert_{C^{\infty}[N]} = 0,\] since the Mal'cev coordinates $\psi(\gamma(n)\Gamma)$ are always rationals over some denominator $\ll \delta^{-O(1)}$.
From the property (i) of Lemma \ref{factorization} we have that $\Vert \eta \circ \varepsilon \Vert_{C^{\infty}[N]} \ll \delta^{-O(1)}$. Putting all this together, we obtain
\[ \Vert \eta \circ g \Vert_{C^{\infty}[N]} \leq \Vert \eta \circ \varepsilon \Vert_{C^{\infty}[N]} + \Vert \eta \circ g' \Vert_{C^{\infty}[N]} + \Vert \eta \circ \gamma \Vert_{C^{\infty}[N]} \ll \delta^{-O(1)},\]
completing (at last!) the proof of Theorem \ref{mainthm-ind}.\endproof

Let us remind the reader that, by remarks immediately following the statement of Theorem \ref{mainthm-ind}, we have also completed the proof of Theorem \ref{main-theorem}.

\section{The multiparameter Leibman theorem}\label{multi-deduction}\label{sec12}

We have proved one of our main results, Theorem \ref{main-theorem}.   In this section we bootstrap this result into a multiparameter version of itself. Strictly speaking, this step is not necessary in order to establish any of the results stated in the introduction, however the arguments here are not terribly difficult, and will be needed in order to obtain multiparameter analogues of the those results.

Recall from \ref{poly-sequences-sec} the definition of $\poly(\Z^t,G_{\bullet})$, the group of polynomial sequences $g : \Z^t \rightarrow G$ with coefficients in $G_{\bullet}$. Recall also the definition of, and notation for, multibinomial coefficients $\binom{\vec{n}}{\vec{j}}$.

We need an analogue of the smoothness norms $C^{\infty}[N]$ in the multiparameter setting. To set these up, we introduce the Taylor coefficients of a polynomial map $g : \Z^t \rightarrow \R/\Z$.

\begin{definition}[Taylor expansion]
Suppose that $g : \Z^t \rightarrow \R/\Z$ is a polynomial map. Then we define the \emph{Taylor coefficients} $\alpha_{\vec j} \in \R/\Z$ for $\vec j \in \Z^t$ to be the unique elements of $\R/\Z$ such that
\[ g(\vec{n}) = \sum_{\vec{j}} \binom{\vec{n}}{\vec{j}} \alpha_{\vec{j}}\]
for all $\vec n$; it is not difficult to verify the existence and uniqueness of these coefficients, and to check that if $g$ has degree at most $d$ then $\alpha_{\vec{j}} = 0$ unless $|\vec{j}| \leq d$, where $|\vec{j}| := j_1 + \dots + j_t$.
\end{definition}

\begin{definition}[Smoothness norms]
Suppose that $g : \Z^t \rightarrow \R/\Z$ is a polynomial map with Taylor expansion
\[ g(\vec{n}) = \sum_{\vec{j}} \alpha_{\vec{j}} \binom{\vec{n}}{\vec{j}}.\]
 Then for any $t$-tuple $\vec N = (N_1,\ldots,N_t)$ for $N_1,\ldots,N_t \geq 1$ we write $[\vec N] := [N_1] \times \ldots \times [N_t]$ and
\[ \Vert g \Vert_{C^{\infty}[\vec N]} := \sup_{\vec{j} \neq 0} \vec N^{\vec{j}}\Vert \alpha_{\vec{j}} \Vert_{\R/\Z},\] where $\vec N^{\vec{j}} := N_1^{j_1} \dots N_t^{j_t}$.
\end{definition}

We have the following generalisation of Lemma \ref{smooth-slow-1}:

\begin{lemma}[Smooth polynomials vary slowly]\label{smooth-slow} 
Let $g : \Z^t \rightarrow \R/\Z$ be a polynomial sequence of degree at most $d$ and suppose that $\vec{n} \in [\vec N]$. Then for any $i \in [t]$ we have  
\[ |g(\vec{n}) - g(\vec{n}-\vec e_i)| \ll_{t,d} \frac{1}{N_i}\Vert g \Vert_{C^{\infty}[\vec N]},\] where $\vec e_i = (0,\dots,0,1,0,\dots,0)$ is the $i^{\operatorname{th}}$ basis vector of $\Z^t$.
\end{lemma}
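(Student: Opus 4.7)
The plan is to proceed by direct computation using the Taylor expansion, combined with the Pascal rule for multi-binomial coefficients. Throughout, I interpret $|g(\vec n) - g(\vec n - \vec e_i)|$ as $\|g(\vec n) - g(\vec n - \vec e_i)\|_{\R/\Z}$, since $g$ takes values in $\R/\Z$.

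First I would write out $g$ in its Taylor expansion $g(\vec n) = \sum_{\vec j} \alpha_{\vec j}\binom{\vec n}{\vec j}$, where the sum is restricted to $|\vec j|\le d$ (so there are only $O_{t,d}(1)$ nonzero terms). Then Pascal's identity $\binom{n_i}{j_i} - \binom{n_i-1}{j_i} = \binom{n_i-1}{j_i-1}$ (and the fact that the factors in the $k\neq i$ coordinates are unchanged) yields
\[
g(\vec n) - g(\vec n - \vec e_i) \;=\; \sum_{\vec j:\, j_i\ge 1}\alpha_{\vec j}\,\binom{n_i-1}{j_i-1}\prod_{k\neq i}\binom{n_k}{j_k} \pmod{\Z}.
\]

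Next I would apply the triangle inequality in $\R/\Z$, exploiting the fact that each $\binom{n_i-1}{j_i-1}\prod_{k\neq i}\binom{n_k}{j_k}$ is an integer, so multiplication by it can only inflate the $\R/\Z$-norm of $\alpha_{\vec j}$ by that integer factor. This gives
\[
\|g(\vec n)-g(\vec n-\vec e_i)\|_{\R/\Z} \;\le\; \sum_{\vec j:\, j_i\ge 1}\|\alpha_{\vec j}\|_{\R/\Z}\,\binom{n_i-1}{j_i-1}\prod_{k\neq i}\binom{n_k}{j_k}.
\]

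For $\vec n \in [\vec N]$ and $|\vec j|\le d$, the standard bound $\binom{n_k}{j_k}\le n_k^{j_k}/j_k!\le N_k^{j_k}$ (and similarly $\binom{n_i-1}{j_i-1}\ll_d N_i^{j_i-1}$) converts each summand into $\ll_d \|\alpha_{\vec j}\|_{\R/\Z}\,\vec N^{\vec j}/N_i$. By definition of the smoothness norm, $\|\alpha_{\vec j}\|_{\R/\Z}\vec N^{\vec j}\le \|g\|_{C^\infty[\vec N]}$ for each $\vec j\neq 0$. Summing over the $O_{t,d}(1)$ relevant indices yields the claim.

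No step here is a genuine obstacle; the only mildly delicate point is remembering that one must bound in the $\R/\Z$-metric (and hence use integrality of the binomial coefficients to justify the triangle inequality), rather than trying to lift $g$ to an $\R$-valued map, which is not in general possible.
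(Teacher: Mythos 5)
Your proof is correct and is essentially the same as the paper's: both use the Taylor expansion together with the binomial/Pascal identity to write $g(\vec n)-g(\vec n-\vec e_i)$ as $\sum_{\vec j}\alpha_{\vec j}\binom{\vec n-\vec e_i}{\vec j-\vec e_i}$ and then bound each term by $\|\alpha_{\vec j}\|_{\R/\Z}\vec N^{\vec j}/N_i\le \|g\|_{C^\infty[\vec N]}/N_i$. You are a bit more explicit than the paper about why the triangle inequality in $\R/\Z$ is legitimate (integrality of the binomial coefficients), which is a worthwhile remark but not a different method.
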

\proof From the Taylor expansion and binomial identities we have
\[ g(\vec{n}) - g(\vec{n}-\vec e_i) = \sum_{|\vec{j}| \leq d} \binom{\vec{n}-\vec e_i}{\vec{j} - \vec e_i}  \alpha_{\vec{j}}.\]
Thus
\[ |g(\vec{n}) - g(\vec{n}-e_i)| \leq \frac{1}{N_i}\Vert g \Vert_{C^{\infty}[\vec N]} \sum_{\substack{|\vec{j}| \leq d\\ \vec{j} \neq 0}} \frac{1}{\vec N^{\vec{j} - \vec e_i}}\binom{\vec{n}-\vec e_i}{\vec{j}-\vec e_i} \ll_{t, d} \frac{1}{N_i}\Vert g \Vert_{C^{\infty}[\vec N]},\]
as required.\endproof

We now give a multiparameter version of Lemma \ref{lem10.4a}, which implies that lemma as the $t=1$ special case.

\begin{lemma}[Multiparameter extrapolation]\label{lem10.4} Suppose that $t,Q, N_1,\ldots,N_t,d \geq 1$ are integer parameters
and that $a_i,b_i \in \Q$, $i = 1,\dots,t$ are rationals of height at most $Q$ with $b_i \neq 0$. Let $p : \Z^t \rightarrow \R/\Z$ be a polynomial map of degree at most $d$ and write $\tilde p(\vec{n}) := p(a_1 + b_1n_1,\dots, a_t + b_tn_t)$. Then there is some $q \in \Z$, $|q| \ll_{d,t} Q^{O_{d,t}(1)}$, such that 
\[ \Vert qp \Vert_{C^{\infty}[\vec N]} \ll_{d,t} Q^{O_{d,t}(1)} \Vert \tilde p \Vert_{C^{\infty}[\vec N]}.\]
\end{lemma}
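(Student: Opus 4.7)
My plan is to establish an explicit triangular linear relation between the Taylor coefficients of $p$ and those of $\tilde p$, and then invert it level by level to produce the required integer $q$. Writing $p(\vec m) = \sum_{\vec k} \alpha_{\vec k}\binom{\vec m}{\vec k}$ and substituting $\vec m = \vec a + B\vec n$ with $B = \mathrm{diag}(b_1,\dots,b_t)$, I would expand each factor $\binom{a_i + b_i n_i}{k_i}$ in the Newton basis: since this is a polynomial in $n_i$ of degree $k_i$, one has
\[
\binom{a_i + b_i n_i}{k_i} = \sum_{j_i=0}^{k_i} P_{k_i, j_i}(a_i, b_i)\binom{n_i}{j_i},
\]
where $P_{k_i, j_i}$ is a polynomial in $(a_i, b_i)$ with rational coefficients depending only on $k_i, j_i$, and the top coefficient satisfies $P_{k_i, k_i}(a_i, b_i) = b_i^{k_i}$. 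Tensoring and comparing Taylor expansions of $\tilde p$ then yields
\[
\tilde\alpha_{\vec j} \equiv b^{\vec j}\alpha_{\vec j} + \sum_{\substack{\vec k > \vec j \\ |\vec k| \leq d}} c_{\vec k, \vec j}\,\alpha_{\vec k} \pmod{\Z},
\]
where $b^{\vec j} := \prod_i b_i^{j_i}$ and $c_{\vec k, \vec j} := \prod_i P_{k_i,j_i}(a_i, b_i)$; since $a_i, b_i$ have height $\leq Q$ and only $O_{d,t}(1)$ tuples $\vec j, \vec k$ are involved, each of these coefficients is a rational of height $\ll_{d,t} Q^{O_{d,t}(1)}$.

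Next I would choose a common denominator $D \in \Z_{>0}$ with $D \ll_{d,t} Q^{O_{d,t}(1)}$ so that all $Db^{\vec j}$ and $Dc_{\vec k, \vec j}$ become integers of magnitude $\ll_{d,t} Q^{O_{d,t}(1)}$. Rearranging the triangular relation gives, in $\R/\Z$,
\[
(Db^{\vec j})\alpha_{\vec j} = D\tilde\alpha_{\vec j} - \sum_{\vec k > \vec j}(Dc_{\vec k, \vec j})\alpha_{\vec k}.
\]
I would then build integers $q_d \mid q_{d-1} \mid \dots \mid q_1$, each of magnitude $\ll_{d,t} Q^{O_{d,t}(1)}$, by downward induction on the level $r = |\vec j|$, so that
\[
\vec N^{\vec j}\|q_r\alpha_{\vec j}\|_{\R/\Z} \ll_{d,t} Q^{O_{d,t}(1)}\|\tilde p\|_{C^{\infty}[\vec N]} \quad \text{for every } \vec j \text{ with } |\vec j| \geq r.
\]
At the top level $r = d$ the sum in the identity is empty, so $q_d := \prod_{|\vec j| = d}(Db^{\vec j})$ works immediately. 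For the step $r+1 \to r$, multiply the displayed identity by $q_{r+1}$; the term $D\tilde\alpha_{\vec j}$ contributes at most $D\|\tilde p\|_{C^\infty[\vec N]}/\vec N^{\vec j}$, while each tail term $(Dc_{\vec k, \vec j})(q_{r+1}\alpha_{\vec k})$ is controlled by the inductive bound combined with $\vec N^{-\vec k} \leq \vec N^{-\vec j}$ (valid since $\vec k > \vec j$ and $N_i \geq 1$). Setting $q_r := q_{r+1}\prod_{|\vec j| = r}(Db^{\vec j})$ then spreads this bound across all $\vec j$ at level $r$ while preserving the estimates at higher levels; taking $q := q_1$ yields $\|qp\|_{C^\infty[\vec N]} \ll_{d,t} Q^{O_{d,t}(1)}\|\tilde p\|_{C^\infty[\vec N]}$.

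The main technical obstacle is the bookkeeping: a single integer $q$ must simultaneously control \emph{every} Taylor coefficient $\alpha_{\vec j}$, and the cascade must not let either the constant in the bound or the size of $q$ escape $Q^{O_{d,t}(1)}$ across the $d$ levels of the induction. This is exactly why I multiply by all the $Db^{\vec j}$ at a given level at once rather than treating each $\vec j$ in isolation; the monotonicity $\vec N^{-\vec k} \leq \vec N^{-\vec j}$ for $\vec k > \vec j$ is the precise ingredient that keeps each inductive step from degrading the bound by more than a $Q^{O(1)}$ factor.
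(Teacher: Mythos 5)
Your proof is correct, but it takes a genuinely different route from the paper's. The paper's proof observes that the \emph{inverse} substitution $\vec n \mapsto ((\vec n - \vec a)/\vec b)$ is itself a rational affine change of variables of height $\ll Q^{O(1)}$, so one may expand $\binom{(n_i-a_i)/b_i}{j_i}$ in the Newton basis and plug directly into the Taylor expansion of $\tilde p$, obtaining an explicit one-shot formula $\alpha_{\vec j} = \sum_{\vec j' \geq \vec j} c(\vec a,\vec b,\vec j,\vec j')\,\tilde\alpha_{\vec j'}$; taking $q$ to be the common denominator of the $c$'s then finishes the argument immediately, since $\vec N^{\vec j'} \geq \vec N^{\vec j}$ for $\vec j' \geq \vec j$. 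You instead keep only the \emph{forward} substitution, producing the lower-triangular relation $\tilde\alpha_{\vec j} = b^{\vec j}\alpha_{\vec j} + \sum_{\vec k > \vec j}c_{\vec k,\vec j}\alpha_{\vec k}$, and then invert this system by hand via a downward induction on $|\vec j|$, accumulating the integer $q$ level by level. Your bookkeeping is sound: the base case uses that the tail is empty at the top level, the inductive step correctly uses the monotonicity $\vec N^{-\vec k} \leq \vec N^{-\vec j}$ for $\vec k > \vec j$ together with the $\|nx\|_{\R/\Z} \leq |n|\|x\|_{\R/\Z}$ estimate, and since there are only $O_{d,t}(1)$ indices $\vec j$ with $|\vec j| \leq d$ the accumulated $q_1 = \prod_{1 \leq |\vec j|\leq d}(Db^{\vec j})$ stays $\ll Q^{O_{d,t}(1)}$. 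In effect, you are re-deriving explicitly the matrix inversion that the paper obtains for free by noting that the inverse substitution is again rational of bounded height; the paper's route is shorter, while yours makes the triangular structure and the cascade fully visible without appealing to that symmetry.
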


\proof First of all observe that, if $a,b \in \Q$ are rationals with height at most $Q$ and $b \neq 0$, we may expand
\[ \binom{(n-a)/b}{j} = \sum_{j' \leq j} c(a,b,j',j) \binom{n}{j'},\] where $c(a,b,j',j)$ is a rational number with height $O_j(Q^{O_{j}(1)})$. Indeed we clearly have $c(a,b,j,j) = b^{-j}$, and we may then compute $c(a,b,j-1,j), c(a,b,j-2,j),\dots$ in turn.

Multiplying such relations together we obtain a multiparameter version, viz.
\[ \prod_{i=1}^t \binom{(n_i-a_i)/b_i}{j_i} = \sum_{\vec{j}' \leq \vec{j}} c(\vec{a},\vec{b},\vec{j}',\vec{j}) \binom{\vec{n}}{\vec{j}},\] where $\vec{j}' \leq \vec{j}$ means that each component of $\vec{j}'$ is at most the corresponding component of $\vec{j}$.

Applying this allows us to give the Taylor coefficients $\alpha_{\vec{j}}$ of $p$ in terms of those of $\tilde p$. Indeed we have
\[ p(\vec{n}) = \tilde p(\frac{n_1 - a_1}{b_1},\dots,\frac{n_t - a_t}{b_t}) = \sum_{\vec{j}} \prod_{i=1}^t \binom{(n_i - a_i)/b_i}{\vec{j}} \tilde{\alpha}_{\vec{j}} =  \sum_{\vec{j}}\sum_{\vec{j}'\leq \vec{j}} \binom{\vec{n}}{\vec{j}'} c(\vec{a},\vec{b},\vec{j}',\vec{j}) \tilde \alpha_{\vec{j}},\] and so
\[ \alpha_{\vec{j}} = \sum_{\vec{j}' \geq \vec{j}} c(\vec{a},\vec{b},\vec{j},\vec{j}') \tilde{\alpha}_{\vec{j}'}.\]
To obtain the lemma, we simply need to take $q$ to be the product of all the denominators of the rationals $c(\vec{a},\vec{b},\vec{j},\vec{j'})$, which is clearly $\ll_{\vec d,t} Q^{O_{d,t}(1)}$.\endproof

\begin{definition}[Multiparameter equidistribution]\label{almost-equidistribution-multi}  Let $G/\Gamma$ be a nilmanifold and let $\delta > 0$. An finite sequence $(g(\vec n)\Gamma)_{\vec n \in P}$ in $G/\Gamma$ indexed by a finite non-empty set $P$ is \emph{$\delta$-equidistributed} if we have
$$ \left|\sum_{\vec n \in P} F(g(\vec n) \Gamma) - \int_{G/\Gamma} F\right| \leq \delta \|F\|_{\Lip}$$
for all Lipschitz functions $F: G/\Gamma \to \C$.  If $\vec N = (N_1,\ldots,N_t)$, we say that a sequence $(g(\vec n)\Gamma)_{\vec n \in [N]}$ is \emph{totally $\delta$-equidistributed} if we have
$$ \left|\sum_{\vec n \in P_1 \times \ldots \times P_t} F(g(\vec n) \Gamma) - \int_{G/\Gamma} F\right| \leq \delta \|F\|_{\Lip}$$
whenever $P_i$ are arithmetic progressions in $[N_i]$ of length at least $\delta N_i$ for each $1 \leq i \leq t$.
\end{definition}

We can now give the multiparameter version\footnote{Note added in 2015: there are some errors in the statement and proof of this theorem. Regarding the statement, one has to exclude the possibility that one of the $N_i$ is small in the sense that $N_i \ll \delta^{-O_{d,m,t}(1)}$, or else restrict to the equal-sides case $N_1=\dots=N_t$.  Furthermore, the proof given here is incorrect in various ways. See the erratum at {\tt arXiv:1311.6170} for a correct argument.  Similar corrections need to be made to the theorems in the next two sections.} of Theorem \ref{main-theorem}.

\begin{theorem}[Multiparameter quantitative Leibman theorem]\label{main-theorem-multi} Let $s,m,t \geq 1$ and $0 < \delta < 1/2$, and let $N_1,\dots,N_t \geq 1$ and $d \geq 1$ be integers. 
Suppose that $G/\Gamma$ is an $m$-dimensional nilmanifold equipped with a $\frac{1}{\delta}$-rational Mal'cev basis $\X$ adapted to some filtration $G_{\bullet}$ of degree $d$, and that $g \in \poly(\Z^t,G_{\bullet})$. Then either $(g(\vec{n})\Gamma)_{\vec{n} \in [\vec N]}$ is $\delta$-equidistributed, or else there is some horizontal character $\eta$ with $0 < \|\eta\| \ll  \delta^{-O_{d,m,t}(1)}$ such that
\[ \Vert \eta \circ g\Vert_{C^{\infty}[\vec N]} \ll \delta^{-O_{d,m,t}(1)}.\]
\end{theorem}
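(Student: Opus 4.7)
My plan is to prove this by induction on the number of parameters $t$. The base case $t = 1$ is exactly Theorem~\ref{main-theorem}, so all the work is in the inductive step: assuming the theorem for $t-1$ parameters, I want to deduce it for $t$ parameters. Suppose $(g(\vec n)\Gamma)_{\vec n \in [\vec N]}$ is not $\delta$-equidistributed, witnessed by a Lipschitz function $F$. Averaging over $n_t$, a standard pigeonhole shows that for a set $S \subseteq [N_t]$ of density $\gg \delta$, the $(t-1)$-parameter slice $\vec n' \mapsto g(\vec n', n_t)$ fails to be $\delta^{O(1)}$-equidistributed; by Corollary~\ref{poly-restrict} each such slice still lies in $\poly(\Z^{t-1}, G_\bullet)$, so the inductive hypothesis produces horizontal characters $\eta_{n_t}$ with $0 < \|\eta_{n_t}\| \ll \delta^{-O(1)}$ and $\|\eta_{n_t} \circ g(\cdot, n_t)\|_{C^\infty[N_1, \ldots, N_{t-1}]} \ll \delta^{-O(1)}$. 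Since there are only $\delta^{-O(1)}$ horizontal characters of bounded norm, a second pigeonhole selects a single $\eta$ that works for a set $S' \subseteq S$ of density $\gg \delta^{O(1)}$ in $[N_t]$.

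Next I would extract information on the Taylor coefficients $\alpha_{\vec j}$ of $\eta \circ g$. For each multi-index $\vec j' = (j_1, \ldots, j_{t-1}) \neq 0$, the single-variable polynomial in $n_t$
\[ D_{\vec j'}(n_t) := \sum_{j_t} \alpha_{(\vec j', j_t)} \binom{n_t}{j_t} \]
satisfies $\|D_{\vec j'}(n_t)\|_{\R/\Z} \ll \delta^{-O(1)} / \vec N'^{\vec j'}$ for every $n_t \in S'$. Feeding this into Lemma~\ref{strong-polynomial} produces a $q_{\vec j'} \ll \delta^{-O(1)}$ with $\|q_{\vec j'} D_{\vec j'}\|_{C^\infty[N_t]} \ll \delta^{-O(1)}/\vec N'^{\vec j'}$, which translates (using that the interval in which $D_{\vec j'}$ concentrates is centered at $0$, so the constant term is also close to $0 \mod \Z$) into
\[ \|q_{\vec j'} \alpha_{(\vec j', j_t)}\|_{\R/\Z} \ll \delta^{-O(1)}/\vec N^{\vec j} \quad \text{for all } j_t \geq 0. \]
Setting $q := \prod_{\vec j' \neq 0} q_{\vec j'} \ll \delta^{-O(1)}$ (the product is over finitely many $\vec j'$ with $|\vec j'| \leq d$) and replacing $\eta$ by $q\eta$ gives a non-trivial horizontal character of bounded norm that controls every Taylor coefficient $\alpha_{\vec j}$ whose first $t-1$ entries are not all zero.

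The main obstacle will be the remaining Taylor coefficients $\alpha_{(0, \ldots, 0, j_t)}$ for $j_t \geq 1$, which the argument above leaves untouched. The natural remedy is to run an analogous pigeonhole in a different coordinate direction, or to apply Theorem~\ref{main-theorem} to the axis slice $n_t \mapsto g(0, \ldots, 0, n_t)$: if that slice is not $\delta^{O(1)}$-equidistributed, Theorem~\ref{main-theorem} produces a character controlling those missing coefficients, and Lemma~\ref{lem10.4} enables us to stitch characters obtained from different directions into a single $\eta$ with bounded norm; if instead the axis slice is equidistributed one derives a contradiction with the original non-equidistribution of $g$ (one may wish to normalize $N_1 = \dots = N_t$ so that the pigeonhole can be applied symmetrically in every direction). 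Once all Taylor coefficients are controlled, the supremum defining $\|q \eta \circ g\|_{C^\infty[\vec N]}$ is $\ll \delta^{-O(1)}$, completing the inductive step.
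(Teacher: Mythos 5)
Your proposal follows essentially the same architecture as the paper's argument: slice along a coordinate, pigeonhole the resulting horizontal characters to get a common $\eta$, then use Lemma~\ref{strong-polynomial} iteratively to control the Taylor coefficients. The paper does not formally induct on $t$ --- it fixes $n_1$, applies the one-parameter Theorem~\ref{main-theorem} to the slice, and then iterates strong recurrence coordinate-by-coordinate in $n_2,\dots,n_t$ --- but this is the same idea packaged differently. In particular, your expansion of $\eta\circ g$ into the one-variable polynomials $D_{\vec j'}(n_t)$ is the exact analogue of the paper's passage from $p_{i_1}(n_2,\dots,n_t)$ through successive $p_{i_1,\dots,i_r}$.

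You have also correctly identified the genuine obstacle: the slicing step gives no control on the Taylor coefficients $\alpha_{\vec j}$ whose ``slice'' index block vanishes (in your case, $\alpha_{(0,\dots,0,j_t)}$; in the paper's, $\vec i$ with $i_1=0$). The paper glosses over this with the phrase ``by switching the indices,'' which is in fact where its argument breaks down --- the footnote added in 2015 acknowledges the statement and proof are wrong as written, precisely because pigeonholing in a different direction produces a \emph{different} character $\eta'$ that cannot be merged with $\eta$ by any clean mechanism, and because the theorem fails as stated when some $N_i$ is small. Your instinct that one should normalise to $N_1=\dots=N_t$ is exactly one of the two fixes mentioned in that footnote (the correction is in arXiv:1311.6170).

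That said, the specific remedies you sketch do not work as stated. Lemma~\ref{lem10.4} relates $\|p\|_{C^\infty[\vec N]}$ to $\|\tilde p\|_{C^\infty[\vec N]}$ for a single polynomial under an affine change of variables; it does not provide a way to ``stitch'' two distinct horizontal characters obtained from different coordinate directions into one. And the claim that if the axis slice $n_t\mapsto g(0,\dots,0,n_t)$ \emph{is} equidistributed you get a contradiction is false --- one axis slice can be perfectly equidistributed while the full multiparameter orbit is not (consider e.g.\ $g(n_1,n_2)=(\alpha n_2, 0)$ on $\R^2/\Z^2$: the slice $n_2\mapsto g(0,n_2)$ projects to equidistribute in the first coordinate, while the second coordinate is identically zero). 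You also need to address the regime where Lemma~\ref{strong-polynomial} is actually applicable: the quantity $\epsilon=\delta^{-O(1)}/\vec N'^{\vec j'}$ must be small compared to $\delta$, which requires at least one relevant $N_i$ to be $\gg\delta^{-C}$; when all the relevant $N_i$ are small the conclusion is trivially true and must be handled separately. So: right structure, correctly spotted the same gap the published proof has, but the patch needs to be replaced with the argument from the erratum.
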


\begin{proof} 
We allow all implied constants to depend on $d,m$ and $t$. Suppose that $(g(\vec{n})\Gamma)_{\vec{n} \in [\vec N]}$ is not $\delta$-equidistributed. Suppose to begin with that $N_1 \geq \delta^{-C}$.

A simple averaging argument confirms that, for $\gg \delta^{O(1)} N_2\dots N_t$ values of $(n_2,\dots,n_t) \in [N_2 \times \dots \times N_t]$, the polynomial sequence $(g_{n_2,\dots,n_t}(n))\Gamma)_{n \in [N_1]}$ is not $\delta^{O(1)}$-equidistributed, where $g_{n_2,\dots,n_t}(n) := g(n,n_2,\dots,n_t)$.

For each such tuple $(n_2,\dots,n_t)$, Theorem \ref{main-theorem} implies that there is some horizontal character $\eta_{n_2,\dots,n_t}$ with $0 < \|\eta\| \ll \delta^{-O(1)}$ such that 
\[ \Vert \eta \circ g_{n_2,\dots,n_t} \Vert_{C^{\infty}[N_1]} \ll \delta^{-O(1)}.\]
By pigeonholing in $\eta$ and passing to a thinner set of tuples $(n_2,\dots,n_t)$ we may assume that $\eta_{n_2,\dots,n_t}$ does not depend on $(n_2,\dots,n_t)$. Writing $p := \eta \circ g$ and expanding
\[ p := \sum_{i_1=0}^d  p_{i_1}(n_2,\dots,n_t) \binom{n_1}{i_i},\] where the $p_{i_1}$ are polynomials, we therefore see that
\begin{equation}\label{condition} \Vert  p_{i_1}(n_2,\dots,n_t)\Vert_{\R/\Z} \ll \delta^{-O(1)}/N_1^{i_1},\end{equation} for $\gg \delta^{O(1)}N_2\dots N_d$ values of $(n_2,\dots,n_t)$, for each $i_1 = 0,\dots,d$. In particular (for each $i_1$) there are $\gg \delta^{O(1)}N_3 \dots N_t$ values of $(n_3,\dots,n_t)$ for which \eqref{condition} holds for $\gg \delta^{O(1)}N_2$ values of $n_2$.

Suppose that $i_1 > 0$. Writing
\[ p_{i_1}(n_2,\dots,n_t) = \sum_{i_2 = 0}^t  p_{i_1,i_2}(n_3,\dots,n_t) \binom{n_2}{i_2}\] and applying Lemma \ref{strong-polynomial}, we see that for $\gg \delta^{O(1)}N_3\dots N_t$ tuples $(n_3,\dots,n_t)$ there is $q_{i_1}(n_3,\dots,n_t) \ll \delta^{-O(1)}$ such that 
\[ \Vert q_{i_1}(n_3,\dots,n_t) p_{i_1,i_2}(n_3,\dots,n_t)\Vert_{\R/\Z} \ll \delta^{-O(1)}/N_1^{i_1} N_2^{i_2}.\] Note that the application of Lemma \ref{strong-polynomial} is valid because $i_1 > 0$ and $N_1 \geq \delta^{-C}$; this guarantees that the parameter $\epsilon$ in that lemma is small enough. Pigeonholing in $(n_3,\dots,n_t)$ and passing to a somewhat smaller set of these tuples we may suppose that $q_{i_1} = q_{i_1}(n_3,\dots,n_t)$ is constant. 

We now continue in this vein, obtaining successively quantities $q_{i_1,i_2,\dots,i_r} \ll \delta^{-O(1)}$. At the final stage we obtain
\[ \Vert q_{i_1,\dots,i_t} p_{i_1,\dots,i_t} \Vert_{\R/\Z} \ll \delta^{-O(1)}/N_1^{i_1} \dots N_t^{i_t}\] or, in our earlier notation,
\begin{equation}\label{star-eq} \Vert q_{\vec{i}} p_{\vec{i}} \Vert_{\R/\Z} \ll \delta^{-O(1)}/\vec N^{\vec{i}}.\end{equation} 
This has been obtained for all $\vec{i}$ with $i_1 > 0$ on the assumption that $N_1 \geq \delta^{-C}$. By switching the indices $i_1,\dots,i_t$ if necessary, we may in fact obtain such a $q_{\vec{i}}$ whenever there is some $r$ with $N_r^{i_r} > \delta^{-C}$. If this is not the case for any $r$ then \eqref{star-eq} holds anyway for trivial reasons (for any $q_{\vec{i}} \ll \delta^{-O(1)}$).

Note that by construction the $p_{\vec{i}}$ are simply the Taylor coefficients of $p$.

Taking $q := \prod_{\vec{i}} q_{\vec{i}}$ we see that $q \ll \delta^{-O(1)}$ and that
\[ \Vert q p_{\vec{i}} \Vert_{\R/\Z} \ll \delta^{-O(1)}/\vec N^{\vec{i}}\] for each index $\vec{i}$ and thus
\[ \Vert q \eta \circ g \Vert_{C^{\infty}[\vec N]} \ll \delta^{-O(1)}.\]
The theorem follows.
\end{proof}

\section{A multiparameter initial factorization theorem}\label{sec13}

Having just established Theorem \ref{main-theorem-multi}, we now use it to obtain an initial factorization theorem for multiparameter polynomial sequences.  We first give a multiparameter version of Definition \ref{smooth-seq-def}, the definition of a smooth sequence (the multiparameter version of a rational sequence is obvious).

\begin{definition}[Multiparameter smooth sequences]\label{smooth-seq-def-multi}  Let $G/\Gamma$ be a nilmanifold with a Mal'cev basis $\X$.  Let $(\varepsilon(n))_{n \in \Z^t}$ be a multiparameter sequence in $G$, let $M\geq 1$ be an integer and let $\vec N = (N_1,\dots,N_t)$ with $N_i \geq 1$ for all $i$.  We say that $(\varepsilon(n))_{n \in \Z^t}$ is \emph{$(M,\vec N)$-smooth} if we have $d(\varepsilon(n),\id_G) \leq M$ and $d(\varepsilon(\vec n),\varepsilon(\vec n-\vec e_i)) \leq M/N_i$ for all $\vec n \in [\vec N]$.
\end{definition}

Here, then, is the main result of this section.

\begin{proposition}[Factorization of poorly-distributed polynomial sequences]\label{prop2.17}  Let $s,m,$ $t \geq 1$, let $0 < \delta < 1/2$, and let $N_1,\ldots,N_t \geq 1$ and $d \geq 0$ be integers.  Write $\vec N := (N_1,\ldots,N_t)$.  Let $G/\Gamma$ be an $m$-dimensional nilmanifold with a $\frac{1}{\delta}$-rational Mal'cev basis $\X$ adapted to a filtration $G_{\bullet}$ of degree $d$, and suppose that $g \in \poly(\Z^t,G)$. Suppose that $(g(\vec{n})\Gamma)_{\vec{n} \in [\vec N]}$ is not totally $\delta$-equidistributed.  Then there is a factorization $g = \varepsilon g' \gamma$, where $\varepsilon, g', \gamma \in \poly(\Z^t,G_{\bullet})$ are polynomial sequences with the following properties: 
\begin{enumerate}
\item $\varepsilon : \Z^t \rightarrow G$ is $(O(\delta^{-O_{d, m, t}(1)}),\vec N)$-smooth;
\item $g' : \Z^t \rightarrow G'$ takes values in a connected proper subgroup $G'$ of $G$ which is $O(\delta^{-O_{d, m, t}(1)})$-rational relative to $\X$;
\item $\gamma: \Z^t \rightarrow G$ is $\delta^{-O_{d, m, t}(1)}$-rational.
\end{enumerate}
\end{proposition}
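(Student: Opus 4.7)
The strategy is to apply the multiparameter quantitative Leibman theorem, Theorem \ref{main-theorem-multi}, to extract a horizontal character $\eta$ that witnesses the failure of total equidistribution, and then to perform a quantitative version of the factorization in Corollary \ref{ratnil-factor}, following closely the template of Lemma \ref{factorization}.

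The hypothesis of failed total $\delta$-equidistribution supplies arithmetic progressions $P_i \subseteq [N_i]$ of length at least $\delta N_i$ on whose product $g$ fails to be $\delta$-equidistributed. Parametrising $P_1 \times \dots \times P_t$ as $\vec a + \vec b \cdot [\vec N']$ with $\vec N'$ componentwise comparable to $\delta \vec N$ and the $b_i$ of size $\ll \delta^{-1}$, Corollary \ref{poly-restrict} gives that $\tilde g(\vec n) := g(\vec a + \vec b \vec n)$ is in $\poly(\Z^t, G_{\bullet})$; Theorem \ref{main-theorem-multi} applied to $\tilde g$ on $[\vec N']$, combined with the multiparameter extrapolation lemma (Lemma \ref{lem10.4}), then produces a horizontal character $\eta$ with
\[ 0 < |\eta| \ll \delta^{-O(1)}, \qquad \|\eta \circ g\|_{C^{\infty}[\vec N]} \ll \delta^{-O(1)}. \]
Writing $\eta(x) = k \cdot \psi(x) \md{1}$ with $k \in \Z^m$, $|k| \ll \delta^{-O(1)}$, the simple-connectedness of $G$ implies that the lift $\tilde\eta(x) := k \cdot \psi(x)$ is a genuine group homomorphism $G \to \R$ (the discrepancy $\tilde\eta(xy) - \tilde\eta(x) - \tilde\eta(y)$ is a continuous $\Z$-valued function on the connected space $G \times G$ vanishing at the identity, and is therefore identically zero). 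Set $G' := \ker\tilde\eta$; this is a proper closed connected subgroup of $G$, and it is $\delta^{-O(1)}$-rational relative to $\X$ because its Lie algebra is cut out by a single linear equation with integer coefficients of size $\ll \delta^{-O(1)}$.

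By Lemma \ref{basis-description}, write $\psi(g(\vec n)) = \sum_{\vec j} t_{\vec j}\binom{\vec n}{\vec j}$ with $(t_{\vec j})_i = 0$ for $i \leq m - m_{|\vec j|}$. The smoothness bound forces $\| k \cdot t_{\vec j}\|_{\R/\Z} \ll \delta^{-O(1)}/\vec N^{\vec j}$ for every $|\vec j| \geq 1$. Mimicking Lemma \ref{factorization}, for each such $\vec j$ one selects vectors $u_{\vec j}, v_{\vec j} \in \R^m$ obeying the same support constraint as $t_{\vec j}$, satisfying $|t_{\vec j} - u_{\vec j}| \ll \delta^{-O(1)}/\vec N^{\vec j}$, with each coordinate of $v_{\vec j}$ rational of common denominator $\ll \delta^{-O(1)}$, and with $k \cdot u_{\vec j} = k \cdot v_{\vec j}$. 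One then sets
\[ \psi(\varepsilon(\vec n)) := \sum_{\vec j} (t_{\vec j} - u_{\vec j})\binom{\vec n}{\vec j}, \qquad \psi(\gamma(\vec n)) := \sum_{\vec j} v_{\vec j}\binom{\vec n}{\vec j}, \]
and $g' := \varepsilon^{-1} g \gamma^{-1}$. By Lemma \ref{basis-description}, both $\varepsilon$ and $\gamma$ lie in $\poly(\Z^t, G_{\bullet})$, and by the group property (Proposition \ref{poly-precise}) so does $g'$. The coordinate bounds on $t_{\vec j} - u_{\vec j}$ together with Lemma \ref{dx-bounds} force $\varepsilon$ to be $(O(\delta^{-O(1)}), \vec N)$-smooth; the rationality of each $v_{\vec j}$, combined with the standard dictionary between rational Mal'cev coordinates and rational group elements (Lemma \ref{ratpoint} and its analogues used in Lemma \ref{factorization}), yields the $\delta^{-O(1)}$-rationality of $\gamma$. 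Finally, since $\tilde\eta$ is a homomorphism,
\[ \tilde\eta(g'(\vec n)) = \tilde\eta(g(\vec n)) - \tilde\eta(\varepsilon(\vec n)) - \tilde\eta(\gamma(\vec n)) = \sum_{\vec j} \left(k \cdot u_{\vec j} - k \cdot v_{\vec j}\right)\binom{\vec n}{\vec j} = 0, \]
so $g'$ takes values in $G'$.

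The main obstacle is the combinatorial selection of the auxiliary vectors $u_{\vec j}$ and $v_{\vec j}$ meeting all of the support, perturbation, rationality, and compatibility conditions simultaneously. This reduces to confirming that the restriction of $k$ to the allowable coordinate range $\{i : i > m - m_{|\vec j|}\}$ is nonzero, which follows from the general structure of horizontal characters in Mal'cev coordinates adapted to $G_{\bullet}$; once that is in hand, the perturbation of $t_{\vec j}$ in a single pivot coordinate to bring $k \cdot t_{\vec j}$ to an integer, and the reconstruction of the same integer by a rational vector of bounded denominator, are elementary. The overall proof is slightly simpler than the corresponding step in Lemma \ref{factorization}, as there is no need to quotient by $G_d^\Delta$ nor to track any commutator structure.
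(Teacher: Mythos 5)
Your proposal follows the same route as the paper's proof: restrict to the bad progressions, apply Theorem~\ref{main-theorem-multi} together with the extrapolation lemma (Lemma~\ref{lem10.4}) to obtain a horizontal character $\eta$ with $\|\eta\circ g\|_{C^\infty[\vec N]}\ll\delta^{-O(1)}$, take $G'$ to be the identity component of $\ker\eta$, and then split the Taylor coefficients $t_{\vec j}$ of $\psi(g)$ into a small perturbation $t_{\vec j}-u_{\vec j}$ (giving $\varepsilon$), a rational vector $v_{\vec j}$ (giving $\gamma$), and a remainder annihilated by $\tilde\eta$ (giving $g'$). The identification of $G'$ with $\ker\tilde\eta$ via the lifting argument, and the verification of the smoothness and rationality bounds, are all correct and match the paper.

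There is however one genuine gap: you never normalise to $g(0)=\id_G$, and the argument as written does not go through without it. The bound $\|\eta\circ g\|_{C^\infty[\vec N]}\ll\delta^{-O(1)}$ only constrains $k\cdot t_{\vec j}$ for $|\vec j|\geq 1$, and your $u_{\vec j},v_{\vec j}$ are only defined for $|\vec j|\geq 1$. Since $\tilde\eta$ is a homomorphism and $\psi(\varepsilon(\vec n))$, $\psi(\gamma(\vec n))$ carry no constant term, the final computation actually yields
\[ \tilde\eta(g'(\vec n)) \;=\; k\cdot t_0 \;+\; \sum_{|\vec j|\geq 1}\bigl(k\cdot u_{\vec j}-k\cdot v_{\vec j}\bigr)\binom{\vec n}{\vec j} \;=\; k\cdot t_0, \]
where $t_0=\psi(g(0))$ is unconstrained; so $g'$ lands in $G'$ only if $k\cdot t_0=0$. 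One cannot simply push $g(0)$ into $\varepsilon$ (it may be far from $\id_G$, breaking the $d(\varepsilon(\vec n),\id_G)\leq M$ bound) nor into $\gamma$ (its coordinates may be irrational). The fix, as in the paper, is to first write $g(0)=\{g(0)\}[g(0)]$ with $\{g(0)\}$ in the fundamental domain and $[g(0)]\in\Gamma$, apply the proposition to $\tilde g:=\{g(0)\}^{-1}g[g(0)]^{-1}$, and then absorb $\{g(0)\}$ into $\varepsilon$ and $[g(0)]$ into $\gamma$ using Lemma~\ref{approx-left} and Lemma~\ref{ratpoint}. A minor secondary point: your assertion that the restriction of $k$ to $\{i: i>m-m_{|\vec j|}\}$ is always nonzero is not correct (for $|\vec j|\geq 2$ it will typically vanish, since $k$ is supported where $\eta$ is nontrivial); but in that degenerate case $k\cdot t_{\vec j}=0$ automatically, so one takes $u_{\vec j}=t_{\vec j}$, $v_{\vec j}=0$ and no pivot is needed.
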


\begin{proof} We will allow all implied constants to depend on $d, m$ and $t$.

We first reduce to the case $g(0) = \id_G$, by factorizing $g = \{g(0)\} \tilde g [g(0)]$ where $\tilde g$ is the polynomial sequence $\tilde g := \{g(0)\}^{-1}  g [g(0)]^{-1}$, for which $\tilde g(0) = \id_G$.  If $(g(\vec n)\Gamma)_{\vec n \in [\vec N]}$ is not totally $\delta$-equidistributed, then one easily verifies using Lemma \ref{approx-left} that $(\tilde g(\vec n)\Gamma)_{\vec n \in [\vec N]}$ is not totally $\tilde \delta$-equidistributed for some $\tilde \delta \gg \delta^{O(1)}$.  Applying the proposition to $\tilde g$, we obtain a factorization $\tilde g = \tilde \eps g' \tilde \gamma$. 
Setting $\eps := \{g(0)\} \tilde \eps$ and $\gamma := \tilde \gamma [g(0)]$, we certainly have $g = \eps g' \gamma$. The sequence $\gamma$ is $\delta^{-O(1)}$-rational by Lemma \ref{ratpoint} and (the multiparameter version of) Lemma \ref{rat-poly-lem}. The sequence $\eps$ is $(\delta^{-O(1)},\vec{N})$-smooth by Lemma \ref{approx-left}.

Henceforth, then, we assume that $g(0)=\id_G$.  By hypothesis, we can find progressions $P_i := \{ a_i + b_i n_i : n_i \in [N'_i]\}$ in $[N_i]$ with $N'_i \geq \delta N_i$ such that the polynomial sequence $\tilde g : \Z^t \rightarrow G$ defined by $\tilde g(\vec{n}) = g(a_1 + b_1 n_1,\dots, a_t + b_t n_t)$ is such that $(\tilde g(\vec{n})\Gamma)_{\vec{n} \in [\vec N']}$ fails to be $\delta$-equidistributed, where $\vec N' := (N'_1,\ldots,N'_t)$. by Lemma \ref{poly-restrict} we have $\tilde g \in \poly(\Z^t,G_{\bullet})$. Applying Theorem \ref{main-theorem} we conclude the existence of a horizontal character $\tilde \eta: G \to \R/\Z$ with $0 < \|\tilde \eta\| \ll \delta^{-O(1)}$ such that
\[ \Vert \tilde \eta \circ \tilde g \Vert_{C^{\infty}[\vec N']} \ll \delta^{-O(1)}.\] At the expense of worsening the exponent of the $\delta^{-O(1)}$, we may replace $[\vec N']$ here by $[\vec N]$. Applying Lemma \ref{lem10.4}, we deduce that there is a horizontal character $\eta: G \to \R/\Z$ with $0 < \| \eta \| \ll \delta^{-O(1)}$  such that 
\begin{equation}\label{smoothness-cond} \Vert \eta \circ g\Vert_{C^{\infty}[\vec N]} \ll \delta^{-O(1)}.\end{equation}
Take $G'$ to be the connected component of $\ker(\eta)$. Then $G'$ is rather clearly a subgroup of $G$ which is $O(\delta^{-O(1)})$-rational relative to $\X$.

Write
\[ \psi(g(n)) = \sum_{\vec{j}} t_{\vec{j}} \binom{\vec{n}}{\vec{j}},\] where $t_{\vec{j}} \in \R^{m}$. By Lemma \ref{basis-description} we know that the coordinate $(t_{\vec{j}})_i$ is equal to 0 if $i \leq m - m_{|\vec{j}|}$. The horizontal character $\eta$ is given in coordinates by
\[ \eta \circ g(\vec{n}) = \sum_{\vec{j}} k \cdot t_{\vec{j}} \binom{\vec{n}}{\vec{j}},\] where $|k| \ll \delta^{-O(1)}$, and \eqref{smoothness-cond} tells us that $\Vert k \cdot t_{\vec{j}} \Vert_{\R/\Z} \ll \delta^{-O(1)}/\vec N^{\vec{j}}$ for all $\vec{j} \neq 0$. Since $|k| \ll \delta^{-O(1)}$ we may choose vectors $u_{\vec{j}} \in \R^{m_1}$ such that $|t_{\vec{j}} - u_{\vec{j}}| \ll \delta^{-O(1)}/\vec N^{\vec{j}}$ and $k \cdot u_{\vec{j}} \in \Z$ for all $\vec{j} \neq 0$. We then choose vectors $v_{\vec{j}} \in \R^{m_1}$, all of whose coordinates are rationals with complexity at most $O(\delta^{-O(1)})$, such that $k \cdot u_{\vec{j}} = k \cdot v_{\vec{j}}$ for all $\vec{j} \neq 0$. We may insist that the $u_{\vec{j}}$ and $v_{\vec{j}}$ have the same support properties as the $t_{\vec{j}}$, namely that $(u_{\vec{j}})_i = (v_{\vec{j}})_i = 0$ if $i \leq m - m_{|\vec{j}|}$.

Define polynomial sequences $\varepsilon,\gamma : \Z^t \rightarrow G$ in terms of their Mal'cev coordinates by
\[ \psi(\varepsilon(\vec{n})) = \sum_{\vec{j} \neq 0}(t_{\vec{j}} - u_{\vec{j}}) \binom{\vec{n}}{\vec{j}} \qquad \mbox{and} \qquad \psi(\gamma(\vec{n})) = \sum_{\vec{j} \neq 0}v_{\vec{j}} \binom{\vec{n}}{\vec{j}},\] and
\[ g' := \varepsilon^{-1} g \gamma^{-1}.\]
By Lemma \ref{basis-description} and the fact that $\poly(\Z^t,G_{\bullet})$ is a group we see that all three of $\eps,g'$ and $\gamma$ lie in $\poly(\Z^t,G_{\bullet})$. We must check the claims (i), (ii) and (iii).
The claim (ii) is clear. To prove (i), that is to say that $\eps$ is $(\delta^{-O(1)},\vec{N})$-smooth, we need to show that 
\[ d(\eps(\vec{n}), \eps(\vec{n} - \vec{e}_i)) \ll \delta^{-O(1)}/N_i\] for $\vec{n} \in \vec{N}$. But as a fairly immediate consequence of the definition of $\eps$ we have the bound
\[ |\psi(\eps(\vec{n})) - \psi(\eps(\vec{n} - \vec{e}_i))| \ll \delta^{-O(1)}/N_i,\]
and so the desired bound follows from Lemma \ref{dx-bounds}. Finally we note that (iii) follows immediately from the definition of $\gamma$ and the properties of rational points described in Lemma \ref{ratpoint}.
\end{proof}

\section{A multiparameter complete factorization theorem}\label{factor-sec}

The last major task of the paper is to iterate Proposition \ref{prop2.17} to deduce our a multiparameter version of our main result, Theorem \ref{mainthm-prelim}.   We first need a technical lemma.

\begin{lemma}[Product of smooth sequences is smooth]\label{prod-smooth}
Let $G/\Gamma$ be a nilmanifold of dimension $m$ and let $M \geq 2$ and $N_1,\dots,N_t \geq 1$ be parameters. Suppose that $\X$ is an $M$-rational Mal'cev basis for $G/\Gamma$ adapted to some filtration $G_{\bullet}$ of degree $d$, and suppose that the maps $\varepsilon_1,\varepsilon_2 : \Z^t \rightarrow G$ are $(M,\vec{N})$-smooth in the sense of Definition \ref{smooth-seq-def-multi}. Then the product $\eps_1\eps_2$ is $(M^{O_{d,m,t}(1)},\vec{N})$-smooth.
\end{lemma}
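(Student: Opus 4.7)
The plan is to verify the two defining conditions of $(M^{O(1)},\vec N)$-smoothness for $\varepsilon := \varepsilon_1\varepsilon_2$ directly, using the right-invariance of the metric $d = d_{\mathcal X}$ together with the fact that conjugation in $G$ is a polynomial operation in Mal'cev coordinates. First, for the pointwise size bound $d(\varepsilon(\vec n),\id_G) \leq M^{O(1)}$: by right-invariance of $d$ and the triangle inequality,
\[ d(\varepsilon_1(\vec n)\varepsilon_2(\vec n),\id_G) \leq d(\varepsilon_1(\vec n)\varepsilon_2(\vec n),\varepsilon_2(\vec n)) + d(\varepsilon_2(\vec n),\id_G) = d(\varepsilon_1(\vec n),\id_G) + d(\varepsilon_2(\vec n),\id_G) \leq 2M. \]

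The substance of the lemma is the differential estimate $d(\varepsilon(\vec n),\varepsilon(\vec n-\vec e_i)) \leq M^{O(1)}/N_i$. I would split this using the intermediate point $\varepsilon_1(\vec n - \vec e_i)\varepsilon_2(\vec n)$ and the triangle inequality:
\[ d(\varepsilon_1(\vec n)\varepsilon_2(\vec n),\varepsilon_1(\vec n-\vec e_i)\varepsilon_2(\vec n-\vec e_i)) \leq A + B, \]
where $A := d(\varepsilon_1(\vec n)\varepsilon_2(\vec n),\varepsilon_1(\vec n-\vec e_i)\varepsilon_2(\vec n))$ and $B := d(\varepsilon_1(\vec n-\vec e_i)\varepsilon_2(\vec n),\varepsilon_1(\vec n-\vec e_i)\varepsilon_2(\vec n-\vec e_i))$. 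The term $A$ is handled immediately by right-invariance: $A = d(\varepsilon_1(\vec n),\varepsilon_1(\vec n-\vec e_i)) \leq M/N_i$ by the smoothness hypothesis on $\varepsilon_1$.

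The genuine work is in bounding $B$, since $d$ is \emph{not} left-invariant. Setting $g := \varepsilon_1(\vec n-\vec e_i)$ and $u := \varepsilon_2(\vec n)\varepsilon_2(\vec n-\vec e_i)^{-1}$, one has
\[ (g\varepsilon_2(\vec n))(g\varepsilon_2(\vec n-\vec e_i))^{-1} = gug^{-1}, \]
so the definition of $d$ gives $B \leq |\psi(gug^{-1})|$. Now the hypotheses on $\varepsilon_1,\varepsilon_2$, together with the appendix lemma (Lemma~\ref{dx-bounds}) relating the metric $d$ to the size of Mal'cev coordinates, yield $|\psi(g)| \ll M^{O(1)}$ and $|\psi(u)| \ll M^{O(1)}/N_i$. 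The key point is then that in the Mal'cev coordinates afforded by an $M$-rational basis, the conjugation map $(g,u) \mapsto gug^{-1}$ is a polynomial map in $\psi(g),\psi(u)$ with coefficients rational of height $M^{O_{d,m}(1)}$, and this polynomial has no constant term as a function of $\psi(u)$ (since $u = \id_G$ forces $gug^{-1} = \id_G$). Consequently each coordinate of $\psi(gug^{-1})$ is a sum of monomials each of which carries a factor of at least one coordinate of $\psi(u)$, yielding $|\psi(gug^{-1})| \ll M^{O_{d,m}(1)} |\psi(u)| \ll M^{O_{d,m,t}(1)}/N_i$.

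The main obstacle is therefore the polynomial-in-$M$ control on conjugation in Mal'cev coordinates; this is a standard consequence of the Baker--Campbell--Hausdorff formula combined with the $M$-rationality of $\mathcal X$, and I would expect it to be essentially contained in (or an immediate consequence of) the appendix results on rational bases that the authors refer to elsewhere in the paper (e.g. in the proof of Lemma~\ref{rat-bounds}). Combining the bounds on $A$ and $B$ gives the differential estimate, completing the verification that $\varepsilon_1\varepsilon_2$ is $(M^{O_{d,m,t}(1)},\vec N)$-smooth.
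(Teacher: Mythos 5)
Your proof is correct and follows essentially the same route as the paper's: a two-term triangle-inequality decomposition, with the "varying $\varepsilon_1$" term killed by right-invariance and the "varying $\varepsilon_2$" term reduced to a conjugation estimate via Lemma~\ref{dx-bounds}. The one difference is cosmetic: you re-derive the conjugation bound $|\psi(gug^{-1})| \ll M^{O(1)}|\psi(u)|$ from scratch, whereas the paper simply cites Lemma~\ref{approx-left} (approximate left-invariance of $d$), whose proof rests on exactly the estimate you write out.
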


\proof First of all we have, for all $\vec{n} \in \vec{N}$,

By the triangle inequality we have
\begin{align*} d(\varepsilon_1\varepsilon_2(\vec{n} - \vec e_i),\varepsilon_1\varepsilon_2(\vec{n})) \leq d(\varepsilon_1(\vec{n} &- \vec e_i)\varepsilon_2(\vec{n} - \vec e_i),\varepsilon_1(\vec{n})\varepsilon_2(\vec{n} - \vec e_i)) \\ & + d(\varepsilon_1(\vec{n})\varepsilon_2(\vec{n} - \vec e_i),\varepsilon_1(\vec{n})\varepsilon_2(\vec{n})).\end{align*}
Using the fact that $d(\varepsilon_1(\vec{n}),\id_G), d(\varepsilon_2(\vec{n}),\id_G) \leq Q$ for all $\vec{n} \in [\vec{N}]$, the result now follows immediately from the right-invariance of $d$, Lemma \ref{approx-left} and Lemma \ref{dx-bounds}.\endproof

We can now state and prove the multiparameter version of Theorem \ref{mainthm-prelim} that we need.

\begin{theorem}[Multiparameter factorization theorem]\label{mainthm-multi}
Let $s,m,t \geq 0$, let $M_0 \geq 2$ and $A > 0$, and let $N_1,\ldots,N_t \geq 1$ and $d \geq 0$. Suppose that $G/\Gamma$ is an $m$-dimensional nilmanifold with a $M_0$-rational Mal'cev basis $\X$ adapted to some filtration $G_{\bullet}$ of degree $d$, and that $g \in \poly(\Z^t, G_{\bullet})$. Then there is a some $M$, $M_0 \leq M \ll M_0^{O_{A,m,d}(1)}$, a subgroup $G' \subseteq G$ which is $M$-rational with respect to $\X$ and a decomposition $g = \varepsilon g' \gamma$ into sequences $\eps,g',\gamma \in \poly(\Z^t,G_{\bullet})$ with the following properties:
\begin{enumerate}
\item $\varepsilon$ is $(M,\vec{N})$-smooth;
\item $g'$ takes values in $G'$ and with respect to the restriction of the metric $d$ the orbit $(g'(\vec{n})\Gamma')_{\vec{n} \in P_1 \times \dots \times P_t}$ is $1/M^A$-equidistributed in $G'/\Gamma'$, for any subprogressions $P_i \subseteq [N_i]$ with $|P_i| \geq N_i/M^A$;
\item $\gamma$ is a $M$-rational.
\end{enumerate}
\end{theorem}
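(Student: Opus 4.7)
The plan is to induct on the dimension $m$ of $G$, iterating Proposition \ref{prop2.17} at each step. The base case $m = 0$ is vacuous. Fix $A$, $d$, $t$, and allow implicit constants to depend on these. For the inductive step, choose a constant $A_0 = C_{A,m,d,t}$ sufficiently large (polynomial in $A$), and apply Proposition \ref{prop2.17} with $\delta := 1/M_0^{A_0}$. If $(g(\vec n)\Gamma)_{\vec n \in [\vec N]}$ is already totally $\delta$-equidistributed, we are done: set $G' := G$, $\varepsilon := \gamma := \id_G$, $g' := g$, with $M := M_0^{O(A_0)}$.

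Otherwise, Proposition \ref{prop2.17} produces a factorization $g = \varepsilon_1 g_1 \gamma_1$ in $\poly(\Z^t,G_{\bullet})$ such that $\varepsilon_1$ is $(M_1,\vec N)$-smooth, $g_1$ takes values in a connected proper subgroup $G_1 \subsetneq G$ that is $M_1$-rational relative to $\mathcal X$, and $\gamma_1$ is $M_1$-rational, with $M_1 \ll M_0^{O(A_0)}$. One defines the induced filtration $G_{1,\bullet}$ on $G_1$ by $G_{1,i} := G_i \cap G_1$; this is indeed a filtration of degree at most $d$ since $[G_i \cap G_1, G_j \cap G_1] \subseteq G_{i+j} \cap G_1$, and one checks that $g_1 \in \poly(\Z^t, G_{1,\bullet})$ because the derivatives of $g_1$ must take values in $G_1$ (as successive differences of $G_1$-valued maps) as well as in the relevant $G_i$. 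By appeal to the appendix, $G_1/\Gamma_1$ admits an adapted Mal'cev basis $\mathcal X_1$ whose elements are $M_1^{O(1)}$-rational combinations of $\mathcal X$, and in particular is $M_1^{O(1)}$-rational in the sense of Definition \ref{ratnil-def}.

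Applying the inductive hypothesis on $G_1/\Gamma_1$ (which has strictly smaller dimension) to $g_1$ with parameter $A$, we obtain $g_1 = \varepsilon_2 g' \gamma_2$ with the desired properties at a new complexity $M \ll M_1^{O(1)} \ll M_0^{O_{A,m,d}(1)}$. Setting $\varepsilon := \varepsilon_1 \varepsilon_2$ and $\gamma := \gamma_2 \gamma_1$, we obtain the factorization $g = \varepsilon g' \gamma$, with all three factors in $\poly(\Z^t, G_{\bullet})$ by the Lazard--Leibman theorem (Proposition \ref{poly-precise}), using the inclusion $\poly(\Z^t, G_{1,\bullet}) \subseteq \poly(\Z^t, G_{\bullet})$. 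The smoothness of $\varepsilon$ follows from Lemma \ref{prod-smooth}, and the rationality of $\gamma$ follows because rational points form a group of polynomial complexity (Lemma \ref{ratpoint}), combined with the multiparameter analogue of Lemma \ref{rat-poly-lem}.

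The main technical obstacle is reconciling the metric used by the inductive hypothesis with the metric required by the conclusion: the smoothness of $\varepsilon_2$ and the equidistribution of $g'$ supplied by the inner induction are stated relative to $d_{\mathcal X_1}$, while Theorem \ref{mainthm-multi} requires these properties relative to the restriction of $d = d_{\mathcal X}$. The comparison results in the appendix (in the spirit of Lemma \ref{comparison-lemma} and Lemma \ref{dx-bounds}) show that $d_{\mathcal X_1}$ and $d_{\mathcal X}|_{G_1}$ are Lipschitz-equivalent with constants of size $M_1^{O(1)}$, so all bounds degrade only polynomially. Choosing $A_0$ sufficiently large in $A$ absorbs this loss and preserves the $1/M^A$ equidistribution target. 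Termination after at most $m$ applications of Proposition \ref{prop2.17} is guaranteed by the strict decrease of $\dim G_i$ at each step, and the cumulative polynomial growth in complexity remains within $M_0^{O_{A,m,d}(1)}$ as required.
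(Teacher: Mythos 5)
Your proposal is essentially the same argument as the paper's: iterate Proposition \ref{prop2.17}, peeling off a smooth factor and a rational factor at each step while the remaining sequence is pushed into a strictly lower-dimensional rational subgroup, then combine the smooth parts via Lemma \ref{prod-smooth} and the rational parts via Lemma \ref{ratpoint}, using Lemma \ref{sub-nil-basis} and Lemma \ref{comparison-lemma} to manage the change of Mal'cev basis and metric at each stage, with termination after at most $m$ steps. You phrase this as induction on $\dim G$ rather than as an explicit loop, which is an equivalent presentation. One point worth tightening: when you invoke the inductive hypothesis on $G_1/\Gamma_1$ you say you apply it ``with parameter $A$'', but the equidistribution it delivers is with respect to $d_{\mathcal X_1}$ and degrades by a factor $M_1^{O(1)}$ when transferred to $d|_{G_1}$; this loss should be absorbed by running the inner induction with a strictly larger exponent $A' = A + O_{m,d}(1)$ (so that $M'^{A'-A} \geq M_1^{O(1)}$), not by enlarging the outer $A_0$ alone, which actually increases $M_1$ and hence the loss. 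The paper handles this by choosing the parameters $\delta_2, \delta_3, \ldots$ afresh at each stage as $c M_0^{-C}$ with $c, C$ suitable; your recursive framing is fine so long as the inner parameter is adjusted accordingly, and the final bound $M \ll M_0^{O_{A,m,d}(1)}$ survives because the recursion depth is at most $m$.
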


\begin{proof}
Let $1/M_0^A = \delta_1 > \delta_2 > \dots$ be a sequence of parameters to be specified as the proof unfolds. For each $i = 1,\dots,t$ let $P_i \subseteq [N_i]$ be a progression of size at least $\delta_1 N_i$. From Proposition \ref{prop2.17} we know that either $(g(\vec{n}))_{\vec{n} \in P_1 \times \dots \times P_t}$ is $\delta_1$-equidistributed on $G/\Gamma$, or else there is a factorization
\[ g = \varepsilon_1 g_1 \gamma_1\] where $\varepsilon_1,g_1,\gamma_1 \in \poly(\Z^t,G_{\bullet})$, $g_1$ takes values in some $O(\delta_1^{-O(1)})$-rational proper subgroup $G' \subseteq G$, $\varepsilon_1$ is $(O(\delta_1^{-O(1)}),\vec{N})$-smooth and $\gamma_1$ is $O(\delta_1^{-O(1)})$-rational. Set $\Gamma' := G' \cap \Gamma$; we are now going to look at the distribution properties of $(g(\vec{n}))$ inside $G'/\Gamma'$ by applying Proposition \ref{prop2.17} once more.

To do this we choose an $M_0^{O_{A,d,m}(1)}$-rational Mal'cev basis $\X'$ for $G'/\Gamma'$ adapted to the filtration $G'_{\bullet} := G_{\bullet} \cap G'$. This is possible by Lemma \ref{sub-nil-basis}, and we may furthermore ensure that each of the basis elements $\X'_i$ is an $M_0^{O_{A,d,m}(1)}$-rational combination of the $X_i$. In view of Lemma \ref{comparison-lemma} we have
\begin{equation}\label{metric-comparison} d'(x,y) \ll M_0^{O_{A,d,m}(1)} d(x,y)\end{equation} for all $x,y \in G'/\Gamma'$.

Take $\delta_2 := cM_0^{-C}$ for some constants $c,C$ depending on $m,d$ and $A$. If these are chosen suitably, and if $(g_1(\vec{n}))_{\vec{n} \in P_1 \times \dots \times P_t}$ is $\delta_2$-equidistributed on $G'/\Gamma'$ with respect to the metric $d'$ for all progressions $P_i$ with $|P_i| \geq \delta_2N_i$, then by \eqref{metric-comparison} the conclusion of the theorem holds. If this is not the case then we apply Proposition \ref{prop2.17} once again, obtaining a factorization $g_1 = \varepsilon_2 g_2 \gamma_2$ where $g_2$ takes values in some $O(\delta_2^{-O(1)})$-rational proper subgroup $G'' \subseteq G'$, $\varepsilon_2 : \Z^t \rightarrow G'$ is $(O(\delta_2^{-O(1)}),\vec{N})$-smooth and $\gamma_2: \Z^t \rightarrow G'$ is $O(\delta_2^{-O(1)})$-rational. 

This allows us to write 
\[ g = \varepsilon_2 \varepsilon_1 g_2 \gamma_1 \gamma_2.\]

Now it follows from Lemma \ref{comparison-lemma} that $\varepsilon_2 : \Z^t \rightarrow G'$ is in fact $(M_0^{O(1)},\vec{N})$-smooth when regarded as a map into $G$ (smoothness now being measured with respect to the metric $d$). By Lemma \ref{prod-smooth}, $\varepsilon_2\varepsilon_1 : \Z^t \rightarrow G$ is also $(M_0^{O(1)},\vec{N})$-smooth. By Lemma \ref{ratpoint} (v), $\gamma_1\gamma_2 : \Z^t \rightarrow G$ is $O(\delta_2^{-O(1)})$-rational. Thus, taking $\varepsilon := \varepsilon_2\varepsilon_1$, $\gamma := \gamma_1\gamma_2$ and $g' := g_2$, the conclusion of the theorem holds unless $(g_2(\vec{n}))_{\vec{n} \in P_1 \times \dots \times P_t}$ fails to be equidistributed on $G''/\Gamma''$. We now proceed as before, introducing a Mal'cev basis $\X''$ and encoding this lack of equidistribution as the failure of $(g_2(\vec{n}))_{\vec{n} \in P_1 \times \dots \times P_t}$ to be $\delta_3$-equidistributed relative to the metric $d'' = d_{\X''}$ for some $\delta_3 = cM_0^{-C}$ (the constants $c,C$ are, of course, not the same as before). We may then apply Proposition \ref{prop2.17} once more, and so on.

It is clear that the total number of iterations is bounded by $m = \dim G$. The implied constants in the $O()$ notation increase with each iteration, but since the total number of iterations is at most $m=O(1)$, this does not cause a difficulty.
Thus we obtain a proof of our main theorem.
\end{proof}

It follows from Lemma \ref{rat-poly-lem} (or rather the multidimensional version of it) that $(\gamma(\vec{n})\Gamma)_{\vec{n} \in \Z^t}$ is periodic in each direction in the sense that $\gamma(\vec{n} + Q\vec{e_i})\Gamma = \gamma(\vec{n})\Gamma$ for some $Q \ll M^{O_{s,m,\vec{d}}(1)}$. Setting $t = 1$, we recover Theorem \ref{mainthm-prelim}.

We leave the straightforward deduction of Theorem \ref{ratner-result} to the reader.

\appendix

\section{Facts about coordinates and Mal'cev bases}\label{malcev-app}

Let us begin this appendix by discussing coordinate systems on a connected, simply-connected nilpotent Lie group $G$ of dimension $m$. A discrete and cocompact subgroup $\Gamma$, leading to a nilmanifold $G/\Gamma$, will be introduced in a little while. Let $\g$ be the Lie algebra of $G$, and let $\exp : \g \rightarrow G$ and $\log : G \rightarrow \g$ be the exponential and logarithm maps, which are both diffeomorphisms. In this appendix all implied constants are allowed to depend on $m$ and $s$, and for notational brevity this dependence will usually be suppressed. The rationality parameter $Q$ will always be assumed to be at least $2$.

Let us begin by recalling from \S \ref{sec2} the notion of coordinates of the first and second kinds. 

\begin{definition}[Coordinates]
Let $\X = \{X_1,\dots,X_m\}$ be a basis for $\g$. If
\[ g = \exp(t_1X_1 + \dots + t_m X_m)\] then we say that $(t_1,\dots,t_m)$ are the coordinates of the \emph{first kind} or \emph{exponential coordinates} for $g$ relative to the basis $\X$. We write $(t_1,\dots,t_m) = \psi_{\X,\exp}(g)$.
If
\[ g = \exp(u_1X_1)\dots \exp(u_m X_m)\] then we say that $(u_1,\dots,u_m)$ are the coordinates of the \emph{second kind} for $g$ relative to $\X$, and we write $(u_1,\dots,u_m) = \psi_\X(g)$.\end{definition}

From now on in this appendix (as in the main text) we will write $\psi := \psi_{\X}$ and $\psi_{\exp} := \psi_{\X,\exp}$. When another basis $\X'$ for some Lie algebra $\g'$ is present we shall write $\psi' := \psi_{\X'}$ and $\psi'_{\exp} := \psi_{\X,\exp}$.

Recall that $\X$ is said to be $Q$-rational if all the structure constants $c_{ijk}$ in the relations 
\[ [X_i,X_j] = \sum_k c_{ijk} X_k\] are rationals of height at most $Q$.

The effect of a change of basis is easily understood in coordinates of the first kind (indeed, it merely effects a linear transformation of coordinates). Nilmanifolds, however, are best studied using coordinates of the second kind. It is, therefore, no surprise that the following lemma describing the passage between the two types of coordinate system is very useful.

\begin{lemma}[Coordinates of the first and second type]\label{1-2-lem}\textup{(i)} Let $\X$ be a basis for $\g$ with the nesting property that
\begin{equation}\label{nest} [\g, X_i] \subseteq \Span( X_{i+1},\dots,X_m) \end{equation} for $i = 1,\dots,m-1$. Then the compositions $\psi_{\exp}\circ \psi^{-1}$ and $\psi \circ \psi_{\exp}^{-1}$ are both polynomial maps on $\R^m$ with degree $O(1)$. If $\mathcal{X}$ is $Q$-rational then all the coefficients of these polynomials are rational of height at most $Q^{O(1)}$.

\textup{(ii)} Suppose that $G' \subseteq G$ is a closed, connected subgroup of dimension $m'$ with associated Lie algebra $\g' \subseteq \g$. Suppose $\X'$ is a basis for $\g'$ with the nesting property. Then $\psi \circ \psi^{\prime -1}$ is a polynomial map from $\R^{m'}$ to $\R^m$ and $\psi'\circ \psi^{-1}$ is a polynomial map from $\psi(G') \subseteq \R^m$ to $\R^{m'}$. Both of these maps have degree $O(1)$. If $\X$ and $\X'$ are $Q$-rational and if each element $X'_i$ of $\mathcal{X}'$ is a $Q$-linear combination of the $X_i$ then all coefficients of these polynomials are rationals of height $Q^{O(1)}$.
\end{lemma}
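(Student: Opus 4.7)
The plan is to prove both parts by iterated application of the Baker--Campbell--Hausdorff (BCH) formula, using nilpotency to guarantee termination of the series and the nesting property to make the coordinate computations triangular. Since $G$ has step $s \leq m$, we have $\log(\exp X \exp Y) = X + Y + \frac{1}{2}[X,Y] + \cdots$ as a finite sum of iterated brackets of $X$ and $Y$ of length at most $s$, with universal rational coefficients.

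For part (i), the forward direction $\psi_{\exp}\circ\psi^{-1}$ is handled by evaluating $\log(\exp(u_1 X_1)\cdots\exp(u_m X_m))$ through $m-1$ successive BCH compositions, expanding each bracket $[X_i,X_j]=\sum_k c_{ijk}X_k$ via the structure constants. A routine induction on the number of factors shows the result is a polynomial in the $u_i$ of degree $O_{m,s}(1)$ with rational coefficients of height $Q^{O_{m,s}(1)}$. The reverse direction $\psi\circ\psi_{\exp}^{-1}$ is the more delicate one: given $(t_1,\dots,t_m)$, we must determine $(u_1,\dots,u_m)$ with $\exp(\sum_i t_i X_i) = \exp(u_1 X_1)\cdots \exp(u_m X_m)$. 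Here the nesting property is crucial: each $\h_i := \Span(X_{i+1},\dots,X_m)$ is a Lie algebra ideal, so $H_i := \exp(\h_i)$ is a normal subgroup of $G$ and $G = H_0 \supseteq H_1 \supseteq \dots \supseteq H_m = \{\id_G\}$ is a central series. Reducing modulo $H_1$ forces $u_1 = t_1$. One then computes $\exp(-t_1 X_1)\exp(\sum_i t_i X_i)$ by BCH; the resulting element lies in $H_1$ (again by the nesting property), and the forward direction (already established) expresses its $\X$-coordinates as polynomials of bounded degree and height in $t_1,\dots,t_m$. Iterating this ``peeling'' procedure $m$ times extracts each $u_i$ as a polynomial of degree $O_{m,s}(1)$ in $t_1,\dots,t_m$ with rational coefficients of height $Q^{O_{m,s}(1)}$.

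For part (ii), we decompose each map into the steps provided by part (i). For $\psi\circ\psi'^{-1}$, write $X'_i = \sum_j a_{ij}X_j$ with the $a_{ij}$ rational of height $\leq Q$, then compute $\log(\exp(u'_1 X'_1)\cdots \exp(u'_{m'}X'_{m'}))$ by iterated BCH in $\g$; this gives the $\X$-exponential coordinates as a polynomial in the $u'_i$ of degree $O_{m,s}(1)$ with coefficients rational of height $Q^{O_{m,s}(1)}$, and one then composes with $\psi\circ\psi_{\exp}^{-1}$ from part (i). For $\psi'\circ\psi^{-1}$, proceed in reverse: apply $\psi_{\exp}\circ\psi^{-1}$ from part (i) in $G$ to pass to $\X$-exponential coordinates, which on $G'$ take values in $\g' \subseteq \g$; then invert the $Q$-rational linear change of basis $X'_i \mapsto \sum_j a_{ij}X_j$ to get $\X'$-exponential coordinates (the needed minors of the coefficient matrix have height $Q^{O(1)}$); and finally apply the reverse direction of part (i) inside $G'$ (valid because $\X'$ has the nesting property in $\g'$ by hypothesis) to produce the $\X'$-Mal'cev coordinates.

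The principal technical obstacle will be the reverse direction of part (i): verifying at each peeling step that the residual group element actually lies in the next ideal $H_i$, and that its logarithm expands as a polynomial in $(t_1,\dots,t_m)$ of bounded degree with controlled coefficient height. Without the nesting property the BCH corrections at step $i$ would produce terms outside $\h_i$, destroying the triangular structure of the induction. The rest is careful but routine bookkeeping: each of the $\leq m$ iterations multiplies polynomial degrees by $O(s)$ and coefficient heights by $Q^{O(1)}$, both of which remain within the claimed $O_{m,s}(1)$ and $Q^{O_{m,s}(1)}$ bounds.
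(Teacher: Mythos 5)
Your proposal is correct and follows essentially the same line as the paper's proof: iterated Baker--Campbell--Hausdorff together with the nesting property yield an upper-triangular polynomial relationship between the two coordinate systems, and part (ii) is handled by passing through exponential coordinates and a (rational) linear change of basis. The only cosmetic difference is in the inversion step of (i), where the paper simply inverts the triangular polynomial relations $t_i = u_i + P_i(u_1,\dots,u_{i-1})$ at the coordinate level, while you implement the equivalent inversion by peeling factors $\exp(-u_i X_i)$ off the group element and re-applying BCH at each stage.
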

\proof (i) Recall the Baker-Campbell-Hausdorff formula, which states that 
\[ \log(\exp(X)\exp(Y)) =X + Y + \frac{1}{2}[X,Y] + \frac{1}{12}[X,[X,Y]] - \frac{1}{12}[Y,[X,Y]] + \dots,\]
this expression being a sum of $O_s(1)$ terms, each of which is a rational number of height $O_s(1)$ times a commutator of order at most $s$ involving $X$s and $Y$s. Repeated use of this allows us to write $\exp(u_1X_1) \dots \exp(u_mX_m)$ in the form $\exp(t_1 X_1 + \dots + t_m X_m)$. Property \eqref{nest} is easily seen to imply that the $t_i$ are polynomials in the $u_i$ with the specific form
\begin{align}\nonumber t_1 &= u_1 \\ \nonumber t_2 &= u_2 + P_2(u_1) \\ \nonumber t_2 &= u_3 + P_3(u_1,u_2) \\ \nonumber & \dots \\ \label{polys} t_m &= u_m + P_m(u_1,\dots,u_{m-1}).\end{align}

This establishes the claim for $\psi_{\exp} \circ \psi^{-1}$. To prove the result for $\psi \circ \psi^{-1}_{\exp}$ we simply note that the relations \eqref{polys} are of an ``upper triangular'' form which is easy to invert. Thus the $u_i$ are given in terms of the $t_i$ by polynomial relations of a similar upper triangular form. The quantitative statements follow by the same arguments, keeping track of the heights of the rational numbers involved. We leave the details to the reader.

(ii) Note the decomposition \[ \psi \circ \psi^{\prime -1} = (\psi \circ \psi_{\exp}^{-1}) \circ (\psi_{\exp} \circ \psi^{\prime -1}_{\exp}) \circ (\psi'_{\exp} \circ \psi^{\prime -1}) .\]
Of the three maps here, the first one is a polynomial map from $\R^m$ to $\R^m$ by (i), and the third is a polynomial map from $\R^{m'}$ to $\R^{m'}$. The middle map is simply a linear transformation from $\R^{m'}$ to $\R^m$.

The composition $\psi^{\prime} \circ \psi^{-1}$ may be dealt with in a very similar manner.

Once again the quantitative claims follow by the same arguments, keeping track of heights. We leave the details to the reader.
\endproof

The upper-triangular form of the relations \eqref{polys} allows us to prove the following key result, which describes group multiplication and inversion in coordinates.

\begin{lemma}[Multiplication and inversion in coordinates]\label{mult-basis-lem}
Let $\X$ be a basis for $\g$ with the nesting property \eqref{nest}. Let $x,y \in G$, and suppose that $\psi(x) = t$ and $\psi(y) = u$. Then
\begin{align*} &\psi(xy) =  \\ & (t_1 + u_1, t_2 + u_2 + P_1(t_1,u_1), ,\dots, t_m + u_m + P_{m-1}(t_1,\dots,t_{m-1},u_1,\dots,u_{m-1})),\end{align*} where, for each $i = 1,\dots,m-1$, $P_i : \R^{i} \times \R^{i} \rightarrow \R$ is a polynomial of degree $O(1)$. Furthermore
\[ \psi(x^{-1}) = (-t_1 , -t_2 + \tilde P_1(t_1),\dots,-t_m + \tilde P_{m-1}(t_1,\dots,t_{m-1}))\] where $\tilde P_i : \R^{i} \rightarrow \R$ is a polynomial of degree $O(1)$.
Let $Q \geq 2$. If $\X$ is $Q$-rational then all the coefficients of the polynomials $P_i,\tilde P_i$ are rationals of height $Q^{O(1)}$. 
\end{lemma}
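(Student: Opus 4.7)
The plan is to reduce multiplication in $G$ to the exponential coordinate picture, where the Baker--Campbell--Hausdorff (BCH) formula makes the product manifestly polynomial, and then convert back to coordinates of the second kind using Lemma~\ref{1-2-lem}. First I would invoke Lemma~\ref{1-2-lem}(i) to obtain that both transition maps $\psi_{\exp}\circ\psi^{-1}$ and $\psi\circ\psi_{\exp}^{-1}$ are polynomial of degree $O(1)$ with rational coefficients of height $Q^{O(1)}$. Next, in exponential coordinates the identity
\[ \log(xy) = \log x + \log y + \tfrac12[\log x,\log y] + \cdots \]
is a finite sum of $O(1)$ iterated commutators, each with a universal rational coefficient of height $O(1)$, truncating because $G$ is $s$-step nilpotent. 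Expanding each nested commutator in the basis $\X$ using the structure constants $c_{ijk}$ produces a polynomial map $\R^m \times \R^m \to \R^m$ of degree $O(1)$ whose coefficients are rationals of height $Q^{O(1)}$. Composing these three polynomial maps then shows that $\psi(xy)$ is a polynomial of degree $O(1)$ in $(t,u)$ with coefficients of height $Q^{O(1)}$.

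The delicate step will be extracting the specific upper-triangular additive form $w_i = t_i + u_i + P_{i-1}(t_1,\dots,t_{i-1},u_1,\dots,u_{i-1})$, rather than merely that $w_i$ is polynomial in all the earlier coordinates. For this I would use the nesting hypothesis: since $[\g,X_i]\subseteq\Span(X_{i+1},\dots,X_m) = \h_i$, each $\h_i$ is an ideal of $\g$, so $H_i:=\exp(\h_i)$ is a normal subgroup of $G$; moreover the image of $X_i$ in $\g/\h_i$ is central, so $\exp(t_iX_i)H_i$ lies in the centre of $G/H_i$. Working modulo $H_i$ and repeatedly using this centrality to shuttle the last generator to the right, I compute
\begin{align*}
 xy\,H_i &= \bigl(\exp(t_1X_1)\cdots\exp(t_{i-1}X_{i-1})\exp(t_iX_i)\bigr)\bigl(\exp(u_1X_1)\cdots\exp(u_{i-1}X_{i-1})\exp(u_iX_i)\bigr)H_i \\
&= \bigl(\exp(t_1X_1)\cdots\exp(t_{i-1}X_{i-1})\bigr)\bigl(\exp(u_1X_1)\cdots\exp(u_{i-1}X_{i-1})\bigr) \exp((t_i+u_i)X_i)\,H_i.
\end{align*}
Putting the product of the first $2(i-1)$ factors on the right-hand side into canonical form $\exp(w_1X_1)\cdots\exp(w_{i-1}X_{i-1})\exp(\lambda X_i)$ modulo $H_i$, and pulling the central factor $\exp((t_i+u_i)X_i)H_i$ once more through, yields $w_i = \lambda + t_i + u_i$ with $\lambda$ a polynomial purely in $(t_1,\dots,t_{i-1},u_1,\dots,u_{i-1})$; this $\lambda$ is the desired $P_{i-1}$.

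For the inversion formula, I would substitute $y=x^{-1}$ into the multiplication formula just established, so that $\psi(xx^{-1})=0$ gives, componentwise,
\[ \tau_i + t_i + P_{i-1}(t_1,\dots,t_{i-1},\tau_1,\dots,\tau_{i-1}) = 0, \]
where $\psi(x^{-1}) = (\tau_1,\dots,\tau_m)$. This is solved by induction on $i$: $\tau_1 = -t_1$, and each successive $\tau_i = -t_i - P_{i-1}(t_1,\dots,t_{i-1},\tau_1,\dots,\tau_{i-1})$, which after substituting the already-determined $\tau_j(t_1,\dots,t_{j-1})$ for $j<i$ becomes a polynomial $-t_i + \tilde P_{i-1}(t_1,\dots,t_{i-1})$ of degree $O(1)$ with coefficients of height $Q^{O(1)}$ (the height bound propagates since at each step we compose $O(1)$ polynomials whose coefficients are rationals of height $Q^{O(1)}$). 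The hard part of the whole argument is really the identification of the additive diagonal $t_i+u_i$ in $w_i$; the key mechanism for this, as above, is the centrality of $\exp(X_i)$ modulo $H_i$, which is a direct consequence of the nesting hypothesis.
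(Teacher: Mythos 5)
Your proof is correct, and the first paragraph (BCH in exponential coordinates, then conversion via Lemma~\ref{1-2-lem}) mirrors the paper's argument exactly. Where you diverge is in how you extract the upper-triangular additive form $w_i = t_i + u_i + P_{i-1}(t_1,\dots,t_{i-1},u_1,\dots,u_{i-1})$: the paper asserts that the BCH map in exponential coordinates already has upper-triangular additive form as a consequence of nesting (the implicit reason being that any iterated bracket $[\dots[X_{j_1},X_{j_2}],\dots,X_{j_k}]$ lies in $\h_{\max(j_1,\dots,j_k)}$, so it contributes to the $X_i$-coefficient only when all $j_\ell<i$), and then composes this with the upper-triangular transitions of \eqref{polys}. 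You instead derive the same structural fact at the group level by reducing modulo $H_i$ and exploiting the centrality of $\exp(X_i)H_i$ in $G/H_i$ — which is a cleaner, more conceptual way of seeing why only the lower coordinates enter, and which does not require re-examining BCH term by term. The inversion step is then handled identically in both proofs by solving the recursion $\tau_i = -t_i - P_{i-1}(t_1,\dots,t_{i-1},\tau_1,\dots,\tau_{i-1})$. In short: same strategy, but your argument for the crucial upper-triangular form fills in explicitly what the paper compresses into a one-line appeal to ``BCH and nesting.''
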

\proof By \eqref{polys} we know that 
\[ \psi_{\exp}(x) = (t_1 , t_2 + R_1(t_1),\dots,t_m + R_{m-1}(t_1,\dots,t_{m-1}))\] and similarly for $\psi_{\exp}(y)$, where $R_i : \R^{i} \rightarrow \R$ is a polynomial for $i = 1,\dots,m-1$. It follows from the Baker-Campbell-Hausdorff formula and the nesting property \eqref{nest} that 
\begin{align*} & \psi_{\exp}(xy) = \\ & (t_1 + u_1 , t_2 + u_2 + S_1(t_1,u_1), \dots, t_m + u_m + S_{m-1}(t_1,\dots,t_{m-1},u_1,\dots,u_{m-1})),\end{align*} where each $S_i : \R^{i} \times \R^{i} \rightarrow \R$ is again polynomial. The statement about the form of $\psi(xy)$ now follows from a further application of the relations \eqref{polys}, and the statement about $\psi(x^{-1})$ is an immediate corollary of it.

To obtain the quantitative versions of these statements we use the same arguments, keeping track of the heights of the rational numbers involved. We leave the details to the reader.\endproof

Recall at this point Definition \ref{metric-def}, in which a basis $\X$ is used to define metric $d = d_\X$ on $G$. We defined $d$ to be the largest metric such that $d(x,y) \leq |\psi(xy^{-1})|$ for all $x,y \in G$, where $|\cdot |$ denotes the $\ell^{\infty}$-norm on $\R^m$. For practical purposes it is important to have an understanding of such metrics in terms of the coordinates $\psi(x)$ and $\psi(y)$, or even in terms of coordinates $\psi'(x),\psi'(y)$ relative to some other basis $\X'$. The following lemma provides some information in this regard. Here, and in the rest of this appendix, we write $d := d_{\X}$ and $d' := d_{\X'}$.

\begin{lemma}[Bounds for $d$ in terms of coordinates]\label{dx-bounds}
Suppose that $Q \geq 2$. Suppose that $\X,\X'$ are two $Q$-rational bases for $\g$, both satisfying the nesting condition \eqref{nest}. Suppose that each $X'_i$ is given by a $Q$-rational combination of the $X_i$ and vice versa. Then for all $x,y \in G$ with $|\psi'(x)|, |\psi'(y)| \leq Q$ we have the bound
\begin{equation}\label{d-psi-bound-1} d(x,y) \ll Q^{O(1)} |\psi'(x) - \psi'(y)|,\end{equation}
and for all $x,y \in G$ with $d(x,\id_G), d(y,\id_G) \leq Q$ we have the bound
\begin{equation}\label{d-psi-bound-2} |\psi'(x) - \psi'(y)| \ll Q^{O(1)} d(x,y).\end{equation}
\end{lemma}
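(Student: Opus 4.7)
Both parts reduce, via Lemmas \ref{1-2-lem} and \ref{mult-basis-lem}, to estimates on certain polynomial maps. For (1), I would observe that the map $(\psi'(x),\psi'(y)) \mapsto \psi(xy^{-1})$ is obtained by applying Lemma \ref{mult-basis-lem} in the basis $\mathcal{X}'$ (which expresses $\psi'(xy^{-1})$ polynomially in $\psi'(x),\psi'(y)$ with $Q^{O(1)}$-rational coefficients of degree $O(1)$) and then composing with the polynomial change of basis $\psi \circ \psi'^{-1}$ from Lemma \ref{1-2-lem}(ii). The resulting polynomial $F : \R^m \times \R^m \to \R^m$ satisfies $F(u,u) = 0$ for all $u$, so each component factors as $F_k(u,v) = \sum_j (u_j - v_j) G_{k,j}(u,v)$ with the $G_{k,j}$ of degree $O(1)$ and $Q^{O(1)}$-rational coefficients. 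On the region $|u|,|v| \leq Q$ one has $|G_{k,j}(u,v)| \ll Q^{O(1)}$, giving $|\psi(xy^{-1})| \ll Q^{O(1)} |\psi'(x)-\psi'(y)|$, and (1) follows from the inequality $d(x,y) \leq |\psi(xy^{-1})|$ built into Definition \ref{metric-def}.

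For (2), the key intermediate statement is that whenever $d(g,\id_G) \leq Q$, one has
\[ |\psi(g)| \ll Q^{O(1)} d(g,\id_G). \]
I would prove this (call it Claim A) by induction on the Mal'cev coordinate index $k \in \{1,\dots,m\}$. Fix any chain $\id_G = x_0,\dots,x_n = g$ with $\sum_i \eps_i \leq d(g,\id_G) + \eta$, where $\eps_i := \min(|\psi(x_i x_{i-1}^{-1})|, |\psi(x_{i-1}x_i^{-1})|)$, and let $h_i$ denote the group element realising $\eps_i$. For $k=1$, $\psi_1$ is a homomorphism (Lemma \ref{mult-basis-lem}), so $|\psi_1(g)| \leq \sum_i |\psi_1(h_i)| \leq \sum_i \eps_i$. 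For $k \geq 2$, Lemma \ref{mult-basis-lem} gives $\psi_k(ab) = \psi_k(a) + \psi_k(b) + P_{k-1}(\psi_{<k}(a),\psi_{<k}(b))$ with $P_{k-1}$ of degree $O(1)$ and $Q^{O(1)}$-rational coefficients; crucially, the identities $\id_G \cdot a = a \cdot \id_G = a$ force $P_{k-1}(0,u) = P_{k-1}(t,0) = 0$, so $P_{k-1}(t,u) = \sum_j t_j R_{k-1,j}(t,u)$. The inductive hypothesis $|\psi_{<k}(x_i)| \ll Q^{O(1)}$ (valid since $d(x_i,\id_G)$ is bounded by the partial chain sum, hence by $Q+\eta$) then yields the per-step estimate $|\psi_k(x_i) - \psi_k(x_{i-1})| \leq \eps_i + \eps_i Q^{O(1)} \leq Q^{O(1)} \eps_i$. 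Summing in $i$ and letting $\eta \to 0$ completes Claim A.

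With Claim A in hand, (2) is finished in three steps. First, by the right-invariance of $d$ (noted after Definition \ref{metric-def}), $d(x,y) = d(xy^{-1},\id_G)$, so Claim A gives $|\psi(xy^{-1})| \ll Q^{O(1)} d(x,y)$. Second, writing $x = (xy^{-1}) \cdot y$ and applying Lemma \ref{mult-basis-lem} again,
\[ \psi_k(x) - \psi_k(y) = \psi_k(xy^{-1}) + P_{k-1}(\psi_{<k}(xy^{-1}),\psi_{<k}(y)); \]
invoking the same vanishing-at-zero trick on $P_{k-1}$ together with $|\psi(y)| \ll Q^{O(1)}$ (Claim A applied to $y$), one deduces $|\psi(x) - \psi(y)| \ll Q^{O(1)} d(x,y)$. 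Third, Lemma \ref{1-2-lem}(ii) shows that $\psi' \circ \psi^{-1}$ is polynomial of degree $O(1)$ with $Q^{O(1)}$-rational coefficients, so its gradient on the ball $\{|\psi| \leq Q^{O(1)}\}$ is bounded by $Q^{O(1)}$, yielding $|\psi'(x)-\psi'(y)| \ll Q^{O(1)} |\psi(x) - \psi(y)|$. Chaining the three estimates gives (2).

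The main obstacle is Claim A. A naive iteration of the multiplication formula along the chain does not work: $|\psi(ab)|$ depends polynomially of degree $> 1$ on $|\psi(a)|$, so a chain of length $n$ would produce a compositionally iterated bound, exponential in $n$. The decisive point is that the correction polynomial $P_{k-1}$ in Lemma \ref{mult-basis-lem} vanishes when \emph{either} argument is zero, which makes the \emph{increment} $\psi_k(x_i) - \psi_k(x_{i-1})$ depend \emph{linearly} on the step size $\eps_i$ (once the lower Mal'cev coordinates are under control by the outer induction). This linearity is exactly what lets the sum $\sum_i \eps_i$, rather than an iterated polynomial of the individual $\eps_i$, control $|\psi_k(g)|$ and thereby drive the entire argument.
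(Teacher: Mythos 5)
Your proof of \eqref{d-psi-bound-1} is essentially the paper's argument (polynomial vanishes on the diagonal, factor out $\psi'(x)-\psi'(y)$), and your final chaining steps for \eqref{d-psi-bound-2} (pass from the normalized claim to arbitrary $y$, then change basis) also match the paper's in spirit. But your proof of the central Claim A takes a genuinely different route. The paper's argument establishes the intermediate per-step bound $|\psi(x)-\psi(y)| \ll Q^{O(1)}\kappa(x,y)(1+\kappa(x,y)+|\psi(y)|)^{O(1)}$, then \emph{bootstraps}: first it proves $|\psi(x)| \ll Q^{O(1)} d(x,\id_G)$ only when $d(x,\id_G) \leq C^{-1}Q^{-C}$ is tiny (so the chain stays in a region where $|\psi|$ remains small and the per-step bound becomes additive), and then relaxes this restriction by a greedy splitting of the chain into pieces of small total weight plus a bounded number of singleton large steps, establishing a uniform a priori bound $|\psi(x_i)| \ll Q^{O(1)}$ on all intermediate points. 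Your argument instead inducts on the Mal'cev coordinate index $k$: the inductive hypothesis controls $|\psi_{<k}(x_i)|$ for all intermediate $x_i$ at once, and then the upper-triangular form of the multiplication law — the correction polynomial $P_{k-1}$ depends only on lower coordinates and vanishes when either argument is zero — makes the increment in coordinate $k$ linear in the step size $\eps_i$. You thus get the uniform intermediate-point bound for free rather than by the greedy subdivision, which is cleaner and exploits the triangular structure more directly; the paper's bootstrap is somewhat more robust in that it does not rely on the coordinate-by-coordinate triangular refinement. Your approach is correct (the one small imprecision, citing Lemma \ref{1-2-lem}(ii) for the change of basis $\psi' \circ \psi^{-1}$ on the full group, is harmless since that lemma specializes to $G'=G$); it furnishes a legitimate alternative to the paper's proof of \eqref{special-claim}.
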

\proof Inequality \eqref{d-psi-bound-1} is by far the easier of the two inequalities claimed here and we prove it first. By definition we have $d(x,y) \leq |\psi(xy^{-1})|$. Write $\psi'(x) = t$ and $\psi'(y) = u$; by Lemmas \ref{1-2-lem} and \ref{mult-basis-lem} we see that the coordinates $\psi(xy^{-1})$ are \[ (P_1(t,u),\dots,P_m(t,u)),\] where each $P_i : \R^m \times \R^m \rightarrow \R$ is a polynomial of degree $O(1)$ whose coefficients are rationals of height $Q^{O(1)}$. Each of these polynomials of course vanishes when $t = u$, and so we can write (e.g.)
\[ P_1(t,u) = P_1(t,u) - P_1(t,t) = \sum_{i = 1}^m (t_i - u_i)R_{1,i}(t,u),\] where each $R_{1,i} : \R^m \times \R^m \rightarrow \R$ is a polynomial of degree $O(1)$ whose coefficients are rationals of height $Q^{O(1)}$. (One way to see this is to expand $P_1$ as a sum of monomials $t^{\vec{\alpha}}u^{\vec{\beta}}$.) The bound \eqref{d-psi-bound-1} follows immediately.

The second bound, \eqref{d-psi-bound-2}, is significantly more difficult. We begin by proving the special case in which $\mathcal{X} = \mathcal{X}$' and $y = \id_G$, or in other words the following claim: 
\begin{equation} \label{special-claim}\mbox{$|\psi(x)| \ll Q^{O(1)} d(x,\id_G)$ uniformly for all $x$ with $d(x,\id_G) \leq Q$.}\end{equation} Write $\kappa(x,y) := \min(|\psi(xy^{-1})|, |\psi(yx^{-1})|)$.  
We will use the bound
\begin{equation}\label{psi-x-y} |\psi(x) - \psi(y)| \ll Q^{O(1)}\kappa(x,y)(1 + \kappa(x,y) + |\psi(y)|)^{O(1)}.\end{equation}
To prove this when $\kappa(x,y) = |\psi(xy^{-1})|$ we proceed much as in the proof of \eqref{observed-bound}: set $x = zy$ and use Lemma \ref{mult-basis-lem} to expand $\psi(x) - \psi(y) = \psi(zy) - \psi(y)$ as a polynomial in the coordinates of $v = \psi(y)$ and $w = \psi(z)$ which vanishes when $w = 0$. When $\kappa(x,y) = |\psi(yx^{-1})|$ we proceed similarly, setting $x = yz^{-1}$. 

From \eqref{psi-x-y} we see in particular that if $|\psi(y)| \leq 1$ and $\kappa(x,y) \leq 1$, then
$$ |\psi(x)| \leq |\psi(y)| + C Q^C \kappa(x,y)$$
for some constant $C \geq 1$.  Iterating this we see that if $x_0,\ldots,x_n$ are elements of $G$ with $x_0 = \id_G$ and $\kappa(x_0,x_1) + \ldots + \kappa(x_{n-1},x_n) \leq C^{-1} Q^{-C}$ then
$$ |\psi(x_n)| \leq C Q^C ( \kappa(x_0,x_1) + \ldots + \kappa(x_{n-1},x_n) ).$$
Inspecting the definition of $d$, we conclude that
\begin{equation}\label{psix}
 |\psi(x)| \ll Q^{O(1)} d(x,\id_G) \hbox{ whenever } d(x,\id_G) \leq C^{-1} Q^{-C}.
\end{equation}
By right-invariance and symmetry of $d$, we can amplify this to
\begin{equation}\label{kapxy}
 |\kappa(x,y)| \ll Q^{O(1)} d(x,y) \hbox{ whenever } d(x,y) \leq C^{-1} Q^{-C}.
\end{equation}

The estimate \eqref{psix} is almost what we need, except that the bound on $d(x,\id_G)$ is too strict.  To relax it, we argue as follows.  To obtain \eqref{special-claim}, it suffices to show that
$$ |\psi( x_n )| \ll Q^{O(1)} ( \kappa(x_0,x_1) + \ldots + \kappa(x_{n-1},x_n) )$$
whenever $x_0,\ldots,x_n \in G$ with $x_0 = \id_G$ and $\kappa(x_0,x_1) + \ldots + \kappa(x_{n-1},x_n) \leq 2Q$ (say).

Using a greedy algorithm, split the path $(x_0,\ldots,x_n)$ into $O(Q^{O(1)})$ paths $(x_i,\ldots,x_j)$ with $\kappa(x_i,x_{i+1}) + \ldots + \kappa(x_{j-1},x_j) \leq C^{-1}Q^{-C}$, plus $O(Q^{O(1)})$ singleton paths $(x_i, x_{i+1})$ with $C^{-1} Q^{-C} \leq \kappa(x_i, x_{i+1}) \leq 2Q$.  Applying \eqref{kapxy}, we thus see that there exists a path $(y_0,\ldots,y_r)$ with $r = O(Q^{O(1)})$, $y_0 = \id_G$, and $y_r = x_n$, such that $\kappa(y_i,y_{i-1}) \ll Q^{O(1)}$ for all $1 \leq i \leq r$.  In particular (using Lemma \ref{mult-basis-lem}) if we write $g_i := y_i y_{i-1}^{-1}$ for $1 \leq i \leq r$, then we see that $|\psi(g_i)| \ll Q^{O(1)}$.  On the other hand, we have the telescoping product
$$ x_n = g_r \ldots g_1.$$
Now if $g_1,\dots,g_r \in G$ are any elements with $|\psi(g_i)| \leq t$ for all $i$ then \[ |\psi(g_1\dots g_r)| \ll (1 + t)^{O(1)}r^{O(1)}.\] This may be seen by
applying Lemma \ref{mult-basis-lem} repeatedly to expand the product out completely in coordinates. That the first coordinate is polynomially controlled is obvious, and it then follows that the second is also, and so on inductively.
Applying this in the present situation gives $|\psi(x_n)| \ll Q^{O(1)}$, and similar arguments for each $i$ give that in fact $|\psi(x_i)| \ll Q^{O(1)}$ uniformly for $0 \leq i \leq n$.  Applying \eqref{psi-x-y} we have
$$ |\psi(x_i)| \leq |\psi(x_{i-1})| + O( Q^{O(1)} \kappa(x_{i-1},x_i) )$$
and \eqref{special-claim} follows.

We have just established the special case $\mathcal{X} = \mathcal{X}'$, $y = \id_G$ of \eqref{d-psi-bound-2}. We now deal with the case where $\mathcal{X} = \mathcal{X}'$ but $y$ is arbitrary. Suppose then that $d(x,\id_G), d(y,\id_G) \leq Q$. Applying \eqref{special-claim} we see that $|\psi(x)|,|\psi(y)| \ll Q^{O(1)}$. By Lemma \ref{mult-basis-lem} we therefore have $|\psi(xy^{-1})| \ll Q^{O(1)}$, and hence by \eqref{d-psi-bound-1} it follows that $d(xy^{-1},\id_G) \ll Q^{O(1)}$. Applying \eqref{special-claim} once more, we see that \[ |\psi(xy^{-1})| \ll Q^{O(1)}d(xy^{-1},\id_G),\] which, since $d$ is right-invariant, implies that \begin{equation}\label{d-psi-bd-3}|\psi(xy^{-1})| \ll Q^{O(1)} d(x,y).\end{equation} The claimed result now follows immediately using \eqref{psi-x-y}.

Finally we turn to the general case in which $\mathcal{X}$ and $\mathcal{X}'$ may be different. We start with the special case of \eqref{d-psi-bound-2} just proved, namely
\begin{equation}\label{spec} |\psi(x) - \psi(y)| \ll Q^{O(1)}d(x,y).\end{equation}
Applying \eqref{d-psi-bound-1} we obtain
\[ d'(x,y) \ll Q^{O(1)}|\psi(x) - \psi(y)| \ll Q^{O(1)}d(x,y).\]
In particular we have $d'(x,\id_G), d'(y,\id_G) \ll Q^{O(1)}$. A second application of \eqref{spec}, with $\mathcal{X}$ replaced by $\mathcal{X}'$, then gives
\[ |\psi'(x) - \psi'(y)| \ll Q^{O(1)}d'(x,y) \ll Q^{O(1)}d(x,y).\] This concludes the proof of Lemma \ref{dx-bounds}.\endproof

The metric $d$ is right-invariant, that is to say $d(xg,yg) = d(x,y)$ for all $x,y,g \in G$. It is useful to have, in addition, the following approximate left-invariance property.

\begin{lemma}[Approximate left-invariance of $d$]\label{approx-left}
Suppose that $Q \geq 2$ and that $\X$ is a $Q$-rational basis for $\g$ satisfying the nesting condition \eqref{nest}. Suppose that $g,x,y \in G$ are elements with $|\psi(x)|,|\psi(y)|,|\psi(g)| \leq Q$. Then we have the bound
\[ d(gx,gy) \ll Q^{O(1)}d(x,y).\]
\end{lemma}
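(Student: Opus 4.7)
The plan is to reduce everything to a coordinate estimate for conjugation, exploiting the fact that we already have a two-sided comparison between $d$ and the coordinates $\psi$ coming from Lemma \ref{dx-bounds}. Concretely, I would argue:
\begin{enumerate}
\item[(a)] From the definition of $d$ we have $d(gx,gy) \leq |\psi((gx)(gy)^{-1})| = |\psi(gzg^{-1})|$, where $z := xy^{-1}$.
\item[(b)] By Lemma \ref{mult-basis-lem} the hypothesis $|\psi(x)|,|\psi(y)| \leq Q$ implies $|\psi(z)| \ll Q^{O(1)}$, and then by \eqref{d-psi-bound-1} of Lemma \ref{dx-bounds} (applied with $\mathcal{X}' = \mathcal{X}$) we get $d(x,\id_G),d(y,\id_G) \ll Q^{O(1)}$. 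The bound \eqref{d-psi-bd-3} proved inside Lemma \ref{dx-bounds} then gives
\[ |\psi(z)| = |\psi(xy^{-1})| \ll Q^{O(1)} d(x,y). \]
\item[(c)] It remains to bound $|\psi(gzg^{-1})|$ by $Q^{O(1)}|\psi(z)|$, which is the genuine new content of the lemma.
\end{enumerate}

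For step (c) I would apply Lemma \ref{mult-basis-lem} twice to write $\psi(gzg^{-1})$ coordinate by coordinate as a polynomial $P(\psi(g),\psi(z))$ of degree $O(1)$ in $2m$ variables whose coefficients are rationals of height $Q^{O(1)}$ (here I use $Q$-rationality of $\mathcal{X}$ together with the bounds $|\psi(g)|,|\psi(z)| \leq Q^{O(1)}$, invoking Lemma \ref{mult-basis-lem} once more to control $|\psi(g^{-1})|$). Because $g\cdot \id_G\cdot g^{-1} = \id_G$, the polynomial $P$ satisfies $P(t,0) = 0$ identically, so expanding each coordinate polynomial in monomials in $\psi(z)$ and pulling out a factor of some $\psi(z)_i$ gives
\[ \psi(gzg^{-1})_j = \sum_{i=1}^{m} \psi(z)_i \, R_{ij}(\psi(g),\psi(z)), \]
with each $R_{ij}$ a polynomial of degree $O(1)$ and rational coefficients of height $Q^{O(1)}$. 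Since $|\psi(g)|,|\psi(z)| \leq Q^{O(1)}$, each $R_{ij}$ is bounded in absolute value by $Q^{O(1)}$, and therefore $|\psi(gzg^{-1})| \ll Q^{O(1)}|\psi(z)|$.

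Combining (a), (b) and (c) yields
\[ d(gx,gy) \leq |\psi(gzg^{-1})| \ll Q^{O(1)}|\psi(z)| \ll Q^{O(1)}\,d(x,y), \]
as required.

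The only step with any real content is (c); (a) is immediate from the definition of $d$ and (b) is nothing more than recalling the estimates proved inside Lemma \ref{dx-bounds}. Even (c) is ``clear'' in the sense of the paper's introduction: it is a polynomial identity together with a trivial vanishing observation, of exactly the kind the authors advertise as giving only polynomial rationality losses. The main (minor) obstacle is being careful that all intermediate quantities -- in particular $|\psi(g^{-1})|$ and $|\psi(z)|$ -- remain bounded by $Q^{O(1)}$, so that we are genuinely allowed to absorb the polynomial values of the $R_{ij}$ into a $Q^{O(1)}$ factor; this is where the hypothesis $|\psi(g)| \leq Q$ (and not merely $d(g,\id_G) \leq Q$) is used.
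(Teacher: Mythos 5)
Your proposal is correct, but it takes a genuinely different route from the paper. The paper's proof works directly with the infimum-over-paths definition of $d$ from Definition \ref{metric-def}: it establishes the pointwise conjugation bound $|\psi(gzg^{-1})| \ll Q^{O(1)}(1+|\psi(z)|+|\psi(g)|)^{O(1)}|\psi(z)|$ (the paper's \eqref{observed-bound}, which is essentially your step (c)), observes that because $d(x,y)\ll Q^{O(1)}$ the infimum may be restricted to paths all of whose step sizes $\kappa(x_{i-1},x_i)$ are $\ll Q^{O(1)}$, applies the conjugation bound stepwise, and takes the infimum of the conjugated path. In particular the paper only invokes the ``easy'' direction of Lemma \ref{dx-bounds} (namely $d(x,y)\ll Q^{O(1)}$). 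You instead bypass the path argument entirely: you use the single-step bound $d(gx,gy)\leq|\psi((gx)(gy)^{-1})|$ from the definition, apply (your) step (c) once, and then invoke the \emph{hard} direction of Lemma \ref{dx-bounds} --- specifically the intermediate bound \eqref{d-psi-bd-3}, $|\psi(xy^{-1})|\ll Q^{O(1)}d(x,y)$ when $d(x,\id_G),d(y,\id_G)\ll Q^{O(1)}$ --- to close the loop. This is a clean shortcut. The trade-off is that your argument leans on the full strength of Lemma \ref{dx-bounds}, while the paper's is more self-contained relative to that lemma at the expense of having to justify restricting the infimum to bounded-step paths. There is no circularity in your approach since Lemma \ref{dx-bounds} is proved independently of Lemma \ref{approx-left}.
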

\proof We start by observing that uniformly in $g,z \in G$ we have the bound
\begin{equation}\label{observed-bound} |\psi(gzg^{-1})| \ll Q^{O(1)} (1 + |\psi(z)| + |\psi(g)|)^{O(1)}|\psi(z)|.\end{equation}
This follows by using Lemma \ref{mult-basis-lem} to conclude that the components of $\psi(gzg^{-1})$ are polynomials of degree $O(1)$ with $Q^{O(1)}$-rational coefficients in the coordinates $v = \psi(g)$ and $w = \psi(z)$, and these polynomials all vanish when $w = 0$. 
Recall from Definition \ref{metric-def} that 
\begin{equation}\label{d-def-repeat} d(x,y) = \inf\{ \sum_{i=0}^{n-1} \min(|\psi(x_{i-1}x_i^{-1})|, |\psi(x_ix_{i-1}^{-1})|): x_0,\dots,x_n \in G; x_0 = x; x_n = y  \}.\end{equation} We see, then, that the lemma will follow from \eqref{observed-bound} (taking $z = x_ix_{i-1}^{-1}$ or $x_{i-1}x_i^{-1}$) if we can show that the infimum may be taken over all those $x_i,x_{i-1}$ which satisfy some bound $\min(|\psi(x_{i-1}x_i^{-1})|,|\psi(x_ix_{i-1}^{-1})|) \ll Q^{O(1)}$.
But this follows from the inequality $d(x,y) \ll Q^{O(1)}$, which is an instant consequence of Lemma \ref{dx-bounds}.\endproof

We conclude this subsection by recording the following result.

\begin{lemma}[Comparison lemma]\label{comparison-lemma}
Suppose that $G' \subseteq G$ is a closed subgroup and that $\X,\X'$ are bases for $\g,\g'$ respectively which have the nesting property \eqref{nest}. Let $Q \geq 2$, and suppose that each $X'_i$ is a $Q$-rational combination of the $X_i$. Then we have the bounds
\[ d'(x,y) \ll Q^{O(1)} d(x,y)\] uniformly for all $x,y \in G'$ with $|\psi(x)|,|\psi(y)|\leq Q$ and
\[ d(x,y) \ll Q^{O(1)} d'(x,y)\] uniformly for all $x,y \in G'$ with $|\psi'(x)|, |\psi'(y)| \leq Q$.
\end{lemma}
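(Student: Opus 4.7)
The plan is to argue entirely in coordinates, moving back and forth between the metrics $d, d'$ and the Mal'cev coordinate systems $\psi, \psi'$ using results already established. I would prove the two directions symmetrically; let me describe the first, $d'(x,y) \ll Q^{O(1)} d(x,y)$ under the hypothesis $|\psi(x)|, |\psi(y)| \leq Q$.

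First, from the defining inequality $d(x,y) \leq |\psi(xy^{-1})|$ and the hypothesis $|\psi(x)|, |\psi(y)| \leq Q$, I would note that $d(x,\id_G), d(y,\id_G) \leq Q$, so Lemma \ref{dx-bounds} (applied with the ``other'' basis equal to $\X$ itself, which is allowed since the lemma only requires a $Q$-rational basis with the nesting property) gives $|\psi(x) - \psi(y)| \ll Q^{O(1)} d(x,y)$. Second, Lemma \ref{1-2-lem}(ii) tells me that on $\psi(G')$, the map $\psi' \circ \psi^{-1}$ is a polynomial of degree $O(1)$ with rational coefficients of height $Q^{O(1)}$. Since $|\psi(x)|, |\psi(y)| \leq Q$, a mean-value argument on this polynomial gives both $|\psi'(x)|, |\psi'(y)| \ll Q^{O(1)}$ and the comparison $|\psi'(x) - \psi'(y)| \ll Q^{O(1)} |\psi(x)-\psi(y)|$. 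Third, the same ``vanishes on the diagonal'' trick used in the proof of Lemma \ref{dx-bounds} can be applied to basis $\X'$: by Lemma \ref{mult-basis-lem} the components of $\psi'(xy^{-1})$ are polynomials in $(\psi'(x), \psi'(y))$ of degree $O(1)$ and with $Q^{O(1)}$-rational coefficients, vanishing on the diagonal $\psi'(x) = \psi'(y)$, so
\[
|\psi'(xy^{-1})| \ll Q^{O(1)} |\psi'(x) - \psi'(y)|
\]
given that $|\psi'(x)|, |\psi'(y)| \ll Q^{O(1)}$. Chaining together and using $d'(x,y) \leq |\psi'(xy^{-1})|$ yields the claim.

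The reverse direction $d(x,y) \ll Q^{O(1)} d'(x,y)$ under $|\psi'(x)|, |\psi'(y)| \leq Q$ is completely analogous: Lemma \ref{1-2-lem}(ii) (applied to $\psi \circ \psi'^{-1}$) gives $|\psi(x)|, |\psi(y)| \ll Q^{O(1)}$; Lemma \ref{dx-bounds}, applied within $G'$ to the basis $\X'$ on $\g'$, gives $|\psi'(x) - \psi'(y)| \ll Q^{O(1)} d'(x,y)$; the polynomial relation transfers this bound to $|\psi(x) - \psi(y)|$; and finally the diagonal-vanishing trick with Lemma \ref{mult-basis-lem} for $\X$ yields $|\psi(xy^{-1})| \ll Q^{O(1)} |\psi(x) - \psi(y)|$, whence $d(x,y) \leq |\psi(xy^{-1})| \ll Q^{O(1)} d'(x,y)$.

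There is no substantial obstacle: every ingredient is already present in the appendix. The only point requiring care is the asymmetry of the hypothesis (it is phrased in terms of $|\psi(x)|$ in one direction and $|\psi'(x)|$ in the other), which forces one to invoke Lemma \ref{1-2-lem}(ii) in the appropriate direction at the start of each argument to convert the given coordinate bound into one compatible with the metric-to-coordinate estimate of Lemma \ref{dx-bounds}. Once that is done, the proof is a short sequence of polynomial estimates with $Q^{O(1)}$ losses at each step.
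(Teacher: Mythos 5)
Your proof is correct and uses the same ingredients the paper has in mind: the coordinate-change polynomials of Lemma \ref{1-2-lem}(ii), the multiplication polynomials of Lemma \ref{mult-basis-lem}, and the metric-versus-coordinate comparisons of Lemma \ref{dx-bounds}. The paper's own proof is only a terse pointer (replace the observed bound of Lemma \ref{approx-left} by $|\psi'(z)| \ll Q^{O(1)}(1+|\psi(z)|)^{O(1)}|\psi(z)|$ and follow the same argument), and your explicit chain $d'(x,y) \leq |\psi'(xy^{-1})| \ll Q^{O(1)}|\psi'(x)-\psi'(y)| \ll Q^{O(1)}|\psi(x)-\psi(y)| \ll Q^{O(1)}d(x,y)$ is exactly what that amounts to once unpacked.
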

\proof We follow essentially the same argument used in the previous lemma. To prove the first bound, for example, replace \eqref{observed-bound} with the bound
\[ |\psi'(z)| \ll Q^{O(1)}(1 + |\psi(z)|)^{O(1)} |\psi(z)|.\] This follows immediately from Lemma \ref{1-2-lem} (ii), which guarantees that $\psi'(z)$ is a polynomial in the coordinates $\psi(z)$ which vanishes when $\psi(z) = 0$.\endproof

\textsc{Mal'cev bases.} Suppose that $G$ is a connected, simply-connected nilpotent Lie group with a filtration $G_{\bullet}$. Let us now introduce a discrete and cocompact subgroup $\Gamma$ to the discussion. Throughout the paper we have assumed that $G/\Gamma$ comes together with a special type of basis $\X$ called a \emph{Mal'cev basis adapted to $G_{\bullet}$}, which is invoked whenever it is necessary to discuss the metric structure of $G/\Gamma$.

Let us recall from \S \ref{sec2} the basic properties of these bases:
\begin{enumerate}
\item For each $j = 0,\dots,m-1$ the subspace $\h_j := \Span(X_{j+1},\dots,X_m)$ is a Lie algebra ideal in $\g$, and hence $H_j := \exp \h_j$ is a normal Lie subgroup of $G$.
\item For every $i$, $0 \leq i \leq s$, we have $G_i = H_{m - \dim(G_i)}$ (or equivalently, $\g_i = \h_{m-\dim(\g_i)}$);
\item Each $g \in G$ can be written uniquely as $\exp(t_1X_1)\dots \exp(t_mX_m)$, for $t_1,\ldots,t_m \in \R$.
\item $\Gamma$ consists precisely of those elements which, when written in the above form, have all $t_1,\ldots,t_m \in \Z$.
\end{enumerate}

Mal'cev bases are not especially flexible in certain ways -- for example it is not at all easy to take a Mal'cev basis on $G/\Gamma$ and use it to construct one on $G^{\Box}/\Gamma^{\Box}$ as we had to do in the proof of Lemma \ref{rat-bounds}. For additional flexibility it is convenient to introduce the notion of a \emph{weak} basis for $G/\Gamma$. These are only ever used in the process of constructing actual Mal'cev bases with desirable properties.

\begin{definition}[Weak bases]\label{weak-basis-def}
Let $\X = \{X_1,\dots,X_m\}$ be a basis for $\g$. Let $Q \geq 2$ be a parameter. We say that $\X$ is a \emph{$Q$-rational weak basis} for $G/\Gamma$ if $\mathcal{X}$ is $Q$-rational (cf. Definition \ref{ratnil-def}) and if we have $\frac{1}{q}\Z^m \supseteq \psi_{\exp}(\Gamma) \supseteq q\Z^m$ for some $q \leq Q$, that is to say the coordinates of $\log \Gamma$ relative to $\X$ are close to being integers.
\end{definition}
Note carefully that $\log \Gamma$ is not necessarily a subgroup of $\g$, as we saw in \S \ref{heisenberg-sec} in connection with the Heisenberg example.

We record some simple facts about weak bases.

\begin{lemma}[Weak bases: simple facts] \label{weak-triv} Weak bases enjoy the following properties.
\begin{enumerate}
\item Suppose that $\X$ is a $Q$-rational weak basis for $G/\Gamma$, and that $\X' = \{X'_1,\dots,X'_m\}$ is another basis for $\g$ with the property that each $X'_i$ is a $Q$-rational combination of the $X_i$. Then $\X'$ is a $Q^{O(1)}$-rational weak basis for $G/\Gamma$. 
\item Suppose that $\X$ is a Mal'cev basis adapted to some subgroup sequence $G_{\bullet}$, that is to say conditions \textup{(i)}, \textup{(ii)}, \textup{(iii)} and \textup{(iv)} from the start of the section are satisfied. Suppose that $\X$ is $Q$-rational. Then $\X$ is an $O(Q^{O(1)})$-rational weak basis for $G/\Gamma$.
\end{enumerate}
\end{lemma}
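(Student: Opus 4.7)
Both parts are essentially exercises in tracking rationality under linear or polynomial changes of coordinates, and I intend to reduce each to results already proved in this appendix. For part (i), write the change of basis as $X'_i = \sum_j a_{ij} X_j$ with $a_{ij}$ rationals of height at most $Q$, and let $A = (a_{ij})$. By Cramer's rule $A^{-1}$ has $Q^{O(1)}$-rational entries, so the structure constants of $\mathcal{X}'$, which are polynomial expressions in the $a_{ij}$, the entries of $A^{-1}$, and the $c_{ijk}$, are themselves $Q^{O(1)}$-rational. Coordinates of the first kind transform linearly under a change of basis: writing $\log g = \sum_i s_i X_i = \sum_i s'_i X'_i$ and comparing coefficients yields $s = A^T s'$, hence $\psi'_{\exp}(g) = (A^T)^{-1}\psi_{\exp}(g)$. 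Applying this linear map, and its inverse, to the containment $\frac{1}{q}\Z^m \supseteq \psi_{\exp}(\Gamma) \supseteq q\Z^m$ (with $q \leq Q$) produces $\frac{1}{q'}\Z^m \supseteq \psi'_{\exp}(\Gamma) \supseteq q'\Z^m$ for some $q' \leq Q^{O(1)}$, as required.

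For part (ii), the plan is to invoke Lemma \ref{1-2-lem}(i), which rests on the nesting property \eqref{nest}. I first verify this property for a Mal'cev basis: given $i$, let $k$ be maximal with $X_i \in \g_k$. Then the filtration inequality gives $[\g,X_i] \subseteq [\g_1,\g_k] \subseteq \g_{k+1} = \h_{m-m_{k+1}}$, while the maximality of $k$ forces $m_{k+1} \leq m-i$, so $\h_{m-m_{k+1}} \subseteq \h_i = \Span(X_{i+1},\dots,X_m)$. Next, Mal'cev conditions (iii) and (iv) give $\psi(\Gamma) = \Z^m$, and Lemma \ref{1-2-lem}(i) supplies polynomial maps $\psi_{\exp}\circ\psi^{-1}$ and $\psi\circ\psi_{\exp}^{-1}$ of bounded degree whose coefficients are rationals of height $Q^{O(1)}$. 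Applying the first of these to $\psi(\Gamma) = \Z^m$ gives $\psi_{\exp}(\Gamma) \subseteq \frac{1}{Q^{O(1)}}\Z^m$. For the reverse containment, I take $q'$ to be a common denominator of the coefficients of $\psi\circ\psi_{\exp}^{-1}$. Because this polynomial map vanishes at the origin (both coordinate systems send $\id_G$ to $0$), every monomial has total degree at least $1$, so for any $v \in \Z^m$ the value $(\psi\circ\psi_{\exp}^{-1})(q'v)$ has denominator cancelled by a positive power of $q'$ and thus lies in $\Z^m$. By condition (iv) this means $\psi_{\exp}^{-1}(q'v) \in \Gamma$, i.e.\ $q'v \in \psi_{\exp}(\Gamma)$, completing the containment $q'\Z^m \subseteq \psi_{\exp}(\Gamma)$.

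The only mildly delicate point is the verification of the nesting property \eqref{nest} from the Mal'cev axioms, which rests on the interplay between condition (ii) (the identification $G_i = H_{m-m_i}$) and the filtration inequality $[G_a,G_b] \subseteq G_{a+b}$. Once this is in hand the rest is routine rational-arithmetic bookkeeping on top of Lemma \ref{1-2-lem}, with no analytic content.
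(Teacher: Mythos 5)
Your proof is correct and takes essentially the approach the paper has in mind: the paper's own proof is just the one-liner ``Part (i) is immediate. Part (ii) follows quickly from Lemma \ref{1-2-lem}.'', and you have filled in exactly the intended details — for (i), that coordinates of the first kind transform by the linear map $(A^T)^{-1}$, whose entries remain $Q^{O(1)}$-rational; for (ii), that the Mal'cev axioms imply the nesting property \eqref{nest}, after which Lemma \ref{1-2-lem}(i) gives $\psi_{\exp}\circ\psi^{-1}$ and $\psi\circ\psi_{\exp}^{-1}$ as bounded-degree polynomials with $Q^{O(1)}$-rational coefficients vanishing at the origin, so that $\psi(\Gamma)=\Z^m$ transfers to the two-sided containment $\frac{1}{q'}\Z^m \supseteq \psi_{\exp}(\Gamma)\supseteq q'\Z^m$ with $q'\ll Q^{O(1)}$.
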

\proof Part (i) is immediate. Part (ii) follows quickly from Lemma \ref{1-2-lem}.\endproof

The next proposition allows us to construct Mal'cev bases from weak bases. If $\mathcal{X}$ is a Mal'cev basis for $G/\Gamma$ and if $G' \subseteq G$ is a subgroup, we say that $G'$ is $Q$-\emph{rational} if the Lie algebra $\g'$ is generated by $Q$-rational combinations of the basis elements $X_i$.

\begin{proposition}[Construction of Mal'cev bases]\label{malcev-exist}
Suppose that $\X$ is a $Q$-rational weak basis for $G/\Gamma$ and that $G_{\bullet}$ is a filtration in which each subgroup $G_i$ is $Q$-rational. Then there is a Mal'cev basis $\X' = \{X'_1,\dots,X'_m\}$ for $G/\Gamma$ adapted to $G_{\bullet}$ in which each $X'_i$ is a $Q^{O(1)}$-rational combination of the basis elements $X_i$. In particular, the Mal'cev basis $\X'$ is $Q^{O(1)}$-rational.
\end{proposition}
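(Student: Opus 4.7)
The plan is a two-stage construction. First, we produce an auxiliary basis $\mathcal{Y}=\{Y_1,\ldots,Y_m\}$ of $\g$ satisfying properties (i)--(iii) of Definition~\ref{malcev} (compatibility with the filtration) with each $Y_k$ a $Q^{O(1)}$-rational combination of the $X_j$. Second, we refine $\mathcal{Y}$ to a basis $\X'$ that additionally satisfies property (iv) (integrality of $\Gamma$ in second-kind coordinates), again via a $Q^{O(1)}$-rational change of basis. The composition yields the required relation between $\X'$ and $\X$.

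For the first stage, work inductively from $i=d$ down to $i=0$, at each step picking vectors $Y_{m-m_i+1},\ldots,Y_{m-m_{i-1}}$ that extend the already-chosen basis of $\g_i$ to a basis of $\g_{i-1}$. Since each $\g_i$ is by hypothesis spanned by $Q$-rational combinations of the $X_j$, elementary Gaussian elimination on these generators supplies such extensions with $Q^{O(1)}$-rational coefficients. The nesting property $[\g,Y_k]\subseteq\Span(Y_{k+1},\ldots,Y_m)$ is then automatic from the filtration axiom: if $Y_k\in\g_i\setminus\g_{i+1}$ then $[\g,Y_k]\subseteq\g_{i+1}=\Span(Y_{m-m_{i+1}+1},\ldots,Y_m)\subseteq\Span(Y_{k+1},\ldots,Y_m)$.

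For the second stage, use downward induction on $i$ to choose $X'_{m-m_i+1},\ldots,X'_{m-m_{i-1}}\in\g_{i-1}$ whose exponentials will generate, in the abelian quotient $G_{i-1}/G_i$, a $\Z$-basis of the quotient lattice $(\Gamma\cap G_{i-1})/(\Gamma\cap G_i)$. Begin at $i=d$, where $G_d$ is abelian: Smith normal form on the lattice $\log(\Gamma\cap G_d)\subseteq\g_d$ produces $X'_{m-m_d+1},\ldots,X'_m$ which are $Q^{O(1)}$-rational combinations of $Y_{m-m_d+1},\ldots,Y_m$ and satisfy $\Gamma\cap G_d=\exp(\Z X'_{m-m_d+1}+\cdots+\Z X'_m)$. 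At each subsequent inductive step, apply Smith normal form in the abelian quotient $G_{i-1}/G_i$, incurring only a polynomial factor in height; a telescoping identity derived from Lemma~\ref{mult-basis-lem} (applied within the nested basis $\mathcal{Y}$) then converts the additive $\Z$-generation in each abelian layer into the multiplicative second-kind representation $\gamma=\exp(n_1X'_1)\cdots\exp(n_mX'_m)$ with $(n_k)\in\Z^m$, establishing (iv).

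The main obstacle is achieving the quantitative cocompactness needed at each inductive stage: we must produce a generating set for $(\Gamma\cap G_{i-1})/(\Gamma\cap G_i)$ whose coordinates in $\mathcal{Y}$ have height $Q^{O(1)}$, so that Smith normal form returns bounds polynomial in $Q$. This is handled by a parallel induction: given any $\gamma\in\Gamma\cap G_{i-1}$, the weak-basis hypothesis together with Lemma~\ref{1-2-lem} expresses $\gamma$ in $\mathcal{Y}$-coordinates of the first kind with denominator $\leq Q^{O(1)}$; the inductive multiplicative factorization of elements of $\Gamma\cap G_i$ (already obtained in second-kind coordinates) then lets us clear the trailing $\g_i$-component of $\gamma$ and extract a $Q^{O(1)}$-controlled representative modulo $\Gamma\cap G_i$. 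Since the induction runs through at most $d\leq m$ filtration steps, the compounded polynomial losses still leave each $X'_k$ as a $Q^{O(1)}$-rational combination of the $X_j$, as required.
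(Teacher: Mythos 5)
Your proposal follows the paper's two-stage skeleton: stage one (building a nested basis $\mathcal{Y}$ compatible with the filtration, each $Y_k$ a $Q^{O(1)}$-rational combination of the $X_j$, with the nesting property for free from the filtration axiom) is essentially identical to the paper's. Stage two diverges. The paper proceeds one \emph{coordinate} at a time, by downward induction on $j$ from $m$ to $1$: at each step the relevant quotient --- the image of $H_{j-1}\cap\Gamma$ in $H_{j-1}/H_j\cong\R$, where $H_j=\exp\big(\Span(Y_{j+1},\ldots,Y_m)\big)$ --- is a rank-one lattice, so $X'_j$ is obtained from $Y_j$ by adjusting a single scalar, with the weak-basis hypothesis giving the height bound directly. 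You instead advance one \emph{filtration layer} at a time, by downward induction on $i$, and therefore need Smith normal form on the rank-$(m_{i-1}-m_i)$ lattice $(\Gamma\cap G_{i-1})/(\Gamma\cap G_i)$ inside the abelian quotient $G_{i-1}/G_i$, followed by a telescoping argument to assemble the multiplicative second-kind representation, plus a parallel induction to keep heights polynomial. Both routes are valid, and both are a bit terse on the same point: for property (iv) one must ensure that $\exp(X'_k)\in\Gamma$, not merely that $\overline{\exp(X'_k)}$ has the right image in the quotient --- one should take $X'_k:=\log\gamma^{(k)}$ for a genuine $\gamma^{(k)}\in\Gamma$ representing the chosen lattice generator, so that $X'_k$ picks up lower-order corrections in the $\mathcal{Y}$-coordinates. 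What the paper's finer slicing buys is that each step is a one-dimensional, cyclic computation with no need for Smith normal form or an auxiliary induction on representatives; what your coarser slicing buys is fewer outer induction steps and a more coordinate-free per-layer description.
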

\proof Take a basis for $\g_d$ consisting of $Q$-rational linear combinations of the $X_i$. By straightforward linear algebra this may be extended to a basis of $\g_{d-1}$ consisting of $Q^{O(1)}$-rational combinations of the $X_i$. This in turn may be extended to a basis of $\g_{d-2}$ and so on. In this fashion we obtain a basis $\mathcal{Y} = \{Y_1,\dots,Y_m\}$ for $\g$ as a vector space consisting of $Q^{O(1)}$-rational combinations of the $X_i$ such that each $\g_{i}$ equals $\Span(Y_{j+1},\dots,Y_m)$ where $j = m - m_i$. By Lemma \ref{weak-triv} (i) we see that $\mathcal{Y}$ is a $Q^{O(1)}$-rational weak basis for $G/\Gamma$. 

Since $[\g,\g_{i}] \subseteq \g_{i+1}$ for all $i$ we see that the weak basis $\mathcal{Y}$ enjoys the nesting property, that is to say $[\g,Y_j] \subseteq \Span( Y_{j+1},\dots,Y_m)$ for all $j$. 

We now convert this basis $\mathcal{Y}$ into the desired Mal'cev basis by choosing $X'_m = c_mY_m,\dots,$ $X'_1 = c_1Y_1$ in turn so that 
\begin{equation}\label{to-verify} \Span( Y_{i+1},\dots,Y_m) \cap \Gamma = \{\exp(n_{i+1}X'_{i+1}) \dots \exp(n_mX'_m) : n_{i+1},\dots,n_m \in \Z\}\end{equation} for $i = m-1,\dots,0$. Such a basis $\X'$ has all of the properties (i), (ii), (iii) and (iv) required to qualify as a Mal'cev basis.
Suppose this is done for $i = j$. Since $\mathcal{Y}$ is a $Q^{O(1)}$-rational weak basis for $G/\Gamma$ we see that
\[ \big(\Span( Y_{j},\dots,Y_m) \cap \Gamma\big)/\Span( Y_{j+1},\dots,Y_m)\]
is generated by $\overline{\exp(c_jY_j)}$ for some $c_j \in \Q$ with heights bounded by $Q^{O(1)}$.
Taking $X'_j := c_jY_j$, we see that \eqref{to-verify} holds for $i = j-1$ too.\endproof

For applications (for example in the proof of Lemma \ref{rat-bounds}) it is convenient to have the following variant of the above proposition.

\begin{proposition}[Mal'cev bases of subnilmanifolds]\label{sub-nil-basis}
Suppose that $\X = \{X_1,\dots,X_m\}$ is a $Q$-rational Mal'cev basis for $G/\Gamma$ adapted to a filtration $G_{\bullet}$. Suppose that $G' \subseteq G$ is a $Q$-rational subgroup of $G$, and furthermore that $G'_{\bullet}$ is a filtration on $G'$ in which each of the groups $G'_i$ is $Q$-rational \textup{(}with respect to the basis $\X$\textup{)}. Write $\Gamma' := \Gamma \cap G'$. Then $G'/\Gamma'$ has a Mal'cev basis $\X' = \{X'_1,\dots,X'_{m'}\}$ adapted to $G'_{\bullet}$ in which each $X'_i$ is a $Q^{O(1)}$-rational combination of the $X_i$.
\end{proposition}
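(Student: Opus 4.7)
The plan is to construct a weak basis $\mathcal{Y}$ for $G'/\Gamma'$ consisting of $Q^{O(1)}$-rational combinations of the $X_j$, with the nesting property and compatible with the filtration $G'_{\bullet}$, and then apply Proposition \ref{malcev-exist} to rescale it into the desired Mal'cev basis. First I would build $\mathcal{Y} = \{Y_1, \ldots, Y_{m'}\}$ by starting from a $Q$-rational basis of $\g'_d$ (which exists by the $Q$-rationality of $G'_d$) and extending successively to $\g'_{d-1}, \ldots, \g'_0 = \g'$ using the $Q$-rationality of each $G'_i$ and elementary linear algebra with bounded-height rationals (Cramer's rule); this makes each $Y_i$ a $Q^{O(1)}$-rational combination of the $X_j$ with $\g'_i = \Span(Y_{m'-m'_i+1}, \ldots, Y_{m'})$. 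The nesting property $[\g', Y_i] \subseteq \Span(Y_{i+1},\ldots,Y_{m'})$ is immediate from adaptedness and the filtration property $[\g', \g'_k] \subseteq \g'_{k+1}$, and the $Q^{O(1)}$-rationality of the structure constants of $\mathcal{Y}$ follows from a bounded-height change of basis from $\X$.

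The core task is verifying that $\mathcal{Y}$ is a $Q^{O(1)}$-rational weak basis for $G'/\Gamma'$, i.e., that $q\Z^{m'} \subseteq \psi_{\mathcal{Y},\exp}(\Gamma') \subseteq \frac{1}{q}\Z^{m'}$ for some $q = Q^{O(1)}$. For the upper inclusion: if $\gamma \in \Gamma'$, then its second-kind coordinates relative to $\X$ are integers, so by Lemma \ref{1-2-lem}(i) its first-kind coordinates lie in $\frac{1}{D}\Z^m$ for some $D = Q^{O(1)}$; a $Q^{O(1)}$-rational left inverse of the matrix expressing the $Y_i$ in the $X_j$ then converts this to $\psi_{\mathcal{Y},\exp}(\gamma) \in \frac{1}{q}\Z^{m'}$. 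For the lower inclusion: choose $N = Q^{O(1)}$ divisible by both $D$ and the common denominator of the coefficients $a_{ij}$ with $Y_i = \sum_j a_{ij} X_j$. Then for any $\vec n \in \Z^{m'}$, the vector $v := N\sum_i n_i Y_i$ has first-kind coordinates relative to $\X$ lying in $D\Z^m$; applying the polynomial $P := \psi \circ \psi_{\exp}^{-1}$ from Lemma \ref{1-2-lem}(i), which has $Q^{O(1)}$-rational coefficients with denominator dividing $D$, zero constant term, and bounded degree, produces second-kind coordinates in $\Z^m$, giving $\exp(v) \in \Gamma$; since $v \in \g'$ we also have $\exp(v) \in G'$, hence $\exp(v) \in \Gamma'$.

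With the weak basis established, I would then invoke Proposition \ref{malcev-exist} applied to $\mathcal{Y}$ together with the filtration $G'_{\bullet}$; each $G'_i$ is $Q^{O(1)}$-rational relative to $\mathcal{Y}$ by one more bounded-height linear algebra computation. This produces a Mal'cev basis $\mathcal{X}' = \{X'_1, \ldots, X'_{m'}\}$ for $G'/\Gamma'$ adapted to $G'_{\bullet}$ in which each $X'_i$ is a $Q^{O(1)}$-rational scalar multiple of some $Y_j$, and therefore a $Q^{O(1)}$-rational combination of the $X_j$ as desired. The main obstacle is the lower inclusion in the weak basis verification, where one must track denominators through the nonlinear BCH-based conversion between first- and second-kind coordinates; this is handled by a single choice of $N = Q^{O(1)}$ large enough to absorb the bounded denominators of both the polynomial $P$ and the expansion coefficients $a_{ij}$, exploiting the fact that $P$ vanishes at the origin so only positive-degree monomials appear.
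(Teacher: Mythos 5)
Your proof follows essentially the same route as the paper: construct a $Q^{O(1)}$-rational basis $\mathcal{Y}$ for $\g'$, establish the weak-basis property of $\mathcal{Y}$ relative to $G'/\Gamma'$, and then invoke Proposition \ref{malcev-exist}. The paper compresses the middle step: it extends $\mathcal{Y}$ to a basis $\tilde{\mathcal{Y}}$ of all of $\g$ (still made of $Q^{O(1)}$-rational combinations of the $X_i$), applies Lemma \ref{weak-triv}(i) to see that $\tilde{\mathcal{Y}}$ is a $Q^{O(1)}$-rational weak basis for $G/\Gamma$, and then observes that the resulting inclusions $\frac{1}{q}\Z^m \supseteq \psi_{\tilde{\mathcal{Y}},\exp}(\Gamma) \supseteq q\Z^m$ restrict to $\g'$ to give exactly the weak-basis property for $\mathcal{Y}$ relative to $G'/\Gamma'$ (the upper inclusion because $\log\Gamma' \subseteq \g'$ kills the last $m-m'$ coordinates, the lower by setting those coordinates to zero). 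Your explicit coordinate computation unfolds the same content, and it is sound, but two bookkeeping points want tightening. First, for $v = N\sum_i n_i Y_i$ to have first-kind $\X$-coordinates in $D\Z^m$ you need $N$ to be a multiple of $D$ \emph{times} the common denominator of the $a_{ij}$, not merely a common multiple of the two quantities (e.g.\ with $D=4$ and denominators having lcm $6$, the choice $N=12$ gives $N/D=3$ which does not clear a coefficient $1/6$). Second, Lemma \ref{1-2-lem}(i) only bounds the heights of the coefficients of $\psi\circ\psi_{\exp}^{-1}$ by $Q^{O(1)}$; it does not say those denominators divide the particular $D$ produced by the upper-inclusion step, so you should enlarge $D$ to a $Q^{O(1)}$ quantity absorbing them before running the lower-inclusion argument. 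Finally, building $\mathcal{Y}$ adapted to $G'_{\bullet}$ is wasted effort here: Proposition \ref{malcev-exist} already reshapes any weak basis into one adapted to the supplied filtration, so any $Q^{O(1)}$-rational basis of $\g'$ would have sufficed.
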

\proof One simply observes that by linear algebra there is a basis $\mathcal{Y} = \{Y_1,\dots,Y_{m'}\}$ for $\g'$ together with an extension $\tilde{\mathcal{Y}} = \{Y_1,\dots,Y_m\}$ to a basis for $\g$ such that each of the $Y_i$ is a $Q^{O(1)}$-rational combination of the $X_i$. By Lemma \ref{weak-triv}, $\tilde{\mathcal{Y}}$ is a weak basis for $G/\Gamma$, and therefore $\mathcal{Y}$ is a weak basis for $G'/\Gamma'$. The result now follows from Proposition \ref{malcev-exist} applied to this weak basis.\endproof

\textsc{Rationality.}  We now record some simple results about rational points in nilmanifolds $G/\Gamma$. Recall Definition \ref{rat-def}: $g \in G$ is \emph{rational} if $g^r \in \Gamma$ for some integer $r > 0$. Recall also the quantitative version of this, Definition \ref{rat-def-quant}: $g \in G$ is $Q$-\emph{rational} if $g^r \in \Gamma$ for some integer $r$, $0 < r \leq Q$.

\begin{lemma}[Properties of rational points]\label{ratpoint} Suppose that $\X$ is a $Q$-rational Mal'cev basis adapted to some subgroup sequence $G_{\bullet}$, where $Q \geq 2$.
\begin{enumerate}
\item If $\gamma \in G$, then $\gamma$ is rational if and only if $\psi(\gamma) \in \Q^m$.
\item The set of rational points in $G$ is a group.
\item If $\gamma \in G$ is $Q$-rational, then $\psi(\gamma) \in \frac{1}{Q'} \Z^m$ for some $Q'$, $1 \leq Q' \ll Q^{O(1)}$, which does not depend on $\gamma$.
\item If $\gamma \in G$ is such that $\psi(\gamma) \in \frac{1}{Q} \Z^m$, then $\gamma$ is $O(Q^{O(1)})$-rational.
\item If $\gamma, \gamma'$ are $Q$-rational, then $\gamma\gamma'$ and $\gamma^{-1}$ are $O(Q^{O(1)})$-rational.
\end{enumerate}
\end{lemma}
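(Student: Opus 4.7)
\medskip

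\noindent\textbf{Proof plan for Lemma \ref{ratpoint}.}
The backbone of the argument is Lemma \ref{mult-basis-lem}, which expresses both multiplication and inversion in Mal'cev coordinates as polynomial maps of degree $O(1)$ whose coefficients are rationals of height $Q^{O(1)}$, together with the fact (provable by induction using the same lemma, or read off from Lemma \ref{basis-description}) that the sequence $n \mapsto \psi(\gamma^n)$ is a polynomial in $n$ of degree at most $s$ whose coefficients are themselves polynomials of degree $O(1)$ in $\psi(\gamma)$ with coefficients of height $Q^{O(1)}$. All five parts fall out of these two facts; the work is essentially bookkeeping of denominators.

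\emph{For (i),} the \emph{if} direction follows from Lemma \ref{mult-basis-lem}: if $\psi(\gamma) \in \Q^m$, then so is $\psi(\gamma^r)$ for every $r$, and clearing denominators coordinate by coordinate yields $r$ with $\psi(\gamma^r) \in \Z^m$, i.e.\ $\gamma^r \in \Gamma$. The \emph{only if} direction is proved by induction on the coordinate index $i$: using the ``upper triangular'' form of the multiplication formula in Lemma \ref{mult-basis-lem}, one has $\psi(\gamma^r)_i = r\psi(\gamma)_i + R_i(\psi(\gamma)_1,\dots,\psi(\gamma)_{i-1})$ for some polynomial $R_i$ with rational coefficients of height $Q^{O(1)}$ (this identity is itself proved by induction, writing $\gamma^r = \gamma \cdot \gamma^{r-1}$). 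Since $\psi(\gamma^r) \in \Z^m$ and the earlier coordinates of $\psi(\gamma)$ are already rational by induction, we conclude $\psi(\gamma)_i \in \Q$. \emph{For (ii)}, the multiplication and inversion formulas of Lemma \ref{mult-basis-lem} preserve the property of having rational coordinates, so (i) shows that the set of rational elements is closed under the group operations.

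\emph{For (iii),} we iterate the identity $\psi(\gamma^r)_i = r \psi(\gamma)_i + R_i(\psi(\gamma)_1,\dots,\psi(\gamma)_{i-1})$ inductively in $i$. At step $i$, assuming $\psi(\gamma)_j$ has denominator dividing some $D_{i-1} \ll Q^{O(1)}$ for $j < i$, the quantity $R_i(\psi(\gamma)_1,\dots,\psi(\gamma)_{i-1})$ is a rational of denominator $\ll Q^{O(1)}$, and since $\psi(\gamma^r)_i \in \Z$ and $1 \leq r \leq Q$, we get $\psi(\gamma)_i \in \frac{1}{Q^{O(1)}}\Z$. After $m = O(1)$ steps we obtain $\psi(\gamma) \in \frac{1}{Q'}\Z^m$ for some $Q' \ll Q^{O(1)}$ depending only on the basis $\X$ and on $Q$, not on $\gamma$. \emph{For (iv),} write $p_i(n) := \psi(\gamma^n)_i$; by Lemma \ref{basis-description} this is a polynomial in $n$ of degree at most $s$ with $p_i(0)=0$, and by Lemma \ref{mult-basis-lem} its coefficients are polynomials in $\psi(\gamma)$ with rational coefficients of height $Q^{O(1)}$. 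If $\psi(\gamma) \in \frac{1}{Q}\Z^m$, the coefficients of $p_i$ therefore have a common denominator $D \ll Q^{O(1)}$. Writing $D p_i(n) = \sum_{j\geq 1} a_{i,j} n^j$ with $a_{i,j} \in \Z$, and taking $r := D$, each term $a_{i,j} r^j$ is divisible by $D$ (since $j \geq 1$), so $p_i(r) \in \Z$ for every $i$, i.e.\ $\gamma^r \in \Gamma$ with $r \leq D \ll Q^{O(1)}$.

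\emph{For (v),} combine (iii) and (iv): if $\gamma,\gamma'$ are $Q$-rational, then by (iii) their coordinates lie in $\frac{1}{Q^{O(1)}}\Z^m$; by Lemma \ref{mult-basis-lem} the coordinates of $\gamma\gamma'$ and $\gamma^{-1}$ are polynomials of degree $O(1)$ with rational coefficients of height $Q^{O(1)}$ in these, so they also lie in $\frac{1}{Q^{O(1)}}\Z^m$, and (iv) finishes. The main technical nuisance (rather than obstacle) is to verify that the denominator in (iii) really does grow only polynomially in $Q$ as we ascend through the $m$ coordinates, since at each step the polynomial $R_i$ is evaluated at quantities whose denominators have just been bounded in the previous step; this is harmless because $m$, the degree, and the heights of the fixed polynomials $R_i$ are all $O(1)$, so the total blowup is still $Q^{O(1)}$.
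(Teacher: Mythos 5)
Your proof is correct, and it rests on the same backbone as the paper's (Lemma \ref{mult-basis-lem} and the polynomiality of $n \mapsto \psi(\gamma^n)$), but it replaces the paper's use of Lagrange interpolation with a direct coordinate-by-coordinate induction. The paper extracts rationality of the coefficients of $p_i(n) := \psi(\gamma^n)_i$ from the fact that $p_i$ vanishes at $0$ and takes integer (resp.\ rational) values at multiples of $r$ (resp.\ at all integers), appealing to Lagrange interpolation at $s+1$ sample points; you instead exploit the upper-triangular structure of the multiplication law to derive $\psi(\gamma^r)_i = r\,\psi(\gamma)_i + R_i\bigl(r;\psi(\gamma)_1,\dots,\psi(\gamma)_{i-1}\bigr)$ by telescoping, and then climb through the coordinates. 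The two routes carry essentially the same bookkeeping burden; yours is a little more self-contained (no interpolation lemma invoked) but needs the telescoping identity, whose proof is of comparable length. Your arguments for (iv) and (v) — clear denominators of the coefficients of $p_i$ and evaluate at $n=D$ — coincide with what the paper does (and sketches for the quantitative parts). One small clarification worth making explicit: in the displayed identity, $R_i$ depends on $r$ as well as on $\psi(\gamma)_1,\dots,\psi(\gamma)_{i-1}$ (a polynomial of degree $O(1)$ in all these variables, with $Q^{O(1)}$-height coefficients, obtained from Faulhaber-type power sums); your bounding of the denominator in (iii) uses $r \leq Q$ implicitly, which should be stated.
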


\proof If $\gamma$ is rational, then by definition there exists $r \geq 1$ such that $\gamma^r \in \Gamma$, and thus $\psi( \gamma^n ) \in \Z^m$ whenever $n$ is a multiple of $r$. Now from Lemma \ref{basis-description} we know that the coordinates $\psi(g^n)$ are all polynomials of degree $O(1)$; these vanish at zero, and take integer values at multiples of $r$. By the Lagrange interpolation formula we conclude that all the coefficients of these polynomials are rational, and so in particular we have $\psi(\gamma) \in \Q^m$. 

Suppose conversely that $\psi(\gamma) \in \Q^m$. Then by Lemma \ref{mult-basis-lem} we see that each of $\psi(\gamma^2)$, $\psi(\gamma^3),\dots$ also lies in $\Q^m$. By another application of Lemma \ref{basis-description} and the Lagrange interpolation formula we conclude that each coordinate of $\psi(\gamma^n)$ is a polynomial with rational coefficients which vanishes at zero. In particular it is easy to see that by choosing $r \in \N$ suitably we may ensure that $\psi(\gamma^r) \in \Z^m$, which of course implies that $\gamma^r \in \Gamma$.

Part (ii) follows immediately from (i) and Lemma \ref{mult-basis-lem}.
 
Claims (iii)-(v) follow by repeating the above arguments, but keeping track of the heights of all the rational numbers involved; the key point is that the group operations, as well as Lagrange interpolation, are all polynomial in nature and so all heights will be $O(Q^{O(1)})$.  We omit the routine details.
\endproof

Let us now recall the notion of a rational sequence, also given in Definition \ref{rat-def}. A sequence $\gamma : \Z \rightarrow G$ is rational if $\gamma(n)\Gamma$ is rational for all $n$, and it is $Q$-rational if $\gamma(n)\Gamma$ is rational for all $n$. The next lemma records some useful properties of rational \emph{polynomial} sequences.

\begin{lemma}[Properties of rational polynomial sequences]\label{rat-poly-lem} 
Suppose that $\gamma : \Z \rightarrow G$ is a polynomial sequence of degree $d$. 
\begin{enumerate}
\item Suppose that $\gamma$ is rational. Then $\gamma(n)\Gamma$ is periodic.
\item Suppose that there is a $Q$-rational Mal'cev basis $\X$ for $G/\Gamma$ and that $\gamma$ is $Q$-rational. Then $\gamma(n)\Gamma$ is periodic with period $\ll Q^{O(1)}$.
\end{enumerate}
\end{lemma}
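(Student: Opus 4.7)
The plan is to prove both parts simultaneously, tracking quantitative bounds for~(ii). The main device is to analyze the polynomial sequence $\tilde\gamma_P(n) := \gamma(n+P)\gamma(n)^{-1}$ in $n$, for a parameter $P\in\Z$, and pick $P$ to force it into $\Gamma$ for every $n$.

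First I would use Lemma \ref{basis-description} to write
\[ \psi(\gamma(n)) = \sum_{j=0}^{d} t_j \binom{n}{j} \]
for some $t_j \in \R^m$ (with the usual support conditions). Since $\gamma$ is rational, Lemma \ref{ratpoint}(i) gives $\psi(\gamma(n))\in\Q^m$ for every $n\in\Z$; applying the inverse forward-difference identity $t_j = \Delta^{j}(\psi\circ\gamma)(0)$ shows $t_j\in\Q^m$. In the setting of~(ii), Lemma \ref{ratpoint}(iii) sharpens $\psi(\gamma(n))\in\frac{1}{Q'}\Z^m$ with $Q'\ll Q^{O(1)}$, so the $t_j$ lie in $\frac{1}{Q''}\Z^m$ for some $Q''\ll Q^{O(1)}$ (after absorbing factors of $d!$).

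Now, because $\poly(\Z,G_{\bullet})$ is a group (Proposition \ref{poly-precise}) and is closed under the dilation $n\mapsto n+P$ (Corollary \ref{poly-restrict}), the sequence $\tilde\gamma_P$ also lies in $\poly(\Z,G_{\bullet})$, so by Lemma \ref{basis-description} one has
\[ \psi(\tilde\gamma_P(n)) = \sum_{j=0}^{d} s_j(P)\binom{n}{j}. \]
Computing $\psi(\gamma(n+P)\gamma(n)^{-1})$ coordinatewise via Lemma \ref{mult-basis-lem}, each coordinate is a polynomial in $(n,P)$ with rational coefficients whose denominators are bounded by $Q^{O(1)}$ (using the $Q$-rationality of $\X$ together with the bound on the denominators of the $t_j$). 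Extracting the Taylor coefficients in $n$ via integer forward differences, it follows that each $s_j(P)$ is a polynomial in $P$ with rational coefficients of denominator $\ll Q^{O(1)}$. Crucially, $\tilde\gamma_0(n)\equiv\id_G$, so $s_j(0)=0$ for all~$j$; thus each coordinate of each $s_j(P)$ is a rational-coefficient polynomial in $P$ \emph{vanishing at} $P=0$.

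The task is now reduced to choosing an integer $P$ making every $s_j(P)$ lie in $\Z^m$, for then $\psi(\tilde\gamma_P(n))\in\Z^m$ for every $n\in\Z$, which by the defining property~(iv) of the Mal'cev basis gives $\tilde\gamma_P(n)\in\Gamma$, i.e., $\gamma(n+P)\Gamma=\gamma(n)\Gamma$. Writing each coordinate of $s_j(P)$ as $\sum_{k\geq 1}a_{j,k}\binom{P}{k}$ with $a_{j,k}\in\tfrac{1}{q}\Z$ for some $q\ll Q^{O(1)}$, the standard Vandermonde identity $\binom{P}{k} = \binom{qk!}{k}\cdot(\text{integer})$ when $P$ is a multiple of $qk!$ shows that taking $P$ to be any multiple of $q\cdot d!$ (hence $P\ll Q^{O(1)}$) forces $s_j(P)\in\Z^m$ for all $j\leq d$. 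This yields the desired period, proving~(i) unconditionally and giving the bound $P\ll Q^{O(1)}$ required for~(ii). The main book-keeping obstacle is verifying that the denominators appearing in $s_j(P)$ really are only polynomial in $Q$; this is routine but requires unwinding Lemma \ref{mult-basis-lem} together with the explicit denominator bound on the $t_j$.
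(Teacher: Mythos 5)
Your argument is correct and follows essentially the same route as the paper's own proof: both express $\psi(\gamma(\cdot))$ as a polynomial via Lemma~\ref{basis-description}, derive bounded-denominator rationality of the coefficients from Lemma~\ref{ratpoint}, pass to $\gamma(n+P)\gamma(n)^{-1}$ via Lemma~\ref{mult-basis-lem}, and choose $P$ (a suitable multiple of $q\,d!$) to force the resulting coordinates into $\Z^m$. One small nit: the divisibility $q \mid \binom{P}{k}$ for $P$ a multiple of $q\,d!$ and $k \leq d$ follows from the identity $\binom{P}{k} = \tfrac{P}{k}\binom{P-1}{k-1}$ (since $k \mid d!$, the factor $\tfrac{P}{k}$ is an integer multiple of $q$), not from Vandermonde's identity, though your conclusion and the resulting bound $P \ll Q^{O(1)}$ are right.
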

\proof (i). Let $\X$ be any Mal'cev basis for $G/\Gamma$. By Lemma \ref{basis-description} the coordinates $\psi(\gamma(n))$ are all polynomials of degree $O(1)$, and by the previous lemma and the Lagrange interpolation formula they all have rational coefficients. Clearing denominators, we thus find some $q$ such that $\psi(\gamma(n)) \in \frac{1}{q}\Z^m$ for all integers $n$. By Lemma \ref{mult-basis-lem} we see that there is some $q' \in \N$ such that, for any $r \in \Z$, we have $\psi(\gamma(n+r)\gamma(n)^{-1}) \in \frac{r}{qq'}\Z^m$. Thus $\gamma(n)\Gamma$ is indeed periodic, with period $qq'$. 

Part (ii) is proved in exactly the same way, once again taking care to keep track of the heights of all rationals involved. \endproof

We leave the formulation and proof of the multidimensional version of this lemma (that is, concerning maps $\gamma : \Z^t \rightarrow G$) to the reader; only trivial modifications are required.

The next result, stating that conjugates of rational subgroups by rational elements are rational, is not needed in the present paper. It is required in the companion paper \cite{ukmobius}.

\begin{lemma}[Rational conjugates]
Suppose that $\mathcal{X} = \{X_1,\dots,X_m\}$ is a $Q$-rational Mal'cev basis for $G/\Gamma$ adapted to some filtration. Suppose that $\gamma \in G$ is $Q$-rational and additionally that the coordinates $\psi(\gamma)$ are all bounded in magnitude by $Q$. Suppose that $G' \subseteq G$ is a $Q$-rational subgroup. Then the conjugate $\gamma G'\gamma^{-1}$ is $Q^{O(1)}$-rational.
\end{lemma}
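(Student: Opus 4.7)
The plan is to identify the Lie algebra of $\gamma G' \gamma^{-1}$ as $\Ad(\gamma) \g'$ and show that the adjoint action of $\gamma$, written in the basis $\mathcal{X}$, is a matrix with rational entries of height $Q^{O(1)}$. Applying this matrix to a $Q$-rational basis of $\g'$ will then yield a $Q^{O(1)}$-rational spanning set for the Lie algebra of $\gamma G' \gamma^{-1}$.

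First I would fix a basis $\{Y_1, \dots, Y_{m'}\}$ of $\g'$ whose elements are $Q$-rational combinations of the $X_i$; this exists by the hypothesis that $G'$ is $Q$-rational. Since $G$ is connected and simply-connected nilpotent, the Lie algebra of $\gamma G' \gamma^{-1}$ is $\Ad(\gamma) \g' = \Span(\Ad(\gamma) Y_1, \dots, \Ad(\gamma) Y_{m'})$, so it suffices to show that each $\Ad(\gamma) Y_j$ is a $Q^{O(1)}$-rational combination of the $X_i$.

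The key step is to control the matrix of $\Ad(\gamma)$ with respect to the basis $\mathcal{X}$. Since $\g$ is nilpotent of some step $s = O(1)$, we have the finite expansion
\[ \Ad(\gamma) = \exp(\ad(\log \gamma)) = \sum_{k=0}^{s} \frac{1}{k!} \ad(\log \gamma)^k. \]
The coordinates $\psi_{\exp}(\gamma)$ of $\log \gamma$ relative to $\mathcal{X}$ are obtained from $\psi(\gamma)$ via the polynomial change-of-coordinates map of Lemma \ref{1-2-lem}(i), whose coefficients are rationals of height $Q^{O(1)}$; since $|\psi(\gamma)| \leq Q$ and $\gamma$ is $Q$-rational (so $\psi(\gamma) \in \tfrac{1}{Q'}\Z^m$ for some $Q' \ll Q^{O(1)}$ by Lemma \ref{ratpoint}(iii)), the entries of $\psi_{\exp}(\gamma)$ are rationals of height $Q^{O(1)}$ and magnitude $Q^{O(1)}$. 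The structure constants $c_{ijk}$ from $[X_i, X_j] = \sum_k c_{ijk} X_k$ are rationals of height $\leq Q$ by $Q$-rationality of $\mathcal{X}$, so the matrix of $\ad(\log \gamma)$ in the basis $\mathcal{X}$ has rational entries of height $Q^{O(1)}$. Taking up to $s = O(1)$ powers and summing, the matrix of $\Ad(\gamma)$ in the basis $\mathcal{X}$ also has rational entries of height $Q^{O(1)}$.

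To conclude, I apply this matrix to each $Y_j$, which itself is a $Q$-rational combination of the $X_i$. The result is a $Q^{O(1)}$-rational combination of the $X_i$, giving the required basis for the Lie algebra of $\gamma G' \gamma^{-1}$, hence showing that $\gamma G' \gamma^{-1}$ is $Q^{O(1)}$-rational in the sense of Definition \ref{rational-subgroup-def}. The only real obstacle is the routine bookkeeping of heights through the BCH-type polynomial identities; no new ideas are required beyond Lemmas \ref{1-2-lem} and \ref{ratpoint}.
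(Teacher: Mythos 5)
Your proof is correct and takes essentially the same approach as the paper: both arguments identify the Lie algebra of $\gamma G' \gamma^{-1}$ as $\Ad(\gamma)\g'$ and show that $\Ad(\gamma)$ carries $Q$-rational vectors to $Q^{O(1)}$-rational vectors, using the $Q$-rationality and bounded magnitude of $\psi(\gamma)$ together with Lemma \ref{1-2-lem} to control $\psi_{\exp}(\gamma)$. The only stylistic difference is that the paper routes the computation through the already-proved coordinate lemmas (\ref{1-2-lem} and \ref{mult-basis-lem}) applied to $\log(\gamma\exp(X'_i)\gamma^{-1})$, whereas you expand $\Ad(\gamma)=\sum_{k=0}^{s}\frac{1}{k!}\ad(\log\gamma)^k$ explicitly and bound the matrix entries via the structure constants; this is a more transparent rendering of the same bookkeeping, but not a genuinely different argument.
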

\proof Set $H := \gamma G' \gamma^{-1}$ and let $\h$ be the corresponding Lie algebra. Recall from basic Lie theory the identity
\[ \log (\gamma \exp (X) \gamma^{-1}) = \Ad(\gamma) X,\] where $\Ad(\gamma) : \g \rightarrow \g$ is the adjoint automorphism of $\g$ associated to the element $\gamma \in G$. For the purposes of this argument all we need is the following immediate consequence of this identity: if $X'_1,\dots,X'_{m'}$ is a basis for the Lie algebra $\g'$ then the elements 
\[ \tilde X_i := \log(\gamma \exp(X'_i) \gamma^{-1})\]
are a basis for $\h$.  By assumption we may choose the $X'_i$ to be $Q$-rational combinations of the $X_i$. It then follows from Lemmas \ref{1-2-lem} and \ref{mult-basis-lem} that each $\tilde X_i$ is a $Q^{O(1)}$-rational combination of the $X_i$.

\textsc{Fundamental domain and reduction.} The next lemma provides a description of $G/\Gamma$ in terms of coordinates relative to any Mal'cev basis $\X$.

\begin{lemma}[Reducing to the fundamental domain]\label{fund-dom-def} Let $\X$ be a Mal'cev basis adapted to some subgroup sequence $G_{\bullet}$.
Suppose that $g \in G$. Then we may write $g = \{g\}[g]$ in a unique way, where $\psi(\{g\}) \in [0,1)^m$ and $[g] \in \Gamma$. 
\end{lemma}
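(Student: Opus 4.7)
The plan is to prove both existence and uniqueness by induction on $m = \dim G$, using the normal subgroup $H_1 := \exp(\Span(X_2,\dots,X_m))$ and the projection $\pi_1 : G \to G/H_1 \cong \R$ that reads off the first Mal'cev coordinate. The base case $m=0$ is vacuous. For the inductive step one must first verify the auxiliary claim that $\{X_2,\dots,X_m\}$ is itself a Mal'cev basis for $H_1/(\Gamma \cap H_1)$: the ideal condition (i) is inherited from $\g$; the uniqueness of representation (iii) on $H_1$ follows from (iii) on $G$ together with the fact that $H_1$ is precisely the fibre $\pi_1^{-1}(0)$; and condition (iv) for $\Gamma \cap H_1$ follows from (iv) on $\Gamma$ combined with the same observation, giving $\Gamma \cap H_1 = \{\exp(n_2 X_2)\cdots \exp(n_m X_m) : n_i \in \Z\}$.

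For existence, write $g = \exp(t_1 X_1)\, g'$ with $g' \in H_1$ via property (iii), and set $n_1 := \lfloor t_1 \rfloor$, $\{t_1\} := t_1 - n_1 \in [0,1)$. Using the normality of $H_1$, I would rewrite
\[ g = \exp(\{t_1\} X_1)\, g''\, \exp(n_1 X_1), \qquad g'' := \exp(n_1 X_1)\, g'\, \exp(-n_1 X_1) \in H_1. \]
The induction hypothesis applied to $g''$ produces $g'' = \{g''\}_\ast [g'']_\ast$ with $\psi_{\{X_2,\dots,X_m\}}(\{g''\}_\ast) \in [0,1)^{m-1}$ and $[g'']_\ast \in \Gamma \cap H_1$. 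Setting $\{g\} := \exp(\{t_1\} X_1)\{g''\}_\ast$ and $[g] := [g'']_\ast \exp(n_1 X_1)$, one checks that $\{g\}$ is in standard form with coordinates $(\{t_1\}, s_2,\dots,s_m) \in [0,1)^m$, and that $[g] \in \Gamma$ since both factors are (the second by property (iv) applied with $n_2=\dots=n_m=0$).

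For uniqueness, suppose $\{g\}[g] = \{g\}'[g]'$ with both satisfying the stated conditions, and set $\gamma := \{g\}^{-1}\{g\}' = [g][g]'^{-1} \in \Gamma$, expanded as $\gamma = \exp(n_1 X_1)\cdots\exp(n_m X_m)$ with $n_i \in \Z$ by (iv). Applying $\pi_1$ (a homomorphism, since $H_1$ is normal) and writing $\psi(\{g\})_1 = s_1$, $\psi(\{g\}')_1 = s_1'$, yields $s_1' = s_1 + n_1$; as $s_1, s_1' \in [0,1)$ and $n_1 \in \Z$ this forces $n_1 = 0$ and $s_1 = s_1'$. Cancelling the common factor $\exp(s_1 X_1)$ on the left reduces the equation to one inside $H_1$, to which the induction hypothesis applies, forcing $\{g\} = \{g\}'$ and $[g] = [g]'$.

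The main obstacle is essentially the verification of the auxiliary Mal'cev-basis claim for $H_1/(\Gamma\cap H_1)$, which is what makes the induction go through; every other step is an algebraic manipulation. The one subtlety to get right is that one cannot simply absorb $\exp(n_1 X_1)$ into $\Gamma$ by moving it to the right, because that would disturb the coordinate of $g'$; the remedy is the conjugation trick $g' \mapsto g''$, valid precisely because $H_1 \lhd G$ by property (i).
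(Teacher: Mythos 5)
Your proof is correct, and it takes a somewhat different (and more formally structured) route than the paper's. The paper proves the lemma by iteratively right-multiplying $g$ by lattice elements $\exp(n_i X_i)$ and appealing to Lemma \ref{mult-basis-lem}, whose ``upper-triangular'' multiplication formula guarantees that fixing coordinate $i$ leaves the earlier coordinates undisturbed; uniqueness is obtained by equating coefficients in that same formula starting from the right. You instead run a clean induction on the dimension $m$: you peel off the first Mal'cev coordinate via the quotient $\pi_1 : G \to G/H_1 \cong \R$, and the conjugation $g' \mapsto g'' := \exp(n_1 X_1)\,g'\,\exp(-n_1 X_1)$, valid because $H_1 \lhd G$, is precisely your replacement for the upper-triangular bookkeeping: it lets you slide the integer factor $\exp(n_1 X_1)$ to the right of everything without touching the remaining coordinates. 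The price you pay is the auxiliary verification that $\{X_2,\dots,X_m\}$ is again a Mal'cev basis for $H_1/(\Gamma \cap H_1)$, which you correctly single out as the crux and which does go through as you say.

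One small phrasing imprecision in the uniqueness step: after cancelling $\exp(s_1 X_1)$ you obtain $h[g] = h'[g]'$, and the lattice elements $[g],[g]'$ need not lie in $H_1$, so the equation is not literally ``inside $H_1$''. What you actually use is that $\gamma = \{g\}^{-1}\{g\}' = h^{-1}h'$, and by the $\pi_1$-argument $\gamma \in \Gamma \cap H_1$; then $h' = h\gamma$ exhibits two decompositions of $h'$ in $H_1/(\Gamma \cap H_1)$, so the inductive uniqueness gives $h = h'$ and $\gamma = \id_G$, whence $\{g\} = \{g\}'$ and $[g] = [g]'$. This is a cosmetic fix, not a gap. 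Overall your approach is a valid alternative that trades the explicit multiplication polynomials of Lemma \ref{mult-basis-lem} for normality and conjugation, and arguably makes the induction more transparent.
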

\proof Recall Lemma \ref{mult-basis-lem}, which describes the multiplication on $G$ in coordinates relative to $\X$. Using this we may iteratively construct $\gamma_m,\gamma_{m-1},\dots,\gamma_1 \in \Gamma$ in such a way that coordinates $i+1,\dots,m$ of $\psi(g\gamma_m \dots \gamma_{i})$ all lie in the interval $[0,1)$.

The uniqueness also follows easily from Lemma \ref{mult-basis-lem}: if $\psi(x\gamma),\psi(x) \in [0,1)^m$ then we may equate coefficients of $\psi(\gamma)$ starting at the right to deduce that $\gamma = \id_G$.\endproof

\textsc{Metrics on nilmanifolds.} Let $\mathcal{X}$ be a Mal'cev basis for some nilmanifold $G/\Gamma$. Recall from Definition \ref{metric-def} the manner in which we used the metric $d = d_{\mathcal{X}}$ on $G$ to define a ``metric'' on $G/\Gamma$ via
\[ d(x\Gamma, y\Gamma) = \inf_{\gamma,\gamma' \in \Gamma} d(x\gamma,y\gamma').\] We can now prove that $d$ really is a metric on $G/\Gamma$ (and thus the inverted commas above can be dispensed with).

\begin{lemma}[Nondegeneracy of metric]\label{quotient-metric}
Suppose that $\mathcal{X}$ is a rational Mal'cev basis for a nilmanifold $G/\Gamma$, adapted to some filtration. Suppose that $d(x\Gamma,y\Gamma) = 0$. Then $x \equiv y \mdlem{\Gamma}$.
\end{lemma}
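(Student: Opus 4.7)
The plan is to use the coordinate comparison result of Lemma \ref{dx-bounds} to transfer the hypothesis $d(x\Gamma, y\Gamma) = 0$ from the chain-definition of $d$ to a genuine convergence statement in $\R^m$, after which the discreteness of $\Gamma$ in $G$ finishes the job.

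First, I would invoke the right-invariance formula $d(x\Gamma, y\Gamma) = \inf_{\gamma \in \Gamma} d(x, y\gamma)$ already recorded in Definition \ref{metric-def}. The assumption $d(x\Gamma, y\Gamma) = 0$ then produces a sequence $\gamma_n \in \Gamma$ with $d(x, y\gamma_n) \to 0$. By the triangle inequality, $d(y\gamma_n, \id_G) \leq d(y\gamma_n, x) + d(x, \id_G)$, so for $n$ sufficiently large both $d(x, \id_G)$ and $d(y\gamma_n, \id_G)$ are bounded by some finite $Q$ depending only on $x$.

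Now I would apply the second bound of Lemma \ref{dx-bounds} (taking $\X = \X'$): since $d(x, \id_G), d(y\gamma_n, \id_G) \leq Q$, we have
\[ |\psi(x) - \psi(y\gamma_n)| \ll Q^{O(1)} d(x, y\gamma_n) \to 0. \]
Because the Mal'cev coordinate map $\psi: G \to \R^m$ is a homeomorphism (property (iii) of Definition \ref{malcev} asserts that it is a bijection, and continuity of $\psi$ and $\psi^{-1}$ follows from the Baker–Campbell–Hausdorff formula together with the fact that exp is a diffeomorphism), this forces $y\gamma_n \to x$ in $G$. Left-multiplying by the continuous map $y^{-1}$, we conclude $\gamma_n \to y^{-1} x$ in $G$.

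Finally, since $\Gamma$ is discrete in $G$, any convergent sequence in $\Gamma$ must be eventually constant, so the limit $y^{-1} x$ lies in $\Gamma$, which is precisely $x \equiv y \md{\Gamma}$. The main subtlety in this argument is that Lemma \ref{dx-bounds} requires a uniform bound on $d(\cdot, \id_G)$ for both points, but this is readily supplied by the triangle inequality once the distances $d(x, y\gamma_n)$ are small; no further quantitative machinery is needed.
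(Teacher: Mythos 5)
Your proof is correct and follows essentially the same route as the paper: both arguments reduce $d(x\Gamma,y\Gamma)=\inf_{\gamma\in\Gamma}d(x,y\gamma)$ to a statement about Mal'cev coordinates via Lemma \ref{dx-bounds} and then exploit the fact that $\psi(\Gamma)=\Z^m$. The packaging differs slightly. The paper's proof is sequence-free: it shows that for any $M$ only finitely many $\gamma\in\Gamma$ satisfy $d(x,y\gamma)\leq M$ (passing from $d(x,y\gamma)$ to a bound on $d(\id_G,\gamma)$ via the approximate left-invariance Lemma \ref{approx-left}, and from there to a bound on $|\psi(\gamma)|$ via Lemma \ref{dx-bounds}), so the infimum is attained and must equal $d(x,y\gamma^*)$ for some $\gamma^*$. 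Your argument instead extracts a minimizing sequence $\gamma_n$, applies Lemma \ref{dx-bounds} directly to the pair $(x,y\gamma_n)$ after a simple triangle-inequality bound on $d(y\gamma_n,\id_G)$, and concludes via discreteness of $\Gamma$; this has the small advantage of avoiding Lemma \ref{approx-left} altogether.

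One pedantic caveat: your final step asserts that "since $\Gamma$ is discrete in $G$, any convergent sequence in $\Gamma$ must be eventually constant." This is not true of an arbitrary discrete subset of a topological space (consider $\{1/n\}\subset\R$ converging to $0$); it requires either the standard fact that a discrete subgroup of a Hausdorff topological group is closed, or — cleaner in this setting — the observation that $\psi(\gamma_n)\in\Z^m$ converges in $\R^m$ to $\psi(y^{-1}x)$, and a convergent $\Z^m$-valued sequence is eventually constant, forcing $\psi(y^{-1}x)\in\Z^m$ and hence $y^{-1}x\in\Gamma$. Either patch makes the argument airtight.
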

\proof Since the metric $d$ on $G$ is right-invariant we have
\[ d(x\Gamma,y\Gamma) = \inf_{\gamma \in \Gamma} d(x,y\gamma).\] It suffices to show that the $\inf$ here is a actually a minimum, to which end we need only show that for any $M$ there are just finitely many $\gamma \in \Gamma$ with $d(x,y\gamma) \leq M$. By Lemma \eqref{approx-left} this assumption implies that $d(y^{-1}x,\gamma) \leq M'$, for some $M'$ depending on $M$, the rationality of the Mal'cev basis $\mathcal{X}$ and the size of the coordinates of $x$ and $y$. This in turn implies that $d(\id_G, \gamma) \leq M''$ which, in view of Lemma \ref{dx-bounds}, implies that $|\psi(\gamma)| \leq M'''$. But if $\gamma \in \Gamma$ then the coordinates $\psi(\gamma)$ are all integers, so the result follows.\endproof

\begin{lemma}[Nilmanifolds are bounded]\label{nil-bounded}
Let $Q \geq 2$, and suppose that $\mathcal{X}$ is a $Q$-rational Mal'cev basis for a nilmanifold $G/\Gamma$ \textup{(}with respect to some filtration\textup{)}. Then $d(x\Gamma,y\Gamma) \ll Q^{O(1)}$ uniformly in $x,y \in G$.
\end{lemma}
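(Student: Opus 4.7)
The plan is to reduce the problem to bounding $d(x',y')$ for representatives $x', y'$ whose Mal'cev coordinates lie in the unit cube, and then use the explicit polynomial description of the group law in coordinates to obtain the desired bound.

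First I would apply Lemma \ref{fund-dom-def} to write $x = \{x\}[x]$ and $y = \{y\}[y]$, where $[x], [y] \in \Gamma$ and $\psi(\{x\}), \psi(\{y\}) \in [0,1)^m$. Since $\{x\}\Gamma = x\Gamma$ and $\{y\}\Gamma = y\Gamma$, and since from Definition \ref{metric-def} we have $d(x\Gamma, y\Gamma) = \inf_{\gamma,\gamma'\in\Gamma} d(x\gamma, y\gamma') \leq d(\{x\}, \{y\})$, it suffices to bound $d(\{x\}, \{y\})$.

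Next I would invoke the trivial one-step upper bound from Definition \ref{metric-def}, namely $d(\{x\}, \{y\}) \leq |\psi(\{x\}\{y\}^{-1})|$. By Lemma \ref{mult-basis-lem}, the coordinates of $\psi(\{x\}\{y\}^{-1})$ are given by polynomials of degree $O(1)$ with coefficients that are rationals of height $Q^{O(1)}$, applied to the coordinates $\psi(\{x\})$ and $\psi(\{y\})$. Since both of these coordinate vectors have entries in $[0,1)$, each such polynomial is bounded in magnitude by $Q^{O(1)}$. Therefore $|\psi(\{x\}\{y\}^{-1})| \ll Q^{O(1)}$, which yields the claim.

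The only potential obstacle is a bookkeeping issue: we need to know that the implied constants in Lemma \ref{mult-basis-lem} depend only on the dimension $m$ (and on $d$ or $s$ from the filtration), not on $x$ and $y$ themselves. This is clear from the statement of that lemma, since the polynomials $P_i, \tilde P_i$ are determined by the basis $\mathcal{X}$ alone, and their coefficients have height $Q^{O(1)}$. Thus no substantive difficulty arises, and the proof is essentially a direct combination of the fundamental-domain reduction with the coordinate description of multiplication and inversion.
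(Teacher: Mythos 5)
Your proof is correct and essentially the same as the paper's: both start by reducing to representatives in the fundamental domain via Lemma \ref{fund-dom-def}, and both then bound the distance by the size of the coordinates of a product/inverse of elements with unit-box coordinates. The only difference is that the paper cites the easy inequality \eqref{d-psi-bound-1} from Lemma \ref{dx-bounds}, whereas you reprove that step inline from the definition of $d$ and Lemma \ref{mult-basis-lem} (which is precisely how \eqref{d-psi-bound-1} is proved anyway), so the two arguments are the same in substance.
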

\proof By Lemma \ref{fund-dom-def} we may choose $\gamma$ and $\gamma'$ so that $|\psi(x\gamma)|, |\psi(y\gamma')| \leq 1$. The claim now follows immediately from Lemma \ref{dx-bounds}.\endproof

The final result of this appendix is not used in this paper but is required in \S 2 of the companion paper \cite{ukmobius}.

\begin{lemma}[Comparison of metrics on nilmanifolds]
Let $Q \geq 2$. Suppose that $G' \subseteq G$ is a closed subgroup and that $\mathcal{X},\mathcal{X}'$ are $Q$-rational Mal'cev bases for $G/\Gamma$ and $G'/\Gamma'$ respectively such that each $X'_i$ is a $Q$-rational combination of the $X_i$. Let $d,d'$ be the metrics induced on $G/\Gamma$ and $G'/\Gamma'$ respectively. Then for any $x,y \in G'$ we have
\[ d'(x\Gamma',y\Gamma') \ll Q^{O(1)}d(x\Gamma,y\Gamma) \] and \[ d(x\Gamma,y\Gamma) \ll Q^{O(1)} d'(x\Gamma',y\Gamma').\]
\end{lemma}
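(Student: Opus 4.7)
The plan is to reduce both inequalities to Lemma \ref{comparison-lemma}, which already gives the pointwise metric comparison on $G'$ provided we work with representatives whose coordinates are bounded. I would first apply Lemma \ref{fund-dom-def} to the nilmanifold $G'/\Gamma'$ to replace $x$ and $y$ by representatives of the same cosets (note $\Gamma \cap G' = \Gamma'$, so $\Gamma'$-cosets and $\Gamma$-cosets agree on elements of $G'$) satisfying $|\psi'(x)|, |\psi'(y)| \leq 1$; by Lemma \ref{1-2-lem}(ii) we then also have $|\psi(x)|, |\psi(y)| \ll Q^{O(1)}$.

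For the inequality $d(x\Gamma, y\Gamma) \ll Q^{O(1)} d'(x\Gamma', y\Gamma')$, I would select $\gamma' \in \Gamma'$ attaining the infimum in the definition of $d'(x\Gamma', y\Gamma')$ (that the infimum is attained is the content of Lemma \ref{quotient-metric}). Since Lemma \ref{nil-bounded} gives $d'(x, y\gamma') \ll Q^{O(1)}$, Lemma \ref{dx-bounds} applied to $\X'$ yields $|\psi'(y\gamma')| \ll Q^{O(1)}$, and then $|\psi(y\gamma')| \ll Q^{O(1)}$ by Lemma \ref{1-2-lem}(ii). Lemma \ref{comparison-lemma} now applies to give $d(x, y\gamma') \ll Q^{O(1)} d'(x, y\gamma') = Q^{O(1)} d'(x\Gamma', y\Gamma')$; since $\gamma' \in \Gamma$ this bounds $d(x\Gamma, y\Gamma)$ from above by the same quantity.

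The reverse inequality is more delicate because the infimum defining $d(x\Gamma, y\Gamma)$ is taken over the larger group $\Gamma$ and need not be attained in $\Gamma'$. Set $\eta := d(x\Gamma, y\Gamma)$. By Lemma \ref{nil-bounded} one always has $d'(x\Gamma', y\Gamma') \ll Q^{O(1)}$, so the desired bound is trivial unless $\eta \leq c Q^{-C}$ for suitable constants $c, C$ depending only on $m$ and $s$; assume this. Fix $\gamma \in \Gamma$ with $d(x, y\gamma) = \eta$. The goal is to show $\gamma \in \Gamma'$, for then Lemma \ref{comparison-lemma} (using that $x, y\gamma \in G'$ have $\psi$-coordinates of size $\ll Q^{O(1)}$, via Lemma \ref{dx-bounds} and $\eta \ll 1$) yields $d'(x\Gamma', y\Gamma') \leq d'(x, y\gamma) \ll Q^{O(1)} \eta$ as required. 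To verify $\gamma \in \Gamma'$, I would apply the approximate left-invariance of Lemma \ref{approx-left} with $y^{-1}$ (whose coordinates are $O(Q^{O(1)})$) to obtain $d(\gamma, y^{-1} x) \ll Q^{O(1)} \eta$; since $y^{-1} x \in G'$, this places $\gamma$ within $Q^{O(1)} \eta \ll Q^{-C + O(1)}$ of $G'$ in the $d$-metric. The main obstacle, which is the key step of the proof, is then a \emph{quantitative rationality} statement: if $\gamma \in \Gamma$ satisfies $d(\gamma, G') \leq c' Q^{-C'}$ for sufficiently large $C'$, then $\gamma \in G'$. I would prove this by extending a $Q^{O(1)}$-rational basis of $\g'$ to a basis $Y_1, \dots, Y_m$ of $\g$ with the remaining $Y_i$ spanning a $Q^{O(1)}$-rational complement. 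Using Lemma \ref{1-2-lem}(i), the coordinates of $\log(\gamma)$ in $\X$ are $Q^{O(1)}$-rational (being polynomial with $Q^{O(1)}$-rational coefficients in the integer vector $\psi(\gamma)$, whose entries are $O(Q^{O(1)})$), so after change of basis the $Y$-coordinates are also $Q^{O(1)}$-rational. The components transverse to $\g'$ vanish on elements of $G'$, hence for $\gamma$ they are rationals of denominator at most $Q^{O(1)}$ that lie within $Q^{-C' + O(1)}$ of zero; taking $C'$ large enough, they must vanish identically, so $\log(\gamma) \in \g'$ and $\gamma \in G' \cap \Gamma = \Gamma'$, completing the proof.
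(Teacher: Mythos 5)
Your proposal is correct and follows essentially the same route as the paper's own proof: normalise the representatives to lie in a bounded fundamental domain, reduce to Lemma~\ref{comparison-lemma} via bounds on coordinates (Lemmas~\ref{nil-bounded}, \ref{dx-bounds}, \ref{1-2-lem}), and for the harder direction show that when $d(x\Gamma,y\Gamma)$ is very small the minimising $\gamma\in\Gamma$ must actually lie in $\Gamma'$, by a quantitative rationality argument in exponential coordinates. The only cosmetic difference is in the rationality step: the paper says tersely that $G'$ is cut out in exponential coordinates by $O(1)$ linear forms with $Q^{O(1)}$-rational coefficients and that the relevant coordinates of $\gamma$ are $Q^{O(1)}$-rational, then concludes; you make the same point by explicitly extending a $Q^{O(1)}$-rational basis of $\g'$ to a basis of $\g$ and looking at the transverse $Y$-coordinates of $\log\gamma$. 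That is precisely the construction that underlies the paper's linear forms, so the two arguments coincide. One small improvement in your version: you normalise $x,y$ in the $G'/\Gamma'$ fundamental domain so that they stay in $G'$, with $|\psi'(x)|,|\psi'(y)|\leq 1$ and hence $|\psi(x)|,|\psi(y)|\ll Q^{O(1)}$, whereas the paper asserts $|\psi(x)|,|\psi(y)|\leq 1$ (which, if obtained by reducing modulo $\Gamma$ rather than $\Gamma'$, would risk moving the representatives outside $G'$). Your phrasing handles this correctly.
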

\proof We prove the second inequality first. By the proof of Lemma \ref{quotient-metric} there is some $\gamma' \in \Gamma'$ such that $d'(x\Gamma',y\Gamma') = d'(x,y\gamma')$. Here we may assume, using Lemma \ref{fund-dom-def}, that $|\psi'(x)|,|\psi'(y)| \leq 1$. By Lemma \ref{nil-bounded} we have $d'(x,y\gamma') \leq Q^{O(1)}$, and therefore by Lemma \ref{dx-bounds} and the triangle inequality we have $d'(\id_{G'},y\gamma') \ll Q^{O(1)}$. By a second application of Lemma \ref{dx-bounds} it follows that $|\psi'(y\gamma')| \ll Q^{O(1)}$. By Lemma \ref{comparison-lemma} we therefore have $d(x,y\gamma') \ll Q^{O(1)}d'(x,y\gamma')$. Since $\Gamma' \subseteq \Gamma$, this implies that
\[ d(x\Gamma,y\Gamma) \leq d(x,y\gamma') \ll Q^{O(1)}d'(x,y\gamma') = Q^{O(1)} d'(x\Gamma',y\Gamma'),\] which is the second inequality claimed.

To prove the first inequality we make the same initial manoeuvres. That is, we may assume that $|\psi(x)|,|\psi(y)| \leq 1$ and that there is some $\gamma \in \Gamma$ such that $d(x\Gamma,y\Gamma) = d(x,y\gamma)$. Let $C$ be a constant to be specified later. If $d(x,y\gamma) \geq Q^{-C}$ then, by Lemma \ref{nil-bounded}, the bound is trivial. Suppose, then, that $d(x,y\gamma) < Q^{-C}$. This is an assertion to the effect that $\gamma$ lies ``near'' $G'$. We will use the rationality properties of the coordinates of $\Gamma$ to conclude from this that $\gamma$ must actually lie in $G'$. 

 By Lemma \ref{approx-left} and Lemma \ref{mult-basis-lem} we obtain $d(z,\gamma) \ll Q^{O(1) - C}$, where $z := y^{-1}x$. Since $d(z,\id_G) \ll Q^{O(1)}$ we have $d(\gamma,\id_G) \ll Q^{O(1)}$, and so by Lemma \ref{dx-bounds} it follows that $|\psi(z) - \psi(\gamma)| \ll Q^{O(1) - C}$. It follows from this and Lemma \ref{1-2-lem} that \begin{equation}\label{near}|\psi_{\exp}(z) - \psi_{\exp}(\gamma)| \ll Q^{O(1) - C}.\end{equation}
Now $G'$ is defined, in exponential or type I coordinates, as the intersection of the kernels of $O(1)$ linear forms with rational coefficients of height $O(Q^{O(1)})$. The coordinates $\psi(\gamma)$ are integers and so the type I coordinates $\psi_{\exp}(\gamma)$ are, by Lemma \ref{1-2-lem}, rationals of height $O(Q^{O(1)})$. The element $z$, of course, lies in $G'$. If $C$ is chosen sufficiently large, it follows from these observations and \eqref{near} that indeed $\gamma$ lies in $G'$ and hence in $\Gamma'$.

We now have that $d(x,y\gamma') \ll Q^{O(1)}$, where $\gamma' = \gamma$ lies in $\Gamma'$. One final application of Lemma \ref{comparison-lemma} implies that $d'(x,y\gamma') \ll Q^{O(1)}d(x,y\gamma')$, from which it of course follows that \[ d'(x\Gamma',y\Gamma') \leq d'(x,y\gamma') \ll Q^{O(1)}d(x,y\gamma') = Q^{O(1)}d(x\Gamma,y\Gamma).\]
This concludes the proof.\endproof

\providecommand{\bysame}{\leavevmode\hbox to3em{\hrulefill}\thinspace}
\providecommand{\MR}{\relax\ifhmode\unskip\space\fi MR }
% \MRhref is called by the amsart/book/proc definition of \MR.
\providecommand{\MRhref}[2]{%
  \href{http://www.ams.org/mathscinet-getitem?mr=#1}{#2}
}
\providecommand{\href}[2]{#2}

     \end{document}